\newtheorem{theorem}{Theorem}[section]
\newtheorem{lemma}[theorem]{Lemma}
\newtheorem{proposition}[theorem]{Proposition}
\newtheorem{assumption}[theorem]{Assumption}
\theoremstyle{remark}
\newtheorem*{remark}{Remark}
\newtheoremstyle{noparentheses}
{\topsep}   
{\topsep}   
{\itshape}  
{0pt}       
{\bfseries} 
{}         
{5pt plus 1pt minus 1pt} 
{\thmname{#1} \thmnumber{#2} \normalfont{\thmnote{#3}}\textbf{.}}          
\theoremstyle{noparentheses}
\newtheorem{theorem*}[theorem]{Theorem}
\newtheorem{lemma*}[theorem]{Lemma}
\newtheorem{proposition*}[theorem]{Proposition}
\newtheorem{corollary*}[theorem]{Corollary}
\newtheorem{assumption*}[theorem]{Assumption}
\newtheoremstyle{klammern}
{\topsep}   
{\topsep}   
{\itshape}  
{0pt}       
{\bfseries} 
{}         
{5pt plus 1pt minus 1pt} 
{\vspace{-6pt}$ $\\  \thmname{#1} \thmnumber{#2} \normalfont{[\hspace{-4pt}}\normalfont{
		\thmnote{#3}
		}\normalfont{\hspace{-4pt}]}\textbf{.}}          
\theoremstyle{klammern}
\newtheorem{lemmab}[theorem]{Lemma}
\newtheorem{propositionb}[theorem]{Proposition}
\def\paragraph#1{\noindent \textbf{#1}}
\numberwithin{equation}{section}
\newlist{thmlist}{enumerate}{1}
\setlist[thmlist,1]{label=(\roman{thmlisti}), ref=\thetheorem.(\roman{thmlisti}),noitemsep} 
\newlist{thmlistb}{enumerate}{1}
\setlist[thmlistb,1]{label=(\alph{thmlistbi}), ref=\thetheorem.(\alph{thmlistbi}),noitemsep} 
\definecolor{bbm}{RGB}{51,153,0}
\definecolor{above}{RGB}{128,0,128}
\definecolor{below}{RGB}{102,0,204}
\definecolor{cascade}{RGB}{204,0,0}
\definecolor{iid}{RGB}{153,51,0}
\def\a{\alpha}
\def\b{\beta}
\def\g{\gamma}
\def\e{\epsilon}
\def\s{\sigma}
\def\o{\omega}
\def\D{\Delta}
\def\E{{\mathbb E}} 
\def\N{{\mathbb N}}  
\def\P{{\mathbb P}} 
\def\R{{\mathbb R}}
\let\cal=\mathcal
\def\AA{{\cal A}}
\def\BB{{\cal B}}
\def\CC{{\cal C}}
\def\EE{{\cal E}}
\def\FF{{\cal F}}
\def\GG{{\cal G}}
\def\LL{{\cal L}}
\def\OO{{\cal O}}
\def\PP{{\cal P}}
\def\SS{{\cal S}}
\def\TT{{\cal T}}
\def\YY{{\cal Y}}
\def\ZZ{{\cal Z}}
\def \zet {{\mathfrak z}}
\def \log{\ln} 
\def\eee{{\mathrm e}}
\def\d{\mathrm{d}}
\def\del{\partial}
\def\mi{\bar{i}}
\newcommand{\multix}[1]{\tilde{x}_{i_{#1}}^{\, \bar{i}_{#1-1}} }
\newcommand{\multixb}[1]{\bar{x}_{i_{#1}}^{\, \bar{i}_{#1-1}} }
\def\1{\mathbbm{1}}
\def\<{\langle}
\def\>{\rangle}
\def \ba {\begin{array}}
\def \ea {\end{array}}
\newcommand{\be}{\begin{equation}}
\newcommand{\ee}{\end{equation}}
\newcommand{\bea}{\begin{eqnarray}}
\newcommand{\eea}{\end{eqnarray}}
\def\TH(#1){\label{#1}}\def\thv(#1){\ref{#1}}
\def\Eq(#1){\label{#1}}\def\eqv(#1){(\ref{#1})}
\def\sfrac#1#2{{\textstyle{\frac{#1}{#2}}}}
\def\BBM{branching Brownian motion }
\newcommand{\kl}[1]{\left(#1\right)}
\newcommand{\bkl}[1]{\big(#1\big)}
\newcommand{\bbkl}[1]{\Big(#1\Big)}
\newcommand{\bbbkl}[1]{\bigg(#1\bigg)}
\newcommand{\bbbbkl}[1]{\Bigg(#1\Bigg)}
\newcommand{\gkl}[1]{\left\{#1\right\}}
\newcommand{\ekl}[1]{\left[#1\right]}
\newcommand{\bbbbekl}[1]{\Bigg[#1\Bigg]}
\newcommand{\okl}[1]{\left\lceil#1\right\rceil}
\newcommand{\abs}[1]{\left| #1 \right|}
\newcommand*\iid{i.i.d.\ }
\DeclareMathOperator{\conc}{conc}
\renewcommand{\ae}{\a_{\mathrm{end}}}
\newcommand{\bend}{b_{\mathrm{end}}}
\newcommand{\ab}{\a_{\mathrm{begin}}}
\newcommand{\bbeg}{b_{\mathrm{begin}}}
\begin{document}

 \title[The log-correction for the maximum of variable-speed BBM]{From 1 to infinity: The log-correction for the maximum of variable-speed branching Brownian motion}

\author[A. Alban]{Alexander Alban}
\address{A. Alban\\
 	Institut f\"ur Mathematik \\Johannes Gutenberg-Universit\"at Mainz\\
	Staudingerweg 9,	
	55128 Mainz, Germany}
\email{aalban@uni-mainz.de}
\author[A. Bovier]{Anton Bovier}
\address{A. Bovier\\
	Institute for Applied Mathematics\\University of Bonn\\ 
	Endenicher Allee 60\\ 
	53115 Bonn, Germany }
\email{bovier@uni-bonn.de}
\author[A. Gros]{Annabell Gros}
\address{A. Gros\\
	Institute for Applied Mathematics\\University of Bonn\\ 
	Endenicher Allee 60\\ 
	53115 Bonn, Germany }
\email{gros@iam.uni-bonn.de}
\author[L. Hartung]{Lisa Hartung}
\address{L. Hartung\\
 	Institut f\"ur Mathematik \\Johannes Gutenberg-Universit\"at Mainz\\	
 	Staudingerweg 9,
 	55128 Mainz, Germany}
\email{lhartung@uni-mainz.de}
\date{\today}

 \begin{abstract}  We study the extremes of variable speed branching Brownian motion (BBM) where the time-dependent ``speed functions", which describe the time-inhomogeneous variance, converge to the identity function.
 	We consider general speed functions lying strictly below their concave hull and piecewise linear, concave speed functions.
 	In the first case, the log-correction for the order of the maximum depends only on the rate of convergence of the speed function near 0 and 1 and exhibits a smooth interpolation between the correction in the i.i.d.\ case, $\sfrac{1}{2\sqrt{2}} \log t$, and that of standard BBM, $\sfrac{3}{2\sqrt{2}} \log t$.
 	In the second case, we describe the order of the maximum in dependence of the form of speed function and show that any log-correction larger than $\sfrac{3}{2\sqrt{2}} \log t$ can be obtained. 
 	In both cases, we prove that the limiting law of the maximum and the extremal process essentially coincide with those of standard BBM, using a first and second moment method which relies on the localisation of extremal particles.
 	This extends the results of  \cite{1to6} for two-speed BBM.
 \end{abstract}

\thanks{
$\!$Published in Electron.\ J.\ Probab.\ 30:1-46, 2025,  \url{https://doi.org/10.1214/25-EJP1287}.\\
\indent This work was partly funded by the Deutsche Forschungsgemeinschaft (DFG, German Research Foundation) under Germany's Excellence Strategy - GZ 2047/1, Project-ID 390685813 and GZ 2151 - Project-ID 390873048,
through Project-ID 211504053 - SFB 1060, through Project-ID 233630050 - TRR 146, through Project-ID 443891315  within SPP 2265, and Project-ID 446173099.
 }

\subjclass[2000]{60J80, 60G70, 82B44} \keywords{branching Brownian motion, variable speed BBM,
extreme values, Derrida's generalised random energy model, F-KPP equation} 

 \maketitle

\section{Introduction}

\subsection {Models and background}
\emph{Variable speed branching Brownian motion [VSBBM]} \cite{1to6,BH14.1,BovHar13, DerriSpohn88,FZ_BM, MZ}  is a class of Gaussian processes $X$ indexed by a continuous-time Galton-Watson-tree with branching  rate one and offspring distribution $(p_k)_{k\in\N}$, where $\sum_{k=1}^{\infty} p_k = 1$, $\sum_{k=1}^{\infty} k p_k =2$ and  $\sum_{k=1}^{\infty} k(k-1)p_k < \infty$. $X$  has mean zero and covariance
\begin{equation}
	\E \left[ x_i(s) x_j(r) \:\mid \FF_t^{\text{tree}} \right]
	= t A (t^{-1} d( x_i(s), x_j(r))),
\end{equation}
where $\FF_t^{\text{tree}}$ denotes the $\s$-algebra generated by the Galton-Watson tree up to time $t$ and $d(x_i(s), x_j(r))$ denotes
the time of the most recent common ancestor of the particles labelled $i$ and $j$ in the tree. $A \colon [0,1] \to [0,1]$ with $A(0)=0$ and $A(1) =1$ is a non-decreasing and right-continuous function, called \emph{speed function}. It is the continuous-time analogon to Derrida's Generalised and Continuous Random Energy Model (GREM/CREM) \cite{GD86b, BK1, BK2, kistler2015,KistSchmi15}. 
VSBBM is a family $(\tilde{X}_t)_{t>0}$ of processes where $\tilde{X}_t = \{\tilde{x}_j^t(s)\colon j \leq n(t), s \leq t \}$ denotes the trajectories of all particles when the time horizon of the process is $t$. 
We write $\tilde{x}_j^t(s)$ for the position at time $s$ of the ancestor of a particle labelled $j$ at time $t$.
For simplicity, we write $\tilde{x}_j(t)\equiv \tilde{x}_j^t(t)$. The trajectory $\{\tilde{x}^t_j(s) \colon s \leq t\}$ for $j \leq n(t)$ is denoted by $\tilde{x}_j$.

The case when  $A(x)\equiv x$ is standard branching Brownian motion (BBM), the primary example of 
so-called log-correlated processes, a class of processes that contains,  among others, branching random walk and the discrete Gaussian free field in dimension two. Also for these models, deformations with
different speed functions analogous to VSBBM have been studied, see \cite{FZ_RW,Mallein,Ouimet} for the branching random walk and \cite{GFF1,GFF2,GFF3} for the discrete Gaussian free field in dimension two.

The extreme value statistics of standard BBM are by now well understood, 
see, e.g. \cite{B_M,B_C, LS, chauvin88, chauvin90, ABK_G, ABK_P, ABK_E}.
The extreme values of variable speed BBM exhibit  different behaviour depending on the properties of the speed function.
\begin{enumerate}[label=(\roman*)]
\item If $A(x) < x$ for all $x \in (0,1)$, then has been shown in \cite{BH14.1} that $A'(0) < 1$ and $A'(1) > 1$ imply that to first sub-leading order,
\begin{equation}
	\label{eq:OrderOfMaxBelow}
	\max_{j \le n(t)} \tilde{x}_j(t) \approx \sqrt{2}t - \frac{1}{2 \sqrt{2}} \log(t).
\end{equation}
The order of the maximum is the same as in the case of independent particles.  
\item If $A(x)=x$ for all $x \in [0,1]$, Bramson \cite{B_M,B_C} has proven that
\begin{equation}
	\max_{j \le n(t)} \tilde{x}_j(t) \approx m(t) \equiv \sqrt{2}t - \frac{3}{2 \sqrt{2}} \log(t).\label{eq:OrderMaxStandardBBM}
\end{equation}
\item If $A(x)>x$ for some $x \in (0,1)$, then, to leading order,
\begin{equation} \label{eq:OrderOfMaxFirstOrder}
	\max_{j \le n(t)} \tilde{x}_j(t) \approx \sqrt{2} t \int_{0}^{1} \!\sqrt{ \conc(A)'(y)} \,\d y,
\end{equation}
where $\conc(A)$ denotes the concave hull of the function $A$. The sub-leading order depends on the specific form of the covariance. If $A$ is a piecewise linear function with slope $\s_1^2$ on the interval $[0,b), b \in (0,1)$ and $\s_2^2$ on $[b,1]$, it has been shown in \cite{FZ_RW, BovHar13} that the log-correction is 
\begin{equation} \label{eq:log_correction_piecewise}
	- \frac{3}{2\sqrt{2}} \left( \s_1 \log(bt) + \s_2 \log((1-b)t) \right).
\end{equation}
If $A$ is strictly concave and continuous, the sub-leading order of the maximum is of order $t^{1/3}$ (see \cite{FZ_BM}; \cite{MZ} for a refinement).
\end{enumerate}
Note that, in the first and second cases, the concave hull of $A$ is the identity function. 
Therefore, \eqref{eq:OrderOfMaxFirstOrder} holds in all three cases. 
We see that standard  BBM is on the borderline  where correlations begin to affect the properties of the extremes.  Moreover, the sub-leading terms are discontinuous at the identity function.  To analyse  these discontinuities in more detail, \cite{1to6} considered  piecewise linear speed functions $A_t$ such that  
\be
A'_t(x) =\begin{cases} 1\pm t^{-\a},&\; x< 1/2,\\
1\mp t^{-\a}, &\; 1/2< x\leq  1.
\end{cases}
\ee
Another example was studied by Kistler and Schmidt
\cite{KistSchmi15}. In the present paper, we generalise the analysis in
\cite{1to6} to a wide class of speed functions.
We distinguish between piecewise linear speed functions converging from above and a general class of 
speed functions converging from below. The case above the identity function is referred to as \emph{Case A},  the other one as case \emph{Case B}. 

As explained above, the dependence of the properties of extremes on the speed function is very different in Cases
A and B. In particular, in case $A$, the techniques of proofs are very different in the case when $A$ is piecewise linear
and when it is strictly concave. Therefore, in this paper we restrict ourselves to the piecewise linear case. The precise conditions on the speed functions considered are given below in Assumption~\ref{as:above}. In Case~B, 
the dependence on the speed function is only on the slopes of the speed function at zero and one, so in this case there is no need to distinguish between piecewise linear and other speed functions. The corresponding conditions are formulated in Assumption~\ref{as:below}.

The following assumptions describe the class of speed functions we consider in Cases A and B.
\begin{assumption}[\textbf{Case A; }$\mathbf{A_t(s) > s}$]\label{as:above}
The family of speed functions $(A_t)_{t>0}$ with $A_t(s) > s$ for all $t > 0, s \in (0,1)$, satisfies:

\begin{thmlist}
	\item The functions $(A_t)_{t>0}$ are piecewise linear and continuous.
	Their derivatives are given by
	\begin{equation}
		A_t'(s) = 
		\sum_{k=1}^{\ell} \s_k^2(t) \1_{ \left( \sum_{j=1}^{k-1} b_j(t), \sum_{j=1}^{k} b_j(t) \right)}(s),
	\end{equation}
	where we call $\s_k\colon\R_+ \to \R_+, 1 \le k \le \ell,$ \emph{velocities} and \mbox{$b_k\colon \R_+ \to (0,1]$},${1 \le k \le \ell}$, are called \emph{interval lengths}. 
	We assume $\sum_{k=1}^{\ell} b_k(t)=1$ and $\sum_{k=1}^{\ell} \s_k^2(t) b_k(t) = 1$. \label{as:above1}
	\item The functions $(A_t)_{t>0}$ are concave and converge to the identity function, as $t\uparrow \infty$. \label{as:above2}
	\item There exists $\b \in (0, 1/2)$ such that, for all $1 \le k < \ell$, \\
	$ \displaystyle \sqrt{ \min \{ b_k(t)t, b_{k+1}(t)t \} } \textstyle \gg ( \s_k(t) - \s_{k+1}(t)  )^{-1} \gg t^{\b}$, as $t \uparrow \infty$.
	\label{as:above3}
\end{thmlist}
\end{assumption}
Here and elsewhere, we use the notation
\begin{align}\label{eq:NotationLLGG}
f(t) \ll g(t), \text{as $t\uparrow\infty$},
\quad\Leftrightarrow\quad 
\exists\, \e > 0\colon \frac{t^\e f(t)}{g(t)} \downarrow 0, \text{as $t\uparrow\infty$},
\end{align}
for functions $f, g \colon \R_+\to\R$.

To illustrate the assumptions in Case A, consider a two-speed BBM with velocities 
$\s_1^2(t) = 1 + t^{-\a_1}$ on the interval $[0,b(t))$ and
$\s_2^2(t) = 1 - t^{-\a_2}$ on $[b(t), 1]$, with $b(t) = 1/( t^{\a_2 - \a_1} + 1 )$ and $\a_1, \a_2 > 0$. One checks that the assumptions are verified if $\a_1+\a_2<1$.
\begin{assumption}[\textbf{Case B; }$\mathbf{A_t(s) < s}$]\label{as:below}
Let $\ab, \ae \in \kl{0, 1/2}$. The family of speed functions $\kl{A_t}_{t>0}$ with $A_t(s) < s$, for all $t>0, s\in (0,1)$, satisfies:
\begin{thmlistb}
	\item \label{as:belowA}
	For each $t>0$, there exist $\bbeg(t) \in (0,1)$ and twice differentiable functions $\underline{B}_t, \overline{B}_{\,t} \colon [0,1]\to[0,1]$ with $\underline{B}_t(0) = \overline{B}_{\,t}(0) = 0$, for which each of the following hold:
	\begin{enumerate}[label=(\roman*)] 
		\item $1 \gg \bbeg(t) \gg t^{\ab\, - \,1/2}$ as $t\uparrow\infty$.
		\item $\underline{B}'_t(0) = \overline{B}'_{\,t}(0) = 1 - t^{-\ab}$. 
		\item On $[0,\bbeg(t)]$, we have $\underline{B}_{\,t} \leq A_t \leq \overline{B}_{t}$ and the second derivatives of $\underline{B}_t, \overline{B}_{\,t}$ are both bounded by $t^{-\ab} \bbeg(t)^{-1}$ in the sense of \eqref{eq:NotationLLGG}.
	\end{enumerate}
	\item \label{as:belowB}
	For each $t>0$, there exist $\bend(t) \in (0,1)$ and twice differentiable functions $\underline{C}_t, \overline{C}_{\,t} \colon [0,1]\to[0,1]$ with $\underline{C}_t(1) = \overline{C}_{\,t}(1) = 1$, 
	such that:
	\begin{enumerate}[label=(\roman*)] 
		\item $1 \gg \bend(t) \gg  t^{\ae\, -\, 1/2}$ as $t\uparrow\infty$.
		\item $\underline{C}'_t(1) = \overline{C}'_{\,t}(1) = 1 + t^{-\ae}$.
		\item On $[1-\bend(t), 1]$, we have $\underline{C}_{\,t} \leq A_t \leq \overline{C}_{t}$ and the second derivatives of $\underline{C}_t, \overline{C}_{\,t}$ are both bounded by $t^{-\ae} \bend(t)^{-1}$ in the sense of \eqref{eq:NotationLLGG}.     
	\end{enumerate}
	\item \label{as:belowC}
	$\min_{s\in[\bbeg(t), 1-\bend(t)]} \bkl{s - A_t(s)} \gg t^{-1/2}$, as $t\uparrow\infty$.
\end{thmlistb}
\end{assumption}
In Case B, the slopes in~$0$ and~$1$ are given by $1-t^{-\ab}$ and $1+t^{-\ae}$.
The assumptions ensure that $A_t$ can be well approximated by piecewise linear functions  in $0$ and $1$, similarly to the assumptions in \cite{BH14.1}.

%
%
%

In this paper, we determine the limiting law of the rescaled maximum and the full extremal process in 
both cases. 
Recall that, for BBM, see \cite{B_M,LS}, 
\begin{equation}\label{eq:lisa1}
\lim_{t \uparrow \infty} \P \left( \max_{j \le n(t)} x_j(t) - m(t) \le y \right)  
= \E \left[ \eee^{- C Z \eee^{-\sqrt{2}y}} \right],
\end{equation}
where  $m(t)$ is the same as in \eqref{eq:OrderMaxStandardBBM}, $Z$ is the limit of the \emph{derivative martingale}
\begin{equation}\label{eq:derivMart}
Z(t) 
\equiv \sum_{j\leq n(t)} \left( \sqrt{2}t - x_j(t) \right) \eee^{-\sqrt{2} \left( \sqrt{2} t - x_j(t) \right)},
\end{equation}
and   $C$ is a positive constant.

The extremal process of standard BBM \cite{ABK_E,ABBS} is of the form
\begin{align}
\lim_{t\uparrow\infty} \sum_{j\leq n(t)} \delta_{x_j(t)-m(t)} = \sum_{k,j} \delta_{\eta_k + \D_j^{(k)}},
\label{eq:ExtremalProessStandardBBM}
\end{align}
where the points
$\eta_k$ are the atoms of a Poisson point process with random intensity measure $CZ\sqrt{2}\eee^{-\sqrt{2}y} \mathrm{d} y$. 
The points $\D_j^{(k)}$ are the atoms of \iid point processes $\D^{(k)}$, which arise as the limit in law as $t\uparrow\infty$ of 
\begin{align}
\sum_{j \leq n(t)}  \delta_{\bar{x}_j (t) - \max_{i \leq n(t)} \bar{x}_i(t)},
\label{eq:ExtremalProessDecoration}
\end{align}
where $\bar{x}(t)$ are the points of standard BBM conditioned on the event  $\max_{i \leq n(t)} x_i(t) \geq \sqrt{2}t$. 
\subsection{Results}
To state our results, we define functions  $m^{\pm}$, where the superscript $+$ corresponds to Case~A and the superscript $-$ to Case~B.
Let
\begin{equation} \label{eq:OrderOfMaxAboveTime}
m^+(t)
\equiv \sqrt{2}t \left( \sum_{k=1}^{\ell} \s_k(t) b_k(t) \! \right) \! 
- \frac{3}{2 \sqrt{2}} \left( \sum_{k=1}^{\ell} \log(b_k(t)t) 
+ 2 \sum_{k=1}^{\ell-1} \log \left( \pi^{1/6} (\s_k(t) - \s_{k+1}(t)) \right) \! \right),
\end{equation}
and 
\begin{equation} \label{eq:OrderOfMaxBelowTime}
m^-(t) \equiv \sqrt{2}t - \frac{1 + 2 (\ab + \ae)}{2\sqrt{2}}  \log(t).
\end{equation}
We notice that $m^+$ depends on the details of the speed functions $(A_t)_{t>0}$ while $m^-$ depends only on the rate of convergence of the speed functions near $0$ and $1$.
%
\noindent The main results of this paper are the following two theorems.
\begin{theorem}\label{thm:lawofmax}
Let $(\tilde{X}_t)_{t>0}$ be a family of variable speed BBMs with speed functions $(A_t)_{t>0}$ satisfying Assumption~\ref{as:above} (Case~A) or Assumption~\ref{as:below} (Case~B). 
Let $C$ be the same positive constant as in \eqref{eq:lisa1}
and	$Z$ the limit of the derivative martingale.
Then, for all $ y \in \R$,
\begin{equation}
	\lim_{t \uparrow \infty} \P \left( \max_{j \le n(t)} \tilde{x}_j(t) - m^{\pm}(t) \le y \right)  
	=\E \left[ \exp\kl{-C
		Z \eee^{-\sqrt{2}y}} \right],
\end{equation} 
where $m^\pm = m^+$ in Case~A and $m^\pm = m^-$ in Case~B.
\end{theorem}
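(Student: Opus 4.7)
The plan is to combine a first and second moment method with a careful localisation of extremal trajectories, concluding by coupling the last portion of the process with standard BBM in order to transport the limit \eqref{eq:lisa1}. The approach is the template used for two-speed BBM in \cite{1to6}, with the added difficulty that Case~A allows an unbounded number of breakpoints and Case~B has a vanishing deviation of $A_t$ from the identity that must be tracked quantitatively.

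\textbf{Localisation.} In Case~A, concavity of $A_t$ together with Assumption~\ref{as:above3} forces any particle reaching $m^+(t)+y$ to have, at each interface $t_k:=t\sum_{j\leq k}b_j(t)$, an ancestor in a Gaussian window of width of order $(\s_k(t)-\s_{k+1}(t))^{-1}$ around the optimal value $\sqrt{2}t\sum_{j\leq k}\s_j(t)b_j(t)$. The entropic cost of chaining these windows across the $\ell$ pieces accounts exactly for the sum of log-corrections in \eqref{eq:OrderOfMaxAboveTime}, with the factor $\pi^{1/6}$ arising from the Gaussian density evaluated at each interface. In Case~B, Assumption~\ref{as:belowC} yields, on the middle block $[\bbeg(t)t, (1-\bend(t))t]$, a covariance uniformly strictly below the identity, which supports an i.i.d.-type Gaussian count whose tail furnishes the $\tfrac{1}{2\sqrt{2}}\log t$ part of \eqref{eq:OrderOfMaxBelowTime}, while Assumptions~\ref{as:belowA} and~\ref{as:belowB} force the trajectories during the boundary layers $[0,\bbeg(t)t]$ and $[(1-\bend(t))t, t]$ to follow near-linear profiles inside Gaussian tubes of transverse widths $t^{\ab}$ and $t^{\ae}$; the two entropic costs add up to the extra $\tfrac{\ab+\ae}{\sqrt{2}}\log t$.

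\textbf{Moment estimates.} Let $N_t(y)$ denote the number of particles that reach $m^{\pm}(t)+y$ and whose trajectory stays in the tube $T_t$ from the localisation step. A many-to-one computation, combined with standard Gaussian estimates on the constrained piecewise-Brownian paths, shows that the choice of $m^{\pm}$ is calibrated precisely so that $\E[N_t(y)]$ is of order $\eee^{-\sqrt{2}y}$ up to a positive multiplicative constant. For the second moment, a many-to-two decomposition indexed by the branching time of two distinct particles is reduced, thanks to the barriers embedded into $T_t$, to an integral dominated by its short-range contributions, giving $\E[N_t(y)^2]\lesssim \E[N_t(y)]^2$. A Paley--Zygmund argument then delivers tightness and non-degeneracy of the shifted maximum.

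\textbf{Coupling and identification of the limit.} Choose $s\uparrow\infty$ with $s\ll b_\ell(t)t$ in Case~A and $s\ll\bend(t)t$ in Case~B, so that, by Assumption~\ref{as:above3} or~\ref{as:belowB}, the covariance structure of the VSBBM over the last $s$ time units differs from that of a standard BBM of duration $s$ by a vanishing perturbation. Conditioning on $\FF^{\text{tree}}_{t-s}$ and coupling each surviving subtree with an independent standard BBM, \eqref{eq:lisa1} applied to each subtree identifies the recentred subtree maxima as Gumbels with constant $C$; summing over ancestors recognises the random prefactor as the limit $Z$ of the derivative martingale in \eqref{eq:derivMart}, yielding the announced Laplace transform. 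The main obstacle will be verifying that the coupling is insensitive, uniformly in the ancestor positions in $T_t$, to the residual perturbations $\s_\ell(t)-1$ (Case~A) and $t^{-\ae}$ (Case~B), and that neither the limiting constant $C$ nor the martingale limit $Z$ is shifted; this is precisely where the quantitative rates in Assumptions~\ref{as:above3} and~\ref{as:belowB} become indispensable.
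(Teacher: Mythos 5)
Your localisation step and the general first/second-moment philosophy match the paper, but there are two genuine gaps in how you pass from moment bounds to the stated limit law.

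First, the combination ``$\E[N_t(y)]\asymp \eee^{-\sqrt 2 y}$, $\E[N_t(y)^2]\lesssim \E[N_t(y)]^2$, Paley--Zygmund'' only yields tightness and non-degeneracy of the recentred maximum; it cannot identify the limiting distribution $\E[\exp(-CZ\eee^{-\sqrt2 y})]$. You delegate the identification to a coupling at time $t-s$, but this is where the argument fails. Conditioning on $\FF_{t-s}$ and applying \eqref{eq:lisa1} to the subtrees conflates two different regimes: \eqref{eq:lisa1} describes the recentred maximum of standard BBM in its \emph{typical} range, whereas each subtree must be evaluated at a displacement $\Lambda^{\mi_{\ell-1}}(t)$ above its own median, i.e.\ in the \emph{tail} regime governed by Bramson's asymptotics (Proposition~\ref{prop:FKPP_tail_estimate}), which is what actually produces the factor $C\Delta\,\eee^{-\sqrt2\Delta}$. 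More importantly, the ``random prefactor'' you obtain by summing over ancestors at time $t-s$ is a functional of the \emph{variable-speed} process at time $t-s$, not the derivative martingale of a standard BBM. In this model the randomness of $Z$ lives at the very beginning of the time interval (the paper extracts it at time $t^{\b}$, where $\s_1\to1$, via Lemma~\ref{prop:above_derivative_martingale} resp.\ Lemma~\ref{prop:belowDerivMartLimit}); showing that all intermediate blocks contribute only the deterministic constants in $m^{\pm}(t)$ and that the surviving random factor is $Z$ is exactly the content of the iterated nested-expectation computation (Proposition~\ref{prop:above_iteration} and its analogues), which your sketch omits. Without that propagation step the limit law is not established.

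Second, in Case~B the theorem covers general, not necessarily piecewise linear, speed functions, and the moment computations are only tractable for piecewise linear ones. The paper closes this by sandwiching $A_t$ between piecewise linear speed functions satisfying Assumption~\ref{as:belowLSpeed} (Lemma~\ref{lem:belowPiecewiseLinearApprox}) and invoking Slepian's lemma. Your proposal works directly with general $A_t$ throughout and never addresses why the many-to-one and many-to-two Gaussian computations remain controllable in that generality; some comparison or approximation argument is indispensable here.
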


\begin{theorem}\label{thm:extremalprocess}
Let $(\tilde{X}_t)_{t>0}$ and $m^\pm$ be as in Theorem~\ref{thm:lawofmax}.
Let $\D_j^{(k)}$ be the atoms of the \iid copies $\D^{(k)}$ of the limit of the point process described in \eqref{eq:ExtremalProessDecoration}.
Then,
\begin{equation}  \label{eq:ExtrProc}
	\lim_{t\uparrow\infty} \sum_{j\leq n(t)} \delta_{\tilde{x}_j(t)-m^{\pm}(t)} = \sum_{k,j} \delta_{\eta_k + \D_j^{(k)}},
\end{equation}
where $\eta_k$ are the atoms of a Poisson point process with random intensity measure\\ \mbox{$CZ\sqrt{2}\eee^{-\sqrt{2}y} \mathrm{d} y$}.
\end{theorem}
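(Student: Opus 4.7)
The plan is to establish the convergence of the random point measure in~\eqref{eq:ExtrProc} via its Laplace functional. By a standard density argument, it suffices to show that for every non-negative continuous $\phi$ with compact support,
\begin{equation*}
\Psi_t(\phi) := \E\ekl{\exp\kl{-\textstyle\sum_{j\le n(t)} \phi(\tilde x_j(t) - m^\pm(t))}}
\end{equation*}
converges, as $t\uparrow\infty$, to the Laplace functional of the decorated Poisson process on the right-hand side of~\eqref{eq:ExtrProc}. The general scheme is that of \cite{ABK_E}, implemented for two-speed BBM in~\cite{1to6}, and the task is to show that the additional complications due to the variable speed function $A_t$ affect neither the Poisson intensity nor the decoration law.

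I would decompose each extremal trajectory at an intermediate time $t-s$, with $s$ large but fixed. The localisation results underpinning Theorem~\ref{thm:lawofmax} imply that, in both Cases~A and~B, particles contributing to the extremes descend from ancestors at time $t-s$ whose paths are confined to a tube around the affine interpolation prescribed by $\conc(A_t)$, and which, recentred by $m^\pm(t) - \sqrt 2 s$, sit at bounded distance from the origin. A first- and second-moment analysis then shows that this point process of ancestors at time $t-s$ converges to a Poisson process with intensity $CZ\sqrt 2\eee^{-\sqrt 2 y}\d y$; the martingale limit $Z$ and the constant $C$ appear exactly as in~\eqref{eq:lisa1}, being forced by Theorem~\ref{thm:lawofmax}. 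Conditionally on this configuration, each descendant subtree evolves with variance profile $A_t$ restricted to $[t-s,t]$. Since $\s_\ell(t)\to 1$ in Case~A and $A_t'(1) = 1 + t^{-\ae}\to 1$ in Case~B, this variance profile converges uniformly to the identity on the $s$-window, so that each subtree converges in law to the standard BBM cluster $\D$ of~\eqref{eq:ExtremalProessDecoration}. Taking first $t\uparrow\infty$ and then $s\uparrow\infty$ decouples the Poisson part from the decorations and produces the claimed Laplace functional.

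The main difficulty is to make the approximation by standard BBM on the top $s$-window genuinely uniform in $t$. In Case~A this amounts to verifying that the last breakpoint $\sum_{k<\ell}b_k(t)t$ lies well below $t-s$; this follows from Assumption~\ref{as:above3} combined with the normalisation $\sum_k \s_k^2(t) b_k(t) = 1$, which forces the last slope $\s_\ell(t)$ to tend to $1$. In Case~B the corresponding control follows from $\bend(t)\to 0$ in Assumption~\ref{as:belowB}, while an entirely parallel argument near time $0$ is needed to account for the exponents $\ab, \ae$ appearing in $m^-(t)$. The second-moment estimate ruling out overlapping clusters from distinct ancestors at time $t-s$ relies on the same tube-localisation that produces the centring $m^\pm$, and follows the blueprint of~\cite{ABK_E, 1to6}; carrying it out with the covariance structure dictated by $A_t$ (with its many slope changes in Case~A, and its smooth but non-trivial deviation from the identity in Case~B) is the main technical point.
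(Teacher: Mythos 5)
Your route is genuinely different from the paper's. You propose the cluster decomposition of \cite{ABK_E}: cut each trajectory at time $t-s$, show the recentred ancestors form (as $t\uparrow\infty$, then $s\uparrow\infty$) a $\mathrm{PPP}(CZ\sqrt{2}\eee^{-\sqrt{2}y}\d y)$, and prove that each descendant subtree on the top $s$-window converges to the decoration $\D$ of \eqref{eq:ExtremalProessDecoration}. The paper instead never touches the decoration directly: it evaluates the Laplace functional for the special test class of Lemma~\ref{lem:jointConvergence} (smooth, nondecreasing, support bounded from the left, eventually constant), observes that the innermost conditional expectation over the last segment $[a_{\ell-1}t,t]$ is a solution of the F-KPP equation with $t$-dependent initial datum $1-\eee^{-\phi(-\s_\ell(t)x)}$, applies the generalised tail asymptotics (Proposition~\ref{prop:F-KPP_tail_estimate_general}) to replace the constant $C$ by $C(\phi)$, and then reruns verbatim the moment computations of Theorem~\ref{thm:lawofmax} to obtain $\E[\eee^{-C(\phi)Z\eee^{-\sqrt 2 y}}]$, which is identified with the Laplace functional of the decorated PPP by citing \cite{ABK_E}. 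What the paper's route buys is that the entire proof is a two-page corollary of Theorem~\ref{thm:lawofmax}; what your route would buy is an explicit genealogical picture, at the cost of a convergence proof for the cluster law under a $t$-dependent variance profile, which is the hardest part of \cite{ABK_E} and which you assert rather than prove.

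There is also a concrete gap in Case~B for speed functions that are not piecewise linear. The localisation machinery of Section~\ref{sec:below_localisation} (and hence any tube/second-moment argument of the kind you invoke) is only established for the piecewise linear approximants of Assumption~\ref{as:belowLSpeed}; the general case is handled in the paper exclusively by sandwiching $A_t$ between $\underline{A}_t$ and $\overline{A}_t$ (Lemma~\ref{lem:belowPiecewiseLinearApprox}) and applying Kahane's comparison theorem to the functional $F(z)=\exp(-\sum_j\phi(z_j-m^-(t)-y))$, whose mixed second derivatives are $\phi'(z_{i_1}-\cdot)\phi'(z_{i_2}-\cdot)F(z)\ge 0$ precisely because $\phi$ is nondecreasing. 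With your class of compactly supported continuous $\phi$ this sign condition fails (and non-smooth $\phi$ is not admissible for Kahane at all), so the comparison step is unavailable; your alternative of running the cluster analysis directly against the "smooth but non-trivial deviation from the identity" would require new localisation estimates for general $A_t$ that neither you nor the paper provide. Either restrict to the paper's test class and add the comparison step, or supply the missing localisation for general $A_t$.
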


Observe that in Case~A,  we can obtain any factor between $\frac{3}{2 \sqrt{2} } \log(t) $ and $\infty$ in front of the logarithmic correction with an appropriate choice of $A_t$.
More precisely,  the logarithmic correction is of the form
\begin{equation} \label{eq:log_corr_above}
- \frac{3}{2 \sqrt{2} } f(A_t) \log(t),
\end{equation}
where $f$ is a function taking values in $(1, \ell)$. This follows from estimating the terms in $m^+$ from above and below with the bounds in Assumption~\ref{as:above3} and using that $b_k(t) \le 1$ for \mbox{$1 \le k \le \ell$}.
In Case B, the logarithmic correction in \eqref{eq:OrderOfMaxBelowTime} only depends on the slope of $A_t$ near~$0$ and near~$1$, while the behaviour of $A_t$ away from~$0$ and~$1$ is negligible as long as $A_t$ maintains a distance of order $t^{-1/2}$ from the identity function. The prefactor of the logarithmic correction interpolates between $\sfrac{1}{2\sqrt2}$ and $\sfrac{3}{2\sqrt2}$. 

Both theorems above follow from the convergence of a class of Laplace functionals. A very nice characterisation of this fact is the following lemma from \citen{Berestycki}.

\begin{lemmab}[\citen{Berestycki}, Lemma 4.4] \label{lem:jointConvergence}
Let $\PP_t, \PP_\infty$ be point processes on $\R$ such that almost surely, $\PP((0,\infty))<\infty$.
The following are equivalent: As $t\uparrow\infty$,
\begin{thmlist}
	\item $\PP_t \to \PP_\infty$ and $\max\PP_t \to \max\PP_\infty$ in distribution.
	\item $\E \, \left[ \exp \left( - \int \phi(y) \PP_t( \d y) \right) \right] \to \E \, \left[ \exp \left( - \int \phi(y) \PP_\infty( \d y) \right) \right]$  for all $\phi\in \CC^\infty$ which are nondecreasing with support bounded from the left and for which there exists $a\in \R$ such that $\phi(x)$ is constant for $x>a$.
	\label{lem:jointConvergence2}
\end{thmlist}  
\end{lemmab}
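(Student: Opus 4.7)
The plan is to prove the equivalence by exploiting the specific shape of the test functions $\phi$ in~(ii): smooth, nondecreasing, with support bounded from the left, and constant equal to some value $c = \phi(a)$ on $(a, \infty)$. These properties are exactly calibrated to capture both the point-process structure and the behaviour of the maximum, while being flexible enough to approximate step functions that detect counts above a level.

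For the direction (i)~$\Rightarrow$~(ii), I would first invoke Skorokhod's representation theorem to realise the joint convergence $(\PP_t, \max \PP_t) \to (\PP_\infty, \max \PP_\infty)$ almost surely on a common probability space. Choosing $M$ large enough that $\P(\max \PP_\infty < M)$ is close to one (possible by tightness of $\max\PP_t$), one splits
\[
\int \phi \, d\PP_t \;=\; \int_{[b,M]} \phi \, d\PP_t \;+\; c \cdot \PP_t((M, \infty)).
\]
The first integral converges almost surely by vague convergence of $\PP_t$ on the compact set $[b, M]$ applied to the bounded continuous $\phi$, while on the high-probability event $\{\max \PP_t < M\}$ the second term vanishes. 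Bounded convergence then yields the convergence of the Laplace transforms.

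The more delicate direction is (ii)~$\Rightarrow$~(i). The key step is to extract from~(ii) the joint Laplace transforms of count vectors $\bigl(\PP_t([a_1, \infty)), \dots, \PP_t([a_k, \infty))\bigr)$. Given continuity points $a_1 < \dots < a_k$ of the limiting distributions and $\lambda_1, \dots, \lambda_k \geq 0$, the step function $\sum_i \lambda_i \mathbf{1}_{[a_i, \infty)}$ is itself nondecreasing (with nonnegative jumps) and constant on $(a_k, \infty)$, so it can be approximated from above and below by smooth $\phi_\varepsilon$ in the class of~(ii). Applying (ii) and letting $\varepsilon \downarrow 0$, one obtains
\[
\E\!\left[\exp\!\left(-\sum_{i=1}^{k} \lambda_i \, \PP_t([a_i, \infty))\right)\right] \;\longrightarrow\; \E\!\left[\exp\!\left(-\sum_{i=1}^{k} \lambda_i \, \PP_\infty([a_i, \infty))\right)\right].
\]
Since the counts are nonnegative integer-valued, this joint Laplace transform (equivalently the joint probability generating function at $z_i = e^{-\lambda_i} \in [0,1]$) uniquely determines the joint law of the count vector. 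Hence the finite-dimensional distributions of the counting process $a \mapsto \PP_t([a, \infty))$ converge at continuity points, which is equivalent to $\PP_t \to \PP_\infty$ in the vague topology, and $\P(\max \PP_t \leq x) = \P(\PP_t((x, \infty)) = 0)$ immediately gives convergence of the maximum.

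The main obstacle I anticipate is ensuring that $\PP_t$ and $\max \PP_t$ converge \emph{jointly}, not merely marginally. The requirement that $\phi$ be constant on $(a, \infty)$ is exactly what produces this coupling: the term $c \cdot \PP_t((a, \infty))$ inside $\int \phi \, d\PP_t$ counts the extremal particles above $a$, so the Laplace functionals simultaneously encode the bulk of the process and its tail. A secondary subtlety is that the approximation $\phi_\varepsilon \to \sum_i \lambda_i \mathbf{1}_{[a_i, \infty)}$ is discontinuous, which forces us to work at continuity points of the limiting distributions; here the hypothesis $\PP((0, \infty)) < \infty$ almost surely is essential, preventing mass from escaping to $+\infty$ during the limit and guaranteeing that the maximum is a well-defined finite functional.
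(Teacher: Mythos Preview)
The paper does not give its own proof of this lemma: it is stated as Lemma~1.5 with the attribution ``[\citen{Berestycki}, Lemma 4.4]'' and then simply used, with a brief remark explaining why the particular class of test functions is convenient (support bounded only on the left so that Bramson's F-KPP asymptotics apply, and smoothness so that Gaussian comparison via Kahane's theorem goes through). There is therefore no proof in the paper to compare your proposal against.

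That said, your sketch is along the standard lines and is essentially how the result is proved in the cited reference. Two comments. First, in the direction (i)~$\Rightarrow$~(ii), your use of Skorokhod presupposes that (i) asserts \emph{joint} convergence of $(\PP_t,\max\PP_t)$, not merely the two marginal convergences; this is indeed the intended reading, but it is worth being explicit since you yourself flag the joint-versus-marginal issue later. Second, in (ii)~$\Rightarrow$~(i), once you have convergence in law of all vectors $\bigl(\PP_t([a_1,\infty)),\dots,\PP_t([a_k,\infty))\bigr)$ at continuity points, you get not only vague convergence of $\PP_t$ and convergence of $\max\PP_t$ separately, but automatically their joint convergence: $\max\PP_t$ is a deterministic functional of these counts (namely $\max\PP_t\le a$ iff $\PP_t((a,\infty))=0$), and any bounded vaguely-continuous functional of $\PP_t$ can be approximated through the same count vectors, so the pair converges jointly. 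You identify this as the ``main obstacle'', but it is in fact handled by the very approximation argument you set up; it would strengthen the write-up to say so explicitly rather than leaving it as an anticipated difficulty.
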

Note that the fact that the functions $\phi$ are required to have support bounded only from the left allows to use 
Bramson's results on the convergence of the F-KPP equations directly and the fact that $\phi$ can be chosen to be smooth is 
convenient for applying Gaussian comparison. We prefer, however, to give the proof
of Theorem \ref{thm:lawofmax} without using Laplace functionals, since we find this more easy to follow. The proof 
of Theorem \ref{thm:extremalprocess} is then very similar and we only outline the main differences.

%
%

\subsection{Outline of the paper}

The remainder of this paper is organised as follows.
Section~\ref{sec:notation} provides a collection of relevant notation.
Section~\ref{sec:preliminaries} recalls facts on Brownian bridges and the asymptotics of solutions of the F-KPP equation. 
A crucial step towards the proofs of Theorem~\ref{thm:lawofmax} and \ref{thm:extremalprocess} is to localise the positions of the ancestral paths of extremal particles. This is done in Section~\ref{sec:localisation}.
Section~\ref{sec:mainProofs} contains the proof of Theorem~\ref{thm:lawofmax} and \ref{thm:extremalprocess}. First, we give a  proof of Theorem~\ref{thm:lawofmax}, which is split between Case~A in Subsection~\ref{sec:proof_caseA} and Case~B in Subsection~\ref{sec:proof_CaseBLinear}. In both cases, we prove the claim for piecewise linear speed functions. Some technical details are postponed to  Appendix~\ref{sec:appendix}. In Case~B, we extend the result to general speed functions with Gaussian comparison techniques. 
In Subsection~\ref{sec:proofExtremalProcess}, we describe how to modify the proof of Theorem~\ref{thm:lawofmax} so it extends to the convergence of Laplace functionals. Applying Lemma~\ref{lem:jointConvergence} then completes the proof of Theorem~\ref{thm:extremalprocess}.

\section{Notation} \label{sec:notation}
In this section, we introduce notation for $\ell$-speed BBM which is used throughout the paper.  Denote by
\begin{equation}
a_k(t) \equiv \sum_{j=1}^k b_j(t), \quad a_0(t) = 0,
\end{equation}
for all $1 \le k \le \ell$, the times at which speed changes occur.
In the following we drop the $t$-dependence of the terms $\s_k(t), b_k(t)$ and $a_k(t)$ to shorten the notation. 

It is convenient to express $\ell$-speed BBM using standard BBMs. To do so, let \mbox{$\{ \tilde{x}_{i_k}^{i_1, \dots, i_{k-1}}$}, \mbox{$k\in \N, i_\ell\in\N \}$}  be BBM with variance $\s_k^2$.

We use multiindices 
\begin{equation}
\bar{i}_k \equiv i_1,\dots,i_k
\end{equation}
and write $\tilde{x}_{i_1}^{\mi_0} = \tilde{x}_{i_1}$. With this notation, we can rewrite variable speed BBM, as
\begin{align} \label{eq:multiindex_notation}
&\left\{ \tilde{x}^t_i (s): 1 \le i \le n(t) \right\} \nonumber \\
&=\left\{ \sum_{j=1}^{k-1}  \multix{j}(b_jt) 
+  \multix{k}(s - a_{k-1} t):
1 \le i_1 \le n(b_1t), \dots, 1 \le i_k \le n^{\mi_{k-1}}(b_kt) \right\},
\end{align} 
for $1\leq k < \ell$ and for \mbox{$s \in [a_{k-1} t, a_k t)$}.
The $\s$-algebra which is generated by all particles of $\ell$-speed BBM up to time $s, s \le t$, is called $\FF_s$.  The path of the particle with position $\sum_{j=1}^k \multix{j}(b_jt)$ at time $a_{k} t$ is abbreviated by $\sum_{j=1}^k \multix{j}$.

\section{Preliminaries} \label{sec:preliminaries}
In this section, we recall some results on \BBM and the F-KPP equation.
We start with the fundamental connection between BBM and the F-KPP equation. Let $f:\R \to [0,1]$ be a function and set
\begin{equation}
v(t,x) \equiv \E \left[ \prod_{j=1}^{n(t)} f(x-x_k(t)) \right].
\end{equation}
Then $u(t,x)=1-v(t,x)$ is the unique solution to the F-KPP equation 
\begin{equation} \label{eq:FKPP}
\del_t u = \frac{1}{2} \del_x^2 u + F(u),
\end{equation}
with $F(u)= (1-u) - \sum_{k=1}^{\infty} p_k (1-u)^k$ and with initial condition $u(0,x)=1-f(x)$.

The following proposition describes the asymptotic behaviour of solutions of the F-KPP equation for large times.
\begin{propositionb}[\citen{B_C, 1to6}]
\label{prop:FKPP_tail_estimate}
Let $u$ be a solution to the F-KPP equation with initial condition satisfying
\begin{thmlist}[label=(\roman*)]
	\item $0 \le u(0,x) \le 1$; \label{prop:FKPP_tail_estimate1}
	\item $\exists \, h > 0\colon \lim \sup_{t \to \infty} \frac{1}{t} \log \left( \int_{t}^{t(1+h)} u(0,y) \d y \right) \le  -\sqrt{2}$;
	\item $\exists \, c > 0, M >0, N > 0\colon \int_{x}^{x+N} u(0,y) \d y > c\quad \forall \, x \le -M$;
	\item $\int_{0}^{\infty} u(0,y) y \eee^{2y} \d y < \infty$. \label{prop:FKPP_tail_estimate4}
\end{thmlist}
Then we have, for any function $z \colon \R_+ \to \R_+$ such that $\lim_{t \to \infty} z(t)/t = 0$,
\begin{equation}
	\lim_{t \uparrow \infty} \eee^{\sqrt{2}z(t)} \eee^{(z(t))^2/2t} (z(t))^{-1} u \left( t, z(t) + \sqrt{2}t - \tfrac{3}{2\sqrt{2}} \log(t) \right) = C,
\end{equation}
where $C$ is a strictly positive constant depending on the initial condition $u(0,\cdot)$.
\end{propositionb}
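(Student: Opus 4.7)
The proposition is a refinement of Bramson's classical convergence theorem \cite{B_C}, which under hypotheses \ref{prop:FKPP_tail_estimate1}--\ref{prop:FKPP_tail_estimate4} asserts that $u(t, m(t) + y) \to w(y)$ uniformly on compacts in $y$, with $m(t) = \sqrt{2}\,t - \tfrac{3}{2\sqrt{2}}\log t$ and $w$ a travelling wave satisfying $1 - w(y) = C\,y\,\eee^{-\sqrt{2} y}(1 + o(1))$ as $y \uparrow \infty$. For bounded $z(t) = y$ this immediately yields the claim, since then $\eee^{-z(t)^2/(2t)} = 1 + o(1)$. The real task is to extend the asymptotic to $z(t) \uparrow \infty$ with $z(t)/t \to 0$, where the Gaussian factor is genuinely present.

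In this regime I would work with the McKean representation $u(t,x) = 1 - \E\ekl{\prod_{j\le n(t)} f(x - x_j(t))}$ with $f = 1 - u(0,\cdot)$, and replace $x$ by $m(t) + z(t)$. A first-moment calculation for the number of BBM particles reaching this height uses the Brownian transition density: expanding $\eee^{t}(2\pi t)^{-1/2} \eee^{-(m(t)+z(t))^2/(2t)}$ and substituting the definition of $m(t)$ produces, after cancelling the deterministic $\sqrt{2}t$ drift and the $\tfrac{3}{2\sqrt{2}}\log t$ correction, exactly $C\,z(t)\,\eee^{-\sqrt{2} z(t)}\,\eee^{-z(t)^2/(2t)}$, with the constant $C$ absorbing $(2\pi)^{-1/2}$ and the weighted integral of the initial datum from hypothesis \ref{prop:FKPP_tail_estimate4}. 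To turn this heuristic into the sharp asymptotic for $u$, insert a Bramson-type curved barrier on the ancestral paths so that on the restricted event the Feynman--Kac weight $\exp\kl{\int_0^t F(u(s,B_s))/u(s,B_s)\,\d s}$ differs from $\eee^{t}$ only by a factor $1 + o(1)$. Identifying the constant $C$ with Bramson's constant follows by matching the two asymptotics in the overlap regime of bounded $y$.

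\textbf{Main obstacle.} The hardest part is uniform control of the non-linear term as $z(t) \uparrow \infty$. Bramson's original argument exploits that $u$ sits strictly between $0$ and $1$ only inside a bounded window around $m(t)$, a regime stable under bounded shifts $y$; for growing $z(t)$ one must translate the curved barrier to track the tilted linear drift $\sqrt{2}\,s + z(t)\,s/t$, and verify simultaneously that it lies far enough below the linear slope to make the non-linear Feynman--Kac weight negligible, yet close enough to it that it does not erode the Gaussian factor $\eee^{-z(t)^2/(2t)}$. Careful bookkeeping of these error terms uniformly in $z(t) = o(t)$, together with the integrability hypothesis \ref{prop:FKPP_tail_estimate4} which guarantees that the constant $C$ is finite, is the technical heart of the argument.
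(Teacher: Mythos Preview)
The paper does not prove this proposition: it is stated in the preliminaries section as a quoted result from \cite{B_C,1to6}, and is used as a black box throughout. So there is no ``paper's own proof'' to compare against.

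That said, your sketch accurately captures the strategy behind the result as it appears in those references. Bramson's original argument \cite{B_C} gives the convergence $u(t,m(t)+y)\to w(y)$ for bounded $y$ via the Feynman--Kac representation with curved barriers, and the tail behaviour $1-w(y)\sim C y\eee^{-\sqrt 2 y}$ is classical. The extension to $z(t)\uparrow\infty$ with $z(t)=o(t)$ is precisely what \cite{1to6} carries out (their Lemma~2.4/Proposition~2.5), and the mechanism is as you describe: one shifts the Bramson barrier to track the tilted drift, checks that the nonlinear Feynman--Kac weight remains $1+o(1)$ on the restricted event, and reads off the Gaussian correction $\eee^{-z(t)^2/(2t)}$ from the Brownian transition density. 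Your identification of the uniform control of the nonlinearity as the main difficulty is accurate; in \cite{1to6} this is handled by sandwiching $u$ between two solutions of the linearised equation with the barrier inserted, exactly in the spirit of \cite[Propositions~8.1--8.3]{B_C}. The constant $C$ is matched through the overlap regime as you suggest.

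In short: your plan is correct and aligns with the cited sources, but be aware that in the present paper this is an imported tool, not something re-proved.
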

\vspace{0pt}
\begin{lemmab} [\citen{B_C}, Proposition 8.2] \label{lem:F-KPP_finite}
For $z \colon \R_+ \to [1, \infty)$ and $t$ large enough, we have
\begin{align}
	\P \left( \max_{j \le n(t)} x_j(t)> z(t) + \sqrt{2}t - \sfrac{3}{2\sqrt{2}} \log(t) \right) 
	\le C' z(t) \eee^{- \sqrt{2} z(t) },
\end{align}
where $C'$ is a strictly positive constant independent of $t$.
\end{lemmab}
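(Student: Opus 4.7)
The plan is to reduce the probability to the value of an F-KPP solution at a shifted argument, invoke Proposition~\ref{prop:FKPP_tail_estimate}, and handle the regime $z(t) \gtrsim t$ by a direct first-moment estimate.

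To set up the F-KPP reduction, define $u(t,x) \equiv \P(\max_{j \le n(t)} x_j(t) > x)$. Taking $f(y) = \1_{y > 0}$ in the construction preceding \eqref{eq:FKPP} yields exactly this $u$, which therefore solves the F-KPP equation with Heaviside initial data $u(0, x) = \1_{x \le 0}$. This initial condition is bounded by $1$ and vanishes on $(0,\infty)$, so the four hypotheses of Proposition~\ref{prop:FKPP_tail_estimate} hold trivially. In the regime $z(t)/t \to 0$, that proposition then yields
\[
u\bkl{t,\, z(t) + m(t)} = (C + o(1))\, z(t)\, \eee^{-\sqrt{2}\, z(t)}\, \eee^{-z(t)^2/(2t)} \le 2C\, z(t)\, \eee^{-\sqrt{2}\, z(t)}
\]
for $t$ large, which is the desired bound (with $C' = 2C$) in this regime.

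To handle the complementary regime $z(t) \ge \ve t$, the F-KPP asymptotics no longer apply, so I would fall back on a first-moment bound. With $a = z(t) + m(t)$, one has
\[
\P\bkl{\max_{j\le n(t)} x_j(t) > a} \le \E[n(t)]\, \P(\sqrt{t}\, Z > a) \le \frac{\eee^t \sqrt{t}}{a}\, \eee^{-a^2/(2t)},
\]
using the standard Gaussian tail estimate. Expanding $a^2/(2t)$ produces the factor $\sqrt{2}\, z(t)$ in the exponent, together with a harmless additional $\eee^{-z(t)^2/(2t)} \le \eee^{-\ve z(t)/2}$ that absorbs all polynomial prefactors in $z(t)$ and the $t^{3/2}$ left over from $\eee^t / a$, yielding the required bound.

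The subtle point is the uniformity of the constant $C'$: Proposition~\ref{prop:FKPP_tail_estimate} is a pointwise statement for each fixed function $z$, whereas the lemma requires a single $C'$ that works for all admissible $z$ simultaneously. I would either extract uniformity by inspecting the explicit barrier estimates underlying Bramson's proof of that proposition, or proceed by contradiction: if no uniform $C'$ existed, one could extract a sequence $(t_n, z_n)$ violating an arbitrary target constant and glue the $z_n$ into a measurable function $z$ with $z(t)/t \to 0$ that contradicts the pointwise limit.
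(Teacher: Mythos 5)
The paper does not prove this lemma at all: it is quoted verbatim from Bramson \cite{B_C}, Proposition~8.2, so your task was really to reconstruct that external result from the tools available here. Your route — identify $u(t,x)=\P(\max_j x_j(t)\ge x)$ as the F-KPP solution with Heaviside initial data (which indeed satisfies hypotheses (i)--(iv) of Proposition~\ref{prop:FKPP_tail_estimate}), read off the bound when $z(t)/t\to 0$, and cover the regime $z(t)\ge \ve t$ by the many-to-one first moment $\eee^t\,\P(\sqrt{t}Z>a)$ — is sound, and the Gaussian computation in the second regime does produce $t^{3/2}\eee^{-\sqrt{2}z(t)}\eee^{-w^2/(2t)}$ with $w=z(t)-\sfrac{3}{2\sqrt2}\log t\ge z(t)/2$, so the surplus $\eee^{-\ve^2 t/8}$ kills all polynomial prefactors uniformly. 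You are right that the only delicate point is uniformity of $C'$: the lemma is used in the paper (e.g.\ in \eqref{eq:above_barrier_T1_union}) with the time variable and the shift both varying over a sum and an integral, so a single constant valid for all $t\ge t_0$ and all shifts $\ge 1$ is required, and your two regimes do not literally partition the set of admissible functions $z$. Your contradiction argument does close this: from a putative failing sequence $(t_n,y_n)$ with $t_n\uparrow\infty$ one extracts a subsequence along which $y_n/t_n$ converges in $[0,\infty]$; a positive or infinite limit is excluded by the explicit first-moment bound, and a zero limit lets you define $z(t_n)=y_n$, $z(t)=1$ otherwise, which satisfies $z(t)/t\to 0$ and contradicts Proposition~\ref{prop:FKPP_tail_estimate} (note that the proposition, as stated, applies to arbitrary, not necessarily continuous, functions $z$, which is exactly what makes this gluing legitimate). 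In short: the paper buys the statement by citation; your argument buys it from the paper's own Proposition~\ref{prop:FKPP_tail_estimate} at the cost of a compactness step, and is a valid self-contained alternative provided you write out that final extraction carefully rather than leaving it as an either/or.
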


%

Particles of standard BBM are unlikely to cross the barrier function with slope $\pm\! \sqrt{2}$.
\begin{lemma}\label{lem:BarrierLok}
For any $\e > 0$, there exists $r_0 < \infty$ such that for all $r > r_0$, for all $t$ large enough,
\begin{align}\label{eq:barrierSqrt2}
	\P \, \bigg( 
	\exists_{j \le n(t)}  \exists_{ s \in [r, t-r]} \colon 
	| x_j(s) |
	> \sqrt{2} s
	\bigg) < \e,
\end{align}
\end{lemma}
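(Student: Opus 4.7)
The plan is to use the critical McKean (additive) martingale together with Doob's maximal inequality to obtain a bound uniform in $t$, and then invoke the symmetry of BBM for the lower tail.

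Introduce
\[
M_s \equiv \sum_{j\le n(s)} e^{\sqrt 2\, x_j(s) - 2s},
\]
which is a non-negative $(\FF_s)$-martingale with $\E[M_s]=1$ (by the many-to-one formula and $\E[e^{\sqrt 2 B_s}] = e^s$, which offsets the mean population growth $\E[n(s)]=e^s$). The crucial observation is that any violation of the upper barrier is immediately visible in $M$: if $x_j(s) > \sqrt 2\, s$ for some $j$ at some $s\ge r$, then the corresponding summand of $M_s$ equals $e^{\sqrt 2(x_j(s)-\sqrt 2 s)}>1$, so $\sup_{s\ge r} M_s > 1$. Applying Doob's maximal inequality to the martingale $(M_{r+u})_{u\ge 0}$ with threshold $1$, and combining with the trivial bound that a probability is at most $1$,
\[
\P\!\left(\exists s \ge r,\, \exists j: x_j(s) > \sqrt 2\, s \,\Big|\, \FF_r\right) \le \P\!\left(\sup_{s \ge r} M_s > 1 \,\Big|\, \FF_r \right) \le M_r \wedge 1,
\]
so that $\P(\exists s \ge r, j: x_j(s) > \sqrt 2 s) \le \E[M_r \wedge 1]$. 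Since this controls the event on the larger window $[r,\infty)$, it a fortiori controls it on $[r,t-r]$, and the bound is uniform in $t>r$.

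It is classical (e.g.\ as a consequence of the Lalley--Sellke analysis, which produces the a.s.\ positivity of the limit $Z$ of the derivative martingale recalled in \eqref{eq:derivMart}) that $M_r \to 0$ almost surely as $r\uparrow\infty$. Dominated convergence then yields $\E[M_r\wedge 1]\to 0$, so one can choose $r_0$ so that $\E[M_r\wedge 1] < \ve/2$ for every $r>r_0$. For the lower tail $\{x_j(s) < -\sqrt 2\, s\}$, the identical argument applied to $-x_j(s)$ — which is again a standard BBM by the symmetry of Brownian motion — yields the same bound, and a union bound over the two tails establishes \eqref{eq:barrierSqrt2}.

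The main obstacle is the appeal to the a.s.\ vanishing of the critical additive martingale $M_r$, which, while classical, is not elementary. A self-contained alternative would discretize $s\in [r,t-r]$ into unit intervals, estimate $\P(\max_j x_j(k) > \sqrt 2\, k)$ at integer times by Lemma~\ref{lem:F-KPP_finite} (giving the summable tail $O(\log k\cdot k^{-3/2})$), and then control the fluctuations within each interval by a conditional first-moment estimate applied to the sub-BBMs rooted at time $k$; this reproduces the same uniform-in-$t$ bound but is considerably more cumbersome than the martingale route above.
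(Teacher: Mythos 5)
Your argument is correct, and it takes a somewhat different route from the paper. The paper's proof is a two-line appeal to the Lalley--Sellke analysis in \cite{LS}: almost surely $\sqrt 2\,t-\max_{j\le n(t)}x_j(t)\to\infty$, so a.s.\ there is a random time after which no particle ever exceeds the line $\sqrt 2\,s$, and the probability of a violation on $[r,\infty)$ tends to $0$ as $r\uparrow\infty$; the lower barrier is handled by symmetry exactly as you do. You instead encode a barrier violation into the critical additive martingale $M_s=\sum_{j\le n(s)}\eee^{\sqrt2 x_j(s)-2s}$ (a single term already exceeds $1$ when $x_j(s)>\sqrt2 s$), apply Doob's maximal inequality conditionally on $\FF_r$ to get the clean, $t$-uniform bound $\E[M_r\wedge 1]$, and then invoke the a.s.\ degeneracy $M_r\to 0$ of the critical martingale. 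All the steps check out: the maximal inequality over the infinite horizon is obtained by letting the finite horizon tend to infinity, and dominated convergence applies since $M_r\wedge1\le1$. The two proofs rest on classical inputs of the same depth (both come out of the Lalley--Sellke circle of ideas), so neither is more elementary; your version has the mild advantage of yielding an explicit quantitative bound $2\,\E[M_r\wedge1]$ that is manifestly uniform in $t$, while the paper's is shorter because it quotes the pathwise statement directly. Your remark that a fully self-contained alternative would go through a discretisation in time plus \cref{lem:F-KPP_finite} is also apt --- that is precisely the strategy the paper uses for the \emph{tilted} barriers in \cref{prop:barrier}, where no martingale is available.
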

\begin{proof}
In the proof of the convergence of the derivative martingale in \cite{LS}, it is pointed out that
\begin{align}
	\liminf_{t\uparrow\infty}	\min_{j\leq n(t)} \kl{\sqrt{2}t - x_j(t)} \uparrow \infty, \text{ a.s.},
\end{align} 
which implies \eqref{eq:barrierSqrt2}.
\end{proof}

We state a version of Slepian's lemma \cite{Sl} adapted to variable speed BBM.
\begin{lemma}[Slepian's Lemma]\label{lem:slepian}
Let $(\hat{x}_k(t))_{k \leq \hat{n}(t)}$ and $(\bar{x}_k(t))_{k \leq \bar{n}(t)}$ be the particle positions at time $t$ of variable speed BBMs with speed functions $\hat{A}$ and $\bar{A}$.\\
If $\hat{A\,} \leq \bar{A\,}$\!, then
\begin{equation}
	\P\left(\max_{k\leq \hat{n}(t)}\hat{x}_k(t)> y\right) \geq  \P\left(\max_{k\leq \bar{n}(t)}\bar{x}_k(t)> y\right).
\end{equation}
\end{lemma}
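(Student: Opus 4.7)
The plan is to reduce the claim to the classical finite-dimensional Slepian inequality by coupling the two variable-speed BBMs to share a common Galton--Watson tree, and then conditioning on this tree. This coupling is permissible because the branching mechanism (rate one, offspring distribution $(p_k)$) does not depend on the speed function: only the Gaussian increments along each edge of the tree do. Under this coupling, $\hat n(t) = \bar n(t) =: n(t)$ almost surely, and the labels of the leaves at time $t$ can be identified for the two processes.

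Conditionally on $\mathcal{F}_t^{\text{tree}}$, the vectors $(\hat x_k(t))_{k\leq n(t)}$ and $(\bar x_k(t))_{k\leq n(t)}$ are centered Gaussian vectors on the same index set, with covariance matrices
\begin{equation*}
\E\bkl{\hat x_i(t)\hat x_j(t)\mid \mathcal{F}_t^{\text{tree}}} = t\, \hat A\kl{t^{-1} d(\hat x_i(t),\hat x_j(t))},
\qquad
\E\bkl{\bar x_i(t)\bar x_j(t)\mid \mathcal{F}_t^{\text{tree}}} = t\, \bar A\kl{t^{-1} d(\bar x_i(t),\bar x_j(t))},
\end{equation*}
where $d(\cdot,\cdot)$ is the most-recent-common-ancestor time in the shared tree. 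In particular, the diagonals coincide, since $\hat A(1)=\bar A(1)=1$ gives $\Var(\hat x_k(t)\mid \mathcal{F}_t^{\text{tree}}) = t = \Var(\bar x_k(t)\mid \mathcal{F}_t^{\text{tree}})$, and the hypothesis $\hat A\leq \bar A$ together with $d(\hat x_i(t),\hat x_j(t)) = d(\bar x_i(t),\bar x_j(t))$ yields the pointwise covariance comparison
\begin{equation*}
\E\bkl{\hat x_i(t)\hat x_j(t)\mid \mathcal{F}_t^{\text{tree}}} \leq \E\bkl{\bar x_i(t)\bar x_j(t)\mid \mathcal{F}_t^{\text{tree}}}
\qquad \text{for all } i,j\leq n(t).
\end{equation*}

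The classical Slepian inequality, applied conditionally to these Gaussian vectors with equal variances and dominated covariances, gives
\begin{equation*}
\P\bkl{\max_{k\leq n(t)} \hat x_k(t) > y \,\big|\, \mathcal{F}_t^{\text{tree}}} \;\geq\; \P\bkl{\max_{k\leq n(t)} \bar x_k(t) > y \,\big|\, \mathcal{F}_t^{\text{tree}}}.
\end{equation*}
Taking expectations over the tree $\sigma$-algebra yields the lemma. The only subtle point, rather than an obstacle, is justifying the coupling: one constructs both processes on a common probability space by first sampling the branching tree and then, along each edge, running \emph{the same} Brownian increment scaled by the respective time-dependent speed, so that the conditional covariance structure is exactly the one given in the paper's covariance formula.
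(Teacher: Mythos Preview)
Your argument is correct and is exactly the standard route: couple the two processes on a common Galton--Watson tree, condition on $\FF_t^{\text{tree}}$, and apply the finite-dimensional Slepian inequality using equal variances (from $\hat A(1)=\bar A(1)=1$) and the covariance domination implied by $\hat A\le\bar A$. The paper itself does not spell this out but simply invokes \cite[Corollary~3.10]{bbm-book}, which packages precisely this argument; so you have effectively reproduced the content of the cited reference. (Your remark about additionally coupling the Brownian increments is harmless but unnecessary: once the tree is shared, only the conditional covariance comparison is needed.)
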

\begin{proof}
This follows from \cite[Corollary 3.10]{bbm-book}.
\end{proof}

We also recall two basic facts about  Brownian bridges.
We denote by $\zet_{a,b}^t$ a Brownian bridge starting in~$a$ and ending in~$b$ at time~$t$.
\vspace{2pt}
\begin{lemmab}[\citen{BH14.1}, Lemma 2.2] \label{lem:BB_fluc}
For any $\gamma > 1/2$ and for any $\e > 0$, there exists a constant $r > 0$ such that
\begin{equation}
	\lim_{t \uparrow \infty}
	\P \left( \forall_{r \le s \le t-r} \colon
	\vert \zet_{0,0}^t (s) \vert < (s \land (t-s) )^{\gamma}\right) > 1 - \e.
\end{equation}
\end{lemmab}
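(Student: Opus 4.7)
The plan is to combine a dyadic decomposition of the time interval with the reflection principle for Brownian motion, exploiting the standard representation $\zeta_{0,0}^t(s) = W(s) - (s/t)\,W(t)$ for a standard Brownian motion $W$. The condition $\gamma > 1/2$ will enter exactly where a sum over dyadic scales is required to converge.

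First I would use the symmetry $\zeta_{0,0}^t(s) \overset{d}{=} \zeta_{0,0}^t(t-s)$ to reduce to controlling the event on $s \in [r, t/2]$, at the cost of a factor of $2$ in the union bound. On this range, $s \land (t-s) = s$, so the envelope becomes simply $s^\gamma$. Then I would split $[r, t/2]$ into dyadic blocks $I_k = [2^k, 2^{k+1}]$ for $k_0 \le k \le k_1$, where $k_0 = \lceil \log_2 r \rceil$ and $k_1 = \lfloor \log_2(t/2) \rfloor$, and observe that on $I_k$ the envelope satisfies $s^\gamma \geq 2^{k\gamma}$. The triangle inequality gives
\begin{equation}
\sup_{s \in I_k} |\zeta_{0,0}^t(s)| \;\leq\; \sup_{s \in [0, 2^{k+1}]} |W(s)| \;+\; \frac{2^{k+1}}{t}\, |W(t)|,
\end{equation}
so it suffices to bound the two terms on the right by $2^{k\gamma}/2$ each, for every $k$ in the relevant range.

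The first term is handled uniformly in $k$ by the reflection principle: for each $k$,
\begin{equation}
\P\!\left( \sup_{s \in [0, 2^{k+1}]} |W(s)| > \tfrac{1}{2} 2^{k\gamma} \right)
\;\leq\; 4 \exp\!\left( - \tfrac{1}{16}\, 2^{\,k(2\gamma - 1)} \right).
\end{equation}
Since $\gamma > 1/2$, the exponent $k(2\gamma - 1)$ is positive and grows linearly in $k$, so the series $\sum_{k \geq k_0} 4 \exp(-2^{k(2\gamma-1)}/16)$ is summable and tends to $0$ as $k_0 \uparrow \infty$; thus it can be made smaller than $\varepsilon/4$ by choosing $r$ (hence $k_0$) large, uniformly in $t$. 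For the bridge correction $(s/t)\,|W(t)|$, it is enough to require $|W(t)| < 2^{-2-\gamma} t^{\gamma}$, because then the strictest constraint (occurring at $k = k_1$, since for $\gamma < 1$ the bound $t \cdot 2^{k\gamma - k - 2}$ is decreasing in $k$) is satisfied with room to spare, while for $\gamma \geq 1$ the constraint is even milder. As $\gamma > 1/2$, the event $\{|W(t)| < 2^{-2-\gamma} t^{\gamma}\}$ has probability tending to $1$ as $t \uparrow \infty$.

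Putting this together via a union bound (over the dyadic blocks, over the two halves $[r, t/2]$ and $[t/2, t-r]$, and over the two terms in the triangle inequality), the probability of the complementary event is bounded by $2 \varepsilon/4 + o(1)$ as $t \uparrow \infty$, which is less than $\varepsilon$ for all sufficiently large $t$. The main technical point is keeping the estimates uniform in $k$ simultaneously with the bridge correction $W(t)$; the $\gamma > 1/2$ hypothesis is used twice, once to ensure summability of the dyadic tail and once to ensure that a polynomial deviation $t^\gamma$ of $W(t)$ is unlikely. The case $\gamma \geq 1$ is strictly easier and does not require the detailed uniformity analysis at the upper dyadic scale.
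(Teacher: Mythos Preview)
Your argument is correct: the dyadic decomposition combined with the reflection principle and the Gaussian tail for $W(t)$ gives exactly the required control, and you use the hypothesis $\gamma>1/2$ in the two places where it is genuinely needed. The paper does not supply its own proof of this lemma; it is quoted verbatim from \cite{BH14.1}, Lemma~2.2, so there is no ``paper's proof'' to compare against. Your approach is the standard one for such envelope estimates on the Brownian bridge and would serve perfectly well as a self-contained proof here.
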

\vspace{6pt}
\begin{lemmab}[\citen{B_C}, Lemma 2.2] \label{lem:BB_line}
For any $x,y > 0$ holds
\begin{equation}
	\P \left( \forall_{0 \le s \le t} \colon
	\zet_{0,0}^t (s) \le (sx + (t-s)y/t) \right) 
	= 1 - \eee^{-2xy/t} 
	\leq 2\frac{xy}{t},
\end{equation}
and asymptotic equality holds if $xy = o(t)$.
\end{lemmab}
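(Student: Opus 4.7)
The plan is to reduce the statement to the classical reflection principle for the maximum of a Brownian bridge. Specifically, I would introduce the affine boundary
\[
L(s) \equiv \frac{sx + (t-s)y}{t},
\]
which satisfies $L(0)=y$ and $L(t)=x$, and set $Y(s) \equiv \zet_{0,0}^t(s) - L(s)$. Since $\zet_{0,0}^t$ is a Brownian bridge from $0$ to $0$ on $[0,t]$ and $L$ is deterministic and affine, $Y$ is again a Gaussian process with the same covariance as a Brownian bridge; checking endpoints gives $Y(0) = -y$ and $Y(t) = -x$, so $Y$ has the law of a Brownian bridge from $-y$ to $-x$ on $[0,t]$. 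Under this shift, the event in question becomes
\[
\bgkl{\forall_{0\le s\le t}\colon \zet_{0,0}^t(s) \le L(s)} = \bgkl{\max_{0\le s\le t} Y(s) \le 0}.
\]

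Next I would invoke the standard reflection-principle identity for a Brownian bridge $B^{a,b}$ from $a$ to $b$ on $[0,t]$: for any level $c\ge \max(a,b)$,
\[
\P\bkl{\max_{0\le s\le t} B^{a,b}(s) \ge c} = \exp\kl{-\tfrac{2(c-a)(c-b)}{t}}.
\]
This identity can itself be derived by conditioning a Brownian motion started at $a$ on its value at time $t$ being $b$, applying the reflection principle to the first-passage time at $c$, and then computing the conditional probability via the Gaussian transition densities. Applying it to $Y$ with $a=-y$, $b=-x$, $c=0$ (which is admissible since $x,y>0$) yields
\[
\P\bkl{\max_{0\le s\le t} Y(s) \ge 0} = \exp\kl{-\tfrac{2xy}{t}},
\]
and taking complements gives the equality $\P(\forall_{s}\, \zet_{0,0}^t(s)\le L(s)) = 1 - e^{-2xy/t}$.

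Finally, the inequality $1 - e^{-2xy/t} \le 2xy/t$ follows from the elementary bound $1 - e^{-u} \le u$ for $u\ge 0$, and the asymptotic equality in the regime $xy = o(t)$ follows from the Taylor expansion $1-e^{-u} = u + O(u^2)$ as $u\downarrow 0$. The only real subtlety in this argument is justifying that $Y$ is a Brownian bridge with the claimed endpoints (a direct Gaussian-covariance computation) and ensuring the level $c=0$ lies above both endpoints so the closed-form reflection identity applies; neither of these presents a genuine obstacle, so the lemma is essentially immediate once the shift is set up correctly.
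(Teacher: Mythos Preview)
Your proof is correct and follows the classical reflection-principle argument for the maximum of a Brownian bridge. The paper does not supply its own proof of this lemma --- it is quoted directly from Bramson \cite{B_C}, Lemma~2.2 --- so there is nothing to compare against; your derivation via the affine shift and the closed-form identity for $\P(\max B^{a,b}\ge c)$ is exactly the standard route.
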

The next lemma allows to restrict events related to  maxima to \emph{likely} events.
\vspace{2pt}
\begin{lemmab}[\citen{1to6}, Lemma 3.4] \label{lem:localisation}
Let $x_j, j=1,\dots,n,$ be path-valued random variables and $\LL$ be an event on the set of paths such that, for any $\e > 0$,
\begin{equation}
	\P \left( \exists_{j \le n} \colon \{ x_j(t) > y\} \land \{ x_j \in \LL \} \right)
	\ge \P \left( \exists_{j \le n} \colon x_j(t) > y \right) - \e.
\end{equation}
Then
\begin{equation}
	\bigg\vert
	\P \, \bigg( \max_{j \le n} x_j(t) \le y \bigg)
	- \P \, \bigg( \max_{j \le n: x_j \in \LL} x_j(t) \le y \bigg)
	\bigg\vert \le \e.
\end{equation}
\end{lemmab}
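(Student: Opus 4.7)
The plan is to convert the claim about the two maxima into a statement about their complementary events and then read off the bound from the hypothesis via an obvious set-theoretic inclusion.

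First I would introduce the events
\[
E \equiv \{\exists\, j \le n \colon x_j(t) > y\}, \qquad E_{\LL} \equiv \{\exists\, j \le n \colon x_j(t) > y \text{ and } x_j \in \LL\},
\]
so that, with the standard convention that a maximum over an empty index set is $-\infty$ (hence vacuously $\le y$), the two probabilities appearing in the conclusion are precisely $\P(E^c)$ and $\P(E_{\LL}^c)$. The asserted estimate is therefore equivalent to $|\P(E^c) - \P(E_{\LL}^c)| \le \e$.

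Next I would observe that any index $j$ witnessing $E_{\LL}$ automatically witnesses $E$, so $E_{\LL} \subseteq E$ and therefore $E^c \subseteq E_{\LL}^c$. In particular $\P(E^c) \le \P(E_{\LL}^c)$, which removes the absolute value from the conclusion. Complementation then gives
\[
\P(E_{\LL}^c) - \P(E^c) = \P(E) - \P(E_{\LL}),
\]
and the hypothesis of the lemma is exactly that the right-hand side is bounded by $\e$.

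There is no genuine obstacle here: the entire argument is the inclusion $E_{\LL} \subseteq E$ combined with complementation and the hypothesis. The only fine point is the convention for the case in which no index $j$ with $x_j \in \LL$ exists, which is the natural reading of the restricted maximum and plays no quantitative role.
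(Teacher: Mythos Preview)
Your argument is correct: the inclusion $E_{\LL}\subseteq E$ gives the sign, and complementation turns the hypothesis directly into the conclusion. The paper does not actually supply a proof of this lemma---it is quoted from \cite{1to6}, Lemma~3.4---so there is nothing to compare against; your write-up is exactly the elementary argument one expects.
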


\section{Localisation of paths}	\label{sec:localisation}
An essential step in the proof of Theorem~\ref{thm:lawofmax} is the control of the particle positions until time $a_{\ell-1}t$.
\subsection{Localisation of paths in Case A} 
\label{sec:localisationA}\label{sec:above_localisation}
First, we prove localisation results for a VSBBM $(\tilde{X}_t)_{t>0}$  with speed functions $(A_t)_{t>0}$ satisfying Assumption~\ref{as:above}. We introduce the following subsets of the space of continuous paths $X:\R_+ \to \R$.
\begin{align}
\AA_{r_1, r_2, S, \s}
&\equiv \left\{ X \colon \forall_{r_1 \le s \le r_2} \colon X(s) + S \le \sqrt{2} \s s \right\}, \nonumber \\
\GG_{s, k, S, B, D}
&\equiv \left\{  X \colon X(s) - \sqrt{2} \sigma_k s + S 
\in \sfrac{\left[ -B, -D \right] }{\s_k - \s_{k+1}}
\right\},
\end{align}
where $0 \le r_1 \le r_2 \le t$, $S \in \R$, $B > D > 0$ and $\s > 0$.
The set $\AA_{r_1, r_2, S, \s}$ describes shifted paths which do not exceed a linear barrier function on a given time interval.
Paths which lie in a certain interval at a given time are described by $\GG_{s, k, S, B, D}$. 
Figure~\ref{fig:Localisation_above} illustrates the localisation results of this subsection in the case of three-speed VSBBM. 
\begin{figure}
\includegraphics[page=1,width = 12cm]{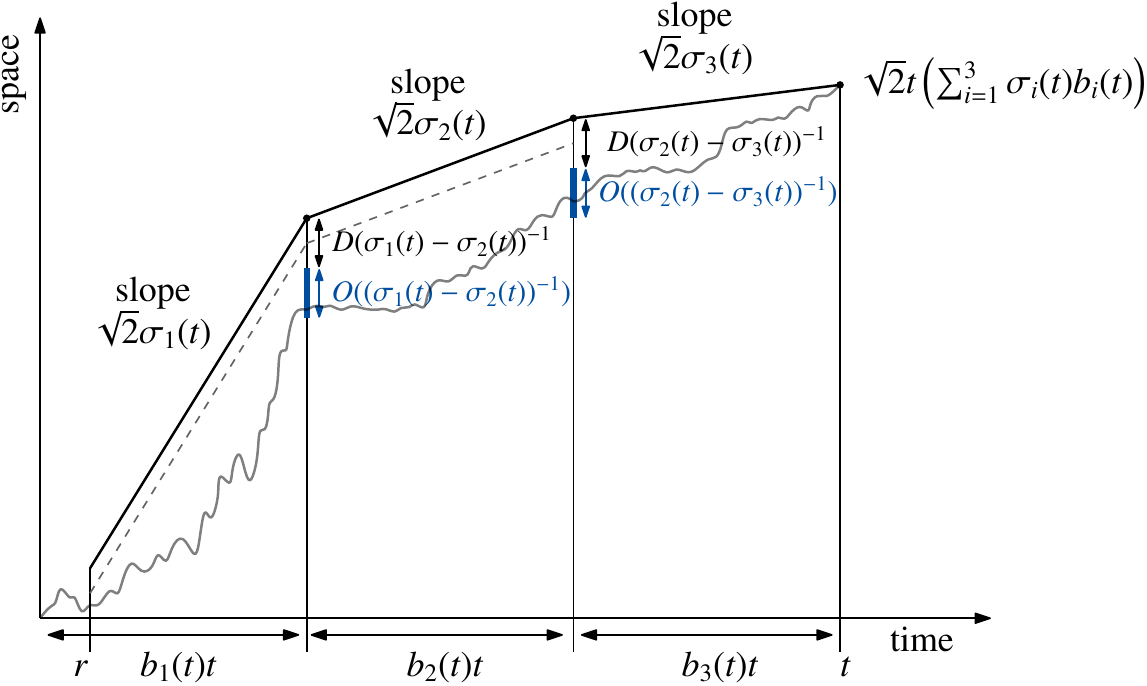}
\caption{Localisation of an extremal particle of three-speed BBM in Case A. 
	The dashed line depicts the effect of entropic repulsion.
} \label{fig:Localisation_above}

\end{figure}


As a first step, we show that all BBM particles stay below a certain barrier function.
\begin{proposition}[Barrier function] \label{prop:barrier}
For any $\e > 0$, there exists $r_0 < \infty$ such that for all $r > r_0$, for all $t$ large enough,
\begin{align} \label{eq:prem_barrier_prop}
	\P  \left( 
	\exists_{j \le n(t)} \exists_{1 \le k < \ell} \exists_{ s \in S_k} \colon 
	\tilde{x}_j(a_{k-1}t + s)
	> \sqrt{2} \sum_{i=1}^{k-1} \s_i b_i t +\sqrt 2 \s_k s 
	\right) < \e,
\end{align}
where $S_1 = [r, b_1t]$ and $S_k = [0,b_kt]$, for $1 < k < \ell$.
\end{proposition}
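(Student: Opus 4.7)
The plan is to prove the proposition by induction on the segment index $k$. The key observation is that on each interval $[a_{k-1}t, a_k t]$, the VSBBM is, conditional on $\FF_{a_{k-1}t}$, a superposition of independent BBMs of variance $\s_k^2$ rooted at the particle positions $\tilde{x}_j(a_{k-1}t)$. After rescaling positions by $\s_k^{-1}$, the barrier $\sqrt{2}\s_k s$ transforms into the standard-BBM barrier $\sqrt{2}s$ controlled by Lemma~\ref{lem:BarrierLok}, which will be the central input.

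For the base case $k=1$, the VSBBM on $[0, b_1 t]$ is a BBM of constant variance $\s_1^2$; dividing by $\s_1$ yields a standard BBM with time horizon $b_1 t$, which tends to infinity by Assumption~\ref{as:above3}. Lemma~\ref{lem:BarrierLok} directly gives the bound on the bulk window $[r, b_1 t - r]$. For the right-endpoint window $[b_1 t - r, b_1 t] \subset S_1$, I would combine the upper-tail control of $\max_j \tilde{x}_j(b_1 t)$ from Lemma~\ref{lem:F-KPP_finite} with a standard Gaussian path-continuity estimate over an interval of length $r$ to rule out crossings near the terminal time.

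For the inductive step, suppose the proposition holds for segments $1, \dots, k-1$. Conditional on $\FF_{a_{k-1}t}$ and on the inductive event, every particle $y_j \equiv \tilde{x}_j(a_{k-1}t)$ satisfies $y_j \le \sqrt{2}\sum_{i=1}^{k-1}\s_i b_i t$; setting $H_j = (\sqrt{2}\sum_{i=1}^{k-1}\s_i b_i t - y_j)/\s_k \ge 0$, the rescaled descendant BBM of $y_j$ crosses the barrier in segment $k$ iff $X^{(j)}(s) > \sqrt{2}s + H_j$ for some $s \in [0, b_k t]$. I would then stratify the initial particles by the height $H_j$ and bound each stratum by combining the first-moment estimate for the count of VSBBM particles near the typical maximum at time $a_{k-1}t$ with two separate estimates on the descendant BBMs: Lemma~\ref{lem:BarrierLok} applied to the superposition of rescaled descendant BBMs on the bulk window $[r, b_k t - r]$, and a Many-to-One short-time barrier estimate on $[0, r] \cup [b_k t - r, b_k t]$.

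The hard part will be controlling the short-time window $[0, r]$ of segments $k > 1$, where Lemma~\ref{lem:BarrierLok} does not apply directly and the barrier begins nearly flat at the previous segment's endpoint value $\sqrt{2}\sum_{i=1}^{k-1}\s_i b_i t$. A naive union bound over the exponentially many initial particles loses a factor $\exp(a_{k-1}t)$ that no fixed margin coming only from the top particle at $a_{k-1}t$ can compensate. The resolution I would pursue is to stratify the initial configuration by distance $H_j$ from the barrier, using that the expected number of initial particles at distance at most $H$ grows only at rate $\exp(\sqrt{2} H)$ (up to polynomial factors), while the short-time crossing probability from a descendant of such a particle is Gaussian in $H$; summing the contributions over all strata should yield a bound that can be made smaller than $\e$ by choosing $r$ large.
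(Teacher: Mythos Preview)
Your inductive scheme has a genuine gap in the bulk window $[r,b_kt-r]$ for $k\ge 2$. Lemma~\ref{lem:BarrierLok} applied to a single descendant BBM gives a crossing probability bounded by $\e$ \emph{independently of the starting height} $H_j$, so combining it with a first-moment count of order $e^{\sqrt{2}H}$ (even with the barrier-conditioned polynomial factor $H(b_{k-1}t)^{-3/2}$) and integrating over $H$ yields a divergent bound. What is missing is that the crossing probability on segment $k$, for a descendant started at rescaled distance $\tfrac{\s_{k-1}}{\s_k}z$ below the barrier, decays like $z\,e^{-\sqrt{2}\frac{\s_{k-1}}{\s_k}z}$ by Lemma~\ref{lem:F-KPP_finite}; against the many-to-one weight $e^{\sqrt{2}z}$ this leaves $e^{-\sqrt{2}z\frac{\s_{k-1}-\s_k}{\s_k}}$, whose integrability against $z^2$ produces the factor $(\s_{k-1}-\s_k)^{-3}$ controlled by Assumption~\ref{as:above3}. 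The concavity $\s_{k-1}>\s_k$ is therefore the essential mechanism, and it appears nowhere in your proposal.

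Your short-time resolution is also off: the stratified sum $\int e^{\sqrt{2}H}e^{-H^2/(2r)}\,\d H\sim \sqrt{r}\,e^{r}$ grows with $r$, so it cannot be made small by choosing $r$ large. The paper avoids both problems simultaneously by a different decomposition: for $k\ge 2$ it does not split $S_k$ into bulk and edge windows at all, but instead discretises to integer times. Writing $(T1)$ for the event that the maximum at some $\lceil s\rceil$ is within $\hat C$ of the barrier and $(T2)$ for its complement intersected with a crossing, the paper bounds $(T1)$ by a first-moment computation (many-to-one on segment $1,\dots,k-1$ with the barrier kept, then the F-KPP tail bound on segment $k$ at integer times), and bounds $(T2)$ by a stopping-time argument showing that a crossing at $s$ forces some descendant to be close to $f(\lceil s\rceil)$ with high probability. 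This $(T1)/(T2)$ split is the device you are missing.
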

With the notation introduced in \eqref{eq:multiindex_notation}, the proposition implies that, with high probability, for all $1 < k < \ell$,
$\tilde{x}_{i_1} \in \AA_{r, b_1t, 0, \s_1}$ and 
$\multix{k} \in \AA_{0, b_kt, - \Lambda^{\mi_{k - 1}} (t),  \s_k}$, 
where 
\begin{equation} \label{eq:Lambda_def}
\Lambda^{\mi_{k - 1}}(t) \equiv \sum_{j=1}^{k-1} \sqrt{2} \s_j b_j t - \multix{j}(b_j t).
\end{equation}

\begin{proof}
By a union bound, the probability in \eqref{eq:prem_barrier_prop} is not larger than
\begin{equation} \label{eq:above_barrier_union}
	\sum_{k=1}^{\ell-1} \P \left( \exists_{j \le n(t)} \exists_{s \in S_k} \colon
	\tilde{x}_j( a_{k-1}t + s) > f(a_{k-1}t + s) \right),
\end{equation}
where $f( a_{k-1}t + s) = \sqrt{2} \sum_{i=1}^{k-1} \s_i b_i t + \sqrt{2} \s_k s$, for $s \in S_k, 1 \le k < \ell$. 
By Lemma~\ref{lem:BarrierLok}, the term with $k=1$
satisfies
\begin{align} \label{eq:barrier_first_summand}
	\P \left( \exists_{j \le n(t)} \exists_{s \in S_1} \colon
	\tilde{x}_j( a_{0}t + s) > f(a_{0}t + s) \right)&=\P \left( \exists_{j \le n(t)} \exists_{s \in [r, b_1t]} \colon
	\s_1 x_j(s) > \sqrt{2} \s_1 s \right)\nonumber\\
	&\leq \e /( 2^{\ell-2} \ell),
\end{align}
for any $\e>0$, for $r$ and $t$ large enough.
Controlling  the  terms with $k>1$  in the sum, follows exactly the lines of the 
proof of Theorem 2.2 in \cite{ABK_G}, namely
controlling the paths first at integer times and then showing that fluctuations between those times are small.  We repeat the calculations for completeness.

We decompose that term with $k=2$ in \eqref{eq:above_barrier_union}  as
\begin{align} \label{eq:above_barrier_T1_T2}
	&\P \left( \exists_{j \le n(t)} \exists_{ s \in [0, b_2t] } \colon
	\tilde{x}_j( a_1t + s) > f( a_1t + s) \right)  		\equiv \, (T1) + (T2),\nonumber  
\end{align}
where,  for an arbitrary constant $\hat{C} > 0$, 
\begin{align}
	(T1)= \, & \P \, \bigg( \exists_{ s \in [a_1t, a_2t] } \colon
	\max_{j \le n(s)} \tilde{x}_j(s) > f(s),
	\max_{j \le n( \okl{s} )} \tilde{x}_k( \okl{s}) > f( \okl{s}) - \hat{C} \bigg),  \nonumber  \\
	(T2)= \, &\P \, \bigg( \exists_{ s \in [a_1t, a_2t] } \colon
	\max_{j \le n(s)} \tilde{x}_j(s) > f(s),
	\max_{j \le n( \okl{s} )} \tilde{x}_k( \okl{s}) \le f( \okl{s}) - \hat{C} \bigg).  	\end{align}
By \eqref{eq:barrier_first_summand}, $(T1)$ is bounded from above by
\begin{equation} \label{eq:above_barrier_T1}
	\P \left( \exists_{ s \in [a_1t, a_2t] } \colon \hspace{-0.5em}
	\max_{j \le n( \okl{s} )} \tilde{x}_k( \okl{s}) > f( \okl{s}) - \hat{C};
	\forall_{ s' \in [r, a_1t] } \colon  \hspace{-0.5em}
	\max_{j \le n(s')} \tilde{x}_j(s') < f(s') \right)
	+ \frac{ \e }{ 2^{\ell-2} \ell }.
\end{equation}
The probability in \eqref{eq:above_barrier_T1} 
can be expressed in terms of standard BBMs $x$ as
\begin{align} \label{eq:above_barrier_Markov}
	&\P \, \bigg( \exists_{i_1 \le n(b_1t)} \colon
	\left\{ \forall_{ s' \in [r, b_1t] } \colon x_{i_1}(s') < \sqrt{2} s' \right\}  \nonumber  \\
	& \quad \land \bigg\{ \exists_{ s \in [0, b_2t]} \colon
	\max_{i_2 \le n^{i_1}( \okl{s} )} x_{i_2}^{i_1} ( \okl{s} ) 
	> \sqrt{2} \okl{s} + \sfrac{ \s_1 }{ \s_2 } \left( \sqrt{2} b_1t - x_{i_1}(b_1t) \right) - \hat{C} \bigg\} \bigg), 
\end{align}
which in turn, by Markov's inequality and the many-to-one lemma, is not larger than
\begin{align} \label{eq:above_barrier_Markov2}
	&\eee^{b_1t} \, 
	\E \, \bigg[ \1_{ \{ \forall s' \in [r, b_1t] \colon x(s') < \sqrt{2} s' \}} \nonumber \\
	& \qquad \ \times
	\P \, \bigg( \exists_{ s \in [0, b_2t]} \colon
	\max_{i_2 \le n( \okl{s} )} x_{i_2} ( \okl{s} ) 
	> \sqrt{2} \okl{s} + \sfrac{ \s_1 }{ \s_2 } \left( \sqrt{2} b_1t - x(b_1t) \right) - \hat{C}
	\, \Big\vert \, x \bigg) \bigg].
\end{align}
Writing $x(s') = \frac{s'}{b_1t} x(b_1t) + \zet_{0,0}^{b_1t}(s')$ for $ s' \in [0,b_1t]$ and using that $\zet_{0,0}^{b_1t}$ is independent of $x(b_1t)$, the right-hand side of \eqref{eq:above_barrier_Markov2} is equal to
\begin{align} \label{eq:above_barrier_T1_o}
	&\eee^{b_1t}
	\int_{- \infty}^{ \sqrt{2} b_1t }
	\sfrac{ \d \o}{\sqrt{2 \pi b_1 t} }
	\exp \left( - \sfrac{ \o^2 }{2 b_1 t} \right)
	\P \left( \zet_{0, \o}^{b_1t}(s') \le \sqrt{2}s \, \forall s' \in [r, b_1t] \right)  \nonumber  \\
	&\qquad \times
	\P \, \bigg( \exists_{ s \in [0, b_2t]} \colon
	\max_{i_2 \le n ( \okl{s} )} x_{i_2} ( \okl{s} ) 
	> \sqrt{2} \okl{s} + \sfrac{ \s_1 }{ \s_2 } \left( \sqrt{2} b_1t - \o \right) - \hat{C} \bigg).
\end{align}
Changing  variables $\o = \sqrt{2}b_1t - z$ and using that $\eee^{-z^2/(2b_1t)}\leq 1$, \eqref{eq:above_barrier_T1_o} is not larger than
\begin{equation} \label{eq:above_barrier_T1_z}
	\int_0^{\infty}  \!
	\sfrac{ \d z \, \eee^{\sqrt{2}z }}{\sqrt{2 \pi b_1t} }
	\P \!\left( \zet_{0, -z}^{b_1t}(s') \le 0 \, \forall s' \in [r, b_1t] \right) 
	\P \!\, \bigg( \exists_{ s \in [0, b_2t]}:\!
	\max_{i_2 \le n( \okl{s} )} x_{i_2}( \okl{s} ) 
	> \sqrt{2} \okl{s} + \sfrac{ \s_1 }{ \s_2 } z - \hat{C} \bigg)\!. 
\end{equation}
By Lemma 3.4 in \cite{ABK_G}, 
\begin{align} \label{eq:above_barrier_BB_bound}
	&\P \left( \zet_{0, -z}^{b_1t}(s') \le 0 \,  \forall s' \in [r, b_1t] \right)
	\leq  \frac{2z\sqrt r}{b_1t-r}.
\end{align}

The second probability in \eqref{eq:above_barrier_T1_z} is bounded from above by	
\begin{align} \label{eq:above_barrier_T1_union}
	\sum_{j=1}^{\okl{b_2t}}
	\P \, \bigg(
	\max_{i_2 \le n( j )} x_{i_2}( j) 
	> \sqrt{2} j + \sfrac{ \s_1 }{ \s_2 } z - \hat{C} \bigg).
\end{align}
We choose $j^* \in \{1, \dots, \okl{b_2t}\}$ such that we can apply Lemma~\ref{lem:F-KPP_finite} to the probabilities in \eqref{eq:above_barrier_T1_union} for $j>j^*$ when $\sfrac{3}{2 \sqrt{2}} \log(j) + \sfrac{ \s_1}{\s_2} z - \hat{C} \ge 1$.
If $j \le j^*$ and $\sfrac{3}{2 \sqrt{2}} \log(j) + \sfrac{ \s_1}{\s_2} z - \hat{C} \ge 1$, we bound these probabilities by Lemma 3.4 in \cite{1to6}, otherwise we simply bound them by one.

Therefore, \eqref{eq:above_barrier_T1_union} is smaller than
\begin{align} \label{eq:above_barrier_T1_FKPP} 
	&\eee^{ - \sqrt{2} \frac{ \s_1}{\s_2} z }
	\eee^{ \sqrt{2} \hat{C} }
	\bigg(
	\sfrac{1}{2\sqrt{\pi}} j^*
	+ \sum_{j=j^*+1}^{ \okl{b_2t} }
	j^{-3/2}
	C' \!\left( \sfrac{3}{2 \sqrt{2}} \log(j) + \sfrac{ \s_1}{\s_2} z - \hat{C} \right)\! \bigg)
	\1_{  \sfrac{ \s_1}{\s_2} z > 1+\hat{C}-\sfrac{3}{2 \sqrt{2}} \log(j) } \nonumber\\
	&+ \sum_{j=1}^{ \okl{b_2t} }\1_{  \sfrac{ \s_1}{\s_2} z < 1+\hat{C}-\sfrac{3}{2 \sqrt{2}} \log(j) }. 
\end{align}
We bound the first indicator function by one. Inserting the estimates from \eqref{eq:above_barrier_BB_bound} and \eqref{eq:above_barrier_T1_FKPP} into \eqref{eq:above_barrier_T1_z}, \eqref{eq:above_barrier_T1_z} is bounded form above by
\begin{align} \label{eq:above_barrrier_T1_postFKPP}
	&\sfrac{2 \sqrt{r}}{ (b_1t)^{1/2} (b_1t - r)}
	C' \eee^{\sqrt{2} \hat{C} }
	\sum_{j=1}^{ \okl{b_2t} }
	j^{-3/2}
	\int_0^{\infty} 
	\d z \,
	\eee^{-\sqrt{2} z \frac{ \s_1 - \s_2}{ \s_2} }
	z \left( \sfrac{ \s_1}{\s_2} z + \sfrac{3}{2 \sqrt{2}} \log(j) - \hat{C} 
	+ \sfrac{j^{3/2}}{2 \sqrt{\pi}} \1_{j \le j^*} \right) \nonumber \\
	&+ \,
	\sfrac{2 \sqrt{r}}{ (b_1t)^{1/2} (b_1t - r)}
	\sum_{j=1}^{\okl{b_2t}}
	\int_0^{ \min \left\{ 0, \frac{\s_2}{\s_1} \left( 1+\hat{C} - \frac{3}{2\sqrt{2}} \log(j) \right) \right\} }
	\d z \, 
	\eee^{ \sqrt{2}z} \, z. 
\end{align}
The sum in the second line has only finitely many summands, each of which is bounded by a constant. Thus, 
the second line is bounded from above by a constant times $\frac{ \sqrt{r} }{ (b_1t)^{1/2} (b_1t - r) }$, which converges to zero, as $t\uparrow\infty$.
For the first line, we note
\begin{equation} \label{eq:above_barrier_integrals}
	\int_0^{\infty} 
	\d z \,
	\eee^{-\sqrt{2} z \frac{ \s_1 - \s_2}{ \s_2} }
	z^2
	+ \int_0^{\infty} 
	\d z \,
	\eee^{-\sqrt{2} z \frac{ \s_1 - \s_2}{ \s_2} }
	z
	= 2 \sfrac{ \s_2^3}{ ( \s_1 - \s_2)^3 }
	+ \sfrac{ \s_2^2}{ ( \s_1 - \s_2)^2}.
\end{equation}
Since $\log(j) j^{-3/2}$ is summable, the first term in \eqref{eq:above_barrrier_T1_postFKPP} is bounded from above by
\begin{equation}
	\sfrac{ \sqrt{r} }{ (b_1t)^{1/2} (b_1t - r) } \OO \left( (\s_1 - \s_2)^{-3}  \right).
\end{equation}
By Assumption~\ref{as:above3}, $ (b_1t)^{3/2} ( \s_1 - \s_2 )^3\uparrow \infty$, as $t \uparrow \infty$. 
Therefore, $(T1)$ can be made smaller than $\e / ( 2^{\ell-2} \ell)$
for fixed $\hat{C}$ and $t$ sufficiently large.	

%
%

It remains to control $(T2)$.
By monotonicity,
\begin{equation} \label{eq:above_barrier_T2}
	(T2)
	\le \P  \biggl( \exists_{s \in [r, a_2t]} \colon
	\max_{j \le n(s)} \tilde{x}_j(s) > f(s),
	\max_{j \le n( \okl{s})} \tilde{x}_j( \okl{s}) \le f( \okl{s} ) - \hat{C}
	\biggr).
\end{equation}
We define stopping times $\SS \equiv \inf \{ s \in [r, a_2t] \colon \max_{j \le n(s)} \tilde{x}_j(s) > f(s) \}$.
By conditioning on~$\SS$,
\begin{align} \label{eq:above_barrier_S_integral}
	(T2) \le
	\sum_{k = \okl{r}-1 }^{ \okl{a_2t} }
	\int_k^{k+1}
	\P   ( \SS \in \d s ) \, 
	\P \, \bigg( \max_{j \le n( \okl{s})} \tilde{x}_j( \okl{s}) \le f( \okl{s} ) - \hat{C}
	\, \Big\vert \, \SS=s \bigg).
\end{align}
The conditional probability is bounded by the probability that the offspring of the particle that, at time $s$, was maximal, are, at time $\okl{s}$, all smaller by $f( \okl{s}) - f(s) - \hat{C}$.
The distribution of the offspring positions  depends on~$s$ and ~$\okl{s}$, but, since $\s_1 > \s_2$,  for all $s$ and $\okl{s}$,  the conditional probability is not larger than
\begin{equation} \label{eq:above_barrier_cond_1}
	\P  \left( \max_{ i \le n( \okl{s} - s )} \s_2 x_i ( \okl{s} - s ) \le \sqrt{2} \s_1 ( \okl{s} - s ) - \hat{C} \right).
\end{equation}
Since 
$\{ \forall i \le n(r): \s_2 x_i (r) \le \sqrt{2} \s_1 r - \hat{C} \}
\subseteq \{ \exists i \le n(r): \s_2 x_i (r) \le \sqrt{2} \s_1 (r) - \hat{C} \}$ for all $r>0$, we get by Markov's inequality and the many-to-one lemma that \eqref{eq:above_barrier_cond_1}
is bounded by
\begin{equation} \label{eq:above_barrier_T2_final}
	\eee^{ \okl{s} - s } \, \P \left( x ( \okl{s} - s ) \le \sqrt{2} \sfrac{ \s_1}{\s_2} ( \okl{s} - s ) - \sfrac{ \hat{C} }{\s_1} \right),
\end{equation}
where $x ( \okl{s} - s )$ is a Gaussian random variable with mean zero and variance $ \okl{s} - s$. 
Since $ \okl{s} - s \le 1$, \eqref{eq:above_barrier_T2_final} is smaller than $\e / (2^{\ell-2} \ell)$ if  $\hat{C}$ is large enough.
In conclusion, we have shown that, for $t$ large enough,
\begin{equation}
	\P \left( \exists_{j \le n(t)} \exists_{ s \in [a_1t, a_2t] } \colon
	\tilde{x}_j(s) > f(s) \right)
	< 3 \e / (2^{\ell-2} \ell).
\end{equation}
Each summand in \eqref{eq:above_barrier_union} can be bounded by $(k+1) \e/ (2^{\ell-k} \ell )$ by the same reasoning as for the second summand.
Then \eqref{eq:above_barrier_union} is smaller than $(\ell - 1) \e / \ell$, which concludes the proof.
\end{proof}
The next proposition states that ancestors of extremal particles at time~$t$ are at the time~$a_kt$ of order $ ( \s_k - \s_{k+1})^{-1}$ below the barrier function.
\begin{proposition}[Position at speed changes] \label{prop:above_speed_change}
For any $y \in \R$ and for any $\e > 0$, there are constants $B > D > 0$ such that, for all $t$ large enough,
\begin{align}\label{eq:above_speed_change}
	\P \, \bigg( &\exists_{j \le n(t)} \colon \Big\{ \tilde{x}_j(t) > m^+(t)-y \Big\} 
	\land \Bigg\{ \exists _{1 \le k < \ell }\colon
	\tilde{x}_j(a_kt) - \sqrt{2}t \sum_{i=1}^{k} \s_i b_i 
	\not\in \sfrac{[ - B, - D ]}{\s_k - \s_{k+1}} \Bigg\} \Bigg) 
	< \e.
\end{align}
\end{proposition}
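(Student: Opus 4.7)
My approach would be to combine the barrier restriction from Proposition~\ref{prop:barrier} with a first-moment estimate that integrates over the ancestor position at time $a_k t$. By a union bound over $k$, it suffices to show, for each $1 \le k < \ell$ separately, that the contribution from a single speed change is bounded by $\e/(\ell-1)$ for appropriate constants $B > D > 0$. For each such $k$ I would split the bad event into the ``too close'' part $\{ \Lambda^{\bar{i}_k}(t) < D/(\s_k - \s_{k+1}) \}$ and the ``too far'' part $\{ \Lambda^{\bar{i}_k}(t) > B/(\s_k - \s_{k+1}) \}$, intersect both with the barrier event from Proposition~\ref{prop:barrier} (paying only an additional $\e$ by Lemma~\ref{lem:localisation}), and treat the two parts separately.

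For each sub-event I would apply Markov's inequality together with the multiindex decomposition \eqref{eq:multiindex_notation} to reduce the probability to a first-moment integral over the distance $z>0$ below the barrier. The integrand has three ingredients: (a) a many-to-one Gaussian factor $\eee^{a_k t}$ times the density of $\sum_{j\le k}\tilde{x}^{\bar{i}_{j-1}}_{i_j}(b_j t)$ evaluated at $\sqrt{2}\,t\sum_{i\le k}\s_i b_i - z$, which produces an exponential in $z$ with a negligible $O(z^2/t)$ correction; (b) a ballot probability that the path stays below the piecewise-linear barrier on $[0,a_k t]$, obtained segment-wise from Lemma~\ref{lem:BB_line} applied to the relevant Brownian bridges, contributing a factor linear in $z$ modulated by the interior ballot factors already controlled in the proof of Proposition~\ref{prop:barrier}; and (c) the F-KPP tail estimate for the descendant VSBBM of duration $t-a_k t$ starting from the conditioned ancestor and reaching the target $m^+(t)-y$, obtained by iterating Proposition~\ref{prop:FKPP_tail_estimate} over the remaining segments $[a_kt,a_{k+1}t],\dots,[a_{\ell-1}t,t]$.

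After carefully tracking the constants, and in particular the factors $\pi^{1/6}(\s_k-\s_{k+1})$ appearing inside $m^+(t)$ via \eqref{eq:OrderOfMaxAboveTime}, the combined integral reduces to one of the schematic form $\int z\,(1+z)\,\eee^{-\sqrt{2}(\s_k-\s_{k+1})z}\,\d z$, whose characteristic scale is indeed $z \sim (\s_k-\s_{k+1})^{-1}$. The ``too far'' contribution is the tail of this integral beyond $B/(\s_k-\s_{k+1})$, which tends to $0$ as $B\to\infty$, while the ``too close'' contribution is the integral over $[0,D/(\s_k-\s_{k+1})]$, which tends to $0$ as $D\downarrow 0$ precisely because of the linear factor $z$ inherited from the ballot estimate in (b).

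The main obstacle is step (c): extracting an F-KPP tail estimate for the multi-speed descendant process that matches, on the nose, the logarithmic corrections appearing in $m^+(t)$. One natural route is downward induction on $k$, exploiting the localisations at $a_{k+1}t,\dots,a_{\ell-1}t$ already established at later stages to reduce the descendant problem to a chain of single-segment F-KPP estimates; an alternative is to apply Proposition~\ref{prop:FKPP_tail_estimate} to each segment separately and propagate the resulting tail through an auxiliary Gaussian computation, possibly aided by Slepian domination (Lemma~\ref{lem:slepian}) against standard BBM with suitably rescaled time. Both routes crucially use Assumption~\ref{as:above3}: the lower bound $\sqrt{\min\{b_k t, b_{k+1}t\}} \gg (\s_k-\s_{k+1})^{-1}$ renders the quadratic Gaussian correction $z^2/(b_k t)$ negligible on the scale $z\sim(\s_k-\s_{k+1})^{-1}$, while $(\s_k-\s_{k+1})^{-1} \gg t^\b$ absorbs the sub-polynomial error terms arising when invoking Proposition~\ref{prop:FKPP_tail_estimate}.
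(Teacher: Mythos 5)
Your proposal follows essentially the same route as the paper: union bound over $k$, insertion of the barrier event from Proposition~\ref{prop:barrier}, a many-to-one/first-moment computation organised as a chain of Gaussian integrals over the positions at the speed-change times, ballot estimates from Lemma~\ref{lem:BB_line} supplying the linear factors in $z$ that kill the ``too close'' contribution, and the $B,D$ split of the resulting integral whose scale is $(\s_k-\s_{k+1})^{-1}$. The one step you flag as the main obstacle, (c), is resolved in the paper by exactly the second route you name: the positions $\o_{k+1},\dots,\o_{\ell-1}$ at the later speed changes are integrated out within the same Gaussian chain, so that the tail bound (Lemma~\ref{lem:F-KPP_finite}) is only ever invoked for the single homogeneous final segment $[a_{\ell-1}t,t]$ of variance $\s_\ell^2$ --- no multi-segment F-KPP asymptotics and no Slepian comparison are needed, and Assumption~\ref{as:above3} enters precisely where you say it does.
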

The assertion of the proposition can be restated as saying that
extremal particles have the property that 
$\tilde{x}_{i_1} \in \GG_{b_1t, 1, 0, B, D}$ and 
$\multix{k} \in \GG_{b_kt, k, - \Lambda^{\mi_{k - 1}}(t), B, D}$, with high probability, 
for all $1 < k < \ell$. 
\begin{proof}
By a union bound, the probability in \eqref{eq:above_speed_change} is not larger than
\begin{align} \label{eq:above_speed_union_bound}
	\sum_{k=1}^{\ell-1} 
	\P \, \bigg( \exists_{j \le n(t)} \colon \Big\{ \tilde{x}_j(t) > m^+(t)-y \Big\} 
	\land \Bigg\{ \tilde{x}_j(a_kt) - \sqrt{2}t \sum_{i=1}^{k} \s_i b_i 
	\not\in \sfrac{[ - B, - D ]}{\s_k - \s_{k+1}} \Bigg\} \Bigg).
\end{align}

We prove in detail how the summand for $k=\ell-1$ can be bounded. We then explain how the arguments can be transferred to estimate the other summands.

By Proposition~\ref{prop:barrier}, we can introduce the barrier condition  into the probabilities in \eqref{eq:above_speed_union_bound}. The summand for $k=\ell-1$ is then 
is equal to 
\begin{align} \label{eq:above_speed_special_localisation}
	\P \, \bigg( 
	&\exists_{j \le n(t)} \colon \Big\{ \tilde{x}_j(t) > m^+(t)-y \Big\}
	\land \, \bigg\{
	\tilde{x}_j (a_{\ell-1} t) - \sqrt{2}t \sum_{i=1}^{\ell-1} \s_i b_i 
	\not\in \sfrac{[ - B, - D ] }{\s_{\ell-1} - \s_{\ell}} 
	\bigg\} \nonumber \\
	&\land \, \Bigg\{ 
	\forall _{1 \le k < \ell-1 }\colon
	\tilde{x}_j(a_kt) - \sqrt{2}t \sum_{i=1}^{k} \s_i b_i 
	\in \sfrac{[ - B, - D ]  }{\s_k - \s_{k+1}} 
	\Bigg\} \nonumber \\
	&\land \, \Bigg\{ 
	\forall _{1 \le k < \ell} \forall_{s \in S_k} \colon 
	\tilde{x}_j(a_{k-1}t + s)
	\le r \lor  \sqrt{2} \, \bigg( \sum_{i=1}^{k-1} \s_i b_i t + \s_k s \bigg) 
	\Bigg\} \Bigg)+\OO(\e).
\end{align}
By the many-to-one lemma, the independence of the increments, and the barrier conditions to write everything in a more accessible product form,  this is bounded from above by
\begin{align}\label{lisa.100}
	&\eee^{\sum_{k=1}^{\ell-1} b_k t} \, 
	\E \, \Bigg[
	\1_{ \left\{ \forall _{1 \le k < \ell} \forall_{s \in S_k} \colon 
		\sum_{i=1}^{k-1} \s_ix_i(b_it)+\s_k x_k(s)
		\le r \lor  \sqrt{2} \, \left( \sum_{i=1}^{k-1} \s_i b_i t + \s_k s \right) \right\} } \nonumber\\
	& \times
	\1_{ \left\{ \forall _{1 \le k < \ell-1 }\colon
		\sum_{i=1}^{k}\s_i x_i(b_it) - \sqrt{2}t \sum_{i=1}^{k} \s_i b_i 
		\in \sfrac{[ - B, - D ]  }{\s_k - \s_{k+1}} \right\} }
	\1_{ \left\{ \sum_{i=1}^{\ell-1}\s_ix_i(b_it) - \sqrt{2}t \sum_{i=1}^{\ell-1} \s_i b_i 
		\not\in \sfrac{[ - B, - D ] }{\s_{\ell-1} - \s_{\ell}} \right\}} \nonumber\\
	&\times
	\P \, \bigg( \max_{i_{\ell} \le n (b_{\ell}t)} \tilde{x}_{i_{\ell}}(b_{\ell}t)
	> m^+(t) - \sum_{k=1}^{\ell-1} \s_k x_{k} (b_{k}t) - y
	\, \bigg \vert \, \tilde{F}_{\ell-1} \Bigg) \Bigg],
\end{align}
where $x_k, 1 \le k \le \ell-1,$ denote Brownian motions and $\tilde{F}_{\ell-1} = \s (x_k, 1 \le k \le \ell-1)$.
Decomposing  $x_k$ into a Brownian bridge and its endpoint, \eqref{lisa.100} equals 
\begin{align} \label{eq:above_speed_integral_chain_1}
	\eee^{\sum_{k=1}^{\ell-1} b_k t} 
	&\int_{I_1} 
	\sfrac{\d \o_1}{\sqrt{2 \pi \s_1^2 b_1t}}
	\eee^{- \frac{\o_1^2}{2 \s_1^2 b_1 t}} 
	\int_{I_2} \dots 
	\int_{I_{\ell-2}} 
	\sfrac{\d \o_{\ell-2}}{\sqrt{2 \pi \s_{\ell-2}^2 b_{\ell-2} t}}
	\eee^{- \frac{\o_{\ell-2}^2}{2 \s_{\ell-2}^2 b_{\ell-2} t}}
	\nonumber \\
	\times \, & 
	\int_{\bar I_{\ell-1}} 
	\sfrac{\d \o_{\ell-1}}{\sqrt{2 \pi \s_{\ell-1}^2 b_{\ell-1} t}}
	\eee^{- \frac{\o_{\ell-1}^2}{2 \s_{\ell-1}^2 b_{\ell-1} t}}
	\P \, \bigg( \max_{i_{\ell} \le n (b_{\ell}t)} \tilde{x}_{i_{\ell}} (b_{\ell}t)
	> m^+(t) - \sum_{k=1}^{\ell-1} \o_k - y \Bigg)
	\nonumber \\
	\times \, &\prod_{k=1}^{\ell-1}
	\P \, \bigg( \zet_{
		- \left( \sum_{i=1}^{k-1} \sqrt{2} \s_i b_i t - \o_i \right) ,\,
		\o_k - \left( \sum_{i=1}^{k-1} \sqrt{2} \s_i b_i t - \o_i \right) }^{|S_k|}
	(s) \le \sqrt{2} \s_k s \, \forall s \in S_k \bigg),
\end{align}
where, for $1 \le k \le \ell-1$,
\begin{align} \label{eq:above_speed_Ik}
	I_k 
	&= \sqrt{2} \s_k b_k t + \sum_{i=1}^{k-1} \left( \sqrt{2} \s_i b_i t - \o_i \right) -  \sfrac{ [D, B ]}{\s_k - \s_{k+1}},
	\nonumber \\
	\bar{I}_{\ell-1}
	&= \left(- \infty,\sqrt{2} \s_{\ell-1} b_{\ell-1}t 
	+  \sum_{i=1}^{\ell-2} \left( \sqrt{2} \s_i b_i t - \o_i \right) \right)\setminus I_{\ell-1}.
\end{align}
Changing variables 
$\o_k = - z_k + \sqrt{2} \s_k b_k t + z_{k-1} \1_{k \ge 2}$ for $1 \le k \le \ell-1$ and setting $z_0=0$,
\eqref{eq:above_speed_integral_chain_1}  equals
\begin{align} \label{eq:above_speed_z}
	\eee^{\sum_{k=1}^{\ell-1} b_k t}
	&\int_{J_1} 
	\sfrac{\d z_1}{\sqrt{2 \pi \s_1^2 b_1t}}
	\eee^{- \frac{ \left(\sqrt{2} \s_1 b_1 t - z_1 \right)^2}{2 \s_1^2 b_1 t}} \
	\int_{J_2} \dots
	\int_{J_{\ell-2}} 
	\sfrac{\d z_{\ell-2}}{\sqrt{2 \pi \s_{\ell-2}^2 b_{\ell-2} t}}
	\eee^{- \frac{ \left( \sqrt{2} \s_{\ell-2} b_{\ell-2} t + z_{\ell-3} - z_{\ell-2} \right)^2}{2 \s_{\ell-2}^2 b_{\ell-2} t}} \nonumber \\
	\times &\int_{\bar{J}} 
	\sfrac{\d z_{\ell-1}}{\sqrt{2 \pi \s_{\ell-1}^2 b_{\ell-1} t}}
	\eee^{- \frac{ \left( \sqrt{2} \s_{\ell-1} b_{\ell-1} t + z_{\ell-2} - 
			z_{\ell-1} \right)^2}{2 \s_{\ell-1}^2 b_{\ell-1} t}}
	\prod_{k=1}^{\ell-1}
	\P \, \bigg( \zet_{
		- \frac{ z_{k-1} }{\s_k}, - \frac{z_k }{\s_k}}^{| S_k|} 
	(s) \le 0 \, \forall s \in S_k \bigg) \nonumber \\
	\times \, &\P \, \bigg( \max_{i_{\ell} \le n (b_{\ell}t)} \tilde{x}_{i_{\ell}} (b_{\ell}t)
	> m^+(t) + z_{\ell-1} - \sqrt{2} t  \sum_{k=1}^{\ell-1} \s_k b_k - y \Bigg), 
\end{align}
where, for all $1 \le k \le \ell-1$,
\begin{equation} \label{eq:above_speed_Jk}
	J_k
	= \sfrac{[ D,B] }{ \s_k - \s_{k+1} }, \quad
	\bar{J}
	= (0, \infty) \setminus J_{\ell}.
\end{equation}
The product of all exponential functions in \eqref{eq:above_speed_z} is equal to
\begin{align} \label{eq:above_speed_expo}
	&\exp \left(  \sum_{k=1}^{\ell-1} b_k t 
	- \frac{ (\sqrt{2} \s_1 b_1 t - z_1)^2}{2 \s_1^2 b_1 t}
	- \sum_{k=2}^{\ell-1} \frac{ ( \sqrt{2} \s_k b_k t + z_{k-1} - z_k )^2}{2 \s_k^2 b_k t} \right)  \nonumber  \\
	&\le \exp \left( - \sqrt{2} \sum_{k=1}^{\ell-2} z_k \frac{\s_k - \s_{k+1}}{\s_k \s_{k+1}}
	+ \sqrt{2} \frac{z_{\ell-1}}{\s_{\ell-1}}
	\right).
\end{align}
For $1 \le k \le \ell-1$,  \eqref{eq:above_speed_z} is not larger than
\begin{align} \label{eq:above_speed_zzz}
	\int_D^B& 
	\sfrac{\d y_1}{\sqrt{2 \pi b_1t}}
	\eee^{-\sqrt{2} \frac{y_1}{ \s_1 \s_2}}
	\int_D^B \dots
	\int_D^B 
	\sfrac{\d y_{\ell-2}}{\sqrt{2 \pi b_{\ell-2} t}}
	\eee^{- \sqrt{2} \frac{y_{\ell-2}}{ \s_{\ell-2} \s_{\ell-1}} } 
	\int_{\substack{(0,D) \\ \cup (B, \infty)}} 
	\sfrac{\d y_{\ell-1}}{\sqrt{2 \pi b_{\ell-1} t}}
	\eee^{ \sqrt{2} \frac{ y_{\ell-1} }{ \s_{\ell-1} (\s_{\ell-1} - \s_{\ell}) } }
	\nonumber \\
	\times &
	\prod_{k=1}^{\ell-1}
	\sfrac{1}{\s_k - \s_{k+1}}
	\P \, \bigg( \zet_{
		- \frac{ y_{k-1} }{ \s_k (\s_{k-1} - \s_k) },
		- \frac{ y_k }{ \s_k (\s_k - \s_{k+1}) }
	}^{|S_k|} 
	(s) \le 0 \, \forall s \in S_k \bigg) 
	\nonumber \\
	\times \, &\P \, \bigg( \max_{i_{\ell} \le n(b_{\ell}t)} \tilde{x}_{i_{\ell}}(b_{\ell}t)
	> m^+(t) +  \sfrac{ y_{\ell-1} }{ \s_{\ell-1} - \s_{\ell} } - \sqrt{2} t \sum_{k=1}^{\ell-1} \s_k b_k - y \Bigg).
\end{align}
By Lemma~\ref{lem:BB_line} and \eqref{eq:above_barrier_BB_bound}, 
\begin{align}  \label{eq:above_speed_BB_bound}
	\prod_{k=1}^{\ell-1}&
	\P \, \bigg( \zet_{
		- \frac{ y_{k-1} }{ \s_k (\s_{k-1} - \s_k) },
		- \frac{ y_k }{ \s_k (\s_k - \s_{k+1}) }
	}^{|S_k|} 
	(s) \le 0 \, \forall s \in S_k \bigg) \nonumber  \\
	\le & \ 2^{\ell-1} \sqrt{r}
	\frac{b_1t}{b_1t - r}
	\frac{y_{\ell-1}}{ (\s_{\ell-1} - \s_{\ell} ) b_{\ell-1}t \, \s_{\ell-1} }
	\prod_{k=1}^{\ell-2} 
	\frac{ y_k^2 }{ (\s_k - \s_{k+1})^2 b_kt \, \s_k \s_{k+1} }.
\end{align}
Therefore, \eqref{eq:above_speed_zzz} is bounded by
\begin{align} \label{eq:above_speed_y}
	&\sqrt{r} \left( \frac{2}{\pi} \right)^{(\ell-1)/2}
	\frac{b_1t}{b_1t-r}
	\prod_{k=1}^{\ell-2}
	\frac{1}{(\s_k - \s_{k+1})^3 (b_kt)^{3/2} \s_k^2 \s_{k+1} }
	\int_D^B
	\d y_1 
	\eee^{-\sqrt{2 } \frac{y_1}{\s_1 \s_2} } y_1^2
	\int_D^B
	\dots \nonumber \\
	&\times
	\int_D^B
	\d y_{\ell-2}
	\eee^{-\sqrt{2} \frac{y_{\ell-2}}{ \s_{\ell-2} \s_{\ell-1}} } y_{\ell-2}^2
	\int_{ \substack{ (0,D) \\ \cup (B, \infty)}}
	\d y_{\ell-1}
	\eee^{\sqrt{2} \frac{y_{\ell-1}}{\s_{\ell-1} (\s_{\ell-1} - \s_{\ell} )}}
	\sfrac{ y_{\ell-1} }{ (\s_{\ell-1} - \s_{\ell} )^2 (b_{\ell-1}t)^{3/2} \s_{\ell-1}^2} \nonumber \\
	&\times
	\P \, \bigg( \max_{i_{\ell} \le n(b_{\ell}t)} \tilde{x}_{i_{\ell}}(b_{\ell}t)
	> m^+(t) +  \sfrac{ y_{\ell-1} }{ \s_{\ell-1} - \s_{\ell} } - \sqrt{2} t \sum_{k=1}^{\ell-1} \s_k b_k - y \Bigg).
\end{align}
The probability of the maximum can be bounded using Lemma~\ref{lem:F-KPP_finite}.
We notice that 
\begin{align} \label{eq:above_speed_max_estimate_x}
	&\sfrac{1}{\s_{\ell}}
	\bigg( m^+(t) +  \sfrac{ y_{\ell-1} }{ \s_{\ell-1} - \s_{\ell} } - \sqrt{2}t \sum_{k=1}^{\ell-1} \s_k b_k - y \bigg)
	- \left( \sqrt{2} b_{\ell} t - \sfrac{3}{2 \sqrt{2}} \log(b_{\ell}t) \right) \nonumber \\
	&= \,  \sfrac{1}{\s_{\ell}}
	\left( \sfrac{ y_{\ell-1} }{\s_{\ell-1} - \s_{\ell} } +  L_{\ell-1}(t) - y \right),
\end{align}
where 
\begin{equation} \label{eq:above_def_L}
	L_{\ell-1}(t) \equiv 
	- \sfrac{3}{2 \sqrt{2}} \sum_{k=1}^{\ell-1} \big( \log (b_k t) + 2 \log ( \pi^{1/6} (\s_k - \s_{k+1})) \big).
\end{equation}
This implies that, for $ y_{\ell-1} > ( \s_{\ell-1} - \s_{\ell} ) (y + \s_{\ell} -  L_{\ell-1}(t))$,
\begin{align} \label{eq:above_speed_max_estimate}
	&\P \, \bigg( \max_{i_{\ell} \le n (b_{\ell}t)} \tilde{x}_{i_{\ell}} (b_{\ell}t)
	> m^+(t) + \sfrac{ y_{\ell-1} }{ \s_{\ell-1} - \s_{\ell} }  - \sqrt{2} t \sum_{k=1}^{\ell-1} \s_k b_k - d \Bigg) \nonumber \\
	&\le \, \sfrac{C'}{\s_{\ell}}
	\left( \sfrac{ y_{\ell-1} }{\s_{\ell-1} - \s_{\ell} } +  L_{\ell-1}(t) - y \right)
	\eee^{ - \sqrt{2} \frac{1}{\s_{\ell}}
		\left( \frac{ y_{\ell-1} }{\s_{\ell-1} - \s_{\ell} } +  L_{\ell-1}(t) - y \right)}. 
\end{align}
For $y_{\ell-1} \le ( \s_{\ell-1} - \s_{\ell} ) ( y + \s_{\ell} - L_{\ell-1}(t) )$, 
we bound the probability by 1.
In order to estimate the $y_{\ell-1}$-integral in \eqref{eq:above_speed_y},
we split the domain of integration \mbox{$(0,D) \cup (B, \infty)$} into two parts,
\begin{alignat}{2}
	&J_a
	\equiv \Big( 0, 
	( \s_{\ell-1} - \s_{\ell} ) ( y + \s_{\ell}- L_{\ell-1}(t) ) \Big), \quad
	&&J_b
	\equiv (0, \infty) \setminus (J_a \cup [D,B] ).
\end{alignat}
On  $J_a$, we bound the integrand by its maximum value to get
\begin{align}  \label{eq:above_speed_Ja}
	&\sfrac{1}{ ( \s_{\ell-1} - \s_{\ell} )^2 (b_{\ell-1}t)^{3/2} \s_{\ell-1}^2}
	\int_0^{( \s_{\ell-1} - \s_{\ell} ) \, \big( y + \s_{\ell} - L_{\ell-1}(t)\big)}
	\d y_{\ell-1}
	\eee^{\sqrt{2} \frac{y_{\ell-1}}{\s_{\ell-1} (\s_{\ell-1} - \s_{\ell} )}}
	y_{\ell-1} \nonumber \\
	&\le \, \sqrt{\pi} \e^{\sqrt{2}} (\s_{\ell-1} - \s_{\ell})^3 ( y + \s_{\ell}- L_{\ell-1}(t) )^2
	\eee^{- \sqrt{2} \, \big( L_{\ell-2}(t) - y \big) }
	(1+o(1)).
\end{align}
By Assumption~\ref{as:above3}, $( \s_{\ell-1} - \s_{\ell} )^3 ( y + \s_{\ell} - L_{\ell-2} (t))^2 \to 0$, as  $t \uparrow \infty$.
By \eqref{eq:above_speed_max_estimate}, the integral over $J_b$ is bounded by
\begin{align} \label{eq:above_speed_Jb}
	\sfrac{C' \sqrt{\pi}}{\s_{\ell-1}^2 \s_{\ell}}
	\eee^{- \sqrt{2} \, \big( L_{\ell-2}(t) - y \big) }
	\bigg(
	\int_0^D
	\!\d y_{\ell-1} 
	\eee^{- \sqrt{2} \frac{ y_{\ell-1} }{ \s_{\ell-1} \s_{\ell} }}
	y_{\ell-1}^2
	+ \!\int_B^{\infty}
	\!\d y_{\ell-1} 
	\eee^{- \sqrt{2} \frac{ y_{\ell-1} }{ \s_{\ell-1} \s_{\ell} }}
	y_{\ell-1}^2
	\bigg)
	(1+o(1)).
\end{align}
Both integrals are finite and converge to zero as $ D \downarrow 0, B \uparrow \infty$, respectively.
Inserting the upper bounds from \eqref{eq:above_speed_Ja} and \eqref{eq:above_speed_Jb} into \eqref{eq:above_speed_y} and recalling the definition of $L_{\ell-2}$, we see that \eqref{eq:above_speed_y} is not larger than
\begin{align} \label{eq:above_speed_final_bound}
	&C' \sqrt{r}
	2^{(\ell-1)/2}
	\eee^{\sqrt{2}y}
	\sfrac{b_1t}{b_1t-r}
	\Big( g(t) + f(B,D) \Big) \nonumber \\
	&\times
	\int_D^B \d y_1 
	\eee^{-\sqrt{2 } \frac{y_1}{ \s_1 \s_2 } } y_1^2
	\int_D^B \dots
	\int_D^B \d y_{\ell-2}
	\eee^{-\sqrt{2} \frac{y_{\ell-2}}{ \s_{\ell-2} \s_{\ell-1}} } y_{\ell-2}^2 (1+o(1)),
\end{align}
where $g \colon \R_+ \to \R_+$ is a function with $g(t) \downarrow 0$ as $t \uparrow \infty$ and $f \colon \R_+^2 \to \R_+$ is a function with $f(B,D) \downarrow 0$ as $B \uparrow \infty, D \downarrow 0$.
All integrals in \eqref{eq:above_speed_final_bound} are finite since $\s_k \to 1$ as $t \uparrow \infty$.
Taking first $t \uparrow \infty$ and then $D \downarrow 0, B \uparrow \infty$, the term \eqref{eq:above_speed_final_bound} gets as small as we want,  in particular smaller that $\e/\ell$.

The summands in \eqref{eq:above_speed_union_bound} for $k < \ell-1$ can be bounded by the same reasoning as for the $k=\ell-1$ case:
The integral in \eqref{eq:above_speed_y} for index $k$ runs over  $(0,D) \cup (B, \infty)$ and 
hence can be bounded as in \eqref{eq:above_speed_Jb}. 
Therefore, each summand in \eqref{eq:above_speed_union_bound} is smaller that $\e/\ell$, which concludes the proof.
\end{proof}


The last localisation proposition states that ancestors of particles which are extremal at time $t$ stay a small order below the barrier function.

\begin{proposition}[Entropic repulsion] \label{prop:above_entropic_rep}
Let $\beta \in (0, 1/2)$ such that, for all $1 \le k < \ell$, $t^{\beta} \ll ( \s_k - \s_{k+1} )^{-1} $.
Then, for any $ y \in \R$ and for any $\e >0$, there exists a constant $\delta \in (0,1/2)$ 
such that, for all $t$ large enough,
\begin{align} \label{eq:above_entropic_prob}
	\P \, \bigg( 
	&\exists_{j \le n(t)} \colon \Big\{ \tilde{x}_j(t) > m^+(t)-y \Big\} \nonumber \\
	&\land \, \Bigg\{ \exists_{1 \le k < \ell} \exists_{s \in S_k^*} :
	\tilde{x}_j(a_{k-1}t + s)
	> \sqrt{2} \, \left( \sum_{i=1}^{k-1} \s_i b_i t + \s_k s \right) - t^{\b \delta}
	\Bigg\} \Bigg) < \e,
\end{align}
where $S_1^* = [t^{\beta}, b_1t]$ and $S_k^* = [0, b_k t]$, for all $ 1 < k < \ell$.
\end{proposition}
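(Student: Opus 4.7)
I will mirror the proof of Proposition~\ref{prop:above_speed_change}, adding an entropic constraint on one segment at a time. First, I union-bound over the segment index $k \in\{1,\dots,\ell-1\}$ on which the refined barrier $-t^{\b\delta}$ is violated, and use Propositions~\ref{prop:barrier} and~\ref{prop:above_speed_change} to additionally impose the global barrier condition and the $[-B,-D]/(\s_i-\s_{i+1})$-localisation at every speed change, at total cost $O(\varepsilon)$ in the target bound. I then follow the many-to-one reduction and bridge decomposition of \eqref{eq:above_speed_integral_chain_1}--\eqref{eq:above_speed_zzz} to express the remaining probability as an $\ell$-fold integral whose integrand factorises across segments into Brownian bridge probabilities, multiplied by an F-KPP tail on the last segment. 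The new ingredient is to replace the unconstrained $k$-th factor
\begin{equation*}
\P\bigl(\zeta_{-a,-b}^{T}(s)\le 0\ \forall s\in[0,T]\bigr) = 1 - e^{-2ab/T}
\end{equation*}
by its entropic variant
\begin{equation*}
\P\bigl(\zeta_{-a,-b}^{T}(s)\le 0\ \forall s,\ \max_{s\in[0,T]} \zeta(s) > -t^{\b\delta}\bigr) = e^{-2(a-t^{\b\delta})(b-t^{\b\delta})/T} - e^{-2ab/T},
\end{equation*}
obtained by inclusion--exclusion from Lemma~\ref{lem:BB_line} applied to the two horizontal barriers $0$ and $-t^{\b\delta}$.

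Since $ab/T = o(1)$ by Assumption~\ref{as:above3}, a first-order Taylor expansion yields the new factor $\sim 2t^{\b\delta}(a+b)/T$, i.e.\ smaller than the old one by a ratio $t^{\b\delta}(a^{-1}+b^{-1})$. On the integration range of Proposition~\ref{prop:above_speed_change} one has $a,b\ge D/(\s_{k-1}-\s_k)$, which by Assumption~\ref{as:above3} is $\gg t^{\b}$, so this ratio is $O(t^{-\b(1-\delta)})$ and vanishes for any $\delta\in(0,1)$. Substituting this refined bound into the integral of Proposition~\ref{prop:above_speed_change} will then reproduce the estimate there, multiplied by $o(1)$, giving the desired bound $\le \varepsilon/(\ell-1)$ for $t$ large and $\delta$ small enough. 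For $k=1$ the bridge starts at $0$, so the inclusion--exclusion argument cannot be applied on the whole of $[0,b_1 t]$; this is exactly why we impose $s\ge t^{\b}$ in $S_1^*$. I would condition on the path at time $t^{\b}$ and apply the same argument to $[t^{\b},b_1 t]$, with the contribution from realisations having $\zeta(t^{\b})<t^{\b\delta}$ controlled by the Gaussian density of $\zeta(t^{\b})$, whose typical scale $t^{\b/2}$ dominates $t^{\b\delta}$ provided $\delta<1/2$.

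The principal difficulty I anticipate is verifying that the gain $t^{\b\delta}(a^{-1}+b^{-1})$ is uniformly small after the full $(a,b)$-integration, which is precisely what Assumption~\ref{as:above3} buys: it forces the bridge endpoints to exceed the threshold $t^{\b\delta}$ by a strict power of $t$, so that the factor can be pulled out of the integral. The extension from integer times to the full continuous interval $S_k^*$, and the handling of sub-unit fluctuations, should proceed as in the proof of Proposition~\ref{prop:barrier} via a stopping-time argument together with Markov's inequality on a single Gaussian increment, paying an additive constant $\hat{C}$ that is negligible against $t^{\b\delta}$ for $t$ large.
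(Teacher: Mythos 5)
Your proposal is correct and follows essentially the same route as the paper: union bound over the offending segment, insertion of the barrier and speed-change localisations, many-to-one/bridge decomposition, inclusion--exclusion between the two horizontal barriers $0$ and $-t^{\b\delta}$ with a first-order expansion showing the gain factor $t^{\b\delta}(a^{-1}+b^{-1})=O(t^{-\b(1-\delta)})$, and for $k=1$ a conditioning at time $t^{\b}$ where the Gaussian scale $t^{\b/2}$ beats $t^{\b\delta}$ for $\delta<1/2$. The only superfluous element is the proposed discretisation/stopping-time step at the end: after the many-to-one reduction the crossing event concerns a single Brownian bridge over a continuous interval, for which Lemma~\ref{lem:BB_line} gives exact formulas, so no integer-time approximation is needed.
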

Due to Assumption~\ref{as:above3}, we know that $\beta$ as specified in Proposition~\ref{prop:above_entropic_rep} exits.
Reformulating the results of the previous proposition, we get that with high probability 
$\tilde{x}_{i_1} \in \AA_{t^{\beta}, b_1t, t^{\b \delta}, \s_1}$ and 
$\multix{k} \in \AA_{0, b_kt, t^{\b \delta} - \Lambda^{\mi_{k - 1}}(t),  \s_k}$ 
for all $1 < k < \ell$.
By monotonicity, we can use the slightly weaker version of Proposition~\ref{prop:above_entropic_rep} where the ancestor of an extremal particle at time $t$ stays $t^{\b \delta/2}$ below the barrier function on $(t^{\b}, a_{\ell-1}t]$ and $t^{\b \delta}$ below the barrier function at time $t^{\b}$.
\begin{proof}
By a union bound, the probability in \eqref{eq:above_entropic_prob} is not larger than
\begin{align} \label{eq:above_entropic_union}
	\sum_{k=1}^{\ell-1}
	\P \, \bigg(
	\exists_{j \le n(t)} \colon& 
	\Big\{ \tilde{x}_j(t) > m^+(t)-y \Big\} \notag\\
	&\land \left\{ \exists_{s \in S_k} :
	\tilde{x}_j(a_{k-1}t + s)
	> \sqrt{2} \, \left(\textstyle \sum\limits_{i=1}^{k-1} \s_i b_i t + \s_k s \right) - t^{\b \delta}
	\right\}
	\Bigg).
\end{align}
We insert the localisation conditions Propositions~\ref{prop:barrier} and  \ref{prop:above_speed_change} into \eqref{eq:above_entropic_union}.
Then, each summand in \eqref{eq:above_entropic_union} is equal to
\begin{align} \label{eq:above_entropic_localisation}
	&\P \, \bigg( 
	\exists_{j \le n(t)} \colon \Big\{ \tilde{x}_j(t) > m^+(t)-y \Big\}
	\land \, \left\{ \exists_{s \in S_k^*} :
	\tilde{x}_j(a_{k-1}t + s)
	> \sqrt{2} \left( \textstyle\sum\limits_{i=1}^{k-1} \s_i b_i t + \s_k s \right) - t^{\b \delta}
	\right\} \nonumber \\
	&\land \, \Bigg\{ \forall_{1 \le k \le \ell-1} \forall_{s \in S_k} :
	\tilde{x}_j(a_{k-1}t + s)
	\le \sqrt{2} \left( \sum_{i=1}^{k-1} \s_i b_i t + \s_k s \right) \Bigg\}
	\nonumber \\
	&\land \, \Bigg\{ \forall_{1 \le k \le \ell-1} :
	\tilde{x}_j(a_k t) - \sqrt{2}t \, \sum_{i=1}^k \s_i b_i
	\in \sfrac{[ -B, -D] }{\s_k - \s_{k+1} } \Bigg\}
	\Bigg)
	+ \OO(\e),
\end{align}
where $S_1 = [r, b_1t]$ and $S_k = S_k^* = [0, b_kt]$, for $1 < k < \ell$.
Proceeding as  in  \eqref{eq:above_speed_special_localisation} -- \eqref{eq:above_speed_Ik}, the probability in \eqref{eq:above_entropic_localisation} is not larger than
\begin{align} \label{eq:above_entropic_integral_chain_1}
	&\eee^{\sum_{k=1}^{\ell-1} b_k t}
	\int_{I_1}
	\sfrac{\d \o_1}{\sqrt{2 \pi \s_1^2 b_1t }}
	\eee^{- \frac{\o_1^2}{2 \s_1^2 b_1t }} \
	\int_{I_2} \dots
	\int_{I_{\ell-1}} 
	\sfrac{\d \o_{\ell-1}}{\sqrt{2 \pi \s_{\ell-1}^2 b_{\ell-1} t}}
	\eee^{- \frac{\o_{\ell-1}^2}{2 \s_{\ell-1}^2 b_{\ell-1} t}} \nonumber \\
	&\times \prod_{ h=0, h \neq k}^{\ell-1}
	\P  \Big(
	\zet_{
		- \left( \sum_{i=0}^{h-1} \sqrt{2} \s_i b_i t - \o_i \right),
		\o_h - \left( \sum_{i=0}^{h-1} \sqrt{2} \s_i b_i t - \o_i \right)}^{|S_h|}
	(s) \le \sqrt{2} \s_h s \, \forall s \in S_h \Big) \nonumber \\
	&\times 
	\Bigg[
	\P  \Big( \zet_{ 
		- \left( \sum_{i=0}^{k-1} \sqrt{2} \s_i b_i t - \o_i \right),
		\o_k - \left( \sum_{i=0}^{k-1} \sqrt{2} \s_i b_i t - \o_i \right)}^{|S_k|} (s) 
	\le \sqrt{2} \s_1 s \, \forall s \in S_k \Big)\nonumber \\
	& \quad
	- \P  \Big( \zet_{ 
		- \left( \sum_{i=0}^{k-1} \sqrt{2} \s_i b_i t - \o_i \right),
		\o_k - \left( \sum_{i=0}^{k-1} \sqrt{2} \s_i b_i t - \o_i \right)}^{|S_k^*|} (s) 
	\le \sqrt{2} \s_1 s - t^{\b \delta} \, \forall s \in S_k^* \Big) 
	\Bigg]\nonumber \\
	&\times \, \P \, \bigg( \max_{i_{\ell} \le n (b_{\ell}t)} \tilde{x}_{i_{\ell}} (b_{\ell}t)
	> m^+(t) - \sum_{i=0}^{\ell-1} \o_i - y \Bigg),
\end{align}
where $I_h$, $1 \le h \le \ell-1$ is defined as in \eqref{eq:above_speed_Ik}.
Proceeding as in \eqref{eq:above_speed_z} -- \eqref{eq:above_speed_zzz} and setting $y_0=0$, we see that \eqref{eq:above_entropic_integral_chain_1} is bounded from above by
\begin{align} \label{eq:above_entropic_y}
	&\int_D^B 
	\sfrac{ \d y_1 }{ \sqrt{2 \pi \s_1^2 b_1t} }
	\sfrac{1}{ \s_1 - \s_2 }
	\eee^{ - \sqrt{2} \frac{y_1}{\s_1 \s_2} } 
	\int_D^B \dots
	\int_D^B
	\sfrac{\d y_{\ell-1}}{\sqrt{2 \pi \s_{\ell-1}^2 b_{\ell-1} t}}
	\sfrac{1}{ \s_{\ell-1} - \s_{\ell} }
	\eee^{ \sqrt{2} \frac{ y_{\ell-1} }{ \s_{\ell-1} (\s_{\ell-1} - \s_{\ell} )} } \nonumber \\
	&\times \,
	\prod_{h=1, h \neq k}^{\ell-1}
	\P \, \bigg( \zet_{
		- \frac{ y_{h-1} }{ \s_h (\s_{h-1} - \s_h) },
		- \frac{ y_h }{ \s_h ( \s_h - \s_{h+1}) } }^{|S_h|} 
	(s) \le 0 \, \forall s \in S_h \bigg) \nonumber \\
	&\times \, 
	\Bigg[
	\P \, \bigg( \zet_{
		- \frac{ y_{k-1} }{ \s_k (\s_{k-1} - \s_k) }, 
		- \frac{ y_k }{ \s_k (\s_k - \s_{k+1}) }}^{|S_k|} (s) 
	\le 0 \, \forall s \in S_k \bigg)\notag\\
	&\quad- \P \, \bigg( \zet_{
		- \frac{ y_{k-1} }{ \s_k (\s_{k-1} - \s_k) }, 
		- \frac{ y_k }{ \s_k (\s_k - \s_{k+1}) }}^{|S_k^*|} (s) 
	\le - \sfrac{t^{\b \delta}}{\s_1} \, \forall s \in S_k^* \bigg)
	\Bigg] \nonumber \\
	&\times \, \P \, \bigg( \max_{i_{\ell} \le n(b_{\ell}t)} \tilde{x}_{i_{\ell}} (b_{\ell}t)
	> m^+(t) + \sfrac{ y_{\ell-1} }{ \s_{\ell-1} - \s_{\ell} } - \sqrt{2} t \, \sum_{i=1}^{\ell-1} \s_i b_i - y \bigg).
\end{align}

Once we show that
\begin{align} \label{eq:above_entropic_diffkey}
	&  \P \, \bigg( \zet_{
		- \frac{ y_{k-1} }{ \s_k (\s_{k-1} - \s_k) }, 
		- \frac{ y_k }{ \s_k (\s_k - \s_{k+1}) }}^{|S_k|} (s) 
	\le 0 \, \forall s \in S_k \bigg)\notag\\
	&\quad- \P \, \bigg( \zet_{
		- \frac{ y_{k-1} }{ \s_k (\s_{k-1} - \s_k) }, 
		- \frac{ y_k }{ \s_k (\s_k - \s_{k+1}) }}^{|S_k^*|} (s) 
	\le - \sfrac{t^{\b \delta}}{\s_1} \, \forall s \in S_k^* \bigg) \nonumber \\
	&\ll
	\begin{cases}
		\frac{ 2 \sqrt{r}}{b_1t - r} 
		\frac{y_1}{\s_1 (\s_1 - \s_2)}
		\quad &\text{if } k=1, \\
		\frac{2}{ \s_k^2 b_kt} 
		\frac{ y_{k-1} }{ \s_{k-1} - \s_k }
		\frac{ y_k }{ \s_k - \s_{k+1}} 
		\quad &\text{if } k \ge 2,
	\end{cases}
\end{align}
the claim of the proposition follows since we have proven (in Proposition~\ref{prop:above_speed_change}) that \eqref{eq:above_entropic_y} is of order 1 when the product in \eqref{eq:above_entropic_y} is taken over all $h$. 

For $k \ge 2$, it follows from Lemma~\ref{lem:BB_line} that
\begin{align} \label{eq:above_entropic_BB_diffC}
	&\P \, \bigg( \zet_{
		- \frac{ y_{k-1} }{ \s_k (\s_{k-1} - \s_k) },
		- \frac{ y_k }{ \s_k (\s_k - \s_{k+1}) }
	}^{|S_k|} 
	(s) \le 0 \, \forall s \in S_k \bigg)\notag\\
	&\quad- \P \, \bigg( \zet_{
		- \frac{ y_{k-1} }{ \s_k (\s_{k-1} - \s_k) },
		- \frac{ y_k }{ \s_k (\s_k - \s_{k+1}) }
	}^{|S_k|} 
	(s) \le - \sfrac{ t^{\beta \delta} }{\s_k} \, \forall s \in S_k \bigg) \nonumber \\
	= \: & \exp \left(
	- \sfrac{2}{\s_k^2 b_kt}
	\sfrac{ y_{k-1} }{ \s_{k-1} - \s_k }
	\sfrac{ y_k}{ \s_k - \s_{k+1} }
	\right)
	\left[
	\exp \left( \sfrac{ 2 t^{\b \delta} }{ \s_k^2 b_kt}
	\left( \sfrac{ y_{k-1} }{ \s_{k-1} - \s_k }
	+ \sfrac{ y_k}{ \s_k - \s_{k+1} }
	- t^{\b \delta} \right) \right)
	-1
	\right].
\end{align}
Since $y_{k-1}, y_k \in [D,B]$ and by Assumption~\ref{as:above3}, the arguments of the exponential functions in \eqref{eq:above_entropic_BB_diffC} converge to zero as $t \uparrow \infty$.	We see that the right-hand side of \eqref{eq:above_entropic_BB_diffC} equals
\begin{equation} \label{eq:above_entropic_diff2_bound}
	2 \sfrac{t^{\b \delta} }{ \s_k^2 b_kt}
	\left( \sfrac{ y_{k-1} }{ \s_{k-1} - \s_k }
	+ \sfrac{ y_k}{ \s_k - \s_{k+1} } \right)
	( 1 + o(1)).
\end{equation}
We see that \eqref{eq:above_entropic_diff2_bound} is smaller than the right-hand side of \eqref{eq:above_entropic_diffkey} for $k \ge 2$ since $t^{\b \delta} \ll ( \s_k - \s_{k+1})^{-1}$ for all $2 \le k \le \ell-1$.

In the case $k=1$, we rewrite the difference of the probabilities as
\begin{align} \label{eq:above_entropic_diff1_integral}
	&\int_{- \infty}^0
	\d z
	\frac{ \exp \bigg({-} 
		\frac{b_1t}{2 (b_1t - t^{\b}) t^{\b}}
		\left( z + \frac{ t^{\b} }{b_1t} \frac{y_1}{ \s_1 (\s_1 - \s_2)} \right)^2 \bigg) }{
		\sqrt{ 2 \pi (b_1t - t^{\b}) t^{\b} / (b_1t)}}
	\,\P \, \bigg( \zet_{0, z}^{t^{\b}} (s) \le 0 \, \forall s \in [r, t^{\b}] \bigg) \\
	&\!\! \times\!\! \bigg[ 
	\P \bigg( \zet_{
		z, 
		- \frac{ y_1 }{ \s_1 (\s_1 - \s_2) }}^{b_1t - t^{\b}}\!(s) 
	\le 0 \: \forall s \!\in\! [0, b_1t - t^{\b}] \bigg)
	- \P \, \bigg( \zet_{
		z, 
		- \frac{ y_1 }{ \s_1 (\s_1 - \s_2) }}^{b_1t - t^{\b}}\!(s) 
	\le - \sfrac{t^{\b \delta}}{\s_1} \: \forall s \!\in\! [0,b_1t - t^{\b}] \bigg) \!
	\bigg]\!. \nonumber
\end{align}
We split the domain of integration into three parts,
\begin{equation} \label{eq:above_entropic_J}
	J_c
	\equiv \left( - \infty, - t^{\b} \right), \quad
	J_d 
	\equiv \left(  - t^{\b}, - t^{\b \delta} / \s_1 \right),\quad
	J_e
	\equiv \left(  - t^{\b \delta} / \s_1, 0 \right).
\end{equation}
On $J_c$, we bound the probabilities of the Brownian bridges by 1. 
Using $t^{\b} \ll b_1t$ and a Gaussian tail estimate, we see that \eqref{eq:above_entropic_diff1_integral} integrated over $J_c$  is smaller than $\exp (-t^{\b}/2)$.
For $z \in J_d$, we get, by Lemma~\ref{lem:BB_line},
\begin{align} \label{eq:above_entropic_BB_diffA}
	&\P \, \bigg( \zet_{
		z, 
		- \frac{ y_1 }{ \s_1 (\s_1 - \s_2) }}^{b_1t - t^{\b}} (s) 
	\le 0 \, \forall s \in [0, b_1t - t^{\b}] \bigg)
	- \P \, \bigg( \zet_{
		z, 
		- \frac{ y_1 }{ \s_1 (\s_1 - \s_2) }}^{b_1t - t^{\b}} (s) 
	\le - \sfrac{t^{\b \delta}}{\s_1} \, \forall s \in [0, b_1t - t^{\b}] \bigg) \nonumber \\
	&= \, \exp \left( - 
	\sfrac{ 2 (-z) y_1 }{ \s_1 (\s_1 - \s_2) (b_1t - t^{\b})}  \right)
	\left[ \exp \left(
	\sfrac{ 2 t^{\beta \delta}}{\s_1 (b_1t - t^{\b} )}
	\left(
	\sfrac{ y_1}{  \s_1 ( \s_1 - \s_2) } 
	- z 
	- \sfrac{ t^{\b \delta} }{ \s_1 } \right) \right)
	-1
	\right].
\end{align}
Since $|z| \le t^{\b}, y_1 \in [D,B]$ and $ t^{\beta} \ll (\s_1 - \s_2)^{-1} \ll \sqrt{b_1t}$, the arguments in both exponential functions in \eqref{eq:above_entropic_BB_diffA} tend to zero as $t \uparrow \infty$.
Thus, the right-hand side of \eqref{eq:above_entropic_BB_diffA} is equal to 
\begin{equation} \label{eq:above_entropic_BB_diffA_bound}
	\sfrac{2 t^{\beta \delta}}{ \s_1 (b_1t - t^{\b})}
	\left( \sfrac{ y_1}{  \s_1  ( \s_1 - \s_2)} - z - \sfrac{ t^{\b \delta} }{ \s_1 }
	\right)
	\, (1+o(1)) \leq \sfrac{2 t^{\beta \delta}}{ \s_1^2 (b_1t - t^{\b})}
	\sfrac{ y_1}{ \s_1 - \s_2} (1+o(1)).
\end{equation}
For $z \in J_e$, the second probability in the left-hand side of \eqref{eq:above_entropic_BB_diffA} is zero. By Lemma~\ref{lem:BB_line}, the first probability is smaller than
\begin{equation} \label{eq:above_entropic_BB_diffB_bound}
	\sfrac{2 (-z) y_1 }{ \s_1 (\s_1 - \s_2) (b_1t - t^{\b})}
	\leq \sfrac{2 t^{\b \delta} y_1 }{ \s_1^2 (\s_1 - \s_2) (b_1t - t^{\b})}.
\end{equation}
We use \eqref{eq:above_barrier_BB_bound} to bound the first probability in \eqref{eq:above_entropic_diff1_integral}. Then, by \eqref{eq:above_entropic_BB_diffA_bound} and \eqref{eq:above_entropic_BB_diffB_bound}, \eqref{eq:above_entropic_diff1_integral} is not larger than
\begin{align} \label{eq:above_entropic_diff1_bound}
	&\eee^{-t^{\b}/2} + 
	\sfrac{ 2\sqrt{r} }{ \s_1^2 (b_1t - t^{\b})}
	\sfrac{y_1}{\s_1 - \s_2}
	t^{\b (\delta - 1/2)}
	\int_{-t^{\b/2} }^{ 0 }
	\d x \, 
	\eee^{ - x^2/2}
	(-x) 
	(1+o(1)) 
	= \OO \left( 
	\sfrac{ \sqrt{r} y_1t^{\b ( \delta - 1/2) } }{ ( \s_1- \s_2) (b_1t - t^{\b})} 
	\right).
\end{align}
Since $\delta \in (0,1/2)$ and $t^{\b \delta} \ll (\s_1 - \s_2)^{-1}$, the the last line of \eqref{eq:above_entropic_diff1_bound} is smaller than the right-hand side of  \eqref{eq:above_entropic_diffkey} for \mbox{$k=1$}.
This concludes the proof.
\end{proof}
\subsection{Localisation of paths in Case B}
\label{sec:below_localisation}
In this subsection, we prove localisations of a VSBBM $(\tilde{X}_t)_{t>0}$  with speed functions $(A_t)_{t>0}$ satisfying Assumption~\ref{as:below}. In  Figure~\ref{fig:Localisation_below}, 
we illustrate these localisation results for three-speed VSBBM.

\begin{figure}
\includegraphics[page=2,width = 12cm]{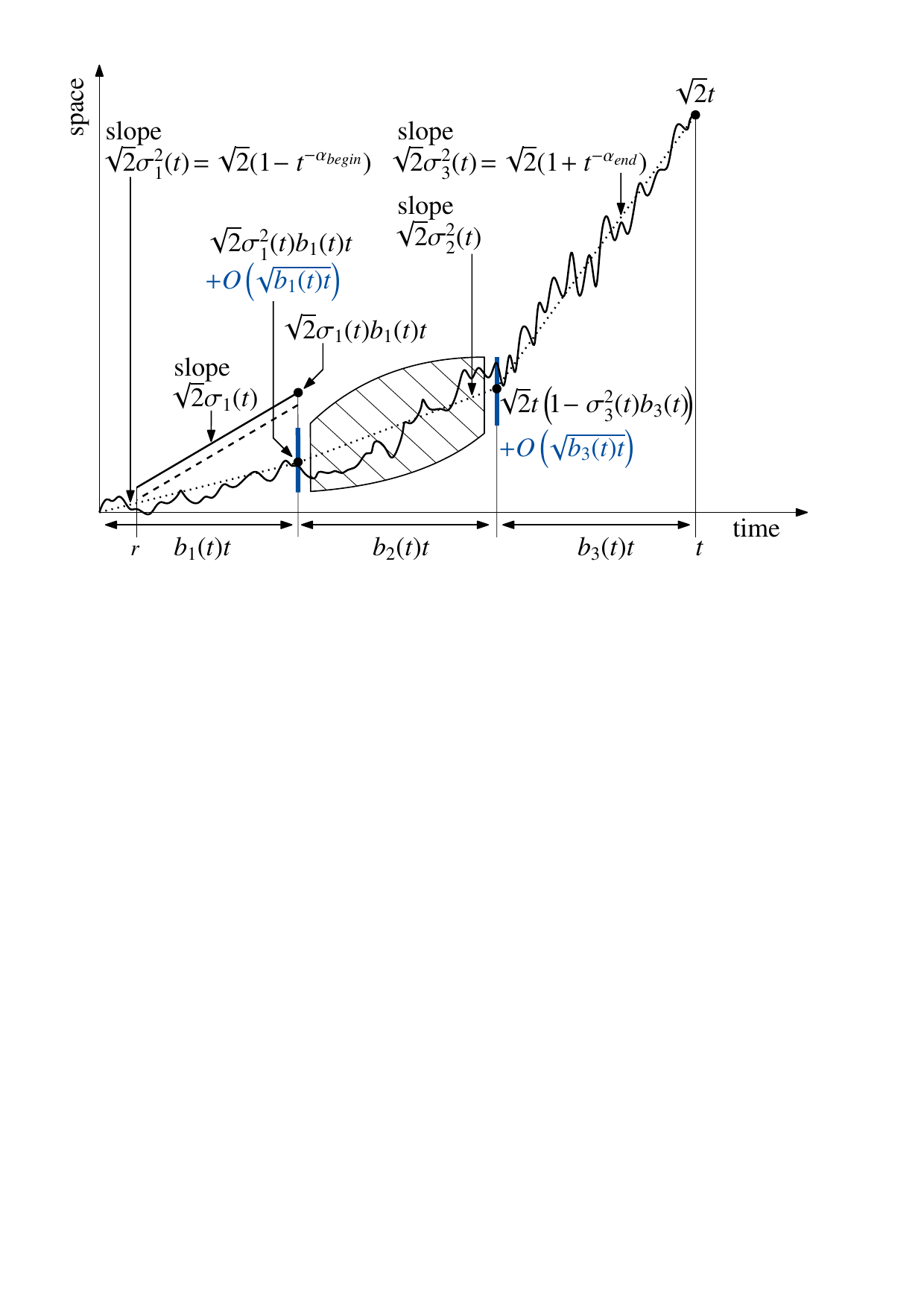}

\caption{Localisation of an extremal particle of three-speed BBM in Case B. The dotted line depicts the function $\sqrt{2} A_t(s/t) t$ for $s \in [0, t]$. The dashed box depicts fluctuations of order $\sqrt{ \min\{ A_t (s/t) t, t - A_t(s/t) t\} }$. The dashed line depicts the effect of entropic repulsion. }\label{fig:Localisation_below}	
\end{figure}
We define the set
\be
\TT_{r_1, r_2, S, \g}
\equiv 
\bigg\{ X \colon \forall_{r_1 \le s \le r_2} \colon 
\big| X(s) + S - \sqrt{2} t A_t\kl{s/t} \big|
<  \bbkl{ A_t\kl{s/t} \wedge \bkl{1 - A_t\kl{s/t} \!} }^\gamma t^\gamma
\bigg\},
\ee
where $0 \le r_1 \le r_2 \le t, S \in \R$ and $\gamma >0$.
Between times $b_1 t$ and $(1-b_\ell) t$, extremal particles fluctuate like a time-inhomogeneous Brownian bridge with time-inhomogeneity controlled by the speed function.
\begin{proposition}[Fluctuations in the middle part] \label{prop:below_bridgeLok}
For any $y\in \R$, any $\e >0$, any $\g > 1/2$ 
and for all $t$ large enough,
\begin{align}
	\label{eq:below_bridgeClaim}
	\P \, \bigg( 
	&\exists_{j \le n(t)} \colon 
	\Big\{ 
	\bar{x}_j(t) > m^-(t)-y
	\Big\}
	\land \, 
	\bigg\{\bar x_j \not\in \TT_{b_1t, (1-b_\ell)t, 0, \g}
	\bigg) < \e.
\end{align}
\end{proposition}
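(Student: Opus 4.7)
The plan is to reduce to a single-path computation via the many-to-one lemma, then exploit the time-changed Brownian-bridge decomposition of the underlying Gaussian process and apply a quantitative sharpening of Lemma~\ref{lem:BB_fluc}.

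First, Markov's inequality together with the many-to-one lemma bounds the probability in \eqref{eq:below_bridgeClaim} by
\begin{equation*}
\eee^t \, \P\bkl{ \bar{x}(t) > m^-(t) - y,\; \bar{x} \notin \TT_{b_1 t,\, (1-b_\ell)t,\, 0,\, \gamma} },
\end{equation*}
where $\bar{x}(s) = B\bkl{t A_t(s/t)}$ is a centred Gaussian process realised as a time change of a standard Brownian motion $B$. A Mills-type estimate for the Gaussian $\bar{x}(t)$ of variance $t$ yields $\eee^t \, \P\bkl{\bar{x}(t) > m^-(t) - y} = \OO\bkl{ t^{\ab + \ae} \eee^{\sqrt{2}y} }$, so the bridge event must contribute super-polynomial decay in $t$ to close the estimate.

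Next, I would condition on $\bar{x}(t) = w$ and use the standard decomposition $\bar{x}(s) = A_t(s/t) \, w + \zeta\bkl{t A_t(s/t)}$, where $\zeta$ is a standard Brownian bridge on $[0,t]$ pinned at $0$ at both ends, independent of $\bar{x}(t)$. This gives
\begin{equation*}
\bar{x}(s) - \sqrt{2} \, t \, A_t(s/t) = A_t(s/t) \, (w - \sqrt{2} \, t) + \zeta\bkl{t A_t(s/t)}.
\end{equation*}
Restricting to the $w$-range where the Gaussian density of $\bar{x}(t)$ is not negligible, $|w - \sqrt{2} \, t|$ is of order $\log t$, so the mean shift is dominated by $\bkl{A_t(s/t) \wedge (1 - A_t(s/t))}^\gamma t^\gamma$ as long as the latter grows as a positive power of $t$, which is guaranteed on $[b_1 t,\, (1-b_\ell)t]$ by the lower bounds on $\bbeg(t)$ and $\bend(t)$. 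Under the substitution $\tau = t A_t(s/t)$, for which $\tau \wedge (t - \tau) = t \bkl{A_t(s/t) \wedge (1 - A_t(s/t))}$, the event $\bar{x} \notin \TT$ then forces
\begin{equation*}
\exists \, \tau \in \ekl{t A_t(b_1),\, t A_t(1-b_\ell)}: \quad |\zeta(\tau)| \ge \tfrac{1}{2}(\tau \wedge (t - \tau))^\gamma.
\end{equation*}

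Finally, by Assumption~\ref{as:below} the inner boundaries $t A_t(b_1)$ and $t - t A_t(1 - b_\ell)$ exceed $t^{1/2 + \ab}$ and $t^{1/2 + \ae}$ up to constants. A union bound using the pointwise Gaussian-bridge tail $\P\bkl{|\zeta(\tau)| \ge (\tau \wedge (t - \tau))^\gamma} \le 2 \exp\bkl{-\tfrac{1}{2}(\tau \wedge (t - \tau))^{2\gamma - 1}}$ at integer $\tau$, combined with a modulus-of-continuity estimate between integer times, yields a super-polynomially small conditional probability. Multiplied by the polynomial prefactor $t^{\ab + \ae}$, the total bound tends to zero, proving the claim. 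The main obstacle is that Lemma~\ref{lem:BB_fluc} is only qualitative, so one must revisit its proof — or argue directly from the Gaussian-bridge tail as above — to extract the quantitative super-polynomial decay in the inner boundary $r = t A_t(b_1) \wedge (t - t A_t(1 - b_\ell))$ that beats the polynomial blow-up $t^{\ab + \ae}$ produced by the many-to-one factor.
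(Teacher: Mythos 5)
Your proposal is correct, but it takes a genuinely different route from the paper. The paper first localises the path at the two gate times: Lemmas~\ref{lem:below_gate1} and~\ref{lem:below_gate2} pin $\bar x_j(b_1t)$ and $\bar x_j((1-b_\ell)t)$ to windows of width $(\s_1^2b_1t)^\gamma$ and $(\s_\ell^2b_\ell t)^\gamma$, and the subsequent first-moment computation only runs up to time $(1-b_\ell)t$, with the final stretch kept as a BBM maximum and evaluated by the sharp F-KPP tail asymptotics (Proposition~\ref{prop:FKPP_tail_estimate}). This produces a prefactor of order $1$ multiplying the bridge-deviation probability, so the purely qualitative Lemma~\ref{lem:BB_fluc} (probability $<\e$) suffices as stated. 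You instead run the crude many-to-one over the whole horizon $[0,t]$, which yields the polynomially large prefactor $t^{\ab+\ae}$, and you compensate by upgrading the bridge estimate to a quantitative, super-polynomially small bound $\exp(-c(\tau\wedge(t-\tau))^{2\gamma-1})$ with inner boundary $\tau_1\wedge(t-\tau_2)\gtrsim t^{1/2+\ab\wedge\ae}$ guaranteed by Assumption~\ref{as:below}. Both routes rest on the same structural facts (single-path law $B(tA_t(s/t))$, bridge decomposition, lower bounds on $b_1t$ and $b_\ell t$); yours avoids the gate lemmas and the F-KPP machinery for this proposition at the cost of reproving Lemma~\ref{lem:BB_fluc} quantitatively, which is routine (union bound over integer times plus the Gaussian bridge tail and a modulus-of-continuity step, exactly as you sketch). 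The one place to tighten your write-up is the treatment of the endpoint $w$: "the range where the density is not negligible" should be an explicit split, say $|w-\sqrt2t|\le K\log t$ versus its complement, with the complement killed by $\eee^t\phi_t(w)\lesssim\eee^{-\sqrt2(w-\sqrt2t)}$ alone and the bridge probability bounded by $1$ there; on the main range the mean shift $A_t(s/t)(w-\sqrt2t)=\OO(\log t)$ is indeed negligible against the threshold, which is at least $t^{\gamma(1/2+\ab\wedge\ae)}$. This is a presentational gap, not a mathematical one.
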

We first control the localisation 
at the times of the first and last speed change.
\begin{lemma}[Position at the last speed change] \label{lem:below_gate2}
For any $y \in \R$, any $\e >0$, any $\g > 1/2$ and for all $t$ large enough,
\begin{align}\label{eq:below_gate2_1}
	\P \, \bigg( 
	&\exists_{j \le n(t)} \colon 
	\Big\{ 
	\bar{x}_j(t) > m^-(t)-y
	\Big\} 
	\land \, 
	\bigg\{ 
	\abs{ 
		\bar{x}_j((1-b_\ell) t) - \sqrt{2} (1-\s_\ell^2 b_\ell) t
	}
	> (\s_\ell^2 b_\ell t)^\g
	\bigg\}
	\bigg) < \e.
\end{align}
\end{lemma}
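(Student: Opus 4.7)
The plan is to apply a first-moment method: Markov's inequality combined with the many-to-one lemma reduces the statement to a single-particle Gaussian computation. Denoting by $E_1(j)$ and $E_2(j)$ the two events on the left-hand side of \eqref{eq:below_gate2_1},
\begin{equation}
\P\bigl(\exists_{j\le n(t)}\colon E_1(j) \wedge E_2(j)\bigr)
\le e^t \, \P(E_1 \wedge E_2),
\end{equation}
where the right-hand side refers to a single Gaussian trajectory $\bar{x}$ with the induced VSBBM covariance. For the piecewise-linear $A_t$ in question, $\bar{x}$ has independent increments, so that $X_1 \equiv \bar{x}((1-b_\ell)t) \sim \NN(0,vt)$ with $v \equiv 1-\s_\ell^2 b_\ell$, and $X_2 \equiv \bar{x}(t)-\bar{x}((1-b_\ell)t) \sim \NN(0,\s_\ell^2 b_\ell t)$ are independent.

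Next, I would change coordinates to the most-probable decomposition on the hyperplane $X_1+X_2 = m^-(t)-y$: writing $X_i = X_i^* + \xi_i$ with $X_1^* = v(m^-(t)-y)$ and $X_2^* = (1-v)(m^-(t)-y)$, a direct computation shows that the joint density factorises as the product of a Gaussian in $\eta \equiv \xi_1+\xi_2$ of variance $t$ (tilted by $e^{-\sqrt{2}\eta}$, with $E_1$ becoming $\eta > 0$) and a conditional Gaussian for $\xi_1$ given $\eta$ of mean $v\eta$ and variance $v(1-v)t = v\s_\ell^2 b_\ell t$. Since $X_1^* = \sqrt{2}vt - O(\log t)$, the event $E_2$ translates, modulo a negligible shift, into $\{|\xi_1| > (\s_\ell^2 b_\ell t)^\gamma(1+o(1))\}$.

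The main computation then shows that the prefactor $e^t\,(2\pi t\sqrt{v(1-v)})^{-1} e^{-(m^-(t)-y)^2/(2t)}$ collapses to a polynomial in $t$ of the form $t^{(1+2(\ab+\ae))/2 - 1} e^{\sqrt{2}y}/(2\pi\sqrt{v(1-v)})(1+o(1))$, since the $e^t$ cancels $e^{-(m^-(t)-y)^2/(2t)}$ at leading order, leaving only the logarithmic correction present in $m^-(t)$. The tilted $\eta$-integral over $\eta > 0$ yields an $O(1)$ factor concentrated on $\eta$ of order $1$, and the conditional Gaussian tail in $\xi_1$ satisfies
\begin{equation}
\P\bigl( |\xi_1| > (\s_\ell^2 b_\ell t)^\gamma \,\big|\, \eta \bigr)
\le 2 \exp\bigl(-(\s_\ell^2 b_\ell t)^{2\gamma - 1}/(2v)\bigr)
\end{equation}
uniformly for $\eta$ in the typical range. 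By Assumption~\ref{as:belowB}\,(i), $\s_\ell^2 b_\ell t \gg t^{\ae + 1/2} \to \infty$, so this tail vanishes faster than any polynomial whenever $\gamma > 1/2$, giving a bound less than $\e$ for $t$ large.

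The main obstacle is the simultaneous cancellation of the three exponentially large factors $e^t$, the Gaussian mass at the origin, and the typical-path penalty $(m^-(t)-y)^2/(2t) \approx t$; these combine cleanly only because $X_i^*/t \to \sqrt{2}$, which identifies the centre $\sqrt{2}(1-\s_\ell^2 b_\ell)t$ of the localisation band and is the reason the lemma's band is centred exactly where it is. The log-coefficient $\tfrac{1+2(\ab+\ae)}{2\sqrt{2}}$ in $m^-(t)$ is calibrated precisely so that the polynomial residue surviving the cancellation is overwhelmed by the Gaussian tail in $\xi_1$ for every $\gamma > 1/2$.
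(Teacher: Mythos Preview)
Your approach is correct and essentially the same as the paper's: both reduce via the many-to-one lemma to a single-particle Gaussian computation and show that the resulting polynomial prefactor is killed by the stretched-exponential tail $\exp\bigl(-\tfrac{1}{2}(\s_\ell^2 b_\ell t)^{2\g-1}\bigr)$. The paper organises this by conditioning on the position at time $(1-b_\ell)t$ and then applying a BBM tail bound on the final segment (which is itself a first-moment estimate), whereas you apply many-to-one over the full interval $[0,t]$ and diagonalise via $(\eta,\xi_1)$; the one place your sketch should be tightened is the phrase ``uniformly for $\eta$ in the typical range'' --- the paper handles the atypical tail explicitly by splitting off $\omega \ge \sqrt{2}(1-b_\ell)t$, and you should similarly note that for large $\eta$ the tilted density $e^{-\sqrt{2}\eta}$ already dominates the conditional probability bounded by $1$.
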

\begin{lemma}[Position at the first speed change] \label{lem:below_gate1}
For any $y \in \R$, any $\e >0$, any $\g > 1/2$ and for all $t$ large enough,
\begin{align}
	\label{eq:below_gate1_1}
	\P \, \bigg( 
	&\exists_{j \le n(t)} \colon 
	\Big\{ 
	\bar{x}_j(t) > m^-(t)-y
	\Big\} 
	\land \, 
	\bigg\{ 
	\abs{ 
		\bar{x}_j(b_1 t) - \sqrt{2}\s_1^2 b_1 t
	}
	> (\s_1^2 b_1 t)^\g
	\bigg\} 
	\bigg) < \e.
\end{align}
\end{lemma}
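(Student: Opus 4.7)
The plan is to follow the same scheme as the proof of Proposition~\ref{prop:above_speed_change}: a union bound, then a first-moment bound via Markov's inequality and the many-to-one lemma, then a change of variables, and finally an appeal to Lemma~\ref{lem:F-KPP_finite} to estimate the tail of the maximum of the subsequent BBM. I restrict to piecewise linear speed functions (with slopes $\s_1^2 = 1-t^{-\ab}$ on $[0, b_1 t]$ and $\s_\ell^2 = 1+t^{-\ae}$ on $[(1-b_\ell)t, t]$); the extension to general speed functions satisfying Assumption~\ref{as:below} then follows by Gaussian comparison, as indicated in the introduction.

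First, I would split the bad event by the sign of $\bar{x}_j(b_1 t) - \sqrt{2}\s_1^2 b_1 t$, and, invoking Lemma~\ref{lem:below_gate2}, further restrict to the event that $\bar{x}_j((1-b_\ell)t) \in \sqrt{2}(1-\s_\ell^2 b_\ell) t + [-(\s_\ell^2 b_\ell t)^\g, (\s_\ell^2 b_\ell t)^\g]$ at cost $\OO(\e)$. Markov plus many-to-one applied at time $(1-b_\ell)t$ produces a double Gaussian integral in $\omega_1 = \bar{x}(b_1 t)$ and $\omega_2 = \bar{x}((1-b_\ell)t)$, times an independent factor $\P(\max y_j(b_\ell t) > m^-(t) - y - \omega_2)$, where $y_j$ is BBM with variance $\s_\ell^2$ per unit time. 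The latter probability is estimated by applying Lemma~\ref{lem:F-KPP_finite} to $x_j = y_j/\s_\ell$.

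After the change of variables $\omega_1 = \sqrt{2}\s_1^2 b_1 t - z_1$ and $\omega_2 = \sqrt{2}(1-\s_\ell^2 b_\ell)t - z_2$, the exponents rearrange as $\eee^{(\s_\ell^2 - 1)b_\ell t + \sqrt{2}z_2}$ from Markov and the two Gaussian densities, multiplied by the F-KPP factor $\eee^{-(\s_\ell - 1)\sqrt{2}b_\ell t - \sqrt{2}z_2 + \sqrt{2}y + \ldots}$. Using $\s_\ell^2 - 1 = t^{-\ae}$ and $\s_\ell - 1 = \tfrac{1}{2}t^{-\ae} + \OO(t^{-2\ae})$, the terms of order $b_\ell t^{1-\ae}$ cancel, as do the $\sqrt{2}z_2$ contributions; an analogous computation absorbs the $b_1 t^{1-\ab}$ contribution coming from the Gaussian density at $b_1 t$. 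What remains is a constant multiple of $\eee^{\sqrt{2}y}$ times a polynomial factor in $t$, which precisely accounts for the $\sfrac{(1+2(\ab+\ae))}{2\sqrt{2}}\log t$ correction in $m^-(t)$ via the F-KPP logarithm from Lemma~\ref{lem:F-KPP_finite}.

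The $z_2$-integration, restricted to the gate interval from Lemma~\ref{lem:below_gate2}, contributes at most $\OO((\s_\ell^2 b_\ell t)^\g)$. The decisive smallness comes from the $z_1$-integration: restricted to $|z_1| > (\s_1^2 b_1 t)^\g$, the Gaussian factor $\eee^{-z_1^2/(2\s_1^2 b_1 t)}$ integrates to a quantity of order $\exp(-\tfrac{1}{2}(\s_1^2 b_1 t)^{2\g-1})$, which vanishes superpolynomially as $t\uparrow\infty$ since $\g > 1/2$ and $b_1 t \gg t^{\ab + 1/2}$ by Assumption~\ref{as:belowA}. I expect the main obstacle to lie in Step~3: the bookkeeping of exponential factors must align so that the leading $\sqrt{2}t$ terms from Markov's entropy, from the two Gaussian densities, and from the F-KPP correction all cancel, leaving only $\eee^{\sqrt{2}y}$ and the log-correction. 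In Case~B this works out because the slopes $\s_1^2, \s_\ell^2$ both tend to $1$ polynomially in $t$ with rates $t^{-\ab}$ and $t^{-\ae}$, and these rates are precisely what produces the $(1+2(\ab+\ae))$ prefactor of the logarithm in $m^-(t)$.
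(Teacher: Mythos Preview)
Your overall architecture matches the paper's proof exactly: restrict to the gate at time $(1-b_\ell)t$ via Lemma~\ref{lem:below_gate2}, apply a first-moment/many-to-one bound producing a double Gaussian integral, and then show that the outer integral over the complement of the $\omega_1$-gate is killed by a Gaussian tail of order $\exp\bigl(-\tfrac12(\s_1^2 b_1 t)^{2\g-1}\bigr)$.

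There is, however, a genuine gap at the F-KPP step. You propose to bound the maximum on $[(1-b_\ell)t,t]$ using Lemma~\ref{lem:F-KPP_finite}, which gives only $C' z\,\eee^{-\sqrt2\,z}$. With $\omega_2$ on the gate you correctly compute $z \sim \sqrt2(\s_\ell-1)b_\ell t$, and you observe that $(\s_\ell^2-1)b_\ell t$ from the entropy/Gaussian side and $-2(\s_\ell-1)b_\ell t$ from $\eee^{-\sqrt2 z}$ cancel at order $b_\ell t^{1-\ae}$. But the second-order mismatch $(\s_\ell^2-1)-2(\s_\ell-1)=(\s_\ell-1)^2$ leaves a residual exponent $+(\s_\ell-1)^2 b_\ell t \asymp \tfrac14\,b_\ell t^{1-2\ae}$, which, by Assumption~\ref{as:belowLSpeed3}, satisfies $b_\ell t^{1-2\ae}\gg t^{1/2-\ae}\to\infty$. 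So your upper bound diverges and cannot be beaten by the Gaussian tail in $z_1$.

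The fix is to use the sharper tail estimate that retains the Gaussian correction $\eee^{-z^2/(2b_\ell t)}$, i.e.\ Proposition~\ref{prop:FKPP_tail_estimate} (as the paper does in \eqref{eq:below_gate1_4}) or equivalently the first-moment bound $\tfrac{1}{2\sqrt{\pi b_\ell t}}\exp\bigl(b_\ell t - (m^-(t)-\omega_1-\omega_2-y)^2/(2\s_\ell^2 b_\ell t)\bigr)$. With this factor the combined exponent becomes exactly $-(\s_\ell^2-1)b_\ell t-\sqrt2 z_2+\ldots$, so both leading and second order cancel against the entropy/Gaussian contribution, leaving only polynomial prefactors; the Gaussian tail in $z_1$ then finishes the argument as you describe.
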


\begin{proof}[Proof of Lemma \ref{lem:below_gate2}]
A first moment method as in \eqref{eq:above_speed_special_localisation} -- \eqref{eq:above_speed_integral_chain_1} shows that the probability in \eqref{eq:below_gate2_1} is bounded from above by
\begin{align}
	\label{eq:below_gate2_1.5}
	&\eee^{(1-b_\ell) t}
	\int
	_{			J_1
	}
	\sfrac{\d \omega}{\sqrt{2 \pi (1  -\s_\ell^2 b_\ell)t}}
	\eee^{- \sfrac{\omega^2}{2 (1  -\s_\ell^2 b_\ell)t}}
	\P \, \bigg(
	\max_{i \le n(b_{\ell}t)} \s_\ell x_{i}^{b_\ell t}(b_{\ell}t)
	> m^-(t) - \omega -y
	\, 
	\bigg),
\end{align}
where $\kl{x_{i}^{b_\ell t}(b_{\ell}t)}_{i \le n(b_{\ell}t)}$ denotes a standard BBM at time $b_\ell t$ and
\begin{align}
	J_1 &\equiv 
	\kl{
		-\infty,\sqrt{2}(1-\s_\ell^2 b_\ell)t - (\s_\ell^2 b_\ell t)^\g
	} 
	\cup \kl{ 
		\sqrt{2}(1-\s_\ell^2 b_\ell)t + (\s_\ell^2 b_\ell t)^\g, \infty
	}.
\end{align}
We split the range of integration $J_1$ into $\big[\!\sqrt{2}(1- b_\ell)t, \infty\big)$ and $J_2 \equiv 
J_1 \setminus \big[\!\sqrt{2}(1- b_\ell)t, \infty\big)$.\\ For \mbox{$\o\in\left[\!\sqrt{2}(1- b_\ell)t, \infty\right)$}, we bound the probability in \eqref{eq:below_gate2_1.5} by $1$ and the integral by
\begin{align}\label{eq:below_gate2_1.6}
	\eee^{(1-b_\ell) t}
	\int
	_{
		\sqrt{2}(1- b_\ell)t 
	}^{\infty}
	\sfrac{\d \omega}{\sqrt{2 \pi (1  -\s_\ell^2 b_\ell)t}}
	\eee^{- \sfrac{\omega^2}{2 (1  -\s_\ell^2 b_\ell)t}}
	\leq 
	\eee^{(1-b_\ell) t} \eee^{- \sfrac{(1-b_\ell)^2 t}{ (1  -\s_\ell^2 b_\ell)}} = o(1),
\end{align}
by a Gaussian tail bound.
For the integral over $J_2$, 
we write
\begin{align}
	z(t, \omega) \equiv  \s_\ell^{-1} \kl{ m^-(t) - \omega -y} - \sqrt{2} b_\ell t,
\end{align}  
and 
use \cite[Lemma 2.3]{1to6} to get 
\begin{align}
	\begin{split}
		\P \, \bigg(
		\max_{i \le n(b_{\ell}t)} \s_\ell x_{i}^{b_\ell t}(b_{\ell}t)
		> m^-(t) - \omega -y
		\, 
		\bigg)
		&\leq
		\sfrac{1}{
			\sqrt{2\pi} (\sqrt{2b_\ell t} + z(t, \omega) / \!\sqrt{b_\ell t})
		} 
		\eee^{-\sqrt{2}z(t, \omega) - \sfrac{z^2(t, \omega)}{2b_\ell t}}\\
		&\leq
		\sfrac{1}{2\sqrt{\pi b_\ell t}}
		\eee^{
			b_\ell t -\sfrac{\left( m^-(t) - \omega -y\right)^2}{2\s_\ell^2 b_\ell t}
		}	
		(1+o(1)).
	\end{split}
	\label{eq:below_gate2_2_2}
\end{align}
By \eqref{eq:below_gate2_1.6} and \eqref{eq:below_gate2_2_2},
\eqref{eq:below_gate2_1.5} is not larger than
\begin{align}
	&\sfrac{1}{2\sqrt{\pi b_\ell t}}
	\eee^{ t}
	\int
	_{
		J_2
	}
	\sfrac{\d \omega}{\sqrt{2 \pi (1  -\s_\ell^2 b_\ell)t}}
	\eee^{- \sfrac{\omega^2}{2 (1  -\s_\ell^2 b_\ell)t}}
	\eee^{
		-\sfrac{
			\left( m^-(t) - \omega 	-y\right)^2
		}{
			2\s_\ell^2 b_\ell t
		}
	}(1+o(1))\label{eq:below_gate2_3}\\
	&=\:
	\sfrac{1}{2\sqrt{\pi b_\ell t}}
	\eee^{t-\sfrac{(m^-(t) - y)^2}{2 t}}
	\int
	_{
		J_2
	}
	\sfrac{\d \omega}{\sqrt{2 \pi (1  -\s_\ell^2 b_\ell)t}}
	\exp\kl{
		- \sfrac{
			\left(\omega\,-\,(1-\s_\ell^2 b_\ell) (m^-(t) - y)\right)^2
		}{
			2 (1  -\s_\ell^2 b_\ell)\,\s_\ell^2 b_\ell t
		}
	}
	(1+o(1))\notag\\
	&=\:
	\sfrac{
		\eee^{
			\sqrt{2} d 
		}
		t^{ 1/2 + \a_1+\a_\ell}
	}{
		\sqrt{b_\ell}
	}
	\!\!\!\!\hspace{-4mm}
	\int\limits_{
		\left(-(\s_\ell^2 b_\ell t)^\g\!,\, (\s_\ell^2 b_\ell t)^\g\right)^c
	}
	\!\!\!
	\sfrac{\d \omega}{\sqrt{2 \pi (1  -\s_\ell^2 b_\ell)t}}
	\exp\!\bbbbkl{
		\!{-} \sfrac{
			\kl{
				\omega
				-(1-\s_\ell^2 b_\ell) 
				\kl{m^-(t) - \sqrt{2} t - y
				}
			}^2
		}{
			2 (1  -\s_\ell^2 b_\ell)\,\s_\ell^2 b_\ell t
		}
	}
	\!(1+o(1)),
	\notag
\end{align}
which tends to zero as $t\uparrow\infty$ since 
$m^-(t) \,- \!\sqrt{2} t - y=\OO(\ln(t))$.
\end{proof} 

\begin{proof}[Proof of Lemma~\ref{lem:below_gate1}]
Due to Assumption~\ref{as:belowLSpeed3}, we can choose $\g>1/2$ 
such that
\begin{align}
	\label{eq:below_gate2_tightenedBEllAssumption}
	1 \gg	b_\ell &\gg t^{\frac{\a_\ell + \g- 1}{1-\g}}.
\end{align}
By Lemma~\ref{lem:below_gate2}, the probability in \eqref{eq:below_gate1_1}  equals
\begin{align}
	&\P \, \bigg( 
	\exists_{j \le n(t)} \colon 
	\Big\{ 
	\bar{x}_j(t) > m^-(t)-y
	\Big\} 
	\land \, 
	\bigg\{ 
	\abs{ 
		\bar{x}_j(b_1 t) - \sqrt{2}\s_1^2 b_1 t
	}
	> (\s_1^2 b_1 t)^\g
	\bigg\}\notag\\
	&\quad\quad\qquad\land\,
	\bigg\{ 
	\abs{ 
		\bar{x}_j((1-b_\ell) t) - \sqrt{2} (1-\s_\ell^2 b_\ell) t
	}
	\leq (\s_\ell^2 b_\ell t)^\g
	\bigg\} 
	\bigg) + \OO(\e)\nonumber\\
	&\leq\: \eee^{(1-b_\ell) t}
	\!\!
	\displaystyle
	\int\limits
	_{(I_1)^c}
	\!\!\!\!
	\sfrac{\d \omega_1}{\sqrt{2 \pi \s_1^2 b_1t}}
	\eee^{- \sfrac{\omega_1^2}{2 \s_1^2 b_1 t}} 
	\int\limits
	_{
		I_2
	}
	\!\!\!\!
	\sfrac{\d \omega_2}{\sqrt{2 \pi (1 - \s_1^2 b_1 -\s_\ell^2 b_\ell)t}}
	\eee^{- \sfrac{\omega_2^2}{2 (1 - \s_1^2 b_1 -\s_\ell^2 b_\ell)t}}
	\notag\\
	&\quad\quad\qquad\times 
	\P \, \bigg(
	\max_{i \le n(b_{\ell}t)} \s_\ell x_{i}^{b_\ell t}(b_{\ell}t)
	> m^-(t) - \omega_1 -\omega_2 -y
	\bigg)+\OO(\e),\label{eq:below_gate1_3}
\end{align}  
where
\begin{align}\label{eq:below_DefIntervals}
	I_1 &\equiv \kl{\sqrt{2}\s_1^2 b_1 t-(\s_1^2 b_1 t)^\g, \sqrt{2}\s_1^2 b_1 t+(\s_1^2 b_1 t)^\g}, \notag\\
	I_2 &\equiv 
	\kl{
		\sqrt{2}(1-\s_\ell^2 b_\ell) t - \omega_1 - (\s_\ell^2 b_\ell t)^\g,
		\sqrt{2}(1-\s_\ell^2 b_\ell) t - \omega_1 + (\s_\ell^2 b_\ell t)^\g
	}.
\end{align} 
For $\omega_2 \in I_2$, by \eqref{eq:below_gate2_tightenedBEllAssumption}, we can use
Proposition~\ref{prop:FKPP_tail_estimate} and get
\begin{align}
	\label{eq:below_gate1_4}
	&\P \, \bigg(
	\max_{i \le n(b_{\ell}t)} \s_\ell x_{i}^{b_\ell t}(b_{\ell}t)
	> m^-(t) - \omega_1 -\omega_2 -y
	\, 
	\bigg)\notag\\
	&=\:
	\sfrac{C}{\sqrt{2}}
	b_\ell t^{1-\a_\ell}
	\eee^{			b_\ell t 
		-\sfrac{
			\kl{
				\sqrt{2} t - \omega_1 - \omega_2 + L(t)-y
			}^2
		}{2\s_\ell^2 b_\ell t}}
	(1+o(1)),
\end{align}
where
\begin{equation}
	L(t) \equiv  
	\sfrac{3}{2\sqrt{2}}\s_\ell \log(b_\ell t)
	- \sfrac{1 + 2(\a_1+\a_\ell)}{2\sqrt{2}} \log(t).
	\label{eq:below_defL}
\end{equation}
This implies that
\begin{align}\label{eq:below_gate1_5}
	&\int\limits
	_{
		I_2		}
	\!\!\!\!
	\sfrac{\d \omega_2}{\sqrt{2 \pi (1 - \s_1^2 b_1 -\s_\ell^2 b_\ell)t}}
	\eee^{- \sfrac{\omega_2^2}{2 (1 - \s_1^2 b_1 -\s_\ell^2 b_\ell)t}}
	\P \, \bigg(
	\max_{i \le n(b_{\ell}t)} \s_\ell x_{i}^{b_\ell t}(b_{\ell}t)
	> m^-(t) - \omega_1 -\omega_2 -y
	\, 
	\bigg)\notag\\
	&=
	\sfrac{C}{\sqrt{2}}
	b_\ell t^{1-\a_\ell}
	\eee^{b_\ell t}
	\eee^{
		-\sfrac{
			\kl{
				\sqrt{2} t - \omega_1 + L(t)-y
			}^2
		}{
			2(1-\s_1^2 b_1) t
		}
	}
	\int\limits_{
		I_2
	}
	\sfrac{
		\d \omega_2
	}{
		\sqrt{2 \pi (1 - \s_1^2 b_1 -\s_\ell^2 b_\ell)t}
	}\notag\\
	&\qquad\times
	\exp\bbbbkl{
		{-} (1-\s_1^2 b_1)\sfrac{
			\kl{
				\omega_2 
				- 
				\kl{1-\s_1^2 b_1}^{-1}
				\kl{
					1-\s_1^2 b_1 - \s_\ell^2 b_\ell
				}
				\kl{
					\sqrt{2}t - \omega_1 + L(t)-y
				}
			}^2
		}{
			2 (1 - \s_1^2 b_1 -\s_\ell^2 b_\ell)\s_\ell^2 b_\ell t
		}
	}(1+o(1))\nonumber\\
	&\leq 	\sfrac{C}{\sqrt{2}}
	b_\ell t^{1-\a_\ell}
	\eee^{b_\ell t}
	\eee^{
		-\sfrac{
			\kl{
				\sqrt{2} t - \omega_1 + L(t)-y
			}^2
		}{
			2(1-\s_1^2 b_1) t
		}
	}
	\sfrac{\s_\ell b_\ell^{1/2}}{(1-\s_1^2 b_1)^{1/2}}(1+o(1)).
\end{align}
Therefore,  
\eqref{eq:below_gate1_3} is not larger than
\begin{align}
	&\sfrac{C}{\sqrt{2}}
	b_\ell^{3/2} t^{1-\a_\ell}
	\eee^{-\sqrt 2(L(t)-y)
		-\frac{(L(t)-y)^2}{2t}
	}\notag\\
	&\times\int\limits
	_{\left(-(\s_1^2 b_1 t)^\g\!, \,(\s_1^2 b_1 t)^\g\right)^c}
	\sfrac{\d \omega_1}{\sqrt{2 \pi (1-\s_1^2 b_1)\s_1^2 b_1t}}
	\exp\kl{
		- \sfrac{
			\kl{
				\omega_1 - \s_1^2 b_1 (L(t)-y)
			}^2
		}{
			2 (1-\s_1^2 b_1)\s_1^2 b_1 t
		}
	} (1+o(1)).
	\label{eq:below_gate1_6}
\end{align}
The prefactor is of polynomial order and the integral  is of order 
$\exp\kl{-\frac{(b_1 t)^{2\g-1}}{2}}$. \linebreak
By Assumption~\ref{as:belowLSpeed2},
\eqref{eq:below_gate1_6} tends to zero as $t\uparrow\infty$.
\end{proof}

\begin{proof}[Proof of Proposition~\ref{prop:below_bridgeLok}]
Let $\g$ be close enough to $1/2$ such that
\eqref{eq:below_gate2_tightenedBEllAssumption}
as well as
\begin{align}
	\label{eq:below_bridge_tightenedB1Assumption}
	1 \gg b_1 &\gg t^{\frac{\a_1 + \g- 1}{1-\g}}
\end{align} 
are satisfied.
Choose $\tilde{\g}$ s.t. \mbox{$1/2 < \tilde{\g} <\g$} and $\tilde{\g} < 1$. By Lemmas~\ref{lem:BarrierLok}, \ref{lem:below_gate2}, and \ref{lem:below_gate1}, the probability in \eqref{eq:below_bridgeClaim} is equal to
\begin{align}
	\label{eq:below_bridge1}
	\P \, \bigg( 
	&\exists_{j \le n(t)} \colon 
	\Big\{ 
	\bar{x}_j(t) > m^-(t)-y
	\Big\}
	\land \,
	\gkl{
		\bar{x}_j \in 
		\AA_{r, b_1 t, 0, \s_1}
		\cap
		\TT_{b_1t, b_1t, 0, \tilde{\g}} 
		\cap 
		\TT_{(1-b_\ell)t, (1-b_\ell)t, 0, \tilde{\g}}
	}
	\notag\\ 
	&\land \, 
	\bigg\{
	\exists_{b_1 t \le s \le (1-b_\ell) t} \colon 
	\big|
	\bar{x}_j(s) - \sqrt{2} t A_t(s/t)
	\big|
	> 
	\bbkl{
		A_t(s/t) \wedge \bkl{1 - A_t(s/t) \!}
	}^\gamma t^\gamma
	\bigg\}
	\bigg) + \OO(\e),
\end{align}
for $r,t>0$ large enough.
As in \eqref{eq:above_speed_special_localisation} -- \eqref{eq:above_speed_integral_chain_1}, 
we see that the probability in \eqref{eq:below_bridge1} is bounded from above by
\begin{align}
	&\eee^{(1-b_\ell) t} 
	\,\E\bigg[
	\1_{
		\bar{x}^t_1 \in \AA_{r, b_1 t, 0, \s_1} \cap \TT_{b_1t, b_1t, 0, \tilde{\g}}
	}
	\notag\\
	&\quad \times\E
	\bigg[
	\1_{
		\bar{x}^t_1 \in \TT_{(1-b_\ell)t, (1-b_\ell)t, 0, \tilde{\g}}
	}
	\1_{
		\exists\, {b_1 t \le s \le (1-b_\ell) t} \colon 
		|
		\bar{x}^t_1(s) -  \sqrt{2} t A_t(s/t)
		|
		> 
		\left(
		A_t(s/t) \wedge \kl{1 - A_t(s/t) \!}
		\right)^\gamma t^\gamma
	}
	\notag\\
	&\quad\times 
	\P \, \bigg(
	\max_{i \le n(b_{\ell}t)} \s_\ell x_{i}^{b_\ell t}(b_{\ell}t)
	> m^-(t) - \bar{x}^{t}_1((1-b_\ell)t) -y
	\, 
	\bigg\vert \, 
	\FF_{(1-b_\ell)t} 
	\bigg) 
	\bigg\vert \, 
	\FF_{b_1 t} 
	\bigg]
	\bigg]\nonumber\\
	=\,&\eee^{(1-b_\ell) t}
	\!\!
	\displaystyle
	\int\limits
	_{I_1}
	\!\!
	\sfrac{\d \omega_1}{\sqrt{2 \pi \s_1^2 b_1t}}
	\eee^{\!- \sfrac{\omega_1^2}{2 \s_1^2 b_1 t}} 
	\P \, \bigg(\!
	\zet_{0, \frac{\omega_1}{\s_1}}^{b_1t}\! (s) \!\le \!\!\sqrt{2} s \, \forall\!_{ s \in [r, b_1t]}\! 
	\bigg)\!\!
	\displaystyle
	\int\limits
	_{I_2}
	\!\!
	\sfrac{\d \omega_2}{\sqrt{2 \pi (1 - \s_1^2 b_1 -\s_\ell^2 b_\ell)t}}
	\eee^{\!- \sfrac{\omega_2^2}{2 (1 - \s_1^2 b_1 -\s_\ell^2 b_\ell)t}}
	\notag\\
	&\quad\times
	\P\kl{
		\exists_{s \in [b_1 t, (1-b_\ell) t]} \colon
		\!
		\abs{
			\zet_{\omega_1, \omega_1+\omega_2}^{(1-\s_1^2b_1 -\s_\ell^2 b_\ell) t}
			\scriptstyle
			\!\kl{A_t(s/t) t - \s_1^2b_1t}
			\: - \sqrt{2} t A_t(s/t)
		}
		\scriptstyle
		\,>\,
		\kl{
			A_t(s/t) \wedge \kl{1 - A_t(s/t) }
		}^\gamma t^\gamma\!
	} \notag\\
	&\quad\times 
	\P \, \bigg(
	\max_{i \le n(b_{\ell}t)} \s_\ell x_{i}^{b_\ell t}(b_{\ell}t)
	> m^-(t) - \omega_1 - \omega_2 -y
	\, 
	\bigg),\label{eq:below_bridge3}
\end{align}
with $I_1, I_2$ as in \eqref{eq:below_DefIntervals}. 
We first bound the probabilities involving Brownian bridges. \linebreak
As in \eqref{eq:above_barrier_BB_bound}, 
for  $\g>1/2$ and  $\o_1 \in I_1$,
\begin{align}
	\P \, \bigg(
	\zet_{0, \sfrac{\omega_1}{\s_1}}^{b_1t} (s)
	\le \!\sqrt{2} s \, \forall\!_{ s \in [r, b_1t]} 
	\bigg)
	&\le 
	\sfrac{2\sqrt{r} }{b_1 t-r}\kl{\sqrt{2} b_1 t - \sfrac{\omega_1}{\s_1}} (1 + o(1))\notag\\
	&\le
	2\sqrt{r} \kl{\sqrt{2}(1-\s_1)  + \s_1^{2\g} b_1 t^{1-\g}}(1+o(1)) \notag\\
	&= \sqrt{2r}t^{-\a_1}(1+o(1)).\label{eq:below_bridge7.5}
\end{align}
For the second probability involving a bridge,
the term inside the absolute value equals
\begin{align}
	\zet_{0,0}^{(1-\s_1^2b_1 -\s_\ell^2 b_\ell) t}
	\kl{A_t(s/t) t - \s_1^2b_1t} 
	+ \omega_1
	+ \sfrac{A_t(s/t) -\s_1^2 b_1 }{1-\s_1^2b_1 -\s_\ell^2 b_\ell} \omega_2
	- \sqrt{2} t A_t(s/t)
	.\label{eq:below_bridge4}
\end{align}
For each $s \in [b_1 t, (1-b_\ell) t]$ and $\o_1, \o_2$  in the ranges of integration,
\begin{align}
	&\omega_1
	+ \sfrac{A_t(s/t) -\s_1^2 b_1 }{1-\s_1^2b_1 -\s_\ell^2 b_\ell} \omega_2
	- \sqrt{2} t A_t(s/t)
	=\:
	\OO\kl{
		\sfrac{1-\s_\ell^2 b_\ell - A_t(s/t) }{1-\s_1^2b_1 -\s_\ell^2 b_\ell}
		(b_1 t) ^{\tilde{\g}}
		+
		\sfrac{A_t(s/t) -\s_1^2 b_1 }{1-\s_1^2b_1 -\s_\ell^2 b_\ell}
		(\s_\ell^2 b_\ell t)^{\tilde{\g}}
	}.\label{eq:below_bridge4.5}
\end{align}
We will show that, for $s \in [b_1 t, (1-b_\ell) t]$ and $t$ large enough, 
\begin{align}
	\sfrac{1-\s_\ell^2 b_\ell - A_t(s/t) }{1-\s_1^2b_1 -\s_\ell^2 b_\ell}
	(\s_1^2 b_1 t)^{\tilde{\g}}
	+
	\sfrac{A_t(s/t) -\s_1^2 b_1 }{1-\s_1^2b_1 -\s_\ell^2 b_\ell}
	(\s_\ell^2 b_\ell t)^{\tilde{\g}}
	\leq 
	\bkl{
		A_t(s/t) \wedge \kl{1 - A_t(s/t) }\!
	}^{\tilde{\g}} 
	t^{\tilde{\g}}
	(1+o(1)).\label{eq:below_bridge4.55}
\end{align}
Namely, \eqref{eq:below_bridge4.55} holds for $s= b_1 t$ and $s=(1-b_\ell) t$.
Assumptions~\ref{as:belowLSpeed2}--(iii) imply that \eqref{eq:below_bridge4.55} holds for $s$ s.t. $A_t(s/t) = 1/2$. Concavity of $x\mapsto x^{\tilde{\g}}$ on $\R_+$ and monotonicity of $A_t$ imply that \eqref{eq:below_bridge4.55} holds for all
$s \in [b_1 t, (1-b_\ell) t]$, and so for those $s$,
\eqref{eq:below_bridge4} equals
\begin{align}
	\zet_{0,0}^{(1-\s_1^2b_1 -\s_\ell^2 b_\ell) t}
	\kl{A_t(s/t) t - \s_1^2b_1t} 
	+
	o\bbkl{
		\bkl{
			A_t(s/t) \wedge \kl{1 - A_t(s/t) }\!
		}^{\g} 
		t^{\g}
	}.
\end{align}
Thus the one-but-last probability  in \eqref{eq:below_bridge3} is bounded from above by
\begin{align}
	\begin{split}
		&\P\kl{
			\exists_{s \in [b_1 t, (1-b_\ell) t]} \colon
			\!
			\abs{
				\zet^{
					(1-\s_1^2 b_1 -\s_\ell b_\ell) t
				}_{
					0, 0
				}
				\scriptstyle\kl{A_t(s/t) t - \s_1^2 b_1 t}
			}
			\scriptstyle
			\,>\,
			\kl{
				A_t(s/t) \wedge \kl{1 - A_t(s/t) }\!
			}^\gamma t^\gamma
		}(1+o(1)). 
		\label{eq:below_bridge5}
	\end{split}
\end{align}
For \mbox{$s \in [b_1 t, (1-b_\ell) t]$}, the fluctuations of the Brownian bridge in \eqref{eq:below_bridge5} are bounded by those of 
$
\bbkl{
	\zet^{
		t
	}_{
		0, 0
	}
	\kl{A_t(s/t) t}\!
}_{
	\!s \in [0,t]
}
$, and thus \eqref{eq:below_bridge5} is bounded by
\begin{align}
	\P\kl{
		\exists_{s \in [b_1 t, (1-b_\ell) t]} \colon			\!
		\abs{
			\zet^{
				t
			}_{
				0, 0
			}
			\scriptstyle\kl{A_t(s/t) t}
		}
		\scriptstyle
		\,>\,
		\kl{
			A_t(s/t) \wedge \kl{1 - A_t(s/t) }\!
		}^\gamma t^\gamma
	}(1+o(1)).\label{eq:below_bridge5.5}
\end{align}
This probability is by the monotonicity of $A_t$ not larger than
\begin{align}
	\P\kl{
		\exists_{s \in [\tilde{r}, t - \tilde{r}]} \colon
		\!
		\abs{
			\zet^{
				t
			}_{
				0, 0
			}
			\bkl{s}
		}
		\,>\,
		\bkl{
			s \wedge \kl{t-s}\!
		}^\gamma
	}\leq \e \sqrt{2\pi}/(C\sqrt{r})
	,\label{eq:below_bridge6}
\end{align}
for $\tilde{r}, t>0$ large enough,  by Lemma~\ref{lem:BB_fluc}.
With   \eqref{eq:below_bridge7.5} and \eqref{eq:below_bridge6}  we see that \eqref{eq:below_bridge3} is smaller than
\begin{align}
	\e\sfrac{\sqrt{2}}{C}
	t^{-\a_1}
	\eee^{(1-b_\ell) t}
	\!\!
	\displaystyle
	\int\limits
	_{I_1}
	\!\!
	&\sfrac{\d \omega_1}{\sqrt{2 \pi \s_1^2 b_1t}}
	\eee^{- \frac{\omega_1^2}{2 \s_1^2 b_1 t}} \!
	\int\limits
	_{I_2}
	\sfrac{\d \omega_2}{\sqrt{2 \pi (1 - \s_1^2 b_1 -\s_\ell^2 b_\ell)t}}
	\eee^{- \frac{\omega_2^2}{2 (1 - \s_1^2 b_1 -\s_\ell^2 b_\ell)t}}
	\notag\\
	&\times 
	\P \, \bigg(
	\max_{i \le n(b_{\ell}t)} \s_\ell x_{i}^{b_\ell t}(b_{\ell}t)
	> m^-(t) - \omega_1 -\omega_2 -y
	\, 
	\bigg) (1+o(1)).\label{eq:below_bridge7.6}
\end{align}
For $\o_1 \in I_1$ and $\tilde{\g}>1/2$, the bound in \eqref{eq:below_gate1_5} is asymptotically sharp, and 
so the integral over $\o_2$ in \eqref{eq:below_bridge7.6} is equal to 
\begin{align}
	\label{eq:below_gate7.9}
	\sfrac{C}{\sqrt{2(1-\s_1^2 b_1)}}
	b_\ell^{3/2} t^{1-\a_\ell}
	\eee^{b_\ell t}
	\exp\kl{
		-\sfrac{
			\kl{
				\sqrt{2} t - \omega_1 + L(t)-y
			}^2
		}{
			2(1-\s_1^2 b_1) t
		}
	}(1+o(1)).
\end{align}
\noindent Inserting \eqref{eq:below_gate7.9} into \eqref{eq:below_bridge7.6} and shifting $\o_1$ by $\sqrt{2}\s_1^2b_1 t$, we see that \eqref{eq:below_bridge7.6} is equal to
\begin{align}
	\begin{split}
		&\e
		b_\ell^{3/2} t^{1-\a_1-\a_\ell}
		\eee^{ t}
		\eee^{
			-\frac{
				(\sqrt{2}t+L(t)-y)^2
			}{2t}
		}
		\!\!
		\displaystyle
		\int
		\limits
		_{-(\s_1^2 b_1 t)^{\tilde{\g}}\!}^{\,(\s_1^2 b_1 t)^{\tilde{\g}}}
		\!\!\!
		\sfrac{\d \omega_1}{\sqrt{2 \pi (1-\s_1^2 b_1)\s_1^2 b_1t}}
		\exp\kl{
			{-}\sfrac{
				\kl{
					\omega_1 - \s_1^2 b_1 (L(t)-y)
				}^2
			}{
				2 (1-\s_1^2 b_1)\s_1^2 b_1 t
			}
		} (1+o(1)).
		\label{eq:below_bridge7.95}
	\end{split}
\end{align}
Note that the integrands in \eqref{eq:below_gate1_6} and \eqref{eq:below_bridge7.95} are the same, but the ranges of integration are complementary. We have seen that the integral in \eqref{eq:below_gate1_6} tends to zero as $t\uparrow\infty$, which implies that the Gaussian integral in \eqref{eq:below_bridge7.95} tends to one.
Recalling the definition of $L(t)$ in \eqref{eq:below_defL}, we find that the $t$-dependent factors outside the integral in  
\eqref{eq:below_bridge7.95} tend to one as well, from which the claim follows.	
\end{proof}

The next proposition is the analog to Proposition~\ref{prop:above_entropic_rep}. It describes the effect of entropic repulsion on the first time interval $[0,b_1 t]$ for extremal particles.

\begin{proposition}[Entropic repulsion] \label{prop:below_entropic_rep}
Let $0<\b<\a_1$. 
Then, for any $y \in \R$ and for any $\e >0$, there exists a constant $\delta \in (0,1/2)$ such that, for all $t$ large enough,
\begin{align}
	\label{eq:below_entrop_1}
	\begin{split}
		\P \, \bigg( 
		&\exists_{j \le n(t)} \colon 
		\Big\{ 
		\bar{x}_j(t) > m^-(t)-y
		\Big\} 
		\land \, 
		\bigg\{  
		\exists_{t^\beta \le s \le b_1t} \colon 
		\bar{x}_j(s) > \sqrt{2} \s_1 s - t^{\b \delta} 
		\bigg\} 
		\bigg) < \e.
	\end{split}
\end{align}
\end{proposition}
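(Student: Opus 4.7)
The plan is to adapt the $k=1$ case of the proof of Proposition~\ref{prop:above_entropic_rep}. Since on $[0, b_1 t]$ the trajectories $\bar x_j$ coincide with $\s_1$ times those of a standard BBM, the entropic repulsion phenomenon on this segment is structurally identical to the one proved there for the first interval.

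First, I would apply Lemma~\ref{lem:localisation} to intersect the event in \eqref{eq:below_entrop_1} with the localisations already in hand: the bulk fluctuation bound of Proposition~\ref{prop:below_bridgeLok}, the gate conditions of Lemmas~\ref{lem:below_gate1} and \ref{lem:below_gate2}, and the barrier $\bar x_j(s) \le \sqrt 2\, \s_1 s$ on $[r, b_1 t]$ (which follows from Lemma~\ref{lem:BarrierLok} applied to the rescaled standard BBM $\bar x_j/\s_1$). The many-to-one lemma combined with the decomposition of a Brownian motion into a bridge plus its endpoint then produces an integral representation identical in structure to \eqref{eq:below_bridge3}, except that the first Brownian-bridge probability is replaced by a \emph{difference} of two bridge probabilities: that of staying below the original barrier on $[r, b_1 t]$, minus that of staying a further $t^{\b\delta}/\s_1$ below it on $[t^\b, b_1 t]$. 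Denote this difference by $\mathcal{D}(\o_1)$, where $\o_1$ is the conditioned value of $\bar x_1(b_1 t)$.

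The heart of the argument, and the step I expect to be the main obstacle, is estimating $\mathcal{D}(\o_1)$. Following the template \eqref{eq:above_entropic_diff1_integral}--\eqref{eq:above_entropic_diff1_bound}, I would integrate out the bridge's value at time $t^\b$ and split its range into the three subsets $(-\infty, -t^\b)$, $(-t^\b, -t^{\b\delta}/\s_1)$, and $(-t^{\b\delta}/\s_1, 0)$. On the first a Gaussian tail bound gives a negligible contribution; on the second, the difference of the two bridge probabilities is estimated directly via Lemma~\ref{lem:BB_line}; on the third, the shifted-barrier probability vanishes identically, and the unshifted one is bounded by Lemma~\ref{lem:BB_line}. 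The expected outcome is a bound on $\mathcal{D}(\o_1)$ smaller by a factor $t^{\b(\delta - 1/2)}$ than the analogous unshifted bound \eqref{eq:below_bridge7.5} used in Proposition~\ref{prop:below_bridgeLok}. The condition $\b < \ab$ enters precisely here, to guarantee $t^\b \ll (1-\s_1)^{-1}$ and $t^\b \ll b_1 t$, so that all Gaussian density factors in the integral representation remain of order one.

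Substituting this improved bound into the computation \eqref{eq:below_bridge3}--\eqref{eq:below_bridge7.95} while leaving the rest of the calculation (the middle-bridge and last-maximum integrals) unchanged produces a probability of order $t^{\b(\delta - 1/2)}$, which tends to zero as $t\uparrow\infty$ for any fixed $\delta \in (0, 1/2)$. Choosing such a $\delta$ yields the claim.
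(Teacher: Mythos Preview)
Your proposal is correct and essentially identical to the paper's proof: both insert the gate and barrier localisations, reduce via many-to-one to an integral containing the difference $\mathcal{D}(\omega_1)$ of bridge probabilities, and estimate that difference by the same three-region decomposition from \eqref{eq:above_entropic_diff1_integral}--\eqref{eq:above_entropic_diff1_bound} (the paper explicitly says it treats the difference ``as in \eqref{eq:above_entropic_diff1_integral}--\eqref{eq:above_entropic_diff1_bound}'', replacing $\frac{y_1}{\s_1(\s_1-\s_2)}$ by $\sqrt{2}b_1 t - \omega_1/\s_1$), yielding the extra factor $t^{\b(\delta-1/2)}$. The only minor deviation is that the paper does not invoke Proposition~\ref{prop:below_bridgeLok} here---only Lemmas~\ref{lem:BarrierLok}, \ref{lem:below_gate2}, \ref{lem:below_gate1}---so the middle-bridge probability never enters the integral representation, but this simplification is inessential to the argument.
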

\noindent This means that the extremal particles of $(\bar{X}_t)_{t>0}$ lie in $\AA_{t^\beta, b_1 t, t^{\beta\delta}, \s_1}$ with high probability. By monotonicity, we can use the superset $\AA_{t^\beta,t^\b, t^{\beta\delta}, \s_1} \cap \AA_{t^\beta, b_1 t, t^{\beta\delta/2}, \s_1}$ of $\AA_{t^\beta, b_1 t, t^{\beta\delta}, \s_1}$ instead.
\begin{proof}
Let $\g>1/2$ be close enough to $1/2$ that \eqref{eq:below_gate2_tightenedBEllAssumption}
and \eqref{eq:below_bridge_tightenedB1Assumption} are satisfied.
By Lemmas~\ref{lem:BarrierLok}, \ref{lem:below_gate2} and  \ref{lem:below_gate1},
the probability in \eqref{eq:below_entrop_1} is equal to
\begin{align}
	\label{eq:below_entrop_2}
	\P \, \bigg( 
	&\exists_{j \le n(t)} \colon 
	\Big\{ 
	\bar{x}_j(t) > m^-(t)-y
	\Big\}
	\land \,
	\gkl{\bar{x}_j \in \TT_{b_1t, b_1t, 0, \g} \cap \TT_{(1-b_\ell)t, (1-b_\ell)t, 0, \g} \cap \AA_{r, b_1 t, 0, \s_1}}
	\notag\\ 
	&
	\land\,
	\bigg\{  
	\bar{x}_j(t^\b) > -\sqrt{2} \s_1  t^{\b} 
	\bigg\} 
	\land \, 
	\bigg\{  
	\exists_{t^\beta \le s \le b_1t} \colon 
	\bar{x}_j(s) > \sqrt{2} \s_1 s - t^{\b \delta} 
	\bigg\} 
	\bigg) + \OO(\e),
\end{align}
for $r,t>0$ large enough.  
As in \eqref{eq:above_entropic_integral_chain_1}, we see that the probability in \eqref{eq:below_entrop_2} is bounded from above by
\begin{align} \label{eq:below_entrop_neu_3}
	&\eee^{(1-b_\ell) t}
	\int\limits_{I_1}
	\!\! 
	\sfrac{\d \o_1}{\sqrt{2 \pi \s_1^2 (b_1t ) }}
	\eee^{-\sfrac{\o_1^2}{2 \s_1^2 (b_1 t ) }}
	\int\limits
	_{I_2}
	\sfrac{\d \omega_2}{\sqrt{2 \pi (1 - \s_1^2 b_1 -\s_\ell^2 b_\ell)t}}
	\eee^{-\sfrac{\omega_2^2}{2 (1 - \s_1^2 b_1 -\s_\ell^2 b_\ell)t}}\nonumber\\
	&\times\bigg[
	\P  \Big(
	\zet_{ 
		0, 
		\frac{\o_1}{\s_1} -  \sqrt{2} b_1 t 
	}^{
		b_1t 
	} (s) 
	\le 
	0 \, \forall_{ s \in [r, b_1t]}
	\Big) 
	-\P  \Big(
	\zet_{ 
		0, 
		\frac{\o_1}{\s_1} -  \sqrt{2}  b_1 t 
	}^{
		b_1t
	} (s) 
	\le 
	- \sfrac{t^{\b \delta}}{\s_1} \, 
	\forall_{s \in [t^\b, b_1t]}
	\Big)
	\bigg] \nonumber \\
	&\times 
	\P  \Big(
	\max_{i \le n(b_{\ell}t)} \s_\ell x_{i}^{b_\ell t}(b_{\ell}t)
	> m^-(t)  - \omega_1 -\omega_2 -y
	\, 
	\Big),
\end{align}
with $I_1, I_2$ as in \eqref{eq:below_DefIntervals}.
We have seen in \eqref{eq:below_bridge7.6}--\eqref{eq:below_bridge7.95} that replacing the difference of probabilities in \eqref{eq:below_entrop_neu_3} with $t^{-\a_1}$ creates a term of order $1$.
Thus, it suffices to show that
\begin{align}
	\P  \Big(
	\zet_{ 
		0, 
		\frac{\o_1}{\s_1} -  \sqrt{2} b_1 t 
	}^{
		b_1t 
	} (s) 
	\le 
	0 \, \forall_{ s \in [r, b_1t]}
	\Big)
	- 
	\P  \Big(
	\zet_{ 
		0, 
		\frac{\o_1}{\s_1} -  \sqrt{2}  b_1 t 
	}^{
		b_1t
	} (s) 
	\le 
	- \sfrac{t^{\b \delta}}{\s_1} \, 
	\forall_{s \in [t^\b, b_1t]}
	\Big)
	\ll
	t^{-\a_1}.  
\end{align}
We deal with the difference of probabilities in \eqref{eq:below_entrop_neu_3} as in \eqref{eq:above_entropic_diff1_integral}--\eqref{eq:above_entropic_diff1_bound}. The only difference is that we replace $\frac{y_1}{\s_1(\s_1-\s_2)}$ by \mbox{$\sqrt{2}b_1 t - \frac{\o_1}{\s_1}$} and use that for $\o_1$ from the integral in \eqref{eq:below_entrop_neu_3},
\begin{align}\label{eq:below_entrop_neu_4}
	\sfrac{\sqrt{2}b_1 t - \s_1^{-1}\o_1}{b_1 t - t^\b} = 2^{-1/2} t^{-\a_1} (1+o(1)).
\end{align}  
Then we get that the difference of probabilities in \eqref{eq:below_entrop_neu_3} is not larger than a term of order 
\begin{align}
	\sqrt{r}\,t^{\b(\delta-1/2)}\sfrac{\sqrt{2}b_1 t - \s_1^{-1}\o_1}{b_1 t - t^\b} \ll t^{-\a_1}.
\end{align}
In the last step, we applied \eqref{eq:below_entrop_neu_4} again and used that $\delta < 1/2$.
\end{proof}

\section[Proofs of Theorem 1.3 and 1.4]{Proofs of Theorem~\ref{thm:lawofmax} and \ref{thm:extremalprocess}} \label{sec:mainProofs}
To prove Theorem~\ref{thm:lawofmax} and \ref{thm:extremalprocess}, it suffices by Lemma~\ref{lem:jointConvergence} 
to prove the convergence of the 
corresponding Laplace functionals
\begin{equation}
\Psi_t ( \phi) 
\equiv \E \, \left[ \exp \left( - \int \phi(z) \EE_t( \d z) \right) \right],
\end{equation}
where $\phi \in \CC^\infty(\R)$ is a nondecreasing test function as in Lemma~\ref{lem:jointConvergence2} and we denote by  \mbox{$\EE_t \equiv \sum_{j \le n(t)} \delta_{ \tilde{x}_j(t) - m^{\pm}(t)}$}  the extremal process of a VSBBM whose speed functions $(A_t)_{t>0}$ satisfy Assumption~\ref{as:above} or \ref{as:below}. 
We set $m^{\pm}(t) = m^+(t)$ if $(A_t)_{t>0}$ satisfies Assumption~\ref{as:above} and $m^{\pm}(t) = m^-(t)$ otherwise.

The proof of the convergence of $(\Phi_t(\phi))_{t>0}$ 
is essentially analogous to the proof of the convergence of the maximum, which formally corresponds to the choice 
$\phi(z) =0$, if  $z\leq y$ and $\phi(z)=+\infty$, if $z>y$, for $y\in \R$. 	
Writing the proof for the maximum is notationally less heavy and easier on the reader, which is why we give the proof 
of Theorem~\ref{thm:lawofmax} in detail in the following two subsections and only indicate the changes needed for the 
proof of Theorem~\ref{thm:extremalprocess} in Subsection~\ref{sec:proofExtremalProcess}.

\subsection[Proof of Theorem 1.3 in Case~A]{Proof of Theorem~\ref{thm:lawofmax} in Case~A}\label{sec:above}\label{sec:proof_caseA} 
This subsection contains the proof of the first part of Theorem~\ref{thm:lawofmax}, i.e.\ we are in Case~A
and show for an $\ell$-speed BBM with speed functions satisfying Assumption~\ref{as:above} that
\begin{equation}
\lim_{t \uparrow \infty}
\P \left( \max_{j \le n(t)} \tilde{x}_j(t) - m^+(t) \le y \right)
= \E \left[ \exp \left( - C 
Z \eee^{-\sqrt{2}y} \right) \right],
\end{equation}
where $m^+(t)$ is given in \eqref{eq:OrderOfMaxAboveTime}.


By Lemma~\ref{lem:BarrierLok} and Propositions~\ref{prop:barrier},~\ref{prop:above_speed_change}, and ~\ref{prop:above_entropic_rep},
\begin{align}
& 
\P \left( \max_{j \le n(t)} \tilde{x}_j(t) - m^+(t) > y \right)\nonumber \\
&= \ \P \, \bigg( \exists_{i_1 \le n(b_1 t), \dots, i_{\ell} \le n^{\mi_{\ell-1}}(b_{\ell} t)} \colon
\bigg\{ \sum_{k=1}^{\ell} \multix{k}(b_k t) - m^+(t) > y \bigg\}
\land \bigg\{ \tilde{x}_{i_1} \in \LL_{b_1t, \s_1, t^{\b}, B, D} \bigg\} \nonumber \\
&\qquad \land \bigg\{ \forall_{1 < k < \ell} \colon
\multix{k} \in \LL_{b_k t, k, B, D}^{\mi_{k-1}} \bigg\} \bigg)
+ \OO(\e),
\end{align}
where
\begin{align}
\LL_{b_1t, 1, t^{\b}, B, D} 
&\equiv \AA_{t^{\b}, t^{\b}, t^{\b \delta}, \s_1}
\cap \BB_{t^{\b}, t^{\b}, 0, \s_1} 
\cap \AA_{t^{\b}, b_1t, t^{\b \delta/2}, \s_1} 
\cap \GG_{b_1t, 1, 0, B, D}, \nonumber \\
\LL_{b_kt, k, B, D}^{\mi_{k-1}}
&\equiv \AA_{0, b_kt, t^{\b \delta/2} - \Lambda^{\mi_{k - 1}}(t), \s_k}
\cap \GG_{b_kt, k, \Lambda^{\mi_{k - 1}}(t), B, D},
\end{align}
and 
\be
\BB_{r_1, r_2, S, \s}
\equiv \left\{ X \colon \forall_{r_1 \le s \le r_2} \colon X(s) + S > -\sqrt{2} \s s \right\}.
\ee
Due to Lemma~\ref{lem:localisation}, it is enough to analyse
\begin{align} \label{eq:above_branch-1}
&\P \, \bigg( \max_{\substack{i_1 \le n(b_1t) \colon \tilde{x}_{i_1} \in \LL_{b_1t, 1, t^{\b}, B, D}, \dots, \\
		i_{\ell} \le n^{\mi_{\ell-1}}(b_{\ell}t) \colon \multix{\ell} \in \LL_{b_{\ell}t, \ell, B, D}^{\mi_{\ell-1}}}} \, 
\sum_{k=1}^{\ell} \multix{k}(b_kt) - m^+(t) \le y \bigg) \nonumber \\
&= \, \P \, \bigg( \max_{\substack{i_1 \le n(b_1t) \\ \tilde{x}_{i_1} \in \LL_{b_1t, 1, t^{\b}, B, D}}} \
\max_{\substack{\substack{i_2 \le n^{i_1}(b_2t) \\ \tilde{x}_{i_2}^{\mi_1} \in \LL_{b_2t, 2, B, D}^{\mi_1}}}} \dots 
\max_{\substack{i_{\ell} \le n^{\mi_{\ell-1}}(b_{\ell}t) \\ \multix{\ell} \in \LL_{b_{\ell}t, \ell, B, D}^{\mi_{\ell-1}}}}
\sum_{k=1}^{\ell} \multix{k}(b_kt) - m^+(t) \le y \bigg). 
\end{align}
Using the branching property, we rewrite this probability as
\begin{align} \label{eq:above_branch}
&\E \, \bigg[\prod_{\substack{i_1 \le n(b_1t) \\ \tilde{x}_{i_1} \in \LL_{b_1t, 1, t^{\beta}, B, D}}}
\E \, \bigg[ \prod_{\substack{i_2 \le n^{i_1}(b_2t) \\ \tilde{x}_{i_2}^{\mi_1} \in \LL_{b_2t, 2, B, D}^{\mi_1}}} \dots \
\E \, \bigg[ \prod_{\substack{i_{\ell-1} \le n^{\mi_{\ell-2}}(b_{\ell-1}t) \\ \tilde{x}_{i_{\ell-1}}^{\mi_{\ell-2}} \in \LL_{b_{\ell-1}t, \ell-1, B, D}^{\mi_{\ell-2}}}} \\
&\!\!\!\times\! 
\bigg( 
1 \!-\!\P \, \bigg(
\max_{ i_{\ell} \le n^{\mi_{\ell-1}}(b_{\ell}t) } 
\multix{\ell}(b_{\ell} t)
> m^+(t) - \sum_ {k=1}^{\ell-1} \multix{k}(b_k t) + y
\bigg\vert \FF_{a_{\ell-1}t} 
\bigg) \!
\bigg) 
\bigg\vert  \FF_{a_{\ell-2}t} \bigg] \dots 
\bigg \vert  \FF_{a_1t} \bigg] \bigg] \nonumber.
\end{align}
We compute this term from within, i.e.\ we first estimate the tail of the maximum with Proposition~\ref{prop:FKPP_tail_estimate} and then iteratively compute the nested expectations.
To apply the tail estimate, we need that $\Delta^{\mi_{\ell-1}}(t)$ is positive and $\lim_{t \to \infty} \Delta^{\mi_{\ell-1}}(t)/(b_{\ell}t) = 0$, where
\begin{align}
&\Delta^{\mi_{\ell-1}}(t)\label{eq:above_def_Delta} \nonumber \\
\equiv  &\sfrac{1}{\s_{\ell}} \left( m^+(t) - \sum_ {k=1}^{\ell-1} \multix{k}(b_k t) + y \right)
- \left( \sqrt{2} b_{\ell} t - \sfrac{3}{2 \sqrt{2}} \log(b_{\ell} t) \right) \nonumber \\
= &\sfrac{1}{\s_{\ell}} \left( \sum_{k=1}^{\ell-1} \sqrt{2} \s_k b_k t - \multix{k}(b_k t) \right)
- \sfrac{3}{2 \sqrt{2}} \left( \sum_{k=1}^{\ell-1}  \log (b_k t) + 2 \log ( \pi^{1/6} (\s_k \!-\! \s_{k+1}) ) \right) \! + y + o(1) \nonumber \\
\equiv &\sfrac{1}{\s_{\ell}} \, \Lambda^{{\mi_{\ell-1}}}(t) + L_{\ell-1}(t) + y + o(1),
\end{align}
$\Lambda^{\bar{i}_{\ell-1}}(t)$ is defined in \eqref{eq:Lambda_def} and $L_{\ell-1}(t)$ is defined in \eqref{eq:above_def_L}.
Since $\tilde{x}_{i_{\ell-1}}^{\mi_{\ell-2}} \in \LL_{b_{\ell-1}t, \ell-1, B, D}^{\mi_{\ell-2}}$, we get $\Delta^{\mi_{\ell-1}}(t)=O\left(( \s_{\ell-1} - \s_{\ell} )^{-1}\right)$. 
By Assumption~\ref{as:above2}, $\Delta^{\mi_{\ell-1}}(t)>0$, and 
by Assumption~\ref{as:above3},  $\lim_{t \to \infty} \Delta^{\mi_{\ell-1}}(t)/(b_{\ell}t) = 0$. 
Proposition~\ref{prop:FKPP_tail_estimate} implies that
\begin{align} \label{eq:above_tail_prob}
&\P \, \bigg( \max_{ i_{\ell} \le n^{\mi_{\ell-1}}(b_{\ell}t) } 
\multix{\ell}(b_{\ell} t)
> m^+(t) - \sum_ {k=1}^{\ell-1}  \multix{k}(b_k t) + y \, 
\bigg\vert \, \FF_{a_{\ell-1}t} \Bigg) \nonumber \\
&= \, C \Delta^{\mi_{\ell-1}}(t) \eee^{-\sqrt{2} \Delta^{\mi_{\ell-1}}(t)} (1+o(1)),
\end{align}
where $o(1)$ is uniform in the range of possible values of $\tilde{x}_{i_{\ell - 1}}^{\, \mi_{\ell - 2}}$.
Inserting \eqref{eq:above_tail_prob} into \eqref{eq:above_branch}, shows that the innermost expectation in \eqref{eq:above_branch} equals
\begin{align}\label{eq:above_exp_approx1}
&\E \, \Bigg[ \prod_{\substack{i_{\ell-1} \le n^{\mi_{\ell-2}}(b_{\ell-1}t) \\ \tilde{x}_{i_{\ell-1}}^{\mi_{\ell-2}} \in \LL_{b_{\ell-1}t, \ell-1, B, D}^{\mi_{\ell-2}}}}
\bigg( 1 - C \Delta^{\mi_{\ell-1}}(t) \eee^{-\sqrt{2} \Delta^{\mi_{\ell-1}}(t)} \bigg)
\, \bigg\vert \, \FF_{a_{\ell-2}t} \Bigg] (1+o(1)) \nonumber \\
&= \, \E \, \Bigg[ 
\exp \bigg( - 
\sum_{\substack{i_{\ell-1} \le n^{\mi_{\ell-2}}(b_{\ell-1}t) \\ \tilde{x}_{i_{\ell-1}}^{\mi_{\ell-2}} \in \LL_{b_{\ell-1}t, \ell-1, B, D}^{\mi_{\ell-2}}}}
C \Delta^{\mi_{\ell-1}}(t) \eee^{-\sqrt{2} \Delta^{\mi_{\ell-1}}(t)} \bigg)
\, \bigg\vert \, \FF_{a_{\ell-2}t} \Bigg] (1+o(1)).
\end{align}
In the last step, we used $\Delta^{\mi_{\ell-1}}(t) > \frac{D}{\s_{\ell-1} - \s_{\ell}}$ and therefore
$\eee^{-\sqrt{2} \Delta^{\mi_{\ell-1}}(t)}\downarrow 0$, as $ \uparrow \infty$.

\begin{proposition} \label{prop:above_iteration}
With the notation from above,
\begin{align} \label{eq:above_interative_1}
	&\E \, \Bigg[ 
	\exp \bigg( - 
	\sum_{\substack{i_{\ell-1} \le n^{\mi_{\ell-2}}(b_{\ell-1}t) \\ \tilde{x}_{i_{\ell-1}}^{\mi_{\ell-2}} \in \LL_{b_{\ell-1}t, \ell-1, B, D}^{\mi_{\ell-2}}}}
	C \Delta^{\mi_{\ell-1}}(t) \eee^{-\sqrt{2} \Delta^{\mi_{\ell-1}}(t)} \bigg)
	\, \bigg\vert \, \FF_{a_{\ell-2}t} \Bigg] \nonumber \\
	= &\exp \left( - C
	\Lambda^{\mi_{\ell-2}}(t)
	\eee^{ - \sqrt{2} \, \left( \frac{1}{\s_{\ell-1}} \Lambda^{\mi_{\ell-2}}(t) + L_{\ell-2}(t) + y \right)}
	\right)  \, (1+o(1)),
\end{align}
where $o(1)$ is uniform in the range of possible values of $\tilde{x}_{i_{\ell - 2}}^{\, \mi_{\ell - 3}}$.
\end{proposition}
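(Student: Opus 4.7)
The strategy is to compute the conditional first moment of $S \equiv \sum_{i_{\ell-1} \in \LL} C\Delta^{\mi_{\ell-1}}(t) \eee^{-\sqrt{2}\Delta^{\mi_{\ell-1}}(t)}$ directly and then pass from $\E[\eee^{-S} \mid \FF_{a_{\ell-2}t}]$ to $\exp(-\E[S \mid \FF_{a_{\ell-2}t}])$ via a concentration argument. Since, on the localization set $\LL_{b_{\ell-1}t,\ell-1,B,D}^{\mi_{\ell-2}}$, one has $\Delta^{\mi_{\ell-1}}(t) \geq D/(\s_{\ell-1} - \s_\ell) \gg t^\beta$, each summand in $S$ is exponentially small, which makes this Laplace-transform-to-mean passage plausible.

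For the first moment, given $\FF_{a_{\ell-2}t}$, the particles $\multix{\ell-1}$ form a $\s_{\ell-1}$-BBM starting at $\sum_{k=1}^{\ell-2}\multix{k}(b_k t)$. I would apply the many-to-one lemma together with a Brownian bridge decomposition, mirroring \eqref{eq:above_speed_special_localisation}--\eqref{eq:above_speed_zzz} from the proof of Proposition~\ref{prop:above_speed_change}. Writing $W \equiv \Lambda^{\mi_{\ell-2}}(t)$ (which is $\FF_{a_{\ell-2}t}$-measurable) and using the algebraic identity
\[
\tfrac{u}{\s_{\ell-1}} - \tfrac{W+u}{\s_\ell} \;=\; -\tfrac{W}{\s_{\ell-1}} - \tfrac{(u+W)(\s_{\ell-1}-\s_\ell)}{\s_{\ell-1}\s_\ell},
\]
the change of variables $u = W + w/(\s_{\ell-1}-\s_\ell)$ with $w \in [D,B]$ converts the exponential part of the integrand into $\eee^{-\sqrt{2}W/\s_{\ell-1} - \sqrt{2}w/(\s_{\ell-1}\s_\ell)}(1+o(1))$. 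Lemma~\ref{lem:BB_line} supplies the bridge probability for the barrier constraint in $\LL$, contributing a factor $2W(u+W)/(\s_{\ell-1}^2 b_{\ell-1}t)(1+o(1))$.

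In the dominant regime $(u+W) \sim w/(\s_{\ell-1}-\s_\ell)$, the polynomial part of the integrand becomes $W \cdot w^2/(\s_{\ell-1}-\s_\ell)^2$, and the Gaussian integral $\int_D^B w^2\eee^{-\sqrt{2}w}\,dw$ tends to $1/\sqrt{2}$ as $D\downarrow 0$, $B\uparrow\infty$. Collecting the prefactors $(2\pi b_{\ell-1}t)^{-1/2}$ (Gaussian density), $(\s_{\ell-1}-\s_\ell)^{-1}$ (Jacobian), $2/(b_{\ell-1}t)$ (bridge), and $1/\sqrt{2}$ (integral), their product is $\pi^{-1/2}(b_{\ell-1}t)^{-3/2}(\s_{\ell-1}-\s_\ell)^{-3}$, which equals $\eee^{\sqrt{2}(L_{\ell-1}-L_{\ell-2})}$ by the specific choice of $\pi^{1/6}(\s_k - \s_{k+1})$ in the definition of $L_k$. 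Hence
\[
\E[S \mid \FF_{a_{\ell-2}t}] \;=\; C\,\Lambda^{\mi_{\ell-2}}(t)\,\eee^{-\sqrt{2}\kl{\Lambda^{\mi_{\ell-2}}(t)/\s_{\ell-1} + L_{\ell-2}(t) + y}}(1+o(1)),
\]
and substituting into $\exp(-\E[S\mid\FF_{a_{\ell-2}t}])$ gives the claimed right-hand side.

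The main obstacle is the passage $\E[\eee^{-S}\mid\FF_{a_{\ell-2}t}] \approx \exp(-\E[S\mid\FF_{a_{\ell-2}t}])$, equivalently the concentration of $S$ around its mean. Expanding $\prod_{i_{\ell-1}}(1-p_{i_{\ell-1}}) = 1 - \sum p_{i_{\ell-1}} + \sum_{k\geq 2}(-1)^k e_k(p)$ and controlling the elementary symmetric polynomials $e_k$ requires a second-moment estimate on the extremal point process of the $\s_{\ell-1}$-BBM, in the spirit of the proof of convergence of the derivative martingale. A secondary technical point is ensuring that all $o(1)$ corrections are uniform in the range of $\tilde{x}_{i_{\ell-2}}^{\mi_{\ell-3}}$ permitted by the previous level's localization, which follows because $W$ enters the estimates only through bounded quantities once the explicit $\eee^{-\sqrt{2}W/\s_{\ell-1}}$ factor is extracted.
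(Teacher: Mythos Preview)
Your approach is essentially the paper's: compute the conditional first moment of $S$ via many-to-one plus a Brownian-bridge decomposition, then justify $\E[\eee^{-S}\mid\FF_{a_{\ell-2}t}]\approx 1-\E[S\mid\FF_{a_{\ell-2}t}]\approx\exp(-\E[S\mid\FF_{a_{\ell-2}t}])$ by a second-moment bound. Your first-moment calculation, including the change of variables and the cancellation producing the $\pi^{-1/2}(b_{\ell-1}t)^{-3/2}(\s_{\ell-1}-\s_\ell)^{-3}$ factor, matches the paper's Lemma~\ref{lem:above_middle_first_moment}.

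Two points where the paper is sharper than your sketch. First, rather than expanding into elementary symmetric polynomials, the paper applies the elementary sandwich $1-x\le\eee^{-x}\le 1-x+x^2/2$ directly to $x=S$ and shows $\E[S^2\mid\FF_{a_{\ell-2}t}]/\E[S\mid\FF_{a_{\ell-2}t}]\to 0$; this avoids controlling higher $e_k$'s. Second, and more importantly, the second-moment bound (Lemma~\ref{prop:above_middle_second_moment}) is \emph{not} a derivative-martingale-style argument: it uses the many-to-two lemma, and the off-diagonal term is controlled only because the localisation set $\LL^{\mi_{\ell-2}}_{b_{\ell-1}t,\ell-1,B,D}$ contains the entropic-repulsion barrier $\AA_{0,b_{\ell-1}t,t^{\beta\delta/2}-\Lambda^{\mi_{\ell-2}}(t),\s_{\ell-1}}$. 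Integrating the branching time against that barrier yields a factor $\eee^{-\tilde C t^{\beta\delta/2}}$ that kills the polynomial prefactors; without the $t^{\beta\delta/2}$ shift (i.e.\ without Proposition~\ref{prop:above_entropic_rep}) the second moment would not beat the first. Your proposal does not identify this mechanism, and it is the one non-routine input.
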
	 
Based on Proposition~\ref{prop:above_iteration}, we will set up an iterative procedure with which we compute all nested expectations in \eqref{eq:above_branch}.
\begin{proof}
We rewrite the left-hand side of \eqref{eq:above_interative_1} as 
\begin{equation} \label{eq:above_middle_start}
	\E  \Bigg[ \!\exp \!\Bigg(\! - C \sqrt{\pi}
	\eee^{-\sqrt{2} \, \left( \sfrac{1}{\s_{\ell}} \Lambda^{\mi_{\ell-2}}(t) + L_{\ell-2}(t) + y \right)}
	\left(  \sfrac{1}{\s_{\ell}} \Lambda^{\mi_{\ell-2}}(t) \, \YY_{i_{\ell-2}} (t)
	+ \ZZ_{i_{\ell-2}}(t) \!\right)\!
	\Bigg) \, \bigg\vert \, \FF_{a_{\ell-2}t} \Bigg],  
\end{equation}
where
\begin{align} \label{eq:above_middle_YY_ZZ}
	\YY_{i_{\ell-2}}(t) 
	&\equiv \sum_{\substack{i_{\ell-1} \le n^{\mi_{\ell-2}}(b_{\ell-1}t), \\ \tilde{x}_{i_{\ell-1}}^{\mi_{\ell-2}} \in \LL_{b_{\ell-1}t, \ell-1, B, D}^{\mi_{\ell-2}}}}
	\eee^{-\sqrt{2}
		\frac{1}{\s_{\ell}} \left( \sqrt{2} \s_{\ell-1} b_{\ell-1} t - \tilde{x}_{i_{\ell-1}}^{\mi_{\ell-2}}(b_{\ell-1}t)  \right)
		+ \frac{3}{2} \big( \log(b_{\ell-1}t) + 2 \log ( \s_{\ell-1} - \s_{\ell} ) \big)},  
\end{align}
and 
\begin{align}
	\ZZ_{i_{\ell-2}}(t) 
	&\equiv \sum_{\substack{i_{\ell-1} \le n^{\mi_{\ell-2}}(b_{\ell-1}t), \\ \tilde{x}_{i_{\ell-1}}^{\mi_{\ell-2}} \in \LL_{b_{\ell-1}t, \ell-1, B, D}^{\mi_{\ell-2}}}}
	\eee^{-\sqrt{2}
		\frac{1}{\s_{\ell}} \left( \sqrt{2} \s_{\ell-1} b_{\ell-1} t - \tilde{x}_{i_{\ell-1}}^{\mi_{\ell-2}}(b_{\ell-1}t)  \right)
		+ \frac{3}{2} \big( \log(b_{\ell-1}t) + 2 \log ( \s_{\ell-1} - \s_{\ell} ) \big)} \nonumber \\
	& \hspace{2.8cm} \times
	\left( \sfrac{1}{\s_{\ell}} \left( \sqrt{2} \s_{\ell-1} b_{\ell-1} t - \tilde{x}_{i_{\ell-1}}^{\mi_{\ell-2}}(b_{\ell-1}t)  \right)
	+ L_{\ell-1}(t) + y \right).
\end{align}

The exponential in \eqref{eq:above_middle_start} is well approximated by its linear approximation. To see this, we use the inequality
\begin{equation} \label{eq:above_inequality}
	1 - x \le \eee^{-x} \le 1 - x + \frac{x^2}{2}, \quad x > 0,
\end{equation}
with
\begin{equation} \label{eq:above_middle_x}
	x = C \sqrt{\pi} 
	\eee^{-\sqrt{2} \, \left( \frac{1}{\s_{\ell}} \Lambda^{\mi_{\ell-2}}(t) + L_{\ell-2}(t) + y \right)} \
	\left( 
	\sfrac{1}{\s_{\ell}} \Lambda^{\mi_{\ell-2}}(t) \,  \YY_{i_{\ell-2}} (t) + \ZZ_{i_{\ell-2}}(t) 
	\right).
\end{equation}
This gives the bounds
\begin{align} \label{eq:above_middle_inequality_E}
	1 - \E \left[ x \, \vert \, \FF_{a_{\ell-2}t} \right]
	\le \E \left[ \eee^{-x} \, \vert \, \FF_{a_{\ell-2}t}\right]
	\le 1 - \E \left[ x \, \vert \, \FF_{a_{\ell-2}t} \right] 
	\left( 1 - \frac{1}{2} \frac{ \E \left[ x^2 \, \vert \, \FF_{a_{\ell-2}t} \right]}{\E \left[ x \, \vert \, \FF_{a_{\ell-2}t} \right]} \right).
\end{align}
The next two lemmas provide the necessary bounds on the conditional first and second moments of $x$.
Their proofs are postponed to Appendix~\ref{sec:above_proofMoments}. 
\begin{lemma} \label{lem:above_middle_first_moment}
	For all $i_1 \le n(b_1t), \dots, i_{\ell-2} \le n^{\mi_{\ell-3}} (b_{\ell-2}t) $ with
	\be 
	\tilde{x}_{i_1} \in \LL_{b_1t, 1, t^{\b}, B, D }, \dots,
	\tilde{x}_{i_{\ell-2}}^{\mi_{\ell-3}} \in \LL_{b_{\ell-2}, \ell-2, B, D}^{\mi_{\ell-3}},\ee 
	\begin{align}
		\E \, \Big[ \YY_{i_{\ell-2}}(t) \, \big\vert \, \FF_{a_{\ell-2}t} \Big]
		&= \sfrac{1}{\sqrt{\pi}}
		\Lambda^{\mi_{\ell-2}}(t)
		\eee^{ \sqrt{2} \Lambda^{\mi_{\ell-2}}(t) \frac{\s_{\ell-1} - \s_{\ell}}{\s_{\ell-1} \s_{\ell}}} 
		\sfrac{ \s_{\ell-1} - \s_{\ell} }{\s_{\ell-1}^3 \sqrt{2}}
		\, (1+o(1)), \\
		\E \, \Big[ \ZZ_{i_{\ell-2}}(t) \, \big\vert \, \FF_{a_{\ell-2}t} \Big]
		&= \sfrac{1}{\sqrt{\pi}}
		\Lambda^{\mi_{\ell-2}}(t)
		\eee^{ \sqrt{2} \Lambda^{\mi_{\ell-2}}(t) \frac{\s_{\ell-1} - \s_{\ell}}{\s_{\ell-1} \s_{\ell}}}
		\!\left( \sfrac{1}{ \s_{\ell-1}^3 \s_{\ell} } -
		\Lambda^{\mi_{\ell-2}}(t) \sfrac{ \s_{\ell-1} - \s_{\ell} }{ \s_{\ell-1}^3 \s_{\ell} \sqrt{2}} \right)
		\! (1+o(1)) \nonumber,
	\end{align}
	where $o(1)$ tends to $0$ as first $t \uparrow \infty$ and then $B \uparrow \infty, D \downarrow 0$.
	
\end{lemma}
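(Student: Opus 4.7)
The approach is a direct moment computation via the many-to-one lemma. Writing $y_i \equiv \sqrt{2}\s_{\ell-1}b_{\ell-1}t - \tilde{x}_{i_{\ell-1}}^{\mi_{\ell-2}}(b_{\ell-1}t)$, the localisation $\tilde{x}_{i_{\ell-1}}^{\mi_{\ell-2}} \in \LL_{b_{\ell-1}t,\ell-1,B,D}^{\mi_{\ell-2}}$ is the intersection of the endpoint constraint $y_i \in \Lambda^{\mi_{\ell-2}}(t) + [D,B]/(\s_{\ell-1}-\s_\ell)$ with the shifted linear upper barrier $\tilde{x}_{i_{\ell-1}}^{\mi_{\ell-2}}(s) \le \sqrt{2}\s_{\ell-1}s + \Lambda^{\mi_{\ell-2}}(t) - t^{\b\delta/2}$ for $s\in[0, b_{\ell-1}t]$. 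Conditioning on $\FF_{a_{\ell-2}t}$ makes $\Lambda^{\mi_{\ell-2}}(t)$ deterministic, and the many-to-one lemma replaces the sum over $i_{\ell-1}$ by $\eee^{b_{\ell-1}t}$ times a single-particle expectation for a Brownian motion $X$ of variance $\s_{\ell-1}^2$ on $[0, b_{\ell-1}t]$.

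I would then integrate out the endpoint $w = X(b_{\ell-1}t)$ via the change of variable $y = \sqrt 2 \s_{\ell-1} b_{\ell-1} t - w$, and handle the barrier by the Brownian-bridge reflection formula. Conditionally on the endpoint, the process $\xi(s) = \sqrt{2}\s_{\ell-1}s + \Lambda^{\mi_{\ell-2}}(t) - t^{\b\delta/2} - X(s)$ is a Brownian bridge of variance $\s_{\ell-1}^2$ between the strictly positive endpoints $a = \Lambda^{\mi_{\ell-2}}(t) - t^{\b\delta/2}$ and $b = y + \Lambda^{\mi_{\ell-2}}(t) - t^{\b\delta/2}$, and the barrier condition becomes $\xi \ge 0$. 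Since $ab/(\s_{\ell-1}^2 b_{\ell-1}t) = o(1)$ uniformly on the gate by Assumption~\ref{as:above3}, linearising $1 - \eee^{-2ab/(\s_{\ell-1}^2 b_{\ell-1}t)}$ gives $P_b(y) = 2\Lambda^{\mi_{\ell-2}}(t)(y+\Lambda^{\mi_{\ell-2}}(t))/(\s_{\ell-1}^2 b_{\ell-1}t)(1+o(1))$. The many-to-one factor and the Gaussian density combine via the identity $\eee^{b_{\ell-1}t - w^2/(2\s_{\ell-1}^2 b_{\ell-1}t)} = \eee^{\sqrt{2}y/\s_{\ell-1} - y^2/(2\s_{\ell-1}^2 b_{\ell-1}t)}$, and absorbing the exponent $-\sqrt{2}y/\s_\ell$ from the definition of $\YY$ reduces the conditional expectation to a one-dimensional integral of $\eee^{\sqrt{2}y(1/\s_{\ell-1}-1/\s_\ell)}(y+\Lambda^{\mi_{\ell-2}}(t))$ over $I = \Lambda^{\mi_{\ell-2}}(t) + [D,B]/(\s_{\ell-1}-\s_\ell)$, with prefactor $2\Lambda^{\mi_{\ell-2}}(t)(\s_{\ell-1}-\s_\ell)^3/(\sqrt{2\pi}\s_{\ell-1}^3)$ and overall $(1+o(1))$ error from the quadratic correction $y^2/(\s_{\ell-1}^2 b_{\ell-1}t)$, again uniformly $o(1)$ by Assumption~\ref{as:above3}.

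Finally, I would factor out the $\Lambda^{\mi_{\ell-2}}(t)$-dependent exponential by shifting $y = \Lambda^{\mi_{\ell-2}}(t) + \hat y$ and then rescale by $u = \sqrt{2}\hat y(\s_{\ell-1}-\s_\ell)/(\s_{\ell-1}\s_\ell)$; the integration bounds converge to $[\sqrt{2}D, \sqrt{2}B]$ and the integrand becomes a polynomial in $u$ weighted by $\eee^{-u}$, so the integral is elementary. Collecting prefactors yields the claimed form of $\E[\YY_{i_{\ell-2}}(t)\mid\FF_{a_{\ell-2}t}]$. The argument for $\ZZ_{i_{\ell-2}}(t)$ is structurally identical but with the summand carrying the extra linear factor $y/\s_\ell + L_{\ell-1}(t) + y$; the integrand after rescaling is then quadratic rather than linear in $u$, giving a two-term output whose second term is precisely what is needed to cancel the leading-in-$\Lambda^{\mi_{\ell-2}}(t)$ contribution in the combination $\frac{\Lambda^{\mi_{\ell-2}}(t)}{\s_\ell}\E[\YY] + \E[\ZZ]$ that appears in the proof of Proposition~\ref{prop:above_iteration}.

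The main technical burden is ensuring that all the $o(1)$ errors are uniform in $\Lambda^{\mi_{\ell-2}}(t)$ across its full admissible range, which via the nested earlier localisations can grow as large as a constant multiple of $\min_{k \le \ell-2}(\s_k - \s_{k+1})^{-1}$, close to but strictly below $\sqrt{b_{\ell-1}t}$. Three $o(1)$ contributions must all be tracked simultaneously: the quadratic correction $y^2/(\s_{\ell-1}^2 b_{\ell-1}t)$ in the Gaussian factor, the Taylor remainder in the reflection formula of relative size $\Lambda^{\mi_{\ell-2}}(t)(y+\Lambda^{\mi_{\ell-2}}(t))/(\s_{\ell-1}^2 b_{\ell-1}t)$, and the deviation of the rescaled integration bounds from $[\sqrt{2}D, \sqrt{2}B]$. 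Assumption~\ref{as:above3} is precisely designed so that all three errors are uniformly $o(1)$, which yields the announced conditional-expectation formulas with the required uniformity in $\tilde{x}_{i_{\ell-2}}^{\,\mi_{\ell-3}}$.
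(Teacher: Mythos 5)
Your strategy is the paper's proof step for step: many-to-one, decomposition of the single-particle path into its endpoint plus a Brownian bridge, the ballot formula of Lemma~\ref{lem:BB_line} linearised to $2ab/(\s_{\ell-1}^2 b_{\ell-1}t)$, a shift and rescaling of the remaining one-dimensional integral, and the same three sources of uniform $o(1)$ error (quadratic Gaussian correction, Taylor remainder of the reflection formula, moving integration bounds) controlled by Assumption~\ref{as:above3}. Your observation that the $\ZZ$-computation produces exactly the cancellation needed in $\sfrac{\Lambda}{\s_\ell}\E[\YY]+\E[\ZZ]$ is also correct and is what Proposition~\ref{prop:above_iteration} relies on.

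There is, however, one concrete error in your setup which, carried through, does not yield the stated formula: the placement of the endpoint gate. You take $y_i=\sqrt{2}\s_{\ell-1}b_{\ell-1}t-\tilde{x}_{i_{\ell-1}}^{\mi_{\ell-2}}(b_{\ell-1}t)\in \Lambda^{\mi_{\ell-2}}(t)+[D,B]/(\s_{\ell-1}-\s_{\ell})$. The localisation actually established in Proposition~\ref{prop:above_speed_change}, and the indicator used in \eqref{eq:above_middle_first_many}, constrain the \emph{cumulative} deficit $\Lambda^{\mi_{\ell-1}}(t)=\Lambda^{\mi_{\ell-2}}(t)+y_i$ to lie in $[D,B]/(\s_{\ell-1}-\s_{\ell})$, i.e.\ $y_i\in-\Lambda^{\mi_{\ell-2}}(t)+[D,B]/(\s_{\ell-1}-\s_{\ell})$. (The confusion is traceable to the definition of $\LL^{\mi_{k-1}}_{b_kt,k,B,D}$ in Subsection~\ref{sec:proof_caseA}, which carries $\GG_{b_kt,k,+\Lambda^{\mi_{k-1}}(t),B,D}$, in conflict with the restatement after Proposition~\ref{prop:above_speed_change}, which has $\GG_{b_kt,k,-\Lambda^{\mi_{k-1}}(t),B,D}$; only the latter is compatible with $\Delta^{\mi_{\ell-1}}(t)=O((\s_{\ell-1}-\s_\ell)^{-1})$ and with the lemma's conclusion.) The consequence is not cosmetic: with your gate the shift $y=\Lambda^{\mi_{\ell-2}}(t)+\hat y$ extracts $\eee^{-\sqrt{2}\Lambda^{\mi_{\ell-2}}(t)(\s_{\ell-1}-\s_{\ell})/(\s_{\ell-1}\s_{\ell})}$ — the opposite sign in the exponent — and the bridge factor $2\Lambda(y+\Lambda)$ combines with the integrand to give a leading contribution of order $\Lambda^2(\s_{\ell-1}-\s_{\ell})^2$ instead of $\Lambda(\s_{\ell-1}-\s_{\ell})$. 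Since the factor $\eee^{+\sqrt{2}\Lambda(\s_{\ell-1}-\s_{\ell})/(\s_{\ell-1}\s_{\ell})}$ must merge with the prefactor $\eee^{-\sqrt{2}\Lambda/\s_{\ell}}$ of \eqref{eq:above_middle_x} into $\eee^{-\sqrt{2}\Lambda/\s_{\ell-1}}$ to close the iteration, the sign matters. With the gate placed correctly, your bridge endpoints become $a=\Lambda^{\mi_{\ell-2}}(t)-t^{\b\delta/2}$ and $b=\Lambda^{\mi_{\ell-1}}(t)-t^{\b\delta/2}\in[D,B]/(\s_{\ell-1}-\s_{\ell})-t^{\b\delta/2}$, and the rest of your computation goes through verbatim and reproduces the paper's proof.
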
 
To bound the conditional second moment of $x$, we write it as
\begin{align} \label{eq:above_middle_second_moment}
	\E &\left[ x^2 \, \vert \, \FF_{a_{\ell-2}t} \right] \nonumber \\
	= \, &C^2 \pi
	\eee^{- 2\sqrt{2} \, \left( \frac{1}{\s_{\ell}} \Lambda^{\mi_{\ell-2}}(t) + L_{\ell-2}(t) + y \right)} \
	\E \left[
	\left( 
	\frac{1}{\s_{\ell}} \Lambda^{\mi_{\ell-2}}(t) \,  \YY_{i_{\ell-2}} (t) + \ZZ_{i_{\ell-2}}(t) 
	\right)^2 \bigg\vert \, \FF_{a_{\ell-2}t} \right] \nonumber \\
	\le \, &C^2 \pi
	\eee^{- 2\sqrt{2} \, \left( \frac{1}{\s_{\ell}} \Lambda^{\mi_{\ell-2}}(t) + L_{\ell-2}(t) + y \right)} \notag\\
	&\times\E \, \Bigg[ \Bigg(
	\sum_{\substack{i_{\ell-1} \le n^{\mi_{\ell-2}}(b_{\ell-1}t), \\ \tilde{x}_{i_{\ell-1}}^{\mi_{\ell-2}} \in \LL_{b_{\ell-1}t, \ell-1, B, D}^{\mi_{\ell-2}}}}
	\eee^{
		-\sqrt{2} \frac{1}{\s_{\ell}} \left( \sqrt{2} \s_{\ell-1} b_{\ell-1} t - \tilde{x}_{i_{\ell-1}}^{\mi_{\ell-2}}(b_{\ell-1}t) \right)
	}
	\Bigg)^2 \, 
	\bigg\vert \, \FF_{a_{\ell-2}t} \Bigg] \nonumber \\
	&\times
	( b_{\ell-1} t )^3 ( \s_{\ell-1} - \s_{\ell} )^6
	\left(
	\sfrac{ B }{ \s_{\ell} (\s_{\ell-1} - \s_{\ell}) }
	+ L_{\ell-1}(t) + y
	\right)^2.
\end{align}
In the last inequality, we bounded the non-exponential factors in  $\ZZ_{i_{\ell-2}}(t) $ with the help of the localisation $\tilde{x}_{i_{\ell-1}}^{\mi_{\ell-2}} \in \LL_{b_{\ell-1}t, \ell-1, B, D}^{\mi_{\ell-2}}$. 
By Assumption~\ref{as:above3}, the last line in \eqref{eq:above_middle_second_moment} is at most of order $t^3$.
The next lemma provides a bound on the expectation in \eqref{eq:above_middle_second_moment}.
\begin{lemma} \label{prop:above_middle_second_moment}
	For all $i_1 \le n(b_1t), \dots, i_{\ell-2} \le n^{\mi_{\ell-3}} (b_{\ell-2}t) $ as in Lemma 
	\ref{lem:above_middle_first_moment}
	\begin{align}
		\E \,& \Bigg[ \Bigg(
		\sum_{\substack{i_{\ell-1} \le n^{\mi_{\ell-2}}(b_{\ell-1}t), \\ \tilde{x}_{i_{\ell-1}}^{\mi_{\ell-2}} \in \LL_{b_{\ell-1}t, \ell-1, B, D}^{\mi_{\ell-2}}}}
		\hspace{-1em}
		\exp \left(
		{\textstyle
			-\sqrt{2} \frac{1}{\s_{\ell}} \left( \sqrt{2} \s_{\ell-1} b_{\ell-1} t - \tilde{x}_{i_{\ell-1}}^{\mi_{\ell-2}}(b_{\ell-1}t) \right) }
		\right)
		\Bigg)^2 \, 
		\bigg\vert \, \FF_{a_{\ell-2}t} \Bigg] \nonumber \\
		\le \, & P(t)
		\exp \left( - \tilde{C} t^{\beta} \right)
		\exp \left( \sqrt{2} \Lambda^{\mi_{\ell-2}}(t)
		\left( \sfrac{\s_{\ell-1} - \s_{\ell}}{\s_{\ell-1} \s_{\ell}}
		+ \sfrac{1}{\s_{\ell}} \right) 
		\right), 
	\end{align}
	with  $\tilde{C} > 0$ a  constant and $P$ is a function satisfying $P(t) \le t^c$ for some  $c>0$.
\end{lemma}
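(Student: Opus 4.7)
The plan is to decompose the square into diagonal and off-diagonal contributions,
$$
\bigg(\sum_i a_i\bigg)^2 = \sum_i a_i^2 + \sum_{i\ne j} a_i a_j,\qquad a_i := \exp(-\sqrt{2}\chi_i/\s_\ell),\ \ \chi_i := \sqrt{2}\s_{\ell-1} b_{\ell-1}t - \tilde x^{\mi_{\ell-2}}_{i_{\ell-1}}(b_{\ell-1}t),
$$
and to bound each using, respectively, the many-to-one and the many-to-two (spine) formulas for the $(\ell-1)$-th segment, i.e.\ a standard BBM with branching rate $1$ and variance $\s_{\ell-1}^2$, run for time $b_{\ell-1}t$ and conditioned on $\FF_{a_{\ell-2}t}$. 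A key preliminary observation is that the localisation event $\LL^{\mi_{\ell-2}}_{b_{\ell-1}t,\ell-1,B,D}$ forces $\chi_i\ge \Lambda^{\mi_{\ell-2}}(t)+D/(\s_{\ell-1}-\s_\ell)$, so
$$
a_i\le \exp\bigl({-}\sqrt{2}\,\Lambda^{\mi_{\ell-2}}(t)/\s_\ell\bigr)\cdot \exp\bigl({-}\sqrt{2}\,D/(\s_\ell(\s_{\ell-1}-\s_\ell))\bigr).
$$
By Assumption~\ref{as:above3}, $(\s_{\ell-1}-\s_\ell)^{-1}\gg t^\beta$, so the second factor is at most $\exp(-\tilde C t^\beta)$ for some $\tilde C>0$; this is the origin of the $\exp(-\tilde C t^\beta)$ in the claimed bound.

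For the diagonal, many-to-one reduces the estimate to $e^{b_{\ell-1}t}\,\E[\mathbf 1_{\mathrm{loc}}\,e^{-2\sqrt{2}\chi/\s_\ell}]$ for a single BM of variance $\s_{\ell-1}^2$, which is computed as in the first-moment calculation of Lemma~\ref{lem:above_middle_first_moment}, except the weight is squared. Completing the square in the Gaussian density against $e^{-2\sqrt 2 \chi/\s_\ell}$, substituting $\chi=\Lambda^{\mi_{\ell-2}}(t)+y/(\s_{\ell-1}-\s_\ell)$ for $y\in[D,B]$, and using Lemma~\ref{lem:BB_line} for the Brownian bridge barrier probability, one obtains a polynomial-in-$t$ prefactor times $\exp\bigl(\sqrt 2\, \Lambda^{\mi_{\ell-2}}(t)(2\s_{\ell-1}-\s_\ell)/(\s_{\ell-1}\s_\ell)\bigr)$. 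This is already of the form claimed, and the additional $\exp(-\tilde C t^\beta)$ is furnished by the preliminary observation above.

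For the off-diagonal, I would apply the many-to-two formula, which takes the schematic form
$$
\E\bigg[\sum_{i\ne j} a_i a_j\,\mathbf 1_{\mathrm{loc}_i,\mathrm{loc}_j}\bigg|\FF_{a_{\ell-2}t}\bigg] = K\int_0^{b_{\ell-1}t}\!ds\, e^{2(b_{\ell-1}t-s)}\,\E^0_{\mathrm{BM}}\bigl[\mathbf 1_{\text{barrier on }[0,s]}\,G(s,x(s))^2\bigr],
$$
where $K$ depends on $(p_k)_{k\in\N}$, and $G(s,y) = \E^y_{\mathrm{BM}}[\mathbf 1_{\text{barrier on }[s,b_{\ell-1}t]}\,\mathbf 1_{\text{endpoint in }\GG}\,e^{-\sqrt 2 \chi/\s_\ell}]$ is the expected single-lineage contribution from position $y$ at time $s$. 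The function $G(s,y)$ is computable in closed form by a first-moment calculation analogous to Lemma~\ref{lem:above_middle_first_moment}, starting from $(s,y)$ rather than $(0,0)$. Squaring $G$, integrating in $y$ against the common-ancestor Gaussian density (with its own bridge barrier factor on $[0,s]$), and then in $s$, one verifies --- after Gaussian completion of squares --- that the dominant exponential behaviour is again $\exp(\sqrt 2\, \Lambda^{\mi_{\ell-2}}(t)(2\s_{\ell-1}-\s_\ell)/(\s_{\ell-1}\s_\ell))$, the $s$-integral produces only polynomial-in-$t$ factors, and each of the two descendants independently carries the preliminary factor $\exp(-\tilde C t^\beta/2)$, which combine to $\exp(-\tilde C t^\beta)$.

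The main technical obstacle is to carefully track the polynomial prefactors arising from the Gaussian normalisations, the Brownian bridge barrier probabilities, and the integration over the branching time $s$, and to confirm they combine to give $P(t)\le t^c$. In particular the negative powers of $(\s_{\ell-1}-\s_\ell)$ that appear from the $y$-integrals are delicate; however, Assumption~\ref{as:above3} bounds $(\s_{\ell-1}-\s_\ell)^{-1}\ll t^{1/2-\beta}$, so these remain polynomial in $t$ and are always dominated by the exponential saving $\exp(-\tilde C t^\beta)$.
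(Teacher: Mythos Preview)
Your overall strategy---split into diagonal $(T1)$ and off-diagonal $(T2)$, apply many-to-one and many-to-two respectively---matches the paper, and your treatment of $(T1)$ is correct: there the endpoint localisation $\GG$ indeed supplies the saving $\exp(-\sqrt{2}D/(\s_\ell(\s_{\ell-1}-\s_\ell)))\le\exp(-\tilde C t^\beta)$.

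For $(T2)$, however, you misidentify the source of the exponential saving. Once you carry out the Gaussian completion of squares you propose, the weight $e^{-\sqrt{2}\chi/\s_\ell}$ in $G(s,y)$ interacts with the Gaussian transition density: after substituting $\tilde\xi=-z/(\s_{\ell-1}-\s_\ell)$ and expanding, the factor $\exp(-\sqrt{2}z/(\s_\ell(\s_{\ell-1}-\s_\ell)))$ that you call the ``preliminary factor'' combines with $\exp(\sqrt{2}z/(\s_{\ell-1}(\s_{\ell-1}-\s_\ell)))$ coming from the Gaussian shift, and what survives is only the bounded constant $\exp(-\sqrt{2}z/(\s_{\ell-1}\s_\ell))$. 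In other words, the bulk of your endpoint saving is consumed in cancelling the growth $e^{b_{\ell-1}t-s}$ from the many-to-two prefactor; you cannot both complete the square to get the correct $\Lambda$-exponent and separately extract $\exp(-\tilde C t^\beta)$ from each descendant. If you tried instead to pull out the crude pointwise bound on $a_ia_j$ before the Gaussian computation, the remaining pair-count would acquire a compensating exponential of the same order, and since the endpoint window has width $B>D$, the balance goes the wrong way.

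The paper's mechanism for $(T2)$ is different: after the Gaussian manipulations one is left with $\int_0^{b_{\ell-1}t}ds\int_{-\infty}^{-t^{\beta\delta/2}}d\tilde\omega\,(\text{Gaussian in }\tilde\omega)\,e^{\sqrt{2}\tilde\omega/\s_{\ell-1}}$, where the upper limit $-t^{\beta\delta/2}$ comes from the \emph{entropic-repulsion barrier} $\AA_{0,b_{\ell-1}t,t^{\beta\delta/2}-\Lambda,\s_{\ell-1}}\subset\LL^{\mi_{\ell-2}}$ evaluated at the branching time $s$. This forces $e^{\sqrt{2}\tilde\omega/\s_{\ell-1}}\le e^{-\sqrt{2}t^{\beta\delta/2}/\s_{\ell-1}}$, which is the saving; the remaining Gaussian integral is $\le 1$ and the $s$-integral then contributes only the harmless $b_{\ell-1}t$. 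Without this barrier restriction the $\tilde\omega$-$s$-integral is of order $e^{b_{\ell-1}t}$, so your claim that ``the $s$-integral produces only polynomial-in-$t$ factors'' independently of any exponential saving is not right---it is the barrier that simultaneously furnishes the saving and tames the $s$-integral.
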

Comparing the first and second-order terms, we obtain with Lemma~\ref{lem:above_middle_first_moment} and ~\ref{prop:above_middle_second_moment}
\begin{align} \label{eq:above_middle_compare_moments}
	\frac{ \E \, \left[ x^2 \, \big\vert \, \FF_{a_{\ell-2}t} \right]}{\E \, \left[ x \, \big\vert \, \FF_{a_{\ell-2}t} \right]} 
	\le \frac{1}{\Lambda^{\mi_{\ell-2}}(t)} P(t) \eee^{- \tilde{C} t^{\beta} }
	\le D ( \s_{\ell-2} - \s_{\ell-1} ) P(t) \eee^{- \tilde{C} t^{\beta} }.
\end{align}
We used the localisation $x_{i_{\ell-2}}^{\mi_{\ell-3}} \in \LL_{b_{\ell-2}t, \ell-2. B, D}^{\mi_{\ell-3}}$ to bound $\Lambda^{\mi_{\ell-2}}(t)$.  By Assumption~\ref{as:above3},
the right-hand side of \eqref{eq:above_middle_compare_moments} converges to zero, as $t \uparrow \infty$.
Together with  \eqref{eq:above_middle_inequality_E}, this yields
\begin{align} \label{eq:above_middle_end}
	\E \, \Big[ \eee^{-x} \, \vert \,  \FF_{a_{\ell-2}t} \Big]
	= \, &\Big(
	1 - \E \left[ x \, \vert \, \FF_{a_{\ell-2}t} \right]
	\Big) \, (1+o(1)) \nonumber  \\
	= \, &\left(
	1 - C 
	\Lambda^{\mi_{\ell-2}}(t)
	\eee^{-\sqrt{2} \, \left( \frac{1}{\s_{\ell-1}} \Lambda^{\mi_{\ell-2}}(t) + L_{\ell-2}(t) + y \right)} 
	\right) \, (1+o(1)) \nonumber  \\
	= \, &\exp \left( -
	C 
	\Lambda^{\mi_{\ell-2}}(t)
	\eee^{ - \sqrt{2} \, \left( \frac{1}{\s_{\ell-1}} \Lambda^{\mi_{\ell-2}}(t) + L_{\ell-2}(t) + y \right)}
	\right)  \, (1+o(1)).
\end{align}
The second equality follows from Lemma~\ref{lem:above_middle_first_moment}.
In the third equality, we used \eqref{eq:above_inequality} together with $\Lambda^{\mi_{\ell-2}}(t) > D (\s_{\ell-2} - \s_{\ell-1})^{-1}$ and Assumption~\ref{as:above3}.
\end{proof}

Inserting the right-hand side of \eqref{eq:above_interative_1} into the nested expectations in \eqref{eq:above_branch}, we see that the innermost expectation in \eqref{eq:above_branch} now is of the form
\begin{equation} \label{eq:above_iterative_2}
\E \, \Bigg[ \exp \bigg(-
\sum_{\substack{ i_k \le n^{\mi_{k-1}} (b_kt) \\ \tilde{x}_{i_k}^{\mi_{k-1}} \in \LL_{ b_kt, k, B, D}^{\mi_{k-1}}}}
C 
\Lambda^{\mi_k} (t)
\eee^{ -\sqrt{2} \, \left( \frac{1}{\s_k} \Lambda^{\mi_k}(t) + L_k(t) + y \right)}
\bigg) \, \bigg\vert \, \FF_{a_{k-1}t} \Bigg]
\end{equation}
with $k = \ell-2$.
By the same arguments as in the proof of Proposition~\ref{prop:above_iteration}, we get that \eqref{eq:above_iterative_2} equals
\begin{equation} \label{eq:above_iterative_3}
\exp \left( -
C 
\Lambda^{\mi_{k-1}}(t)
\eee^{ - \sqrt{2} \, \left( \frac{1}{\s_{\ell}} \Lambda^{\mi_{k-1}}(t) + L_{k-1}(t) + y \right)}
\right)  \, (1+o(1)),
\end{equation}
where the error is uniform in the range of possible values of $\tilde{x}_{i_{k-1}}^{\mi{k_2}}$ since $\tilde{x}_{i_{k-1}}^{\mi{k_2}} \!\in \LL_{b_{k-1}t, k-1, B, D}^{\mi_{k-2}}$.
Iterating this procedure for $k=\ell-3, \dots, 2$, we see that \eqref{eq:above_branch} is equal to
\begin{align} \label{eq:above_first_start}
\E \, \Bigg[ \prod_{\substack{i_1 \le n(b_1t), \\ \tilde{x}_{i_1} \in \LL_{b_1t, 1, t^{\b}, B, D}}}&
\exp \, \bigg( {-} C 
\eee^{- \sqrt{2} y} \, 
\bigr( \sqrt{2} \s_1 b_1 t - \tilde{x}_{i_1}(b_1t) \bigr)  \\
&\times\eee^{-\sqrt{2} \frac{1}{\s_2} \left( \sqrt{2} \s_1 b_1 t - \tilde{x}_{i_1}(b_1t) \right)
	+ \frac{3}{2} \big( \log(b_1t) + 2 \log( \pi^{1/6} (\s_1 - \s_2)) \big) }
\bigg) \Bigg] (1 + o(1)). \nonumber 
\end{align}
To compute the expectation, we split the range of integration  $[0, b_1t)$ into $[0,t^{\b})$ and $[t^{\b}, b_1 t)$, and abbreviate the length of the second part by $t^* \equiv b_1t - t^{\beta}$. 
We choose $\b \in (0,1/2)$ such that Assumption~\ref{as:above3} is satisfied.
Denote by $\{ \tilde{x}_j(t^{\b}) \colon 1 \le j \le n(t^{\b})\}$ the particles of a BBM with variance $\s_1^2$ at time $t^{\b}$, and, for each $j$, denote by \mbox{$\{ \tilde{x}_{j^*}^j (t^*) \colon 1 \le j^* \le n(t^*) \}$} the particles of independent BBMs with variance $\s_1^2$ at time $t^*$.
We set
\begin{align}
\widetilde{\LL}_{t^*, 1, \tilde{x}_j \left( t^{\b} \right) - \sqrt{2} \s_1 t^{\b}, B, D}
&\equiv \AA_{0, t^*, t^{\b \delta/2} + \tilde{x}_j \left( t^{\b} \right) - \sqrt{2} \s_1 t^{\b}, \s_1}
\cap \GG_{t^*, 1, \tilde{x}_j(t^{\b}) - \sqrt{2} \s_1 t^{\b}, B, D}, \nonumber \\
\widetilde{\LL}_{t^{\beta}, t^{\beta \delta}, \s_1}
&\equiv 
\AA_{t^{\beta}, t^{\beta}, t^{\beta \delta}, \s_1}
\cap \BB_{t^{\beta}, t^{\beta}, 0, \s_1}.
\end{align}
By conditioning on $\FF_{t^{\b}}$, we rewrite \eqref{eq:above_first_start} as
\begin{align} \label{eq:above_first_split}
\E\,  \Bigg[ \prod_{\substack{j \le n(t^{\b}) \\ \tilde{x}_j \in \widetilde{\LL}_{t^{\beta}, t^{\beta \delta}, \s_1}}}
\E \,& \Bigg[ \exp \bigg(- C \sqrt{\pi}
\eee^{-\sqrt{2} \,  \left( \frac{1}{\s_2} \left(\sqrt{2} \s_1 t^{\b} - \tilde{x}_j \left( t^{\b} \right) \right) + y \right)} \nonumber \\
& \times
\left( \left(\sqrt{2} \s_1 t^{\b} - \tilde{x}_j \left( t^{\b} \right) \right) \widetilde{\YY}_j(t) + \widetilde{\ZZ}_j(t) \right)\! \bigg) \,  \bigg\vert \,  \FF_{t^{\b}} \Bigg] \Bigg] (1+o(1)),
\end{align}
where
\begin{align} \label{eq:above_first_splitted}
\widetilde{\YY}_j(t) 
&\equiv \hspace{-1.5em}
\sum_{\substack{j^* \le n^j(t^*), \\ \tilde{x}_{j^*}^{\, j} \in \widetilde{\LL}_{t^*, 1, \tilde{x}_j(t^{\b}) - \sqrt{2} \s_1 t^{\b}, B, D}}} 
\hspace{-2.5em}
\eee^{-\sqrt{2} \, \frac{1}{\s_2} \left( \sqrt{2} \s_1 t^* - \tilde{x}_{j^*}^{\, j}(t^*) \right)
	+ \frac{3}{2} \big( \log(b_1t) + 2 \log ( \s_1 - \s_2 ) \big)}, \\
\widetilde{\ZZ}_j(t) 
&\equiv \hspace{-1.5em}
\sum_{\substack{j^* \le n^j(t^*), \\ \tilde{x}_{j^*}^{\, j} \in \widetilde{\LL}_{t^*, 1, \tilde{x}_j(t^{\b}) - \sqrt{2} \s_1 t^{\b}, B, D}}}
\hspace{-2.5em}
\left(\sqrt{2} \s_1 t^* - x_{j^*}^{\, j}(t^*) \right)
\eee^{-\sqrt{2} \frac{1}{\s_2} \left(\sqrt{2} \s_1 t^* - \tilde{x}_{j^*}^{\, j}(t^*) \right)
	+ \frac{3}{2} \big( \log(b_1t) + 2 \log ( \s_1 - \s_2 ) \big)}\!. \nonumber 
	\end{align}
	The proofs of the following lemmas are very similar to the corresponding results in \cite{1to6} and use the same technique as in \eqref{eq:above_middle_start} -- \eqref{eq:above_middle_end}. We give the details in Appendix A for completeness. 
	We set
	\begin{equation} \label{eq:above_first_x}
z
=  C \sqrt{\pi}
\eee^{-\sqrt{2} \, \left( \frac{1}{\s_2} \left(\sqrt{2} \s_1 t^{\b} - \tilde{x}_j \left( t^{\b} \right)\right) + y \right) }
\left( \left(\sqrt{2} \s_1 t^{\b} - \tilde{x}_j(t^{\b})\right) \widetilde{\YY}_j(t) + \widetilde{\ZZ}_j(t) \right).
\end{equation}
In the first step, we compute the first moments of $\widetilde{\YY}_j(t)$ and $\widetilde{\ZZ}_j(t)$.
\begin{lemma} \label{lem:above_first_moment_first}
For all $j \le n(t^{\b})$ with $\tilde{x}_j \in \LL_{t^{\b}, t^{\b \delta}, \s_1}$,
\begin{align}
	\E \, \Big[ \widetilde{\YY}_j(t) \, \vert \, \FF_{t^{\b}} \Big]
	&= \sfrac{1}{ \sqrt{ \pi } }
	\left( \sqrt{2} \s_1 t^{\b} - \tilde{x}_j ( t^{\b} ) \right) 		\sfrac{ \s_1 - \s_2 }{\s_1^3 \sqrt{2}}(1+o(1)), \\
	\E \, \Big[ \widetilde{\ZZ}_j(t) \, \vert \, \FF_{t^{\b}} \Big]
	&=  \sfrac{1}{\sqrt{\pi}}
	\left(\sqrt{2} \s_1 t^{\b} - \tilde{x}_j ( t^{\b} ) \right)		\sfrac{1}{\s_1^3}
	\left( 1
	-
	\left(\sqrt{2} \s_1 t^{\b} - \tilde{x}_j ( t^{\b} ) \right)
	\sfrac{ \s_1 - \s_2 }{ \sqrt{2}}
	\right) (1+o(1)), \nonumber 
\end{align}
where $o(1)$ tends to $0$ as first $t \uparrow \infty$ and then $B \uparrow \infty, D \downarrow 0$.
\end{lemma}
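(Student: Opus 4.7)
The plan is to reduce each sum to a one-particle integral via the many-to-one lemma and then evaluate the resulting Gaussian--exponential integral using the Brownian-bridge line-crossing formula of Lemma~\ref{lem:BB_line}. Let $V_j \equiv \sqrt{2}\sigma_1 t^\beta - \tilde{x}_j(t^\beta)$. The localisation $\tilde{x}_j \in \widetilde{\LL}_{t^\beta, t^{\beta\delta}, \sigma_1}$ forces $t^{\beta\delta} \le V_j \le 2\sqrt{2}\sigma_1 t^\beta$, and Assumption~\ref{as:above3} then gives $V_j(\sigma_1 - \sigma_2) \to 0$ together with $V_j^2 \ll b_1 t\,(\sigma_1 - \sigma_2)^2$ as $t\uparrow\infty$.

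Applying many-to-one to $\widetilde{\YY}_j(t)$ gives $e^{t^*}$ times a single-particle expectation for a Brownian motion $X$ of variance $\sigma_1^2$ over $[0, t^*]$. Decomposing $X_s = (s/t^*)X_{t^*} + \zeta(s)$ with $\zeta$ an independent bridge and substituting $u = \sqrt{2}\sigma_1 t^* - X_{t^*}$, the endpoint condition from $\GG_{t^*, 1, \tilde{x}_j(t^\beta)-\sqrt{2}\sigma_1 t^\beta, B, D}$ restricts $u$ to $-V_j + [D, B]/(\sigma_1 - \sigma_2)$, while the path condition from $\AA_{0, t^*, t^{\beta\delta/2}+\tilde{x}_j(t^\beta)-\sqrt{2}\sigma_1 t^\beta, \sigma_1}$ becomes the linear bridge constraint $\zeta(s) \le V_j - t^{\beta\delta/2} + (s/t^*) u$ on $[0, t^*]$, whose endpoint values $V_j - t^{\beta\delta/2}$ and $V_j - t^{\beta\delta/2} + u$ are both positive on the range of $u$. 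Combining the $e^{t^*}$ factor with the Gaussian density of $X_{t^*}$ and the $e^{-\sqrt{2}u/\sigma_2}$ from the summand produces the effective exponent $-\sqrt{2} u (\sigma_1 - \sigma_2)/(\sigma_1\sigma_2)$, modulo a sub-Gaussian factor $e^{-u^2/(2\sigma_1^2 t^*)} = 1 + o(1)$ uniformly on the integration range. By Lemma~\ref{lem:BB_line} the bridge probability equals $1 - \exp\bigl(-2(V_j - t^{\beta\delta/2})(V_j - t^{\beta\delta/2} + u)/t^*\bigr)$, whose argument is $o(1)$ by Assumption~\ref{as:above3}, so it linearises.

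After substituting $u = -V_j + w$ with $w \in [D, B]/(\sigma_1 - \sigma_2)$ and using $V_j - t^{\beta\delta/2} \approx V_j$ and $V_j - t^{\beta\delta/2} + u \approx w$, the remaining integral reads
\[
\frac{2V_j(\sigma_1-\sigma_2)^3}{\sqrt{2\pi}\,\sigma_1}\,(1+o(1)) \int_{D/(\sigma_1-\sigma_2)}^{B/(\sigma_1-\sigma_2)} w\, e^{-\sqrt{2}\,w(\sigma_1-\sigma_2)/(\sigma_1\sigma_2)}\,dw.
\]
Rescaling $v = w(\sigma_1-\sigma_2)/(\sigma_1\sigma_2)$ with $\sigma_i \to 1$ and using $\int_0^\infty v e^{-\sqrt{2}v}\,dv = 1/2$ after first $t\uparrow\infty$ and then $B\uparrow\infty, D\downarrow 0$ yields the claim for $\widetilde{\YY}_j$. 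The $\widetilde{\ZZ}_j$ analysis differs only by an extra factor $u = w - V_j$ in the integrand; using also $\int_0^\infty v^2 e^{-\sqrt{2}v}\,dv = 1/\sqrt{2}$, the $w \cdot w = w^2$ contribution produces the leading $V_j/(\sqrt{\pi}\sigma_1^3)$, and the subleading $-V_j \cdot w$ contribution generates the correction factor $-V_j(\sigma_1-\sigma_2)/\sqrt{2}$ appearing in the claim.

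The main obstacle is the bookkeeping of $(1+o(1))$ errors uniformly over $V_j \in [t^{\beta\delta}, 2\sqrt{2}\sigma_1 t^\beta]$. Every small-argument expansion used above -- linearising the bridge probability, dropping $e^{-u^2/(2\sigma_1^2 t^*)}$, replacing $V_j - t^{\beta\delta/2}$ by $V_j$, absorbing $V_j$ relative to $w$ inside $V_j - t^{\beta\delta/2} + u$, and sending $\sigma_i \to 1$ under the rescaled integral -- is controlled by the chain $t^\beta \ll (\sigma_1-\sigma_2)^{-1} \ll (b_1 t)^{1/2}$ of Assumption~\ref{as:above3}; the order of limits, first $t\uparrow\infty$ and then $B\uparrow\infty,\, D\downarrow 0$, is essential so that the sub-Gaussian factor is absorbed before the $w$-range is extended to infinity.
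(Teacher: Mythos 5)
Your proposal is correct and follows essentially the same route as the paper: many-to-one, decomposition into endpoint plus Brownian bridge, the line-crossing formula of Lemma~\ref{lem:BB_line} linearised via Assumption~\ref{as:above3}, and the change of variables reducing to $\int_0^\infty v\,\eee^{-\sqrt{2}v}\,\d v$ and $\int_0^\infty v^2\,\eee^{-\sqrt{2}v}\,\d v$ after first $t\uparrow\infty$, then $B\uparrow\infty$, $D\downarrow 0$. The only (harmless) imprecision is that the bridge has variance $\s_1^2$, so the exponent in Lemma~\ref{lem:BB_line} should carry a factor $\s_1^{-2}$; since $\s_1\to 1$ this is absorbed into the $(1+o(1))$, exactly as in the paper's treatment.
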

We bound  $\E[z^2\, \vert \, \FF_{t^{\b}}]$ from above by 
\begin{align} \label{eq:above_first_second_moment}
&
\E \, \bigg[ \bigg( 
\hspace{-0.7em}
\sum_{\substack{j^* \le n^j(t^*), \\ \tilde{x}_{j^*}^{\, j} \in \widetilde{\LL}_{t^*, 1,  \tilde{x}_j(t^{\b}) - \sqrt{2} \s_1 t^{\b}, B, D}}}
\hspace{-1.8em}
\exp \left(
{\textstyle-\sqrt{2} \frac{1}{\s_2} \left( \sqrt{2} \s_1 t^* - \tilde{x}_{j^*}^{\,  j}(t^*) \right) } 
\right)
\bigg)^2 \, \Big\vert \, \FF_{t^{\b}} \bigg] \\
&\times
\eee^{-2\sqrt{2} \, \left( \frac{1}{\s_2} \left(\sqrt{2} \s_1 t^{\b} - \tilde{x}_j \left( t^{\b} \right)\right) + y \right) } 
C^2 \pi 
( b_1t)^3
( \s_1 - \s_2)^6
\left( \left( \sqrt{2} \s_1 t^{\b} - \tilde{x}_j ( t^{\b}) \right) 
+ \sfrac{ B }{ \s_1 - \s_2 } \right)^2, \nonumber 
\end{align}
where we used 
the localisation $\tilde{x}_{j^*}^{\, j} \in \widetilde{\LL}_{t^*, 1, \tilde{x}_j(t^{\b}) - \sqrt{2} \s_1 t^{\b}, B, D}$ to bound the non-exponential factors in $\widetilde{\ZZ}_j(t)$ by $( \s_1 - \s_2 )^{-1} B$. 
Due to $\tilde{x}_j \in \LL_{t^{\b}, t^{\b \delta}, \s_1}$, $\sqrt{2} \s_1 t^{\b} - \tilde{x}_j(t^\b) \in (t^{\b \delta}, 2 \sqrt{2} t^{\b})$. 
Together with Assumption~\ref{as:above3}, we see that the non-exponential factors in the last line of \eqref{eq:above_first_second_moment} are bounded by $C^2 \pi t^{3+2\b}$.
It remains to estimate the conditional expectation in 
\eqref{eq:above_first_second_moment}.
\begin{lemma} \label{prop:above_second_moment_first}
For all $j \le n(t^{\b})$ with $\tilde{x}_j \in \LL_{t^{\b}, t^{\b \delta}, \s_1}$,
\begin{align}
	\E \, \bigg[ \bigg( 
	\hspace{-0.5em}
	\sum_{\substack{j^* \le n^j(t^*), \\ \tilde{x}_{j^*}^{\, j} \in \widetilde{\LL}_{t^*, 1, \tilde{x}_j(t^{\b}) - \sqrt{2} \s_1 t^{\b}, B, D}}}
	\hspace{-1.5em}
	\exp \left(
	{ \textstyle-\sqrt{2} \frac{1}{\s_2} \left( \sqrt{2} \s_1 t^* - \tilde{x}_{j^*}^{\, j}(t^*) \right) } 
	\right)\!
	\bigg)^2 \, \bigg\vert \, \FF_{t^{\b}} \bigg]
	\le \,  P(t) \exp \left( - \tilde{C} t^{\b} \right)\!,
\end{align}
where we use the notation from Lemma~\ref{prop:above_middle_second_moment}.
\end{lemma}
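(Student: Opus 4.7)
The approach I would take parallels the proof of Lemma~\ref{prop:above_middle_second_moment} (carried out in Appendix~\ref{sec:appendix}), adapted to the initial interval. Three tools drive the argument: the many-to-two formula for BBM with variance $\sigma_1^2$, the Cameron--Martin--Girsanov transform with drift $\sqrt{2}\sigma_1/\sigma_2$ (which absorbs the exponential weight into a drift on the underlying Brownian motion), and the ballot-type estimate of Lemma~\ref{lem:BB_line} for the residual barrier probability.

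First, I would apply the branching property at time $t^\beta$ together with the many-to-two formula to split the conditional second moment into a diagonal term (from coinciding summation indices) and an off-diagonal term parametrised by the time $s\in[0,t^*]$ of the most recent common ancestor. Each resulting first-moment expression is then reduced, via Girsanov's theorem, to a probability under a BM with drift $\sqrt{2}\sigma_1/\sigma_2$ of satisfying the barrier and endpoint conditions encoded in $\widetilde{\LL}_{t^*,1,\tilde{x}_j(t^\beta)-\sqrt{2}\sigma_1 t^\beta,B,D}$. This produces an explicit Girsanov prefactor and, by Lemma~\ref{lem:BB_line}, a ballot probability that are both polynomial in $t$ and $(\sigma_1-\sigma_2)^{-1}$.

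The crucial exponential factor $\exp(-\tilde{C}t^\beta)$ arises from the endpoint condition built into $\GG_{t^*,1,\cdot}$: combined with the barrier, it forces $\sqrt{2}\sigma_1 t^* - \tilde{x}_{j^*}^{\,j}(t^*) \ge D/(\sigma_1-\sigma_2) - (\sqrt{2}\sigma_1 t^\beta - \tilde{x}_j(t^\beta))$, so that each summand is bounded by $\exp(-\sqrt{2}D/[\sigma_2(\sigma_1-\sigma_2)]) \exp(\sqrt{2}(\sqrt{2}\sigma_1 t^\beta-\tilde{x}_j(t^\beta))/\sigma_2)$. The localisation $\tilde{x}_j\in\widetilde{\LL}_{t^\beta,t^{\beta\delta},\sigma_1}$ bounds the second factor by $\exp(O(t^\beta))$, while Assumption~\ref{as:above3} gives $(\sigma_1-\sigma_2)^{-1}\gg t^\beta$. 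Squaring the resulting weight bound---which appears both in the diagonal term and, via Cauchy--Schwarz or direct estimation, in the off-diagonal integrand---yields $\exp(-\tilde{C}t^\beta)$ for some $\tilde{C}>0$ depending on $D$.

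All remaining contributions---the Girsanov prefactor, the Brownian bridge probabilities, the polynomial factors of the type appearing in Lemma~\ref{lem:above_first_moment_first}, and the integration over the branching time $s$---are polynomial in $t$ and together combine into a factor $P(t)\le t^c$ for some $c>0$. The main obstacle is the careful Laplace-type analysis in $s$ required to show that the off-diagonal integrand is uniformly dominated by the same exponential decay as the diagonal term; this is technical but entirely parallel to the computation for the middle-interval analogue in Appendix~\ref{sec:appendix}, so the argument is essentially routine.
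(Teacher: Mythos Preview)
Your proposal is correct and takes essentially the same approach as the paper: the paper's proof literally defers to Lemma~\ref{prop:above_middle_second_moment}, noting only that one uses the simplified endpoint localisation $\sqrt{2}\sigma_1 t^* - \tilde{x}_{j^*}^{\,j}(t^*) \in (D^*,B^*)(\sigma_1-\sigma_2)^{-1}$, and that proof is exactly the diagonal/off-diagonal split via many-to-one/many-to-two that you outline. Your Girsanov phrasing is equivalent to the paper's direct Gaussian integration, and your identification of the exponential decay as coming from the endpoint constraint $\GG$ combined with $(\sigma_1-\sigma_2)^{-1}\gg t^\beta$ matches the mechanism in the paper's $(T1)$ bound; for $(T2)$ the paper instead extracts decay from the barrier condition at the branching time (giving $\exp(-\tilde C t^{\beta\delta/2})$), but either route suffices.
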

Comparing the first and second moments terms of $z$, we obtain with Lemma~\ref{lem:above_first_moment_first} and~\ref{prop:above_second_moment_first}
\begin{equation}
\frac{ \E \left[ z^2 \, \big\vert \, \FF_{t^{\beta}} \right] }{ \E \left[ z \, \big\vert \, \FF_{t^{\beta}} \right]}
\le \frac{1}{ \sqrt{2} \s_1 t^{\b} - \tilde{x}_j \left( t^{\b} \right) }
P(t)
\eee^{ - \tilde{C} t^{\b} }
\le t^{-\b \delta} P(t) \eee^{ - \tilde{C} t^{\b} }.
\end{equation}
The upper bound is due to $\tilde{x}_j \in \LL_{t^{\b}, t^{\b \delta}, \s_1}$ and converges to zero as $t \uparrow \infty$.
Next, we  approximate $\E \left[ \eee^{-z} \, \vert \, \FF_{t^{\beta}} \right]$ as in \eqref{eq:above_middle_end}:
\begin{align} \label{eq:above_first_approx}
\E  \Big[ \eee^{-z}  \Big\vert  \FF_{t^{\beta}} \Big]
=  &\exp \!\bigg( {-}  C 
\left(\sqrt{2} \s_1 t^{\b} - \tilde{x}_j (t^{\b}) \right) 	\eee^{-\sqrt{2}  \left( \frac{1}{\s_2} \left(\sqrt{2} \s_1 t^{\b} - \tilde{x}_j \left( t^{\b} \right) - t^{\b \delta} \right) + y \right) }
\bigg)  (1 + o(1)).
\end{align}
Inserting the right-hand side of \eqref{eq:above_first_approx} into \eqref{eq:above_first_split}, we see that \eqref{eq:above_first_split} is equal to
\begin{align} \label{eq:above_towards_start}
\E \, \Bigg[ 
\exp \Bigg( 
\hspace{-0.2em}- 
\hspace{-0.2em} C 
\eee^{- \sqrt{2} y }
\hspace{-1.3em}
\sum_{\substack{j \le n(t^{\b}) \\  \tilde{x}_j \in \widetilde{\LL}_{t^{\beta}, t^{\beta \delta}, \s_1}}}
\hspace{-1.2em}
\left(\sqrt{2} \s_1 t^{\b} - \tilde{x}_j ( t^{\b} ) \right) 			\eee^{-\sqrt{2} \frac{1}{\s_2} \left(\sqrt{2} \s_1 t^{\b} - \tilde{x}_j \left( t^{\b} \right) \right)}
\Bigg) \Bigg] (1+o(1)).
\end{align}

Finally, the sum in \eqref{eq:above_towards_start} converges to the limit of the derivative martingale.
\begin{lemma} \label{prop:above_derivative_martingale}
With the notation above,
\begin{align} \label{eq:above_towards_claim}
\sum_{j \le n(t^{\b})} 
\1_{ \left\{ \tilde{x}_j \in \widetilde{\LL}_{t^{\beta}, t^{\beta \delta}, \s_1} \right\} }
\left(\sqrt{2} \s_1 t^{\b} - \tilde{x}_j ( t^{\b} ) \right) 		\eee^{-\sqrt{2} \frac{1}{\s_2} \left( \sqrt{2} \s_1 t^{\b} - \tilde{x}_j \left( t^{\b} \right) \right)} 
\to Z, 
\end{align}
in probability, as $t \uparrow \infty$, where $Z$ is the limit of the derivative martingale.
\end{lemma}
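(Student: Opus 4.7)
The plan is to reduce the sum to the classical Lalley--Sellke convergence of the derivative martingale for standard BBM after absorbing the perturbation of the exponent and removing the indicator. Since $(\tilde x_j(s))_{s \le t^\b}$ is a BBM of variance $\s_1^2$, I write $\tilde x_j = \s_1 x_j$ for a standard BBM $(x_j)$ and set $v_j := \sqrt{2}\, t^\b - x_j(t^\b)$. The sum in \eqref{eq:above_towards_claim} then equals
\begin{align*}
\s_1 \sum_{j \le n(t^\b)} \1_{ \{ \tilde x_j \in \widetilde{\LL}_{t^\b, t^{\b\delta}, \s_1}\} }\, v_j\, \eee^{-\sqrt{2}\, (\s_1/\s_2)\, v_j},
\end{align*}
while the localisation event forces $v_j \in [t^{\b\delta}/\s_1,\, 2\sqrt{2}\, t^\b]$.

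First I absorb the perturbation in the exponent. Set $\theta_t := (\s_1 - \s_2)/\s_2 \ge 0$; by Assumption~\ref{as:above3}, $\theta_t \ll t^{-\b}$, so on the localisation event $\theta_t v_j \le 2\sqrt{2}\, \theta_t\, t^\b \to 0$ uniformly in $j$. Hence $\eee^{-\sqrt{2}\, (\s_1/\s_2)\, v_j} = \eee^{-\sqrt{2}\, v_j}(1 + o(1))$ with the error deterministic, and, combined with $\s_1 \to 1$ from Assumption~\ref{as:above2}, it suffices to prove $\sum_j \1_{\widetilde{\LL}}\, v_j \eee^{-\sqrt{2}\, v_j} \to Z$ in probability.

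Next I remove the indicator. Denoting the derivative martingale of $(x_j)$ at time $t^\b$ by $Z(t^\b) := \sum_j v_j \eee^{-\sqrt{2}\, v_j}$, I decompose
\begin{align*}
\sum_j \1_{\widetilde{\LL}}\, v_j \eee^{-\sqrt{2}\, v_j}
= Z(t^\b)
- \sum_{v_j < t^{\b\delta}/\s_1} v_j \eee^{-\sqrt{2}\, v_j}
- \sum_{v_j > 2\sqrt{2}\, t^\b} v_j \eee^{-\sqrt{2}\, v_j}.
\end{align*}
By the symmetric analogue of Lemma~\ref{lem:BarrierLok}, w.h.p.\ every particle of $(x_j)$ satisfies $|x_j(t^\b)| \le \sqrt{2}\, t^\b$; thus the last sum vanishes w.h.p., and the middle sum may be restricted to $v_j \ge 0$. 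A standard many-to-one computation then gives
\begin{align*}
\E\bigg[\sum_j v_j \eee^{-\sqrt{2}\, v_j}\, \1_{0 \le v_j < c}\bigg]
= \sfrac{1}{\sqrt{2\pi t^\b}}\int_0^c v\, \eee^{-v^2/(2 t^\b)}\, \d v
\le \sfrac{c^2}{2\sqrt{2\pi t^\b}},
\end{align*}
which for $c = t^{\b\delta}/\s_1$ is of order $t^{\b(2\delta - 1/2)} \to 0$ whenever $\delta < 1/4$; Markov's inequality then yields convergence of the middle sum to $0$ in probability. Finally, $Z(t^\b) \to Z$ almost surely as $t \uparrow \infty$ by \cite{LS}, which completes the reduction.

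The only delicate point is the trade-off in the lower cutoff: the crude first-moment bound converges only when $2\delta < 1/2$. This is harmless because the constant $\delta$ in Proposition~\ref{prop:above_entropic_rep} is existentially quantified in $(0, 1/2)$, and hence may be fixed in $(0, 1/4)$ without affecting any other step in the proof of Theorem~\ref{thm:lawofmax}.
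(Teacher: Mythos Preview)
Your proof is correct and follows essentially the same approach as the paper, which simply invokes Lemma~3.7 of \cite{1to6} after checking that the two hypotheses $\s_1/\s_2 \ge 1$ and $t^{\b}(\s_1-\s_2)\to 0$ carry over from Assumption~\ref{as:above}. You have effectively spelled out that cited argument: absorbing the perturbation $e^{-\sqrt{2}\theta_t v_j}$ via $t^\b(\s_1-\s_2)\to 0$, removing the indicator by the barrier Lemma~\ref{lem:BarrierLok} and a many-to-one first-moment bound, and then appealing to \cite{LS}; your observation that one may take $\delta\in(0,1/4)$ in Proposition~\ref{prop:above_entropic_rep} is valid since its proof works for any $\delta\in(0,1/2)$.
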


\begin{proof}
This lemma is a generalisation of Lemma 3.7 in \cite{1to6} where the case \mbox{$\s_1^2 = 1 + t^{-\a},$} \mbox{$\s_2^2 = 1- t^{-\a},$} $\a \in (0, 1/2),$ was considered. 
The proof of Lemma~\ref{prop:above_derivative_martingale} is a rerun this proof provided $\s_1/\s_2 \ge 1$ and $t^{\b}(\s_1-\s_2) \downarrow 0$ as $t \uparrow \infty$.
But these properties follow directly from Assumption~\ref{as:above} and we are done.
\end{proof}
Due to Lemma~\ref{prop:above_derivative_martingale} and $Z>0$ a.s., \eqref{eq:above_towards_start} converges to
\begin{equation}
\E \left[ \exp \left( - C 
Z \eee^{-\sqrt{2} y} \right) \right],
\end{equation}
as $t \uparrow \infty$.
This proves Theorem~\ref{thm:lawofmax} in Case~A.	

\subsection[Proof of Theorem 1.3 in Case~B]{Proof of Theorem~\ref{thm:lawofmax} in Case~B}\label{sec:below}\label{sec:proof_CaseBLinear}
In this subsection, we prove Theorem~\ref{thm:lawofmax} in Case~B.
We begin  with the case of \mbox{$\ell$-speed} BBM and write $\a_1,\a_\ell$ instead of $\ab, \ae$, and $b_1, b_\ell$ instead of $\bbeg, \bend$ to be consistent with Case~A.
To complete the proof for the more general setting of Theorem~\ref{thm:lawofmax}, we use Gaussian comparison techniques. 
\begin{assumption}\label{as:belowLSpeed}
Let $\a_1, \a_\ell \in \kl{0, 1/2}$. We assume that the family of speed functions $\kl{A_t}_{t>0}$ with $A_t(s) < s$ for all $t>0, s\in (0,1)$, satisfies:
\begin{thmlist}
\item \label{as:belowLSpeed1}
\emph{The functions $(A_t)_{t>0}$ are piecewise linear and continuous}:
Their derivatives are given by
\begin{equation}
	A_t'(s) = 
	\sum_{k=1}^{\ell} \s_k^2 \1_{ \left( \sum_{j=1}^{k-1} b_j, \sum_{j=1}^{k} b_j \right)}(s),
\end{equation}
with velocities $\s_k\colon\R_+ \to \R_+, 1 \le k \le \ell,$ and interval lengths \mbox{$b_k\colon \R_+ \to (0,1]$}, \mbox{$1 \le k \le \ell$}. 
We assume for all $t>0$ that $\sum_{k=1}^{\ell} b_k=1$ and $\sum_{k=1}^{\ell} \s_k^2 b_k = 1$.
\item  \label{as:belowLSpeed2}
The velocity on the first interval satisfies $\s_1^2 = 1 - t^{-\a_1} + o(t^{-\a_1})$ and the interval length $b_1$ satisfies $1 \gg b_1 \gg t^{\a_1-1/2}$ as $t \uparrow \infty$.
\item \label{as:belowLSpeed3}
The velocity on the last interval satisfies $\s_\ell^2 = 1 + t^{-\a_\ell} + o(t^{-\a_\ell})$ and the interval length $b_\ell$ satisfies $1 \gg b_\ell \gg t^{\a_\ell-1/2}$ as $t \uparrow \infty$.
\item \label{as:belowLSpeed4}
\emph{Minimum distance to the identity function:} $\min_{s\in[b_1, 1-b_\ell]} \bkl{s - A_t(s)} \gg t^{-1/2}$ as $t\uparrow\infty$.
\end{thmlist}

\end{assumption}

From now on, we 
write $(\bar{X}_t)_{t>0}$ for $\ell$-speed BBM with speed functions $(A_t)_{t>0}$ satisfying Assumption~\ref{as:belowLSpeed}. 
\begin{proposition}\label{prop:BelowLSpeedLimitingLaw}
Let $C$ be the positive constant from Proposition~\ref{prop:FKPP_tail_estimate}
and	denote by $Z$ the limit of the derivative martingale. Then, for all $ y \in \R$,
\begin{equation}
\lim_{t \uparrow \infty} \P \left( \max_{j \le n(t)} \bar{x}_j(t) - m^{-}(t) \le y \right)  
= \E \left[ \eee^{- C Z \eee^{-\sqrt{2}y}} \right],
\end{equation} 
where
\begin{align}
m^-(t) = \sqrt{2}t - \sfrac{1 + 2 (\a_1 + \a_\ell)}{2\sqrt{2}}  \log(t).
\end{align}
\end{proposition}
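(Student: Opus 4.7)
The plan is to mirror the proof of Theorem~\ref{thm:lawofmax} in Case~A (Subsection~\ref{sec:proof_caseA}), using the Case~B localisations from Subsection~\ref{sec:below_localisation} instead. First, combining Proposition~\ref{prop:below_bridgeLok}, Lemmas~\ref{lem:below_gate1}--\ref{lem:below_gate2} and Proposition~\ref{prop:below_entropic_rep} with Lemma~\ref{lem:localisation}, I would restrict attention (up to error $\mathcal{O}(\varepsilon)$) to trajectories $\bar{x}_j$ lying in a localisation event $\mathcal{L}_t$ imposing: entropic repulsion below $\sqrt{2}\sigma_1 s - t^{\beta\delta/2}$ on $[t^\beta, b_1 t]$; the endpoint $\bar{x}_j(b_1 t)$ within $(\sigma_1^2 b_1 t)^\gamma$ of $\sqrt{2}\sigma_1^2 b_1 t$; a bridge-type tube around $\sqrt{2}t A_t(s/t)$ on $[b_1 t, (1-b_\ell)t]$; and $\bar{x}_j((1-b_\ell)t)$ within $(\sigma_\ell^2 b_\ell t)^\gamma$ of $\sqrt{2}(1-\sigma_\ell^2 b_\ell)t$.

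Second, using the branching property conditionally on $\mathcal{F}_{(1-b_\ell)t}$, the localised probability factorises into an expectation over particles at time $(1-b_\ell)t$ whose ancestors satisfy the earlier parts of $\mathcal{L}_t$. The innermost tail, namely the probability that a BBM of variance $\sigma_\ell^2$ run for time $b_\ell t$ exceeds $m^-(t) - \bar{x}_j((1-b_\ell)t) - y$, satisfies the hypotheses of Proposition~\ref{prop:FKPP_tail_estimate} (by Assumption~\ref{as:belowLSpeed3} and the localisation, the ``gap'' is positive and of order $\log t \ll b_\ell t$), and equals $C \Delta_j \, e^{-\sqrt{2}\Delta_j}(1+o(1))$ with
\begin{equation*}
\Delta_j = \sigma_\ell^{-1}\bigl[\sqrt{2}(1-\sigma_\ell^2 b_\ell)t - \bar{x}_j((1-b_\ell)t)\bigr] + \tfrac{3}{2\sqrt{2}}\log(b_\ell t) - \sigma_\ell^{-1}\tfrac{1+2(\alpha_1+\alpha_\ell)}{2\sqrt{2}}\log(t) - \sigma_\ell^{-1}y.
\end{equation*}

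Third, I would integrate out the middle segment: conditionally on $\mathcal{F}_{b_1 t}$, the increment $\bar{x}_j((1-b_\ell)t) - \bar{x}_j(b_1 t)$ is Gaussian with variance $(1-\sigma_1^2 b_1 - \sigma_\ell^2 b_\ell)t$, and taking its expectation against $C\Delta_j e^{-\sqrt{2}\Delta_j}$ combined with the many-to-one factor $e^{(1-b_1-b_\ell)t}$ reduces, via the Laplace-type calculation already carried out in \eqref{eq:below_gate1_5}--\eqref{eq:below_gate1_6} (but retaining the $C\Delta$ factor instead of bounding it by $1$), to an expression of the form $C\Lambda_1 \, e^{-\sqrt{2}(\sigma_2^{-1}\Lambda_1 + L_1(t) + y)}(1+o(1))$, where $\Lambda_1 = \sqrt{2}\sigma_1^2 b_1 t - \bar{x}_j(b_1 t)$ and $L_1(t)$ collects the remaining logarithms. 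Splitting $[0,b_1 t]$ into $[0,t^\beta) \cup [t^\beta, b_1 t]$ and conditioning on $\mathcal{F}_{t^\beta}$, the first/second moment machinery of \eqref{eq:above_first_split}--\eqref{eq:above_towards_start} applies: the analogues of Lemmas~\ref{lem:above_first_moment_first} and~\ref{prop:above_second_moment_first} go through since $\sigma_1 \to 1$ and $t^\beta(1-\sigma_1) \to 0$ by Assumption~\ref{as:belowLSpeed2}. The expression collapses to $\mathbb{E}\bigl[\exp\bigl(-C e^{-\sqrt{2}y}\sum_{j}(\sqrt{2}\sigma_1 t^\beta - \tilde{x}_j(t^\beta))e^{-\sqrt{2}(\sqrt{2}\sigma_1 t^\beta - \tilde{x}_j(t^\beta))/\sigma_2}\bigr)\bigr](1+o(1))$, and Lemma~\ref{prop:above_derivative_martingale} identifies this sum with the derivative martingale $Z$ in the limit, yielding $\mathbb{E}[\exp(-CZe^{-\sqrt{2}y})]$.

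The main obstacle is Step~3: one must verify that the Gaussian normalisation factors from integrating the middle-segment increment combine precisely with the $\tfrac{3}{2\sqrt{2}}\log(b_\ell t)$ from the F-KPP asymptotics and the Gaussian normalisation $(\sigma_1^2 b_1 t)^{-1/2}$ arising in Step~4 to produce exactly the log-correction $\tfrac{1+2(\alpha_1+\alpha_\ell)}{2\sqrt{2}}\log t$ of $m^-(t)$. This bookkeeping is delicate because $b_1$ and $b_\ell$ themselves carry negative powers of $t$, and in contrast to Case~A only the first and last intervals contribute to the logarithmic correction, while the intermediate bridge regime contributes only through the leading Gaussian factor that cancels $e^{(1-b_1-b_\ell)t}$ via the saddle point at the mean $(1-\sigma_1^2 b_1)^{-1}(1-\sigma_1^2 b_1 - \sigma_\ell^2 b_\ell)(\sqrt{2}t - \omega_1 + L(t) - y)$.
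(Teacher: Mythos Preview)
Your overall scheme matches the paper's: localise, apply F-KPP asymptotics at time $(1-b_\ell)t$, reduce the inner conditional expectation to a functional of a sum $\YY_{i_1}(t)$ given $\FF_{b_1t}$, then split $[0,b_1t]$ at $t^\beta$ and identify the derivative martingale. However, Step~3 as you describe it has two genuine problems.

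First, after F-KPP you obtain a \emph{random sum} $\YY_{i_1}(t)$ over all descendants at time $(1-b_\ell)t$ of the particle $i_1$, and the object to compute is $\E\bigl[e^{-\YY_{i_1}(t)}\,\big\vert\,\FF_{b_1t}\bigr]$, not merely $\E\bigl[\YY_{i_1}(t)\,\big\vert\,\FF_{b_1t}\bigr]$. You describe only the latter (``taking its expectation\dots combined with the many-to-one factor''). The paper proves, via Lemmas~\ref{lem:below_FirstMomentsMiddlePart} and~\ref{lem:below_SecondMomentsMiddlePart}, that $\E[\YY_{i_1}^2\mid\FF_{b_1t}]/\E[\YY_{i_1}\mid\FF_{b_1t}]\to 0$, and this second-moment bound is precisely where Assumption~\ref{as:belowLSpeed4} (the distance of $A_t$ to the identity on $[b_1,1-b_\ell]$ being $\gg t^{-1/2}$) enters: it kills the cross terms in the many-to-two expansion of $\YY_{i_1}^2$. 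Without this you cannot pass from $\E[e^{-\YY_{i_1}}]$ to $1-\E[\YY_{i_1}]$. This is the structural difference from Case~A: there the second moment is controlled interval-by-interval via entropic repulsion; here the whole middle block is handled at once using the distance-to-identity hypothesis, which you never invoke.

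Second, the output you claim for Step~3, namely $C\Lambda_1\, e^{-\sqrt{2}(\sigma_2^{-1}\Lambda_1 + L_1(t)+y)}$ with $\Lambda_1=\sqrt{2}\sigma_1^2 b_1 t-\bar{x}_j(b_1t)$, is the Case~A recursion format and is incorrect here. In Case~B there is no iteration through $\sigma_2,\dots,\sigma_{\ell-1}$; the middle part is a single Gaussian block, and $\E[\YY_{i_1}\mid\FF_{b_1t}]$ has the form of Lemma~\ref{lem:below_FirstMomentsMiddlePart}: a prefactor $b_\ell^{3/2}t^{1-\alpha_\ell}$ times $\exp\bigl((1-b_1)t-(\sqrt{2}t-\bar{x}_{i_1}(b_1t)+L(t)+y)^2/(2(1-\sigma_1^2 b_1)t)\bigr)$. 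No factor $\sigma_2^{-1}$ ever appears. The collapse to the derivative-martingale shape occurs only \emph{after} the next integration over $[t^\beta,b_1t]$ (Lemma~\ref{lem:below_FirstMomentsFirstPart}), not at the stage you indicate.
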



\noindent\emph{Proof of Proposition~\ref{prop:BelowLSpeedLimitingLaw}.} The structure of this proof is identical to that in Case A, which is given in Subsection~\ref{sec:proof_caseA}. 
Let $\e > 0$, $r>0$, $\g>1/2$ and $0 < \b < \a_1$  be such that $t^\b \ll b_1 t$.
Lemma~\ref{lem:BarrierLok} and Propositions~\ref{prop:below_bridgeLok} and~\ref{prop:below_entropic_rep} imply
that
\begin{equation}
\P \, \bigg( 
\exists_{j \le n(t)} \colon 
\Big\{ 
\bar{x}_j(t) > m^-(t)-y 
\Big\} \land \, 
\bigg\{  
\bar{x}_j \notin
\LL
\bigg\} 
\bigg) < \e,
\end{equation}
for all $r, t$ large enough, where
\begin{align}
\LL &= \LL_{b_1 t} \cap \TT_{b_1t, (1-b_\ell)t, 0, \g},\notag\\
\LL_{b_1 t} &=  \BB_{t^\b,t^\b,0,\s_1} \cap \AA_{t^\beta,t^\b, t^{\beta\delta}, \s_1} \cap \AA_{t^\beta, b_1 t, t^{\beta\delta/2}, \s_1} \cap  \TT_{b_1 t, b_1 t, 0, \g},
\end{align}
recalling  
\begin{align}
\AA_{r_1, r_2, S, \s}
&= \left\{ X \colon \forall_{r_1 \le s \le r_2} \colon X(s) + S \le \sqrt{2} \s s \right\},\notag\\
\BB_{r_1, r_2, S, \s}
&= \left\{ X \colon \forall_{r_1 \le s \le r_2} \colon X(s) + S > -\sqrt{2} \s s \right\}.
\end{align}
By Lemma~\ref{lem:localisation}, Proposition~\ref{prop:BelowLSpeedLimitingLaw} will follow from 
\begin{align}
&\lim_{t\uparrow\infty}\P \left( \max_{j \le n(t)\colon \bar{x}_j \in \LL} \bar{x}_j(t) - m^{-}(t) \le y \right) = \E \Big[
\exp\bbkl{
{-}CZ \eee^{-\sqrt{2} y} 
}\Big].
\label{eq:bRec2}
\end{align}
Proceeding as in \eqref{eq:above_branch-1}--\eqref{eq:above_branch}, we write the probability in \eqref{eq:bRec2} as
\begin{align}
&	\E \, \Bigg[ 
\prod_{\substack{
	i_1 \le n(b_1t), \\ 
	\bar{x}_{i_1} \in\LL_{b_1 t}
	}}
	\E \, \Bigg[ 
	\prod_{\substack{
	i_2 \le n^{i_1}(b_2t), \dots, i_{\ell-1} \le n^{\mi_{\ell-2}}(b_{\ell-1}t), \\
	\sum_{k=1}^{\ell-1} \multixb{k} \in \TT_{b_1t, (1-b_\ell)t, 0, \g}
	}}\label{eq:below_branch}\\
	&\qquad\times
	\Bigg( 
	1 - 
	\P \, 
	\Bigg( 
	\max_{ i_{\ell} \le n^{\mi_{\ell-1}}(b_{\ell}t) } 
	\multixb{\ell}(b_{\ell} t)
	> 
	m^-(t)
	- \sum_{k=1}^{\ell-1} \multixb{k}(b_k t)
	+ y \, 
	\bigg\vert \, \FF_{(1-b_\ell)t} 
	\Bigg)
	\Bigg) \, 
	\bigg \vert \, \FF_{b_1t} \Bigg] \Bigg]. \notag
\end{align}
Since $\sum_{k=1}^{\ell-1} \multixb{k} \in \TT_{b_1t, (1-b_\ell)t, 0, \g}$, the F-KPP asymptotics from Proposition~\ref{prop:FKPP_tail_estimate} imply
\begin{align}
\P  &\,
\Bigg( 
\max_{ i_{\ell} \le n^{\mi_{\ell-1}}(b_{\ell}t) } 
\multixb{\ell}(b_{\ell} t)
> 
m^-(t)
- \sum_{k=1}^{\ell-1} \multixb{k}(b_k t)
+ y \, 
\bigg\vert \, \FF_{(1-b_\ell)t} 
\Bigg)\notag\\
=\:&
C \Delta^{\mi_{\ell-1}}(t)
\exp\kl{
b_\ell t
-\sfrac{
	\kl{\Delta^{\mi_{\ell-1}}(t) + \sqrt{2} b_\ell 	t}^2
}{
	2b_\ell t
}
} \label{eq:below_tailAsymptotics}
(1+o(1)),
\end{align}
with 
\begin{align}
\label{eq:below_delta_def_Ende}
\Delta^{\mi_{\ell-1}}(t)
\equiv 
\sfrac{1}{\s_{\ell}} 
\left(
m^-(t) - \sum\limits_{k=1}^{\ell-1} \multixb{k}(b_k t) + y
\right)
- \left( 
\sqrt{2} b_{\ell} t - \sfrac{3}{2 \sqrt{2}} \log(b_{\ell} t)
\right).
\end{align}
Inserting \eqref{eq:below_tailAsymptotics} into \eqref{eq:below_branch} yields
\begin{align} \label{eq:below_afterFKPP}
&\E  \Bigg[ 
\prod_{\substack{
	i_2 \le n^{i_1}(b_2t), \dots, i_{\ell-1} \le n^{\mi_{\ell-2}}(b_{\ell-1}t), \\
	\sum_{k=1}^{\ell-1} \multixb{k} \in \TT_{b_1t, (1-b_\ell)t, 0, \g}
	}} 
	\Bigg(
	1 -
	C \Delta^{\mi_{\ell-1}}\!(t)
	\exp\bbbkl{
b_\ell t
-\sfrac{
	\kl{\Delta^{\mi_{\ell-1}}(t) + \sqrt{2} b_\ell 	t}^2
}{
	2b_\ell t
}
}(1+o(1))\!
\Bigg)
\bigg \vert \FF_{b_1t} \Bigg] 
\notag\\
&\!\!=\!
\E \Bigg[ 
\!\prod_{\substack{
	i_2 \le n^{i_1}(b_2t), \dots, i_{\ell-1} \le n^{\mi_{\ell-2}}(b_{\ell-1}t), \\
	\sum_{k=1}^{\ell-1} \multixb{k} \in \TT_{b_1t, (1-b_\ell)t, 0, \g}
	}}\!
	\exp\Bigg(
	{-} 
	C \Delta^{\mi_{\ell-1}}\!(t)
	\exp\!\bbbkl{\!
b_\ell t
-\sfrac{\!
	\kl{\!\Delta^{\mi_{\ell-1}}\!(t) + \sqrt{2} b_\ell 	t}^{\! 2}
}{
	2b_\ell t
}
}(1+o(1))\!
\Bigg)
\bigg \vert  \FF_{b_1t} \Bigg] 
\notag\\
&\!\!=\!
\E  \Big[ 
\exp\Big({-} 
\YY_{i_1}(t)
\Big) \, 
\big \vert \, \FF_{b_1t} \Big] 
(1+o(1)), 
\end{align}
where
\begin{align}\label{eq:below_DefY}
\YY_{i_1}(t) \equiv 
\sum_{\substack{
	i_2 \le n^{i_1}(b_2t), \dots, i_{\ell-1} \le n^{\mi_{\ell-2}}(b_{\ell-1}t), \\
	\sum_{k=1}^{\ell-1} \multixb{k} \in \TT_{b_1t, (1-b_\ell)t, 0, \g}
	}}
	C \Delta^{\mi_{\ell-1}}(t)
	\exp\kl{
b_\ell t
-\sfrac{
	\kl{\Delta^{\mi_{\ell-1}}(t) + \sqrt{2} b_\ell 	t}^2
}{
	2b_\ell t
}
}.
\end{align}
The first equality in \eqref{eq:below_afterFKPP} holds since, for $i_2, \dots, i_{\ell}$ from that product, the right-hand side of \eqref{eq:below_tailAsymptotics} tends to zero.
The error terms in the second line of \eqref{eq:below_afterFKPP} are uniform and we will prove later that $\YY_{i_1}(t)$ converges as $t\uparrow\infty$ for $x_{i_1} \in \LL_{b_1 t}$. Thus it is justified in the last line of \eqref{eq:below_afterFKPP} to write the error term outside the conditional expectation.
As in \eqref{eq:above_middle_inequality_E}, we use the inequality \eqref{eq:above_inequality}
to get 
that
\begin{align}
\label{eq:below_ExpEstimate}
1 \!- \!
\E
\ekl{
\mathcal{Y}_{i_1}(t) \,\big\vert\, \FF_{b_1 t}
}
\leq\E
\ekl{
\exp\big({-} 
\YY_{i_1}(t)
\big)
\,\big\vert\, 
\FF_{b_1 t}
}
\leq 
1 \!-\!   \E
\ekl{
\mathcal{Y}_{i_1}(t) \,\big\vert\, \FF_{b_1 t}
}
\kl{\!
1 \!-\!
\sfrac{1}{2}
\sfrac{
	\E\ekl{
		\mathcal{Y}^2_{i_1}(t) \,\big\vert\, \FF_{b_1 t}
	}
}{
	\E\ekl{
		\mathcal{Y}_{i_1}(t) \,\big\vert\, \FF_{b_1 t}
	} 
}
}\!.
\end{align} 
We postpone the proofs of the following (conditional) first and second moment estimates for $\mathcal{Y}_{i_1}(t) $ to Appendix~\ref{sec:below_first_second_moments}.
\begin{lemma}
\label{lem:below_FirstMomentsMiddlePart}
Let $\g>1/2$ be such that
\begin{align}
(\s_\ell^2 b_\ell t)^\g \ll b_\ell t^{1-\a_\ell}.
\end{align} 
For all $i_1$ appearing in the product in \eqref{eq:below_branch}, we have
\begin{align}
\label{eq:below_FirstMomentsMiddlePartClaim}
\E\ekl{
	\mathcal{Y}_{i_1}(t) \,\big\vert\, \FF_{b_1 t}
}
=
\sfrac{C}{\sqrt{2(1-\s_1^2 b_1)}}
b_\ell^{3/2} t^{1-\a_\ell}
\exp\kl{
	(1- b_1)t
	-
	\sfrac{
		\kl{
			\sqrt{2 t} - \bar{x}_{i_{1}}(b_{1}t) + L(t)+y
		}^2
	}{
		2 (1- \s_1^2 b_1) t
	}
}
(1+o(1)),
\end{align}
where $L(t)$ is as in \eqref{eq:below_defL}.
\end{lemma}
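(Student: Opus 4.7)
The plan is to compute the conditional expectation by a many-to-one reduction followed by a careful Gaussian saddle-point analysis, with the tube constraint $\TT$ being handled by a Brownian-bridge argument. First, I would apply the many-to-one lemma, conditionally on $\FF_{b_1 t}$, to the sum over the multi-index $(i_2,\dots,i_{\ell-1})$ in \eqref{eq:below_DefY}. This replaces the tree at times $b_2 t,\dots,b_{\ell-1} t$ by a single time-inhomogeneous Brownian path whose variance profile on $[b_1 t,(1-b_\ell)t]$ is dictated by $A_t$, and it produces a factor $\eee^{(1-b_1-b_\ell)t}$. I would decompose the endpoint at time $(1-b_\ell)t$ as $\bar{x}_{i_1}(b_1 t)+\omega$, where $\omega$ is Gaussian with variance $(1-\s_1^2 b_1-\s_\ell^2 b_\ell)t$, and condition the middle fluctuations as an independent Brownian bridge pinned to $0$ and $\omega$.

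With this decomposition, the integrand equals
\begin{equation*}
C\,\Delta(t,\omega)\exp\!\bkl{b_\ell t-\tfrac{(\Delta(t,\omega)+\sqrt{2}b_\ell t)^2}{2 b_\ell t}}=C\,\Delta(t,\omega)\exp\!\bkl{-\sqrt{2}\,\Delta(t,\omega)-\tfrac{\Delta(t,\omega)^2}{2 b_\ell t}},
\end{equation*}
where $\Delta(t,\omega)=\s_\ell^{-1}(m^-(t)-\bar{x}_{i_1}(b_1 t)-\omega+y)-\sqrt{2}b_\ell t+\tfrac{3}{2\sqrt{2}}\log(b_\ell t)$ by \eqref{eq:below_delta_def_Ende}. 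The tube indicator becomes a Brownian-bridge event on $[b_1 t,(1-b_\ell)t]$, and the hypothesis $(\s_\ell^2 b_\ell t)^\g\ll b_\ell t^{1-\a_\ell}$ together with $\g>1/2$ allows me to invoke Lemma~\ref{lem:BB_fluc} (after rescaling time and using monotonicity of $s\wedge(t-s)\mapsto(\cdot)^\g$ as in the proof of Proposition~\ref{prop:below_bridgeLok}) to show that this bridge event has conditional probability $1-o(1)$ uniformly in $\omega$ over the range that will contribute to leading order. Hence to leading order the indicator can be dropped.

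Next, I would integrate $\omega$ against its Gaussian density of variance $(1-\s_1^2 b_1-\s_\ell^2 b_\ell)t$. Since $\Delta$ is affine in $\omega$ with slope $-\s_\ell^{-1}$, the quadratic terms $\omega^2/[2(1-\s_1^2 b_1-\s_\ell^2 b_\ell)t]$ and $\Delta^2/(2 b_\ell t)$ combine to an effective quadratic in $\omega$ with precision
\begin{equation*}
\tfrac{1}{(1-\s_1^2 b_1-\s_\ell^2 b_\ell)t}+\tfrac{1}{\s_\ell^2 b_\ell t}=\tfrac{1-\s_1^2 b_1}{(1-\s_1^2 b_1-\s_\ell^2 b_\ell)\s_\ell^2 b_\ell t}.
\end{equation*}
Completing the square, the linear contribution $-\sqrt{2}\Delta$ shifts the saddle to $\omega^\ast$ and the value at the saddle is exactly $-\tfrac{(\sqrt{2}t-\bar{x}_{i_1}(b_1 t)+L(t)+y)^2}{2(1-\s_1^2 b_1)t}$ after using the definitions of $m^-(t)$ and $L(t)$ in \eqref{eq:below_defL}. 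The ratio of Gaussian normalisations yields $\s_\ell\sqrt{b_\ell/(1-\s_1^2 b_1)}$, and evaluating $\Delta$ at $\omega^\ast$ gives a factor of order $b_\ell t^{1-\a_\ell}$; together these produce the prefactor $\tfrac{C}{\sqrt{2(1-\s_1^2 b_1)}}\,b_\ell^{3/2}t^{1-\a_\ell}$ after simplification using $\s_\ell\to 1$. The $b_\ell t\to\infty$ limit makes the $\Delta^2/(2 b_\ell t)$ contribution to the exponent negligible away from the saddle.

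The main obstacle will be the uniform control of the Brownian-bridge tube event: one has to verify that the probability of $\TT_{b_1 t,(1-b_\ell)t,0,\g}$ conditioned on the endpoints $\bar{x}_{i_1}(b_1 t)$ and $\bar{x}_{i_1}(b_1 t)+\omega$ is $1-o(1)$ uniformly for $\omega$ in a window of width $\OO(\sqrt{(1-\s_1^2 b_1-\s_\ell^2 b_\ell)t})$ around the saddle, including the typical logarithmic shift. This requires reducing the time-inhomogeneous bridge to the standard one and checking, via Assumption~\ref{as:belowLSpeed4} and the concavity trick of \eqref{eq:below_bridge4.5}--\eqref{eq:below_bridge4.55}, that the fluctuation budget $\bkl{A_t(s/t)\wedge(1-A_t(s/t))}^\g t^\g$ dominates the mean shift coming from a non-zero endpoint. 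The remaining bookkeeping—tracking the $1+o(1)$ factors through the saddle-point evaluation and ensuring that $(\s_\ell^2 b_\ell t)^\g\ll b_\ell t^{1-\a_\ell}$ is compatible with the required precision on $\Delta$—is straightforward but tedious.
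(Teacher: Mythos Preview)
Your proposal is correct and follows essentially the same route as the paper: many-to-one reduction conditional on $\FF_{b_1 t}$, decomposition into endpoint $\omega$ plus Brownian bridge, the bridge-in-tube probability shown to be $1-o(1)$ via the argument of Proposition~\ref{prop:below_bridgeLok}, and a Gaussian saddle-point computation producing the stated prefactor and exponent. One small point of attribution: the hypothesis $(\s_\ell^2 b_\ell t)^\g \ll b_\ell t^{1-\a_\ell}$ is used not for the bridge estimate but to ensure that $\Delta(t,\omega)=\tfrac{1}{\sqrt{2}}b_\ell t^{1-\a_\ell}(1+o(1))$ uniformly on the integration window $I$ of width $2(\s_\ell^2 b_\ell t)^\g$; the bridge argument itself only needs $\g>1/2$.
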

\begin{lemma}
\label{lem:below_SecondMomentsMiddlePart}
Let $\g>1/2$ be such that
\begin{align}\label{eq:below_midsm-1}
\min_{s\in[b_1, 1-b_\ell]} \kl{s - A_t(s)} t \gg t^\g.
\end{align}
For all $i_1$ appearing in the product in \eqref{eq:below_branch}, we have
\begin{align}
\label{eq:below_midsm0}
\E\ekl{
	\mathcal{Y}^2_{i_1}(t) \,\big\vert\, \FF_{b_1 t}
}
\leq
P(t) 
\exp\kl{
	(1- b_1)t
	-
	\sfrac{
		\kl{
			\sqrt{2 t} - \bar{x}_{i_{1}}(b_{1}t) + L(t)+y
		}^2
	}{
		2 (1- \s_1^2 b_1) t
	}
}
\eee^{-t^{1/2}}(1+o(1))
,
\end{align}
with $L(t)$ as in \eqref{eq:below_defL}. We write $P$ for any term satisfying $P(t) \leq t^c$ for some constant $c\in \R$ and for all $t>0$ large enough.
\end{lemma}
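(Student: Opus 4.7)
The proof I have in mind is a many-to-two second-moment argument. I would begin by expanding $\YY_{i_1}^2(t)$ as a double sum over pairs $(\mi_{\ell-1}, \mi_{\ell-1}')$ of descendants of $i_1$ and, for each pair, decompose according to the generation $k \in \{2,\ldots,\ell-1\}$ at which the two lineages separate and the precise splitting time $T \in [a_{k-1}t, a_k t]$ within that generation. The diagonal contribution together with the contribution from matched siblings produces a term proportional to $\E[\YY_{i_1}(t)\mid \FF_{b_1 t}]$, which is controlled by Lemma~\ref{lem:below_FirstMomentsMiddlePart}. For each off-diagonal splitting time $T$, I would use the branching property: conditional on the common-ancestor position $\xi$ at time $T$, the two subsequent subtrees evolve independently, and each sub-sum contributes in expectation a copy of the first-moment functional starting from $(T,\xi)$, while the leg from $(b_1 t, \bar x_{i_1}(b_1 t))$ to $(T,\xi)$ contributes the usual Gaussian density together with the branching factor $e^T$.

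The resulting integral over $T$ and $\xi$ has as its integrand: (i) a Gaussian density in $\xi$ coming from the common-ancestor leg; (ii) the product of two copies of the F-KPP asymptotic of Proposition~\ref{prop:FKPP_tail_estimate}, each evaluated from $(T,\xi)$ to the endpoint, exactly as in \eqref{eq:below_tailAsymptotics}; and (iii) the indicator that the full path lies in the tube $\TT_{b_1 t,(1-b_\ell) t, 0, \g}$. The tube condition constrains $\xi$ to a window of width $\bkl{A_t(T/t)\wedge(1-A_t(T/t))}^\g t^\g$ around $\sqrt{2}\kl{tA_t(T/t) - \s_1^2 b_1 t}$. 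Completing the square in $\xi$ exactly as in the chain \eqref{eq:below_gate1_5}--\eqref{eq:below_gate1_6}, the $\xi$-integral produces the same Gaussian exponent as in Lemma~\ref{lem:below_FirstMomentsMiddlePart}, multiplied by the additional penalty $\exp\bkl{-\sqrt{2}\kl{T - tA_t(T/t)}}$ (up to lower-order polynomial corrections), which is precisely the exponential cost of forcing the tube to sit strictly below the straight barrier of slope $\sqrt{2}$.

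Integrating in $T$ over $[b_1 t, (1-b_\ell) t]$ and invoking the hypothesis $\min_{s\in[b_1,1-b_\ell]}(s-A_t(s))t \gg t^{\g}$ from \eqref{eq:below_midsm-1}, which is compatible with Assumption~\ref{as:belowLSpeed4} for $\g$ chosen close enough to $1/2$, the additional penalty is uniformly bounded by $\exp\bkl{-c\, t^{1/2}}$ for some $c>0$, dominating the polynomial prefactors coming from the $T$-integration, the tube width, and the number $O(\ell)$ of intermediate intervals. This produces exactly the right-hand side of \eqref{eq:below_midsm0}. The main obstacle, and the step I expect to demand the most care, is the algebraic bookkeeping in the Gaussian completion: one has to verify that the $\xi$-exponent combines with the two forward legs so that the exponential in the first-moment bound from Lemma~\ref{lem:below_FirstMomentsMiddlePart} appears with multiplicity exactly one, rather than squared as a naive pairing would suggest. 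This single-copy cancellation is precisely what makes the ratio $\E[\YY_{i_1}^2\mid \FF_{b_1 t}]/\E[\YY_{i_1}\mid \FF_{b_1 t}]$ decay like $e^{-t^{1/2}}$ and what ultimately justifies the Laplace-transform linearisation in \eqref{eq:below_ExpEstimate}.
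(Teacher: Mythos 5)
Your overall strategy is the same as the paper's: split $\E[\YY_{i_1}^2\mid\FF_{b_1t}]$ into a diagonal part and an off-diagonal part indexed by the splitting time, treat the off-diagonal via the many-to-two lemma, and extract the decisive penalty $\exp\bkl{-(s-tA_t(s/t))}$ at the branch time $s$ from the requirement that the tube sits below the line of slope $\sqrt{2}$ in variance time; hypothesis \eqref{eq:below_midsm-1} then makes this penalty $\ll \eee^{-t^{1/2}}$ uniformly in $s\in[b_1t,(1-b_\ell)t]$, and the Gaussian completion indeed reproduces the first-moment exponential with multiplicity one. (Your constant $\sqrt{2}$ in the penalty exponent should be $1$, but this is immaterial for the conclusion.)

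The one genuine gap is your treatment of the diagonal term. You assert it is ``proportional to $\E[\YY_{i_1}(t)\mid\FF_{b_1t}]$, which is controlled by Lemma~\ref{lem:below_FirstMomentsMiddlePart}'' — but a bound of the form $O(1)\cdot\E[\YY_{i_1}\mid\FF_{b_1t}]$ is \emph{not} sufficient for \eqref{eq:below_midsm0}, which demands the extra factor $\eee^{-t^{1/2}}$ relative to the first moment; without it, the ratio in \eqref{eq:below_ExpEstimate} would not vanish and the linearisation would fail. What saves the diagonal is that each individual summand of $\YY_{i_1}(t)$ is itself exponentially small on the tube event: with $\multixb{k}$ localised in $\TT_{b_1t,(1-b_\ell)t,0,\g}$ one has $\Delta^{\mi_{\ell-1}}(t)+\sqrt{2}b_\ell t = \sqrt{2}\s_\ell b_\ell t(1+o(1))$, so each term is of order $P(t)\eee^{b_\ell t-\s_\ell^2 b_\ell t}=P(t)\eee^{-b_\ell t^{1-\a_\ell}}$, and $b_\ell t^{1-\a_\ell}\gg t^{1/2}$ by Assumption~\ref{as:belowLSpeed3}. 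Hence $\sum_i a_i^2\le(\sup_i a_i)\sum_i a_i\le P(t)\eee^{-t^{1/2}}\E[\YY_{i_1}\mid\FF_{b_1t}]$ (the paper reaches the same bound by a direct many-to-one computation in which the Gaussian exponent of each summand is doubled). You need to state this smallness of the individual summands explicitly; it is where Assumption~\ref{as:belowLSpeed3} enters the diagonal estimate.
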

By 
Lemmas~\ref{lem:below_FirstMomentsMiddlePart} and \ref{lem:below_SecondMomentsMiddlePart},
\begin{align}
\label{eq:below_FractionSecondFirstMomentMiddle}
\lim_{t\uparrow\infty}
\sfrac{
\E\ekl{
	\mathcal{Y}^2_{i_1}(t) \,\big\vert\, \FF_{b_1 t}
}
}{
\E\ekl{
	\mathcal{Y}_{i_1}(t) \,\big\vert\, \FF_{b_1 t}
} 
}
=0.
\end{align}
From this and \eqref{eq:below_ExpEstimate} follows that
\begin{align}\label{eq:below_AfterAveragingMiddleInner}
\E
\ekl{
\eee^{-\mathcal{Y}_{i_1}(t) }\,\big\vert\, \FF_{b_1 t}
}
=
1 -   \E
\ekl{
\mathcal{Y}_{i_1}(t) \,\big\vert\, \FF_{b_1 t}
}(1+o(1)),
\end{align}
so \eqref{eq:below_branch} is equal to
\begin{align}\label{eq:below_AfterAveragingMiddle}
&
\E \, \Bigg[ 
\prod_{\substack{
	i_1 \le n(b_1t), \\ 
	\bar{x}_{i_1} \in\LL_{b_1 t}
	}}
	\bbkl{1 -   \E
\ekl{
	\mathcal{Y}_{i_1}(t) \,\big\vert\, \FF_{b_1 t}
	}}
	\Bigg] (1+o(1))\\
	&\!=\,
	\E  \Bigg[ 
	\prod_{\substack{
	i_1 \le n(b_1t), \\ 
	\bar{x}_{i_1} \in\LL_{b_1 t}
	}}\!\!
	\exp\!\bbbbkl{\!
{-}
\sfrac{C}{\sqrt{2(1-\s_1^2 b_1)}}
b_\ell^{3/2} t^{1-\a_\ell}
\!\exp\!\kl{\!
	(1- b_1)t
	\!-\!
	\sfrac{
		\kl{
			\sqrt{2 t} - \bar{x}_{i_{1}}(b_{1}t) + L(t)+y
		}^2
	}{
		2 (1- \s_1^2 b_1) t
	}\!
}\!\!
}\!
\Bigg] (1+o(1)).\notag
\end{align}
In the last step, we used Lemma~\ref{lem:below_FirstMomentsMiddlePart} and that, for $\bar{x}_{i_1} \in\LL_{b_1 t}$, the right-hand side of \eqref{eq:below_FirstMomentsMiddlePartClaim} tends to zero as $t\uparrow\infty$. 
As in 
\eqref{eq:above_first_split}, we rewrite the expectation on the right-hand side of \eqref{eq:below_AfterAveragingMiddle} by conditioning on $\FF_{t^{\beta}}$ as
\begin{align}
\E \, \Bigg[ 
\prod_{\substack{
	j \le n(t^{\b}) \\ 
	\bar{x}_j \in \LL_{t^\b}
	}}
	\E
	\ekl{
\eee^{-\tilde{\YY}_{j}(t) }\,\big\vert\, \FF_{t^\b}
}
\Bigg],\label{eq:below_first1}
\end{align}
where
\begin{align}
\tilde{\YY}_{j}(t) 
& \equiv
\sum_{\substack{
	j^* \le n^j(t^*), \\
	\bar{x}_{j^*}^{\, j} \in \LL_{t^*}
	}}
	\sfrac{C}{\sqrt{2(1-\s_1^2 b_1)}}
	b_\ell^{3/2} t^{1-\a_\ell}
	\exp\kl{
(1- b_1)t
-
\sfrac{
	\kl{
		\sqrt{2 t} - \bar{x}_{j^*}^{\, j}(t^*) - \bar{x}_j(t^\b) + L(t)+y
	}^2
}{
	2 (1- \s_1^2 b_1) t
}
},\notag\\
\LL_{t^\b} & \equiv
\AA_{t^{\b}, t^{\b}, t^{\b\delta}, \s_1} \cap \BB_{t^\b,t^\b,0,\s_1},\notag\\
\LL_{t^*} & \equiv \AA_{0, t^*, t^{\b\delta/2} + \bar{x}_j(t^\b) - \sqrt{2}\s_1 t^\b, \s_1} \cap \TT_{t^*,t^*, \bar{x}_j(t^\b) - \sqrt{2}\s_1 t^\b, \g}.
\label{eq:below_DefY2}
\end{align}
Recall that $t^* = b_1 t - t^\b$ and, independently for each $j$, $(\bar{x}_{j^*}^{j}(t^*))_{j^*\leq n^j(t^*)}$ are the particles of a BBM with variance $\s_1^2$ at time $t^*$. 
We postpone the proofs of the estimates of the conditional first and second moments to Appendix~\ref{sec:below_first_second_moments}.
\begin{lemma}
\label{lem:below_FirstMomentsFirstPart}
Let $\g>1/2$ be such that
\begin{align}
(\s_1^2 b_1 t)^\g \ll b_1 t^{1-\a_1}.
\end{align} 
For all $j$ appearing in the product in \eqref{eq:below_first1}, we have
\begin{align}
\E\ekl{
	\tilde{\YY}_{j}(t) \,\big\vert\, \FF_{t^\b}
}
=
C \eee^{-\sqrt{2} y}
\left(\sqrt{2} \s_1 t^{\b} - \bar{x}_j(t^{\b}) -t^{\b\delta/2} \right)
\eee^{-\sqrt{2} \, \left(\sqrt{2} \s_1 t^{\b} - \bar{x}_j(t^{\b}) \right)}
(1+o(1)).
\label{eq:below_FMFP_Result}
\end{align}
\end{lemma}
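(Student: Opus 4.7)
The proof follows the same scheme as Lemma~\ref{lem:below_FirstMomentsMiddlePart}, adapted to the interval $[0,t^*]$ with $t^* = b_1 t - t^\b$. My plan is to apply the many-to-one lemma at time $t^*$ conditionally on $\FF_{t^\b}$ to reduce the sum over BBM particles to a single Gaussian integral, factor the event $\{X \in \LL_{t^*}\}$ using a Brownian-bridge decomposition, and then carry out the Gaussian asymptotics explicitly. Concretely, the conditional expectation becomes
\begin{align*}
\E\!\ekl{\tilde\YY_j(t) \mid \FF_{t^\b}}
= \sfrac{C b_\ell^{3/2} t^{1-\a_\ell}}{\sqrt{2(1-\s_1^2 b_1)}}\,\eee^{(1-b_1)t + t^*}\,
\E\!\ekl{\1_{X \in \LL_{t^*}}\,
\exp\!\kl{-\sfrac{(\sqrt{2}t - X(t^*) - \bar{x}_j(t^\b) + L(t) + y)^2}{2(1-\s_1^2 b_1)t}}},
\end{align*}
where $X$ is a Brownian motion of variance $\s_1^2$ on $[0,t^*]$ starting at $0$.

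Writing $X(s) = (s/t^*)\,\o + \zeta(s)$ with $\o := X(t^*)$ and $\zeta$ an independent Brownian bridge of variance $\s_1^2$ vanishing at both endpoints, the event $\{X \in \LL_{t^*}\}$ factorises: the endpoint constraint from $\TT_{t^*,t^*,\cdot,\g}$ confines $\o$ to an interval of radius $\OO(t^\g)$ around $\mu := \sqrt{2}\s_1^2 t^* + \sqrt{2}\s_1 t^\b - \bar{x}_j(t^\b)$, while the slanted-barrier constraint from $\AA_{0,t^*,\cdot,\s_1}$ becomes a linear barrier condition on $\zeta$ whose probability, by Lemma~\ref{lem:BB_line}, is asymptotically $\tfrac{2}{\s_1^2 t^*}(\sqrt{2}\s_1 t^\b - \bar{x}_j(t^\b) - t^{\b\delta/2})(\sqrt{2}\s_1(1-\s_1) t^* - t^{\b\delta/2} + z)$ after the substitution $\o = \mu - z$. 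Under Assumption~\ref{as:belowLSpeed2} the second factor behaves uniformly like $\tfrac{\sqrt{2}}{2}\,b_1 t^{1-\a_1}$ for $z \in [-t^\g, t^\g]$, so the bridge contribution is $\sqrt{2}\,t^{-\a_1}(\sqrt{2}\s_1 t^\b - \bar{x}_j(t^\b) - t^{\b\delta/2})(1+o(1))$.

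Plugging $\o = \mu - z$ into the full exponent $(1-b_1)t + t^* - \o^2/(2\s_1^2 t^*) - G(\o)$ and completing the square in $z$, the $t$-linear terms collapse via $(1-\s_1^2) = t^{-\a_1}$ and $\sum_k \s_k^2 b_k = 1$ to
\begin{align*}
- \sqrt{2}(\sqrt{2}\s_1 t^\b - \bar{x}_j(t^\b)) - \sqrt{2}\,L(t) - \sqrt{2}\,y - \sfrac{z^2}{2\s_1^2 t^*} + o(1).
\end{align*}
Since $t^\g \gg \sqrt{t^*}$, the Gaussian integral in $z$ over the allowed window converges to $1$, and by \eqref{eq:below_defL} one has $\eee^{-\sqrt{2}L(t)} \sim b_\ell^{-3/2}\,t^{-1+\a_1+\a_\ell}$. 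All polynomial $t$-factors, namely $b_\ell^{3/2} t^{1-\a_\ell}$ from the prefactor, $t^{-\a_1}$ from the bridge asymptotics, and $b_\ell^{-3/2}t^{-1+\a_1+\a_\ell}$ from $L(t)$, cancel exactly, and $\sqrt{2(1-\s_1^2 b_1)} \to \sqrt{2}$ combines with the $\sqrt{2}$ from the bridge factor, leaving precisely \eqref{eq:below_FMFP_Result}. The main technical point is verifying that these cancellations are exact and that the error terms arising in the Gaussian completion-of-the-square, the quadratic remainder in $G$, and the Brownian-bridge approximation are genuinely $o(1)$ uniformly in $\bar{x}_j(t^\b)$ over the range permitted by $\bar{x}_j \in \LL_{t^\b}$.
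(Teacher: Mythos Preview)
Your proposal is correct and follows essentially the same route as the paper's proof: many-to-one at time $t^*$, the endpoint--bridge decomposition with Lemma~\ref{lem:BB_line} yielding the factor $\sqrt{2}\,t^{-\a_1}(\sqrt{2}\s_1 t^\b - \bar{x}_j(t^\b) - t^{\b\delta/2})$, a Gaussian completion of the square, and the final cancellation of polynomial factors via the definition of $L(t)$. The only cosmetic difference is that the paper keeps the integration variable as $\o$ and completes the square against the full quadratic $\o^2/(2\s_1^2 t^*) + (\sqrt{2}t - \o - \bar{x}_j(t^\b) + L(t) + y)^2/(2(1-\s_1^2 b_1)t)$ directly (see \eqref{eq:below_1fm3}), whereas you shift $\o = \mu - z$ first; both lead to the same asymptotics.
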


\begin{lemma}\label{lem:below_SecondMomentsFirstPart}
Let $\g$ be as in Lemma~\ref{lem:below_FirstMomentsFirstPart}.
For all $j$ appearing in the product in \eqref{eq:below_first1}, we have
\begin{align}
\E\ekl{
	\tilde{\YY}^2_{j}(t) \,\big\vert\, \FF_{t^\b}
}
\leq  P(t) \,
\eee^{
	-\sqrt{2} \, 
	\left(\sqrt{2} \s_1 t^{\b} - \bar{x}_j(t^{\b}) \right)
}
\eee^{
	-\sqrt{2} t^{\beta\delta/2}
}
(1+o(1)).
\end{align}
\end{lemma}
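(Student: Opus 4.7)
The strategy is to expand the square and apply a many-to-two argument in the spirit of the classical second moment computation for BBM. Writing $\tilde\YY_j^2(t) = \sum_{j^*} W_{j^*}^2 + \sum_{j^* \neq j^{**}} W_{j^*} W_{j^{**}}$, where $W_{j^*}$ denotes the generic summand in the definition of $\tilde\YY_j(t)$ in \eqref{eq:below_DefY2}, I would treat the diagonal and off-diagonal contributions separately.

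For the diagonal piece, the many-to-one lemma reduces the expectation to $e^{t^*}\, \E[W_0^2\,\1_{\bar x^0 \in \LL_{t^*}}]$, where $\bar x^0$ is a Brownian motion with variance $\s_1^2$. Squaring the integrand doubles the coefficient of the quadratic form in $u = \bar x^0(t^*)$; after completing the square and carrying out the Gaussian integration, tightened to the barrier set $\LL_{t^*}$ by Lemma~\ref{lem:BB_line}, one obtains a bound of the order $P(t)\, e^{-2\sqrt{2}(\sqrt{2}\s_1 t^\b - \bar x_j(t^\b))}$. Since $\sqrt{2}\s_1 t^\b - \bar x_j(t^\b) \geq t^{\b\delta}$ by the localization $\bar x_j \in \LL_{t^\b}$, this is dominated by the announced right-hand side.

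For the off-diagonal piece, I would condition on the time $s \in [0, t^*]$ at which the MRCA of $j^*$ and $j^{**}$ lived. The many-to-two lemma gives, with $K = \tfrac12\sum_k k(k-1)p_k$,
\begin{equation}
\E\bigg[\sum_{j^* \neq j^{**}} W_{j^*} W_{j^{**}} \,\bigg\vert\, \FF_{t^\b}\bigg]
= 2K \int_0^{t^*} \!ds\; e^{s + 2(t^*-s)}\; \E\big[\,W(\xi_1)\,W(\xi_2)\,\1_{\text{barriers hold}}\,\big],
\end{equation}
where, given common ancestor position $z$ at time $s$, the two descendants satisfy $\bar{x}_{j^*}^{\,j}(t^*) = z+\xi_1$ and $\bar{x}_{j^{**}}^{\,j}(t^*) = z+\xi_2$, with $\xi_1, \xi_2$ independent centred Gaussians of variance $\s_1^2(t^*-s)$. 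After completing the square in the combined quadratic form of $W(\xi_1)\,W(\xi_2)$ together with the Gaussian density of $z$, the $z$-integral reproduces (up to polynomial factors) the prefactor of Lemma~\ref{lem:below_FirstMomentsFirstPart} squared, while the independent barrier constraints on $\xi_1$ and $\xi_2$ each contribute, via Lemma~\ref{lem:BB_line}, an entropic repulsion factor that combines to at least $e^{-\sqrt{2}t^{\b\delta/2}}$.

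The main obstacle is the careful bookkeeping of the quadratic forms and barrier localizations in the many-to-two integral. A convenient organization is to split the $s$-integration at $s_0 = t^\b$. For $s > s_0$ (late branching), the analysis mirrors the diagonal case and yields the same bound up to polynomial factors. For $s \leq s_0$ (early branching), the two descendants evolve essentially independently and the contribution factorizes, giving an estimate of the form $O\bigl((\E[\tilde\YY_j\mid\FF_{t^\b}])^2\cdot e^{-\sqrt{2}t^{\b\delta/2}}/(\sqrt{2}\s_1 t^\b - \bar x_j(t^\b))\bigr)$ by Lemma~\ref{lem:below_FirstMomentsFirstPart} and the entropic repulsion. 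Summing the two contributions and absorbing all $t$-polynomial prefactors into $P(t)$ completes the proof.
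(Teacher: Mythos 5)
Your decomposition and overall strategy coincide with the paper's: split $\E[\tilde\YY_j^2\mid\FF_{t^\b}]$ into the diagonal sum $(T1)$ and the off-diagonal sum $(T2)$, handle $(T1)$ by many-to-one (the paper in fact drops the barrier there entirely and gets $P(t)\,\eee^{-\sqrt2(\sqrt2\s_1t^\b-\bar x_j(t^\b))}\eee^{-b_1t^{1/2}}$, which, like your doubled-exponent bound, is dominated by the claimed right-hand side), and handle $(T2)$ by the many-to-two lemma with an integral over the branching time, exactly as in \eqref{eq:below_fism3}.

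The one step I would not let stand as written is your explanation of where the factor $\eee^{-\sqrt2\,t^{\b\delta/2}}$ comes from in $(T2)$. You attribute it to "the independent barrier constraints on $\xi_1$ and $\xi_2$", i.e.\ to the two post-branching increments, each estimated via Lemma~\ref{lem:BB_line}. But Lemma~\ref{lem:BB_line} only produces polynomial factors of the form $2xy/T$; it cannot produce an exponentially small term. In the paper the factor is extracted from the \emph{common ancestor's} position at the branching time: the event $\bar x_{j^*}^{\,j}\in\LL_{t^*}$ forces the ancestor to sit at least $t^{\b\delta/2}$ below the line $\sqrt2\s_1(\,\cdot\,)$ at that time (this is the upper limit of the $\o_1$-integral in \eqref{eq:below_fism3}), and after completing the square the integrand carries a tilt $\eee^{\sqrt2\,\tilde\o}$ in the ancestor's gap variable $\tilde\o$; integrating that tilt over $\tilde\o\le-t^{\b\delta/2}$ is what yields $\eee^{-\sqrt2\,t^{\b\delta/2}}$ (compare \eqref{eq:above_middle_second_int}--\eqref{eq:above_middle_second_bound_T2} in Case A). Your sketch does impose "barriers hold" at the branching time, so the correct ingredient is present; you just need to extract the exponential from the tilted ancestor integral rather than from the descendants' bridge probabilities. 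With that correction (and your split at $s_0=t^\b$, which is harmless but not needed in the paper's version), the argument closes.
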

By Lemmas~\ref{lem:below_FirstMomentsFirstPart} and \ref{lem:below_SecondMomentsFirstPart},
\begin{align}
\sfrac{\E\ekl{
	\tilde{\YY}^2_{j}(t) \,\big\vert\, \FF_{t^\b}
	}}{\E\ekl{
	\tilde{\YY}_{j}(t) \,\big\vert\, \FF_{t^\b}
	}} 
	\leq C^{-1} \eee^{\sqrt{2}y}P(t)
	\left(
	\sqrt{2} \s_1 t^{\b} - \bar{x}_j(t^{\b}) -t^{\b\delta/2} 
	\right)^{-1} \eee^{-\sqrt{2}t^{\b\delta/2}},
\end{align}
which, for $\bar{x}_j \in \LL_{t^\b}$, converges to $0$ as $t\uparrow\infty$.
We proceed as in \eqref{eq:below_FractionSecondFirstMomentMiddle} -- \eqref{eq:below_AfterAveragingMiddle}, to get that 
\begin{align}
&\E \, \Bigg[ 
\prod_{\substack{
	j \le n(t^{\b}) \\ 
	\bar{x}_j \in \LL_{t^\b}
	}}
	\E
	\ekl{
\eee^{-\tilde{\YY}_{j}(t) }\,\big\vert\, \FF_{t^\b}
}\!
\Bigg]
\label{eq:below_beforeDerivMart}\\
&=\:
\E \, \Bigg[
\exp\bbbkl{
{-}C \eee^{-\sqrt{2} y} 
\sum_{\substack{
		j \le n(t^{\b}) \\ 
		\bar{x}_j \in \LL_{t^\b}
}}
\left(
\sqrt{2} \s_1 t^{\b} - \bar{x}_j(t^{\b}) 	-t^{\b\delta/2} 
\right)
\eee^{
	-\sqrt{2} \, \left(\sqrt{2} \s_1 t^{\b} - 	\bar{x}_j(t^{\b}) \right)
}
}
\Bigg] (1+o(1)).
\notag
\end{align}
The sum in \eqref{eq:below_beforeDerivMart} converges to the limit of the derivative martingale.
\begin{lemma}\label{prop:belowDerivMartLimit}
With the notation from above,
\begin{align}
\sum_{\substack{
		j \le n(t^{\b}) \\ 
		\bar{x}_j \in \LL_{t^\b}
}}
\left(\sqrt{2} \s_1 t^{\b} - \bar{x}_j(t^{\b}) -t^{\b\delta/2}\right)
\eee^{-\sqrt{2} \, \left(\sqrt{2} \s_1 t^{\b} - \bar{x}_j(t^{\b}) \right)}
&\to Z,
\end{align}
in probability as $t\uparrow\infty$.
\end{lemma}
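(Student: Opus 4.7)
The plan is to reduce the sum to (a perturbation of) the derivative martingale of standard BBM at time $t^\beta$ and exploit its almost-sure convergence to $Z$. Since, on the interval $[0, b_1 t]$, the process $(\bar{x}_j(s))_{j \leq n(s)}$ has variance $\sigma_1^2$, we may write $\bar{x}_j(s) = \sigma_1 x_j(s)$ where $(x_j(s))_{j \leq n(s)}$ is a standard BBM. Setting $u_j \equiv \sqrt{2} t^\beta - x_j(t^\beta)$, the condition $\bar{x}_j \in \LL_{t^\beta}$ becomes $\sigma_1^{-1} t^{\beta\delta} \leq u_j < 2\sqrt{2} t^\beta$, and the sum to analyze takes the form
\begin{equation*}
S_t \equiv \sum_{j\colon \sigma_1^{-1} t^{\beta\delta} \leq u_j < 2\sqrt{2} t^\beta} \bkl{\sigma_1 u_j - t^{\beta\delta/2}} \eee^{-\sqrt{2}\sigma_1 u_j}.
\end{equation*}
The target is the a.s.\ limit $Z$ of $Z(t^\beta) = \sum_{j\leq n(t^\beta)} u_j \, \eee^{-\sqrt{2} u_j}$, and the three sources of discrepancy are the prefactor $\sigma_1 \neq 1$ together with the exponent $\sigma_1 \neq 1$, the additive shift $t^{\beta\delta/2}$, and the truncations $u_j \geq \sigma_1^{-1} t^{\beta\delta}$ and $u_j < 2\sqrt{2} t^\beta$.

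To handle the variance perturbation, I would split the range of $u_j$ at a level $M_t = t^{\beta/2}(\log t)$. For $u_j \leq M_t$, using $\sigma_1 = 1 - \tfrac{1}{2} t^{-\alpha_1}(1+o(1))$ and $\beta < \alpha_1$, we have $(1-\sigma_1)u_j = O(t^{-\alpha_1 + \beta/2}\log t) \to 0$ uniformly, hence $\eee^{-\sqrt 2 \sigma_1 u_j} = \eee^{-\sqrt 2 u_j}(1+o(1))$ and $\sigma_1 u_j \eee^{-\sqrt 2 \sigma_1 u_j}$ differs from $u_j \eee^{-\sqrt 2 u_j}$ by a factor $1 + o(1)$. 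For $u_j > M_t$, a standard first-moment bound via the many-to-one lemma shows that $\sum_{j\colon u_j > M_t} u_j \eee^{-\sqrt 2 u_j} \to 0$ in $L^1$ (the Gaussian tail of $x_j(t^\beta)$ suppresses the sum), and the same holds for $\sum u_j \eee^{-\sqrt 2 \sigma_1 u_j}$ since $\sigma_1 \geq 1/2$ eventually.

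The shift contribution equals $t^{\beta\delta/2} W_t$ with $W_t \equiv \sum_{j \in \LL_{t^\beta}} \eee^{-\sqrt 2 \sigma_1 u_j}$. Comparing to the critical additive martingale $W(t^\beta) = \sum_j \eee^{-\sqrt 2 u_j}$, which by Derrida--Simon is of order $(t^\beta)^{-1/2}$ in probability, together with the variance comparison of the previous paragraph, gives $W_t = O_\P(t^{-\beta/2})$. Hence $t^{\beta\delta/2} W_t = O_\P(t^{\beta(\delta-1)/2}) \to 0$ since $\delta < 1$. The lower truncation $u_j \geq \sigma_1^{-1} t^{\beta\delta}$ is absorbed similarly: the contribution from $u_j \in [-1, t^{\beta\delta}]$ to $Z(t^\beta)$ vanishes in probability because, by the convergence of the derivative martingale together with Lemma~\ref{lem:BarrierLok} (no particle stays close to $\sqrt{2}s$ for large $s$), the mass of $Z(t^\beta)$ is carried by particles with $u_j$ of order $1$ or larger and the proportion with $u_j \leq t^{\beta\delta}$ becomes negligible. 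The upper truncation at $2\sqrt{2} t^\beta$ amounts to excluding particles above $-\sqrt{2}t^\beta$, which is superseded by the stronger barrier in Lemma~\ref{lem:BarrierLok}.

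Combining these three estimates with $\sigma_1 \to 1$ gives $S_t = Z(t^\beta) + o_\P(1) \to Z$ in probability. The principal technical obstacle will be the uniform control of the variance perturbation on the full range of $u_j$: one must simultaneously ensure that the exponential weight $\eee^{-\sqrt 2 \sigma_1 u_j}$ is well approximated by $\eee^{-\sqrt 2 u_j}$ where the main mass of $Z(t^\beta)$ lives (requiring $\beta < 2\alpha_1$, which holds as $\beta < \alpha_1$) while keeping the tail $u_j \gg \sqrt{t^\beta}$ under control. As in the proof of Lemma~\ref{prop:above_derivative_martingale}, this parallels the structure of Lemma 3.7 of~\cite{1to6}, whose argument transfers verbatim provided the two quantitative conditions $\sigma_1 \leq 1$ and $t^\beta(1-\sigma_1) \to 0$ are verified; both are immediate consequences of Assumption~\ref{as:belowLSpeed2} and $\beta < \alpha_1$.
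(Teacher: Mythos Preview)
Your proposal is correct and follows essentially the same route as the paper: the paper's proof is a one-line reduction to Lemma~\ref{prop:above_derivative_martingale} (and thence to Lemma~3.7 of \cite{1to6}) via the single observation that $(\sigma_1-1)t^{\beta}=o(1)$, and your sketch is precisely an unpacking of what that reduction entails, ending with the same quantitative condition $t^{\beta}(1-\sigma_1)\to 0$ derived from Assumption~\ref{as:belowLSpeed2} and $\beta<\alpha_1$.
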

\begin{proof}
The proof is the same as for Lemma~\ref{prop:above_derivative_martingale} since
$
(\s_1-1) t^\b = o(1)
$.
\end{proof}

\noindent As almost surely, 
\begin{align}
-C \eee^{-\sqrt{2} y} 
\sum_{\substack{
	j \le n(t^{\b}) \\ 
	\bar{x}_j \in \LL_{t^\b}
	}}
	\left(\sqrt{2} \s_1 t^{\b} - \bar{x}_j(t^{\b}) -t^{\b\delta/2} \right)
	\eee^{-\sqrt{2} \, \left(\sqrt{2} \s_1 t^{\b} - \bar{x}_j(t^{\b}) \right)} <0,
\end{align}
Lemma~\ref{prop:belowDerivMartLimit} implies that 
\begin{align}
& \lim_{t\uparrow\infty} \P \left( \max_{j \le n(t)\colon \bar{x}_j \in \LL} \bar{x}_j(t) - m^{-}(t) \le y \right)\notag\\
&=\:
\lim_{t\uparrow\infty}
\E \, \Bigg[
\exp\bbbbkl{
{-}C \eee^{-\sqrt{2} y} 
\sum_{\substack{
		j \le n(t^{\b}) \\ 
		\bar{x}_j \in \LL_{t^\b}
}}
\left(
\sqrt{2} \s_1 t^{\b} - \bar{x}_j(t^{\b}) -t^{\b\delta/2} 
\right)
\eee^{-\sqrt{2} \, \left(\sqrt{2} \s_1 t^{\b} - \bar{x}_j(t^{\b}) \right)}
}
\Bigg]\notag\\
&=\:
\E \Big[
\eee^{
{-}CZ \eee^{-\sqrt{2} y} 
}\Big].
\end{align}
This completes the proof of Proposition~\ref{prop:BelowLSpeedLimitingLaw} up to the proofs of Lemmas~\ref{lem:below_FirstMomentsFirstPart} and
\ref{lem:below_SecondMomentsFirstPart}.
\null\hfill\qedsymbol

To complete the proof of Theorem~\ref{thm:lawofmax} in Case~B, we use the fact that we can approximate the speed functions in that setting from above and below by the speed functions described in the following lemma. We deduce from Proposition~\ref{prop:BelowLSpeedLimitingLaw} that variable speed BBMs with these speed functions have the same limiting law of the maximum.

\begin{lemma}\label{lem:belowPiecewiseLinearApprox}
Let $\ab,\ae \in \kl{0, 1/2}$. Let $\kl{A_t}_{t>0}$ be a family of speed functions which satisfy Assumption~\ref{as:below} for $\ab,\ae$. 
Then, there exist $\ell\in\N$ and families of speed functions $(\underline{A}_t)_{t>0}, (\overline{A}_t)_{t>0}$ which satisfy for each $t>0$ large enough:
\begin{thmlist}
\item \label{lem:belowPiecewiseLinearApprox1} $\underline{A}_t(s) \leq A_t(s) \leq \overline{A}_t(s) $ for all $s \in [0,1]$.
\item \label{lem:belowPiecewiseLinearApprox2} The functions $\underline{A}_t$ and $\overline{A}_t$ are piecewise linear and continuous
with slopes
\begin{align}
	\underline{A}_t'(s) &= 
	\sum_{k=1}^{\ell} \underline{\sigma}_k^2 \1_{ \left( \sum_{j=1}^{k-1} \underline{b}_j, \sum_{j=1}^{k} \underline{b}_j \right)}(s), \quad
	\text{ for } s \in (0,1),
	\notag\\
	\overline{A}_t'(s) &= 
	\sum_{k=1}^{\ell} \overline{\sigma}_k^2 \1_{ \left( \sum_{j=1}^{k-1} \overline{b}_j, \sum_{j=1}^{k} \overline{b}_j \right)}(s), \quad
	\text{ for } s \in (0,1),
\end{align}
where $\underline{\sigma}_k,  \underline{b}_k$ (and analogously   $\overline{\sigma}_k,  \overline{b}_k$) depend on $t$ and satisfy
\begin{align}
	\label{eq:below_sigma_unten}
	\sum_{k=1}^{\ell} \underline{\sigma}_k^2\, \underline{b}_k = 1,
	\qquad
	\sum_{k=1}^{\ell}  \underline{b}_k = 1
	\quad \text{ and } \quad
	\underline{b}_k > 0 \text{ for all } k = 1,\dots, \ell.
\end{align}
\item \label{lem:belowPiecewiseLinearApprox3}
$\underline{A}_t$ and  $\overline{A}_t$ satisfy Assumption~\ref{as:belowLSpeed2}--(iv) with $\a_1 = \ab, \a_\ell = \ae$.
\end{thmlist}
\end{lemma}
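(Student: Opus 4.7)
The plan is to construct $\overline{A}_t$ and $\underline{A}_t$ explicitly as piecewise linear functions whose first and last slopes equal $1 - t^{-\ab}$ and $1 + t^{-\ae}$ up to an $o(t^{-\ab})$, $o(t^{-\ae})$ correction (the leeway allowed by Assumptions~\ref{as:belowLSpeed2} and~\ref{as:belowLSpeed3}), and to verify the sandwich property piece by piece using the smooth envelopes from Assumption~\ref{as:below}.

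Near $s = 0$, I would pick a first interval length $b_1$ with $t^{\ab - 1/2} \ll b_1 \ll \bbeg(t)$; this range is non-empty because the chain in Assumption~\ref{as:belowA}--(i) is strict. Second-order Taylor expansion of $\overline{B}_t$ at $0$, combined with the derivative and curvature bounds of Assumption~\ref{as:belowA}--(ii),(iii), gives
\[
\overline{B}_t(s) \,=\, (1 - t^{-\ab})\,s \,+\, \mathcal{R}(s),\qquad \sup_{s\in[0,b_1]}|\mathcal{R}(s)| \,\ll\, t^{-\ab}\, b_1.
\]
Setting the first slope of $\overline{A}_t$ to $1 - t^{-\ab} + \eta_t$ with $\eta_t$ slightly larger than the Taylor error rate but still $o(t^{-\ab})$ makes $\overline{A}_t(s) \geq \overline{B}_t(s) \geq A_t(s)$ on $[0, b_1]$. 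The mirror choice $1 - t^{-\ab} - \eta_t$ applied to $\underline{B}_t$ produces the lower approximant on the same interval. A symmetric construction on $[1 - b_\ell, 1]$ using $\underline{C}_t, \overline{C}_t$ from Assumption~\ref{as:belowB} handles the last piece.

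On the middle, $\overline{A}_t$ admits a single linear piece: its slope is then fixed by the normalisation $\sum_k \overline{\sigma}_k^2 \overline{b}_k = 1$ to lie within $O(t^{-\ab} b_1 + t^{-\ae} b_\ell)$ of $1$, and the resulting function satisfies $|\overline{A}_t(s) - s| = O(t^{-\ab}b_1 + t^{-\ae}b_\ell)$. Since Assumption~\ref{as:belowC} provides $s - A_t(s) \gg t^{-1/2}$ on $[\bbeg(t), 1-\bend(t)]$, shrinking $b_1$ and $b_\ell$ within their allowed ranges so that $t^{-\ab} b_1 + t^{-\ae} b_\ell \ll s - A_t(s)$ gives $\overline{A}_t \geq A_t$. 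The lower approximant is more delicate: a single near-unit-slope middle piece would sit \emph{above} $A_t$, so I would introduce two middle pieces, a first one with a smaller slope that drops $\underline{A}_t$ safely below $A_t$ and a second one with a larger slope that recovers the value $1 - \underline{\sigma}_\ell^2 b_\ell$ at $1 - b_\ell$. This forces $\ell \geq 4$; padding $\overline{A}_t$ with a redundant break of equal slope produces a common $\ell$ (taking $\ell = 4$ or $\ell = 5$).

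The main obstacle is not any single estimate but the simultaneous bookkeeping: the interval lengths and breakpoints must be chosen to respect the strict chains of $\gg$'s in Assumptions~\ref{as:below} and~\ref{as:belowLSpeed}, the strict positivity of every slope, the two normalisations $\sum_k b_k = 1$ and $\sum_k \sigma_k^2 b_k = 1$, and the gap condition~\ref{as:belowLSpeed4}. Each relation $f \gg g$ in the hypotheses supplies a polynomial margin $t^{\epsilon}$ of slack in the sense of \eqref{eq:NotationLLGG}; after fixing a single small $\epsilon > 0$ smaller than all of them, all constraints can be satisfied concurrently and the verification reduces to elementary asymptotic estimates.
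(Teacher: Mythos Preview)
Your approach is correct and follows the same overall strategy as the paper: use Taylor expansion of the smooth envelopes $\underline{B}_t,\overline{B}_t,\underline{C}_t,\overline{C}_t$ to fix the first and last linear pieces, then fill in the middle so that the sandwich and the distance condition~\ref{as:belowLSpeed4} hold simultaneously. The paper, however, organises the middle of $\overline{A}_t$ differently: it takes $\overline{b}_1=\bbeg/2$, sets $\overline{A}_t(s)=s-t^{-1/2+\e}$ explicitly on $[\bbeg,1-\bend]$, and then inserts two short connector pieces on $[\bbeg/2,\bbeg]$ and $[1-\bend,1-\bend/2]$, arriving at $\ell=5$. Your three-piece $\overline{A}_t$ is leaner but trades this for a coupled choice of $b_1,b_\ell$: you must pick $b_1$ with $t^{\ab-1/2}\ll b_1$ \emph{and} $t^{-\ab}b_1$ below $\min_{s\in[\bbeg,1-\bend]}(s-A_t(s))$, which is possible precisely because the latter is $\gg t^{-1/2}$; the paper's explicit middle segment sidesteps this balancing act.

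One point you glossed over: your sandwich argument for the middle of $\overline{A}_t$ invokes only Assumption~\ref{as:belowC}, which covers $[\bbeg,1-\bend]$, whereas your single middle piece lives on the larger interval $[b_1,1-b_\ell]$. On the leftover strips $[b_1,\bbeg]$ and $[1-\bend,1-b_\ell]$ you still need $\overline{A}_t\ge A_t$. This is easy to patch: on $[b_1,\bbeg]$ one has $A_t\le\overline{B}_t$, and $\overline{B}_t'(s)=1-t^{-\ab}+o(t^{-\ab})$ is strictly smaller than your middle slope $\overline{\sigma}_2^2=1+O(t^{-\ab}b_1+t^{-\ae}b_\ell)$, so starting from $\overline{A}_t(b_1)\ge\overline{B}_t(b_1)$ the gap only widens. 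The symmetric argument handles the right strip. With this addition, your construction goes through.
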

\begin{proof}
Let $t>0$. We start with the construction of the first piece of the piecewise linear speed functions $\underline{A}_t$ and $\overline{A}_t$.
$A_t$ satisfies Assumption~\ref{as:belowA} with corresponding lower and upper bounds $\underline{B}_t$ and $\overline{B}_t$ on $[0,\bbeg]$, so we get for all $s \in [0, \bbeg]$ that
\begin{align}\label{eq:below_PiecewiseLinearApprox1}
A_t(s) \leq \overline{B}_t(s) 
&\leq \overline{B}_t(0) + s\overline{B}'_t(0) + \sfrac{s^2}{2} \max_{z \in [0, \bbeg]} \overline{B}''_t(z) \notag\\
&\leq \kl{1 - t^{-\ab} + \sfrac{\bbeg}{2} \max_{z \in [0, \bbeg]} \overline{B}''_t(z)}s\equiv \overline{\s}_1^2 s, 
\end{align}
and 
\begin{align}\label{eq:below_PiecewiseLinearApprox2}
A_t(s) \geq \kl{1 - t^{-\ab} - \sfrac{\bbeg}{2} \min_{z \in [0, \bbeg]} \underline{B}''_t(z)}s \equiv \underline{\s}_1^2 s.
\end{align}
We set $\underline{b}_1 =\overline{b}_1 = \bbeg/2$ and $\underline{A}_t(s) = \underline{\s}_1^2 s, \overline{A}_t(s) = \overline{\s}_1^2 s$ for all $s\in[0, \bbeg/2]$.
$\underline{A}_t$ and $\overline{A}_t$ satisfy Assumption~\ref{as:belowLSpeed2} and  $\underline{A}_t(s) \leq A_t(s) \leq \overline{A}_t(s) $ for all $s \in [0,\bbeg/2]$.
Analogously, $A_t$ satisfies Assumption~\ref{as:belowB} with corresponding lower and upper bounds $\underline{C}_t$ and $\overline{C}_t$ on $[1-\bend, 1]$, so we set $\underline{b}_\ell =\overline{b}_\ell = \bend/2$ and, for all $s\in [1-\bend/2, 1]$,
\begin{align}
1 -\underline{A}_t(s) = \underline{\s}_\ell^2 (1-s), \notag\\
1-\overline{A}_t(s) = \overline{\s}_\ell^2 (1-s),
\end{align}
where
\begin{align}
\underline{\s}_\ell^2 &\equiv 1 + t^{-\ae} - \sfrac{\bend}{2} \min_{z \in [1-\bend, 1]} \underline{C}''_t(z), \notag\\
\overline{\s}_\ell^2 &\equiv 1 + t^{-\ae} + \sfrac{\bend}{2} \max_{z \in [1-\bend, 1]} \overline{C}''_t(z).
\end{align}
Then, $\underline{A}_t$ and $\overline{A}_t$ satisfy Assumption~\ref{as:belowLSpeed3} and 
$\underline{A}_t(s) \leq A_t(s) \leq \overline{A}_t(s) $ for all $s \in [1-\bend/2, 1]$.
We turn to the construction of $\underline{A}_t$ and $\overline{A}_t$ on $[\bbeg/2, 1-\bend/2]$.
For $k=2, \dots, \ell-1$, we can choose $\underline{\sigma}_k,  \underline{b}_k$ freely, as long as \eqref{eq:below_sigma_unten} is satisfied and $\underline{A}_t(s) \leq  A_t(s)$ for $s \in [\bbeg/2, 1-\bend/2]$. The latter condition  implies that $\underline{A}_t$ satisfies Assumption~\ref{as:belowLSpeed4}.
Since $A_t$ satisfies Assumption~\ref{as:belowC}, there exists $\e>0$ such that
\begin{align}
\min_{s\in [\bbeg,1-\bend]} \bkl{s - A_t(s)} \geq t^{-1/2 + \e},
\end{align}
for $t$ large enough. For $s\in [\bbeg,1-\bend]$, we set
\begin{align}
\overline{A}_t(s) = s - t^{-1/2 + \e}.
\end{align}
This ensures that on $[\bbeg,1-\bend]$, $\overline{A}_t$ is a piecewise linear upper bound of $A_t$, which satisfies Assumption~\ref{as:belowLSpeed4}.
Then, we set $\overline{A}_t$ to be continuous in $\bbeg/2$ as well as $\bbeg$ with
$\overline{A}'_t(s) = \overline{\s}_2^2$  for  $s \in (\bbeg/2, \bbeg)$, where
\begin{align}
\overline{\s}_2^2 \equiv \sfrac{2}{\bbeg}\kl{\overline{A}_t(\bbeg) - \overline{A}_t(\bbeg/2)} = \sfrac{2}{\bbeg} \kl{(1-\overline{\s}_1^2/2) \bbeg - t^{-1/2 + \e}}.
\end{align}
By Assumption~\ref{as:belowA}.(i), $\overline{\s}_2^2 \in (1,\infty)$  for $\e$ close to $0$ and $t$ large enough. Analogously, we set the slope $\overline{\s}_4^2$ on $(1-\bend, 1-\bend/2)$ as
\begin{align}
\overline{\s}_4^2 \equiv \sfrac{2}{\bend}\kl{\overline{A}_t(1-\bend/2) - \overline{A}_t(1-\bend)},
\end{align}
which lies in $(0,1)$ for $\e$ close to $0$ and $t$ large.
Thus,  $\overline{A}_t$ is a piecewise linear and continuous upper bound of $A_t$, which satisfies Assumption~\ref{as:belowLSpeed4} on $[\bbeg/2,1-\bend/2]$.
\end{proof}

\begin{remark}
In the proof of Lemma~\ref{lem:belowPiecewiseLinearApprox}, it becomes clear that it is always possible to choose $\ell=5$.
\end{remark}

\begin{proof}[Proof of Theorem~\ref{thm:lawofmax}]
Let  $(\underline{A}_t)_{t>0}$ and $(\overline{A}_t)_{t>0}$ be the speed functions from Lemma~\ref{lem:belowPiecewiseLinearApprox} corresponding to $(A_t)_{t>0}$, $\ab$ and $\ae$.
Let $(\underline{X}_t)_{t>0}, (\overline{X}_t)_{t>0}$ be variable speed BBMs with speed functions $(\underline{A}_t)_{t>0}, (\overline{A}_t)_{t>0}$.
Since  $(\underline{A}_t)_{t>0}$ and $(\overline{A}_t)_{t>0}$ satisfy Assumption~\ref{as:belowLSpeed}, Proposition~\ref{prop:BelowLSpeedLimitingLaw} implies that
\begin{align}\label{eq:below_BeforeGaussComp}
\lim_{t \uparrow \infty} \P \left( \max_{j \le n(t)} \underline{x}_j(t) - m^{-}(t) \le y \right)  
= \lim_{t \uparrow \infty} \P \left( \max_{j \le n(t)} \overline{x}_j(t) - m^{-}(t) \le y \right)  
= \E \left[ \eee^{- C Z \eee^{-\sqrt{2}y}} \right].
\end{align}
Since $\underline{A}_t \leq A_t \leq \overline{A}_t$ for all $t>0$, we get
with Gaussian comparison (see Lemma~\ref{lem:slepian}) and \eqref{eq:below_BeforeGaussComp} that
\begin{align}
\lim_{t \uparrow \infty} \P \left( \max_{j \le n(t)} \tilde{x}_j(t) - m^{-}(t) \le y \right)  
= \E \left[ \eee^{- C Z \eee^{-\sqrt{2}y}} \right].
\end{align}
This completes the proof of Theorem~\ref{thm:lawofmax}  in Case~B.
\end{proof}

\subsection[Proof of Theorem 1.4]{Proof of Theorem~\ref{thm:extremalprocess}}\label{sec:proofExtremalProcess}

Let $\tilde X$ be a VSBBM with piecewise linear speed functions satisfying either Assumption~\ref{as:above} or~\ref{as:belowLSpeed}, respectively. 
We compute, for $y \in \R$ and for $\phi \in \CC^\infty(\R)$ which are nondecreasing with support bounded from the left and for which there exists $a\in \R$ such that $\phi(x)$ is constant for $x>a$,
\begin{align}\label{eq:ExtrProc_0}
&\Psi_t ( \phi \, ( \cdot - y )) \\
&= \E  \Bigg[ \eee^{- 
\sum_{j=1}^{n(t)} \phi \left( \tilde{x}_j(t) - m^{\pm}(t) - y\right) 
} \Bigg] \nonumber  \\
&= \, \E  \Bigg[ \exp \Bigg( {-} 
\sum_{i_1 \le n(b_1t)} 
\sum_{i_2 \le n^{i_1}(b_2t)}
\dots
\sum_{i_{\ell} \le n^{\mi_{\ell-1}}(b_{\ell}(t))}
\phi \, \Bigg(
\sum_{j=1}^{\ell}  \s_j \, x_{i_j}^{\mi_{j-1}} (b_jt) - m^{\pm}(t) - y
\Bigg)
\Bigg) \Bigg] \nonumber \\
&= \, \E  \Big[ \textstyle\prod\limits_{i_1 \le n(b_1t)} 
\!\E  \Big[ \!\prod\limits_{i_2 \le n^{i_1}(b_2t)} \hspace{-0.7em} \dots
\E \Big[  \!\prod\limits_{i_{\ell} \le n^{\mi_{\ell-1}}(b_{\ell}(t))}
\!\!\eee^{- \phi  \left(
\sum\limits_{j=1}^{\ell}  \s_j  x_{i_j}^{\mi_{j-1}} (b_jt) - m^{\pm}(t) -y
\right) }
\!\Big\vert \FF_{a_{\ell-1}t} \Big]
\!\dots
\Big\vert \FF_{a_1t} \Big]
\Big], \nonumber 
\end{align}
where $x_{i_j}^{\mi_{j-1}}, 1 \le j \le \ell$, denote standard BBMs. 
We want to show that the innermost conditional expectation is a solution to the \mbox{F-KPP} equation and use the asymptotics of these solutions.
Recalling that the velocities $\s_j, 1 \le j \le \ell$, depend on $t$, we set
\begin{equation}
f^t(z) = \eee^{- \phi \, ( - \s_{\ell}(t) z ) }, \qquad 
v^t ( s, z )
= \E \, \bigg[  \prod_{i_{\ell} \le n^{\mi_{\ell-1}}(s)}
f^t\left( z - x_{i_{\ell}}(s) \right) 
\bigg].
\end{equation}
For fixed $t$, the function $1 - v^t$ is a solution to the \mbox{F-KPP} equation with initial conditions $1 - v^t(0,x) = 1 - f^t(x)$.
Since the initial conditions depend on the time horizon $t$, we need the following generalisation of Proposition~\ref{prop:FKPP_tail_estimate}.
\begin{propositionb}[\citen{1to6}, Proposition 5.2] \label{prop:F-KPP_tail_estimate_general}
Let $u^t$ be a family of solutions to the F-KPP equation with initial data satisfying
\begin{equation}
u^t(0,x) \to u(0,x),
\end{equation}
pointwise and monotone, for $x \in \R$ as $t \uparrow \infty$, where $u(0,x)$ satisfies the conditions in Proposition~\ref{prop:FKPP_tail_estimate}(i)--(iv). 
Then, for any function $z \colon \R_+ \to \R_+$ such that $\lim_{t \uparrow \infty} z(t)/t = 0$,
\begin{equation}
\lim_{t \uparrow \infty} \eee^{\sqrt{2} z(t)} \eee^{ (z(t))^2/(2t) } (z(t))^{-1}
u^t \left( t, z(t) + \sqrt{2}t - \sfrac{3}{2\sqrt{2}} \log(t) \right)
= C,
\end{equation}
where $C$ is the constant from Proposition~\ref{prop:FKPP_tail_estimate} and $u$ is the solution of the F-KPP equation with initial condition $u(0,\cdot)$.
\end{propositionb}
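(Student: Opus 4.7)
The strategy is a squeeze argument: sandwich the $t$-dependent family $u^t$ between F-KPP solutions started from fixed initial data, invoke Proposition~\ref{prop:FKPP_tail_estimate} on those sandwiching solutions, and then let the approximation tighten. Without loss of generality, assume $u^t(0,x)\downarrow u(0,x)$ as $t\uparrow\infty$ (the increasing case is symmetric). For every fixed $s>0$, let $w^{(s)}$ denote the solution of the F-KPP equation \eqref{eq:FKPP} with initial datum $u^s(0,\cdot)$. By monotonicity in $t$, for every $t\geq s$ one has $u(0,\cdot)\leq u^t(0,\cdot)\leq u^s(0,\cdot)$, and the F-KPP comparison principle, visible directly from the BBM representation $1-v(t,x)=\E\prod_j f(x-x_j(t))$, preserves this ordering for all later times. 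Hence, for $t\geq s$,
\begin{equation*}
u(t, z(t)+m(t)) \;\leq\; u^t(t,z(t)+m(t)) \;\leq\; w^{(s)}(t,z(t)+m(t)),
\end{equation*}
where $m(t)=\sqrt{2}t-\sfrac{3}{2\sqrt{2}}\log t$.

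\textbf{Applying the fixed-data theorem.} Next, I would verify that, for each fixed $s$, the initial datum $u^s(0,\cdot)$ inherits conditions \ref{prop:FKPP_tail_estimate1}--\ref{prop:FKPP_tail_estimate4} of Proposition~\ref{prop:FKPP_tail_estimate} from $u(0,\cdot)$. Boundedness in $[0,1]$ is trivial; the lower bound~(iii) follows because $u^s\geq u$ pointwise; the upper tail bounds (ii) and~(iv) follow because for sufficiently large $s$, the difference $u^s(0,\cdot)-u(0,\cdot)$ is controlled uniformly (using pointwise monotone convergence together with an initial step verifying the bounds for some fixed $s_0$). Proposition~\ref{prop:FKPP_tail_estimate} then yields constants $C$ (for $u$) and $C_s$ (for $w^{(s)}$) such that
\begin{equation*}
\eee^{\sqrt{2}z(t)}\,\eee^{z(t)^2/(2t)}\,(z(t))^{-1}\,u(t,z(t)+m(t))\to C,\qquad
\eee^{\sqrt{2}z(t)}\,\eee^{z(t)^2/(2t)}\,(z(t))^{-1}\,w^{(s)}(t,z(t)+m(t))\to C_s,
\end{equation*}
as $t\uparrow\infty$. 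Combining with the sandwich gives
$C\leq\liminf_{t}[\cdots]\leq \limsup_{t}[\cdots]\leq C_s$.

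\textbf{Continuity of the Bramson constant.} The remaining task, which is the heart of the argument, is to show $C_s\to C$ as $s\uparrow\infty$. For this I would exploit Bramson's integral representation of the prefactor (see \cite{B_C}, Theorem~3 and its proof): the constant associated with a given initial datum $u_0$ can be written as a limit of the form
\begin{equation*}
C(u_0)\;=\;\lim_{\tau\uparrow\infty}\sqrt{\tau}\,\eee^{\sqrt{2}\,m(\tau)}\!\!\int_{\R}u_0(y)\,\psi_\tau(y)\,\d y,
\end{equation*}
where $\psi_\tau$ is a positive kernel independent of $u_0$. Since $u^s(0,\cdot)\downarrow u(0,\cdot)$ and all functions sit in $[0,1]$, the monotone/dominated convergence theorem applied inside this representation yields $C_s\downarrow C$. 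Inserting this into the sandwich finishes the proof.

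\textbf{Main obstacle.} The decisive step is step~3, the continuity of the Bramson constant under monotone approximation of the initial data; everything else is a routine comparison argument. The representation of $C$ as an integral functional of $u_0$ is not fully explicit in the body of Bramson's monograph \cite{B_C}, so some care is needed in extracting it (or, alternatively, in running his Feynman--Kac based identification of $C$ along the approximating sequence and checking that the compactness estimates hold uniformly in $s$). Once that continuity is in hand, the rest of the proposition follows immediately from the maximum principle.
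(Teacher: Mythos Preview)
The paper does not prove this proposition at all: it is stated with attribution to \cite[Proposition~5.2]{1to6} and invoked as a black box, so there is no ``paper's own proof'' to compare your attempt against. What I can do is comment on the soundness of your sketch on its own terms.

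Your sandwich strategy is the natural one and is essentially what underlies the argument in \cite{1to6}. Two points deserve care. First, your verification that each frozen initial datum $u^s(0,\cdot)$ satisfies conditions (ii) and (iv) of Proposition~\ref{prop:FKPP_tail_estimate} is not automatic from the hypotheses as stated: pointwise monotone convergence of $u^t(0,\cdot)$ to a limit obeying (ii) and (iv) does not by itself force the approximants to obey them (think of a sequence supported further and further to the right). In the concrete application in this paper the initial data are $1-\eee^{-\phi(-\s_\ell(t)x)}$ with $\phi$ supported on a left-bounded set, so all $u^t(0,\cdot)$ vanish to the right of a fixed point and (ii), (iv) are trivial uniformly in $t$; the general statement in \cite{1to6} is formulated with that kind of use in mind. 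Second, your identification of the crux---continuity of the Bramson constant $C(u_0)$ under monotone approximation of $u_0$---is exactly right, and your proposed route through Bramson's representation of $C$ as a positive linear functional of the initial datum (cf.\ \cite[(8.17)--(8.18)]{B_C}) together with monotone convergence is the way this is handled. So your outline is correct in spirit; just be aware that the proposition, as literally stated here, is tailored to initial data for which the tail conditions hold uniformly along the approximating family, and your Step~2 should invoke that rather than try to deduce it from the bare hypotheses.
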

For $t \uparrow \infty$, $1 - f^t(x) \to 1 - \eee^{ - \phi (-x) }$  pointwise and monotone for $x \ge 0$.
Therefore, we can apply Proposition~\ref{prop:F-KPP_tail_estimate_general} to $u^t \equiv 1 - v^t$. 
Hence,
\begin{align}
\E \,& \Bigg[  \hspace{-0.1em}
\prod_{i_{\ell} \le n^{\mi_{\ell-1}}(b_{\ell}(t))}
\hspace{-0.6em}
\eee^{- \phi \, \left(
\sum_{j=1}^{\ell}  \s_j \, x_{i_j}^{\mi_{j-1}} (b_jt) - m^{\pm}(t) - y
\right) }
\bigg\vert \FF_{a_{\ell-1}t} \Bigg]\notag\\
= \, & v^t \, \Bigg( b_{\ell}t, \sfrac{1}{ \s_{\ell}(t) } \Bigg( m^{\pm}(t) + y - \sum_{j=1}^{\ell} \tilde{x}_{i_j}^{\mi_{j-1}} (b_jt) \Bigg) \Bigg)\notag \\
= \, & \left( 1 - C(\phi) \Delta^{\mi_{\ell-1}}(t)
\,\eee^{- \sqrt{2} \, \Delta^{\mi_{\ell-1}}(t)
- \sfrac{ \Delta^{\mi_{\ell-1}}(t)^2 }{b_{\ell} t}} \right)
( 1 + o(1)),
\end{align}
where $C(\phi)$ is 
the constant from Proposition~\ref{prop:FKPP_tail_estimate} for the initial condition $u(0,x)=1-\eee^{-\phi(-x)}$ and $ \Delta^{\mi_{\ell-1}}$ is defined for $m^+(t)$ in \eqref{eq:above_def_Delta} and for $m^-(t)$ in  \eqref{eq:below_delta_def_Ende}. 

From now on, following the same computations as in the proof of Theorem~\ref{thm:lawofmax}, we get that for variable speed BBMs with piecewise linear speed functions
\begin{align}\label{eq:ExtrProc_1}
\lim_{t \uparrow \infty} \Psi_t (\phi \,  ( \cdot - y )) =
\E \, \bigg[
\eee^{
- C(\phi) Z  \eee^{ - \sqrt{2} y} 
}
\bigg]. 
\end{align}

The Laplace functional in \eqref{eq:ExtrProc} corresponds to the point processes in the right-hand side of \eqref{eq:ExtrProc}, see \cite{ABK_E}.
Thus, the proof of 
Theorem~\ref{thm:extremalprocess} in Case~A is completed.

It remains to show  convergence of the extremal process for 
variable speed BBM $(\tilde{X}_t)_{t>0}$ with 
general (not necessarily piecewise linear) speed functions $(A_t)_{t>0}$, which satisfy Assumption~\ref{as:below}.
For each $t>0$, given the underlying Galton-Watson tree of $\tilde{X}_t$, let $\underline{X}_t$ and $\overline{X}_t$ be independent Gaussian processes on this tree with mean $0$ and speed functions $\underline{A}_t, \overline{A}_t$ from Lemma~\ref{lem:belowPiecewiseLinearApprox}.
By \eqref{eq:ExtrProc_1},
\begin{align}
\lim_{t \to \infty} \underline{\Psi}_t (\phi \, ( \cdot - y)) 
= \lim_{t \to \infty} \overline{\Psi}_t (\phi \, ( \cdot - y)) 
= 
\E\ekl{
\eee^{-C(\phi) Z \eee^{-\sqrt{2}y}}
},
\end{align}
where  
\begin{align}
\underline{\Psi}_t ( \phi \, ( \cdot - y)) 
&\equiv
\E \, \Bigg[ 
\exp \Bigg(
{-} \sum_{j=1}^{n(t)} \phi \left( \underline{x}_j(t) - m^{-}(t) -y \right) 
\Bigg)
\Bigg],\notag\\
\overline{\Psi}_t (\phi \, ( \cdot - y)) 
&\equiv
\E \, \Bigg[ 
\exp \Bigg(
{-} \sum_{j=1}^{n(t)} \phi \left( \overline{x}_j(t) - m^{-}(t) -y \right) 
\Bigg)
\Bigg].
\end{align}
Thus, it suffices to prove, for $t>0$, that
\begin{align}
\label{eq:ExtrProc_GaussCompare}
\E\ekl{
F\bbkl{
	(\underline{x}_j(t))_{j\leq n(t)}
}
\Big\vert
\FF_t^{\text{tree}}
}
\leq
\E\ekl{
F\bbkl{
	(\tilde{x}_j(t))_{j\leq n(t)}
}
\Big\vert
\FF_t^{\text{tree}}
}
\leq
\E\ekl{
F\bbkl{
	(\overline{x}_j(t))_{j\leq n(t)}
}
\Big\vert
\FF_t^{\text{tree}}
},
\end{align}
where $\FF_t^{\text{tree}}$ is the $\sigma$-algebra of the Galton-Watson tree of $\tilde{X}_t$, $\underline{X}_t$ and $\overline{X}_t$ and
\begin{align}
F\colon \R^{n(t)} \to \R,\quad 
z \mapsto \exp\bbbbkl{
{-} \sum_{j=1}^{n(t)} \phi \left( z_j - m^{-}(t) -y \right)
}. 
\end{align}
Note that inside the conditional expectations in \eqref{eq:ExtrProc_GaussCompare}, we view $n(t)$ as a constant. 
Since $\phi$ is nondecreasing, we get for all $x \in \R^{n(t)}$ and \mbox{$1 \leq i_1, i_2 \leq n(t)$}, $i_1\neq i_2$, that 
\begin{align}\label{eq:ExtrProc_Partial}
\sfrac{\partial^2 F(x)}{\partial x_{i_1} \partial x_{i_2}} 
= \phi'(x_{i_1} -  m^{-}(t) -y)\,\phi'(x_{i_2} -  m^{-}(t) -y) \, F(x) \geq 0.
\end{align}
By Kahane's theorem, see for example \cite[Theorem 3.5]{bbm-book}, \eqref{eq:ExtrProc_GaussCompare} follows from \eqref{eq:ExtrProc_Partial}.
This completes the proof of  Theorem~\ref{thm:extremalprocess} in Case~B. \hfill\qedsymbol

\appendix

\section{Appendix} \label{sec:appendix}
\label{sec:above_proofMoments}\label{sec:below_first_second_moments}
We prove the moment estimates from Subsection~\ref{sec:proof_caseA} and Subsection~\ref{sec:below}.
\begin{proof}[Proof of Lemma \ref{lem:above_middle_first_moment}]
By the many-to-one lemma,
\begin{align} \label{eq:above_middle_first_many}
\E \,& \Big[ \YY_{i_{\ell-2}}(t) \, \big\vert \, \FF_{a_{\ell-2}t} \Big] \nonumber \\
= \, &\eee^{b_{\ell-1}t} \, \E \, \bigg[ \!
\exp \left( -\sqrt{2}
\sfrac{1}{\s_{\ell}} \left( \sqrt{2} \s_{\ell-1} b_{\ell-1} t - \tilde{x}_{i_{\ell-1}}(b_{\ell-1}t) \right)
+ \sfrac{3}{2} \Big( \! \log(b_{\ell-1}t) + 2 \log (\s_{\ell-1} - \s_{\ell} ) \Big) \right) \nonumber \\
& \times
\1_{ \Big\{
	\tilde{x}_{i_{\ell-1}} (b_{\ell-1}t) - \sqrt{2} \s_{\ell-1} b_{\ell-1} t - \Lambda^{\mi_{\ell-2}}(t) \in \sfrac{ [-B, -D] }{ \s_{\ell-1} - \s_{\ell} }
	\Big\}}	\nonumber \\
& \times
\1_{ \Big\{ 
	\tilde{x}_{i_{\ell-1}} (s) \le \sqrt{2} \s_{\ell-1}s + \Lambda^{\mi_{\ell-2}}(t) - t^{\b\delta/2} \, \forall s \in [0, b_{\ell-1} t] \Big\}}
\, \Big\vert \, \FF_{\a_{\ell-2}t} \bigg] \nonumber \\
= \, &
( b_{\ell-1}t )^{3/2} ( \s_{\ell-1} - \s_{\ell} )^3
\eee^{b_{\ell-1}t}
\! \int_I \sfrac{\d \o}{\sqrt{2 \pi \s_{\ell-1}^2 b_{\ell-1} t}}
\exp \left({-} \sfrac{\o^2}{2 \s_{\ell-1}^2 b_{\ell-1} t} 
	- \sqrt{2} \sfrac{1}{\s_{\ell}} \!\left( \sqrt{2} \s_{\ell-1} b_{\ell-1} t - \o \right)\! \right) \nonumber\\
	& \times
	\P \, \bigg( \zet_{
		- \Lambda^{\mi_{\ell-2}}(t),
		\o - \Lambda^{\mi_{\ell-2}}(t) }^{b_{\ell-1}t}
	(s) \le \sqrt{2} \s_{\ell-1} s - t^{\b \delta/2} \, \forall s \in [0, b_{\ell-1} t] 
	\, \Big\vert \, \Lambda^{\mi_{\ell-2}}(t) \bigg),
\end{align} 
where
\begin{equation} \label{eq:above_middle_first_I}
	I = \sqrt{2} \s_{\ell-1} b_{\ell-1} t + \Lambda^{\mi_{\ell-2}}(t) 
	- \sfrac{[ D,B] }{\s_{\ell-1} - \s_{\ell}}.
\end{equation}
With $\o = - y (\s_{\ell-1} - \s_{\ell})^{-1} + \sqrt{2} \s_{\ell-1} b_{\ell-1} t  + \Lambda^{\mi_{\ell-2}}(t)$,
the right-hand side of \eqref{eq:above_middle_first_many} equals
\begin{align} \label{eq:above_middle_first_shift}
	& ( b_{\ell-1}t )^{3/2} ( \s_{\ell-1} - \s_{\ell} )^2
	\eee^{b_{\ell-1}t} 
	\int_D^B
	\sfrac{\d y}{\sqrt{2 \pi \s_{\ell-1}^2 b_{\ell-1} t}}
	\exp \left( {-} \sqrt{2} \sfrac{1}{\s_{\ell}}\! \left( \sfrac{y}{\s_{\ell-1} \!-\! \s_{\ell}} - \Lambda^{\mi_{\ell-2}}\!(t) \!\right)\!\right) \nonumber\\
	&\!\!\times\! 
	\exp \left( - \sfrac{ \big( \sqrt{2} \s_{\ell-1} b_{\ell-1}t + \Lambda^{\mi_{\ell-2}}(t) - y (\s_{\ell-1} - \s_{\ell})^{-1} \big)^2 }{2 \s_{\ell-1}^2 b_{\ell-1} t} \right) \nonumber\\
	&\!\!\times
	\!\,\P\!\,  \bigg(\! \zet_{
		- \frac{1}{\s_{\ell-1}} \Lambda^{\mi_{\ell-2}}(t),
		- \frac{y}{ \s_{\ell-1} ( \s_{\ell-1} - \s_{\ell} )}}^{b_{\ell-1}t}
	\!(s) \le \!{-} \sfrac{t^{\b \delta/2}}{\s_{\ell-1}} \, \forall_{s \in [0, b_{\ell-1} t]} 
	\, \Big\vert \, \Lambda^{\mi_{\ell-2}}(t) \bigg).  
\end{align}
Since $\tilde{x}_{i_1} \in \LL_{b_1t, 1, t^{\b}, B, D}, \dots, \tilde{x}_{i_{\ell-2}}^{\mi_{\ell-3}} \in \LL_{ b_{\ell-2}t, \ell-2, B, D}^{\mi_{\ell-3}}$,  $\Lambda^{\mi_{\ell-2}}(t)$ is of order $( \s_{\ell-2} - \s_{\ell-1} )^{-1}$. Thus, 
\begin{equation} \label{eq:above_middle_first_exponential}
	\sfrac{ \big(\Lambda^{\mi_{\ell-2}}(t) - y \, (\s_{\ell-1} - \s_{\ell})^{-1} \big)^2 }{2 \s_{\ell-1}^2 b_{\ell-1} t}
	= \OO \left( \left( \min\{ (\s_{\ell-2} - \s_{\ell-1})^2,
	(\s_{\ell-1} - \s_{\ell})^2 \} b_{\ell-1}t \right)^{-1} \right).
\end{equation}
By Assumption~\ref{as:above3}, the right-hand side of \eqref{eq:above_middle_first_exponential} tends to zero as $t \uparrow \infty$.
By Lemma~\ref{lem:BB_line},
\begin{align} \label{eq:above_middle_first_BB}
	&\P \, \bigg( \zet_{
		- \frac{1}{\s_{\ell-1}} \Lambda^{\mi_{\ell-2}}(t),
		- \frac{y}{ \s_{\ell-1} ( \s_{\ell-1} - \s_{\ell} )}}^{b_{\ell-1}t}
	(s) \le - \sfrac{t^{\b \delta/2}}{\s_{\ell-1}} \, \forall s \in [0, b_{\ell-1} t] 
	\, \Big\vert \, \Lambda^{\mi_{\ell-2}}(t)  \bigg)\nonumber \\
	= \, & \sfrac{2}{ \s_{\ell-1}^2 b_{\ell-1}t} 
	\left( \Lambda^{\mi_{\ell-2}}(t) - t^{\b \delta/2} \right)
	\left( \sfrac{y}{ \s_{\ell-1} - \s_{\ell} }- t^{\b \delta/2} \right)
	(1+o(1)),
\end{align}
since, by  Assumption~\ref{as:above3}, the second line of \eqref{eq:above_middle_first_BB} converges to zero, as $t \uparrow \infty$.
Since $t^{\b} \ll ( \s_k - \s_{k+1} )^{-1}$ and $\delta \in (0,1/2)$, we can drop the terms $ t^{\b \delta/2}$ in \eqref{eq:above_middle_first_BB} and make only an error of order $o(1)$.
We conclude that \eqref{eq:above_middle_first_shift} is equal to
\begin{equation}
	\sqrt{ \sfrac{2}{\pi}}
	\, \Lambda^{\mi_{\ell-2}}(t)
	\eee^{\sqrt{2} \Lambda^{\mi_{\ell-2}}(t) \frac{\s_{\ell-1} - \s_{\ell}}{\s_{\ell-1} \s_{\ell}}}
	\sfrac{\s_{\ell-1} - \s_{\ell}}{\s_{\ell-1}^3}
	\int_D^B \d y \, \eee^{-\sqrt{2} \frac{y}{\s_{\ell-1} \s_{\ell}} } y \, (1+o(1)).
\end{equation}
As first $t \uparrow \infty$ and then $D \downarrow 0, B \uparrow \infty$, the integral in the last line converges to $1 / 2$. \linebreak
The conditional expectation of $\ZZ_{i_{\ell-2}}(t)$ is computed analogously. 
\begin{align}
	\E \,& \Big[ \ZZ_{i_{\ell-2}}(t) \, \big\vert \, \FF_{a_{\ell-2}t} \Big] \nonumber \\
	= \, &
	\sqrt{\sfrac{2}{\pi}} 
	\,  \Lambda^{\mi_{\ell-2}}(t)
	\eee^{\sqrt{2} \Lambda^{\mi_{\ell-2}}(t) \frac{\s_{\ell-1} - \s_{\ell}}{\s_{\ell-1} \s_{\ell}}}
	\sfrac{ \s_{\ell-1} - \s_{\ell}}{ \s_{\ell-1}^3 }
	\int_D^B
	\d y \, 
	\eee^{- \sqrt{2} \frac{y}{\s_{\ell-1} \s_{\ell}}} \nonumber \\
	& \quad \times
	\left( \sfrac{1}{\s_{\ell}}
	\left( \sfrac{y}{ \s_{\ell-1} \s_{\ell} } - \Lambda^{\mi_{\ell-2}}(t) \right)
	- \sfrac{3}{2 \sqrt{2}} \left( \ln (b_{\ell-1}t) + 2 \log (\s_{\ell-1} - \s_{\ell}) \right) 
	\right) \, (1+o(1)) \nonumber \\
	= \, & \sqrt{ \sfrac{2}{\pi}} 
	\, \Lambda^{\mi_{\ell-2}}(t)
	\eee^{\sqrt{2} \Lambda^{\mi_{\ell-2}}(t) \frac{\s_{\ell-1} - \s_{\ell}}{\s_{\ell-1} \s_{\ell}}}
	\sfrac{1}{ \s_{\ell-1}^3 \s_{\ell} }
	\nonumber \\
	&\quad \times \, \left[
	\int_D^B \d y \, \eee^{-\sqrt{2} \frac{y}{\s_{\ell-1} \s_{\ell}}} y^2 
	+ \Lambda^{\mi_{\ell-2}}(t) ( \s_{\ell-1} - \s_{\ell} )
	\int_D^B \d y \, \eee^{-\sqrt{2}  \frac{y}{\s_{\ell-1} \s_{\ell}} } y  
	\right] \, (1+o(1)).
\end{align} 
As first $t \uparrow \infty$ and then $D \downarrow 0, B \uparrow \infty$, the integrals in the last line converge to $1/ \sqrt{2}$ and $1/2$, respectively.
\end{proof}

\begin{proof}[Proof of Lemma \ref{prop:above_middle_second_moment}]
We split the expectation value in two summands: the first one, $(T1)$, contains the squared terms and the second one, $(T2)$, contains the mixed terms.
The first summand can be controlled by dropping the barrier condition and applying the many-to-one lemma:
\begin{align} \label{eq:above_middle_T1}
	(T1)
	= \, &\E \, \Bigg[ 
	\sum_{\substack{i_{\ell-1} \le n^{\mi_{\ell-2}}(b_{\ell-1}t), \\ \tilde{x}_{i_{\ell-1}}^{\mi_{\ell-2}} \in \LL_{b_{\ell-1}t, \ell-1, B, D}^{\mi_{\ell-2}}}}
	\exp \Bigg(
	{\textstyle - 2 \sqrt{2} \frac{1}{\s_{\ell}} \left( \sqrt{2} \s_{\ell-1} b_{\ell-1} t - \tilde{x}_{i_{\ell-1}}^{\mi_{\ell-2}}(b_{\ell-1}t) \right) }
	\Bigg) \, 
	\bigg\vert \, \FF_{a_{\ell-2}t} \Bigg] \nonumber \\
	\le \, & \eee^{b_{\ell-1}t}
	\int_I
	\sfrac{\d \o}{ \sqrt{2 \pi \s_{\ell-1}^2 b_{\ell-1} t} }
	\exp \left( - \sfrac{\o^2}{2 \s_{\ell-1}^2 b_{\ell-1} t}
	- \sqrt{2} \sfrac{1}{\s_{\ell}} \left( \sqrt{2} \s_{\ell-1} b_{\ell-1} t - \o \right) \right),
\end{align}
where $I$ is defined as in \eqref{eq:above_middle_first_I}.
With the same arguments as in \eqref{eq:above_middle_first_shift} -- \eqref{eq:above_middle_first_exponential}, we see that
\begin{align} \label{eq:above_middle_second_T1_asymp}
	(T1)
	\le 
	\frac{1}{\sqrt{2 \pi}} 
	\frac{
		\eee^{\sqrt{2} \Lambda^{\mi_{\ell-2}}(t) \frac{\s_{\ell-1} - \s_{\ell}}{\s_{\ell-1} \s_{\ell}}}}{
		(\s_{\ell-1} - \s_{\ell} )(b_{\ell-1})^{1/2} \s_{\ell-1}}
	\int_D^B
	\!\d y \, 
	\eee^{- \sqrt{2} \frac{y}{ \s_{\ell-1} \s_{\ell} } }
	\eee^{- \sqrt{2} \frac{1}{\s_{\ell}} \big(  \frac{y}{ \s_{\ell-1} - \s_{\ell} } - \Lambda^{\mi_{\ell-2}}(t) \big)}
	\!(1+o(1)).
\end{align}
Bounding the last exponential in \eqref{eq:above_middle_second_T1_asymp} by its maximal value, we obtain
\begin{equation} \label{eq:above_middle_second_bound_T1}
	(T1) \le 
	\frac{1}{\sqrt{2 \pi}} 
	\frac{
		\eee^{- \sqrt{2} \frac{1}{\s_{\ell}} \big(  \frac{D}{ \s_{\ell-1} - \s_{\ell} } - \Lambda^{\mi_{\ell-2}}(t) \big)} }{
		(\s_{\ell-1} - \s_{\ell} )(b_{\ell-1})^{1/2} \s_{\ell-1}}
	\eee^{\sqrt{2} \Lambda^{\mi_{\ell-2}}(t) \frac{\s_{\ell-1} - \s_{\ell}}{\s_{\ell-1} \s_{\ell}}}
	\int_D^B
	\d y \, 
	\eee^{- \sqrt{2} \frac{y}{ \s_{\ell-1} \s_{\ell} } }
	(1+o(1)).
\end{equation}
The integral converges to $1/\sqrt{2}$ as first $t \uparrow \infty$ and then $D \downarrow 0, B \uparrow \infty$.
We compute the second summand with help of the many-to-two lemma \cite[Lemma~10]{B_M}.
\begin{align} \label{eq:above_middle_second_many}
	&(T2)\notag\\
	= \, &\E  \Bigg[ 
	\hspace{-0.7em}
	\sum_{\substack{i_{\ell-1}, j_{\ell-1} \le n^{\mi_{\ell-2}}(b_{\ell-1}t); \\  i_{\ell-1} \neq j_{\ell-1};  \\ \tilde{x}_{i_{\ell-1}}^{\mi_{\ell-2}},  \tilde{x}_{j_{\ell-1}}^{\mi_{\ell-2}} \in \LL_{b_{\ell-1}t, \ell-1, B, D}^{\mi_{\ell-2}}}}
	\hspace{-2em}
	\exp \left(
	{\textstyle {-} \sqrt{2} \frac{1}{\s_{\ell}} \!\left( 2 \sqrt{2} \s_{\ell-1} b_{\ell-1} t \!-\! \tilde{x}_{i_{\ell-1}}^{\mi_{\ell-2}}(b_{\ell-1}t) \!-\!  \tilde{x}_{j_{\ell-1}}^{\mi_{\ell-2}}(b_{\ell-1}t) \right)}
	\right) \! 
	\bigg\vert  \FF_{a_{\ell-2}t} \Bigg] \nonumber \\
	= \, & K 
	\int_0^{b_{\ell-1}t} \d s \, \eee^{2 b_{\ell-1}t - s}
	\int_{- \infty}^{\sqrt{2} \s_{\ell-1}s + \Lambda^{\mi_{\ell-2}}(t)  - t^{\b \delta/2} }
	\sfrac{ \d \o}{\sqrt{2 \pi \s_{\ell-1}^2 s} }
	\eee^{ - \sfrac{\o^2}{2 \s_{\ell-1}^2 s} } \nonumber \\
	&  \times 
	\Bigg( \int_{I - \o }
	\sfrac{\d z}{\sqrt{2 \pi \s_{\ell-1}^2 ( b_{\ell-1} t - s )}}
	\eee^{- \sfrac{z^2}{2 \s_{\ell-1}^2 \left( b_{\ell-1}t - s \right) } }
	\eee^{ - \sqrt{2} \frac{1}{\s_{\ell}} \left( \sqrt{2} \s_{\ell-1} b_{\ell-1}t - ( \o + z ) \right) }
	\Bigg)^2,
\end{align}
where $K > 0$ is some constant and $I$ is defined as in \eqref{eq:above_middle_first_I}.
Shifting the $\o$-integral, we get that \eqref{eq:above_middle_second_many} is equal to
\begin{align} \label{eq:above_middle_second_shift}
	&K 
	\int_0^{b_{\ell-1}t} \d s \, \eee^{2 b_{\ell-1}t - s}
	\int_{- \infty}^{- t^{\b \delta/2} }
	\sfrac{ \d \tilde{\o}}{\sqrt{2 \pi \s_{\ell-1}^2 s} }
	\eee^{- \sfrac{ \left( \tilde{\o} + \sqrt{2} \s_{\ell-1}s + \Lambda^{\mi_{\ell-2}}(t) \right)^2}{2 \s_{\ell-1}^2 s} } \\
	&  \quad \times 
	\Bigg( \int_{I_1}
	\sfrac{\d z}{\sqrt{2 \pi \s_{\ell-1}^2 ( b_{\ell-1} t - s )}}
	\eee^{ - \sfrac{z^2}{2 \s_{\ell-1}^2 \left( b_{\ell-1}t - s \right) } }
	\eee^{- \sqrt{2} \frac{1}{\s_{\ell}} \left( \Lambda^{\mi_{\ell-2}}(t) + \sqrt{2} \s_{\ell-1} \left( b_{\ell-1}t - s \right) - ( \tilde{\o} + z ) \right) }
	\Bigg)^2, \nonumber
\end{align}
where
\begin{equation}
	I_1 = \sqrt{2} \s_{\ell-1} \left( b_{\ell-1}t - s \right)  - \tilde{\o} - \sfrac{[ D,B] }{\s_{\ell-1} - \s_{\ell}}.
\end{equation}
With a shift of the $z$-integral, we see that the last line in \eqref{eq:above_middle_second_shift} equals
\begin{align} \label{eq:above_middle_second_squared}
	& \Bigg(
	\int_{ - \frac{B}{\s_{\ell-1} - \s_{\ell}}}^{- \frac{D}{\s_{\ell-1} - \s_{\ell}}}
	\sfrac{ \d \tilde{z}}{\sqrt{2 \pi \s_{\ell-1}^2 ( b_{\ell-1} t - s )}}
	\eee^{- \sfrac{ \left( \tilde{z} - \tilde{\o} + \sqrt{2} \s_{\ell-1} ( b_{\ell-1}t - s) \right)^2}{2 \s_{\ell-1}^2 ( b_{\ell-1}t - s)}}
	\eee^{ \sqrt{2} \frac{1}{\s_{\ell}}\left( \Lambda^{\mi_{\ell-2}}(t)+\tilde{z}\right) }
	\Bigg)^2 \nonumber \\
	&= \,  
	\eee^{ 2 \sqrt{2} \frac{1}{\s_{\ell}} \Lambda^{\mi_{\ell-2}}(t)- 2 (b_{\ell-1}t - s)+2 \sqrt{2} \frac{1}{\s_{\ell-1}} \tilde{\o}} \notag\\
	&\quad\times
	\Bigg(
	\int_{ - \frac{B}{\s_{\ell-1} - \s_{\ell}}}^{- \frac{D}{\s_{\ell-1} - \s_{\ell}}}
	\sfrac{ \d \tilde{z}}{\sqrt{2 \pi \s^2_{\ell-1} ( b_{\ell-1} t - s )}}
	\eee^{- \frac{ \left( \tilde{z} - \tilde{\o} \right)^2}{2 \s_{\ell-1}^2 ( b_{\ell-1}t - s)}+\sqrt{2} \tilde{z} \frac{ \s_{\ell-1} - \s_{\ell}}{\s_{\ell-1} \s_{\ell}} }
	\Bigg)^2.
\end{align}
The last exponential inside the $\tilde{z}$-integral can be uniformly bounded by $\exp \left( - \frac{\sqrt{2}D}{ \s_{\ell-1} \s_{\ell}}\right)$. 
The remaining $\tilde{z}$-integral is smaller than 1.
Inserting this bounds into \eqref{eq:above_middle_second_shift}, we get that \eqref{eq:above_middle_second_shift} is not larger than
\begin{equation} \label{eq:above_middle_second_int}
	K 
	\eee^{- 2 \sqrt{2} \frac{D}{\s_{\ell-1} \s_{\ell}} - \sqrt{2} \Lambda^{\mi_{\ell-2}}(t) \left( \frac{1}{\s_{\ell-1}} - \frac{2}{\s_{\ell}} \right)}
	\int_0^{b_{\ell-1}t} \hspace{-0.5em} \d s
	\int_{- \infty}^{- t^{\b \delta/2}}
	\sfrac{ \d \tilde{\o}}{\sqrt{2 \pi \s_{\ell-1}^2 s} }
	\eee^{ - \sfrac{ \left( \tilde{\o} + \Lambda^{\mi_{\ell-2}}(t) \right)^2}{2 \s_{\ell-1}^2 s} 
		+ \sqrt{2} \frac{1}{\s_{\ell-1}} \tilde{\o}}.
\end{equation}
We bound the last exponential inside the $\tilde{\o}$-integral uniformly by $\exp ( - \sqrt{2} t^{\beta \delta} / \s_{\ell-1} )$. 
The remaining $\tilde{\o}$-integral can be bounded by 1.
Thus, \eqref{eq:above_middle_second_int} is not larger than
\begin{equation} \label{eq:above_middle_second_bound_T2}
	K
	\eee^{- 2 \sqrt{2} D}
	b_{\ell-1}t
	\eee^{ - \sqrt{2} \frac{1}{\s_{\ell-1}} t^{\b \delta/2} }
	\eee^{ \sqrt{2} \Lambda^{\mi_{\ell-2}}(t) 
		\left( \frac{\s_{\ell-1} - \s_{\ell}}{\s_{\ell-1} \s_{\ell}} + \frac{1}{\s_{\ell}} \right) }.
\end{equation}
Finally, we add the upper bounds on $(T1)$, \eqref{eq:above_middle_second_bound_T1}, and on $(T2)$, \eqref{eq:above_middle_second_bound_T2}.
\begin{align}
	&\eee^{ \sqrt{2} \Lambda^{\mi_{\ell-2}}(t) 
		\left( \frac{\s_{\ell-1} - \s_{\ell}}{\s_{\ell-1} \s_{\ell}} + \frac{1}{\s_{\ell}} \right) }\notag\\
	&\quad\times\left( 
	\eee^{ - \sqrt{2} \frac{D}{ \s_{\ell-1} ( \s_{\ell-1} - \s_{\ell})}}
	\sfrac{1}{\sqrt{2 \pi} }
	\sfrac{1}{( \s_{\ell-1} - \s_{\ell}) b_{\ell-1}t \s_{\ell-1} }
	+K \eee^{ - \sqrt{2} \frac{1}{\s_{\ell-1}} t^{\b \delta/2} }
	\eee^{- 2 \sqrt{2} D}
	b_{\ell-1}t
	\right) \nonumber \\
	&\le \, \eee^{ - \sqrt{2} \Lambda^{\mi_{\ell-2}}(t) 
		\left( \frac{\s_{\ell-1} - \s_{\ell}}{\s_{\ell-1} \s_{\ell}} + \frac{1}{\s_{\ell}} \right) }
	\eee^{- \sqrt{2} \frac{ t^{\beta} }{\s_{\ell-1}} \left( D + t^{\delta/2} \right) }
	P(t),
\end{align}
where $P$ is a function satisfying $P(t) \le t^c$ for some $c>0$.
In the last inequality, we used that $( \s_{\ell-1} - \s_{\ell} )^{-1} \gg t^{\b}$.
\end{proof}

\begin{proof}[Proof of Lemma \ref{lem:above_first_moment_first}]
The proof is similar to that of Lemma~\ref{lem:above_middle_first_moment}. 
The main difference lies in the use of a simplified localisation result and a different starting point for the Brownian bridge on the interval $[t^{\b}, b_1t]$.
Since $\tilde{x}_{j^*}^j \in \tilde{\LL}_{t^*, 1 \tilde{x}_j(t^{\b})-\sqrt{2}\s_1 t^{\b},B,D}$, $|\sqrt{2} \s_1 t^{\b} - \tilde{x}_j(t)| \le 2 \sqrt{2} \s_1 t^{\b}$ and $t^{\b} \ll (\s_1 - \s_2)^{-1}$, we know that, for any $D^*>D, B^*<B$ and for $t$ large enough, $\sqrt{2} \s_1 t^* - \tilde{x}_{j^*}^j(t^*) \in (D^*, B^*)(\s_1 - \s_2)^{-1}$.
By the many-to-one lemma, 
\begin{align} \label{eq:above_first_first_many}
	\E \, \Big[ \widetilde{\YY}_j(t) \, \vert \, \FF_{t^{\b}} \Big]
	= \, &
	( b_1 t )^{3/2} ( \s_1 - \s_2 )^3
	\eee^{t^*}
	\int_I
	\sfrac{ \d \o }{ \sqrt{2 \pi \s_1^2 t^* } }
	\exp  \left( - \sfrac{ \o^2 }{ 2 \s_1^2 t^* } - \sqrt{2} ( \sqrt{2} \s_1 t^* - \o )\right) \nonumber \\
	& \times 
	\P \, \bigg( \zet_{ 
		- ( \sqrt{2} \s_1 t^{\b} - \tilde{x}_j (t^{\b}) ),
		\o
	}^{t^*} (s) 
	\le \sqrt{2} \s_1 s - t^{\b \delta/2} \, \forall  s \in [0,t^*] \bigg),
\end{align}
where
\begin{equation}
	I = \sqrt{2} \s_1 t^* + (\sqrt{2} \s_1 t^{\b} - x_j(t^{\b})) - \sfrac{(D^*,B^*)}{\s_1 - \s_2}.
\end{equation}
With the change of variables $\o = -y (\s_1 - \s_2)^{-1} + \sqrt{2} \s_1 t^* + (\sqrt{2} \s_1 t^{\b} - x_j(t^{\b}))$, we see that the right-hand side of \eqref{eq:above_first_first_many} equals
\begin{align}  \label{eq:above_first_first_cov}
	&\sfrac{(b_1t)^{3/2} (\s_1 - \s_2)^2}{\s_1 \sqrt{2 \pi t^*} }
	\int_{D^*}^{B^*} \d y \,
	\eee^{
		- \sfrac{ y^2}{2 \s_1^2 (\s_1 - \s_2)^2 t^*}
	}
	\eee^{- \sqrt{2} \sfrac{y}{\s_1 (\s_1 - \s_2)}} \nonumber \\
	&\times
	\P \, \bigg( \zet_{ 
		- \frac{1}{\s_1} ( \sqrt{2} \s_1 t^{\b} - \tilde{x}_j (t^{\b}) ),
		- \frac{ y }{ \s_1 (\s_1 - \s_2)}
	}^{t^*} (s) 
	\le - \sfrac{t^{\b \delta/2}}{\s_1}\, \forall  s \in [0,t^*] \bigg).
\end{align}
By Assumption~\ref{as:above3} and $t^* \sim b_1t$, it follows that $y^2 /(2 \s_1^2 (\s_1 - \s_2)^2 t^*)$ converges to zero, uniformly in $y$, as $t \uparrow \infty$.
By Lemma~\ref{lem:BB_line}, 
\begin{align} \label{eq:above_first_first_BB}
	&\P \, \bigg( \zet_{ 
		- \frac{1}{\s_1} ( \sqrt{2} \s_1 t^{\b} - \tilde{x}_j (t^{\b}) ),
		- \frac{ y }{ \s_1 (\s_1 - \s_2)}
	}^{t^*} (s) 
	\le - \sfrac{t^{\b \delta/2}}{\s_1}\, \forall  s \in [0,t^*] \bigg) \nonumber \\
	&= \, \sfrac{2}{ \s_1^2 t^* } 
	\Big( \sqrt{2} \s_1 t^{\b} - \tilde{x}_j ( t^{\b}) - t^{\b \delta/2} \Big)
	\left( \sfrac{y}{\s_1 (\s_1 - \s_2)} - t^{\b \delta/2} \right)  \, (1+o(1)),
\end{align}
provided the second line of \eqref{eq:above_first_first_BB} converges to zero as $t \uparrow \infty$.
This is the case since $\sqrt{2} \s_1 t^{\b} - \tilde{x}_j(t)) \in (t^{\b \delta}, 2 \sqrt{2} \s_1 t^{\b})$, $ t^{\b} \ll (\s_1 - \s_2)^{-1}$ and $y^2 /(2 \s_1^2 (\s_1 - \s_2)^2 t^*) \to 0$ as $t \uparrow \infty$.
Since $ t^{\b} \ll (\s_1 - \s_2)^{-1}$, $\delta \in (0,1/2)$ and $t^{\b \delta /2} \ll t^{\b \delta}$, we can drop the terms $t^{\b \delta/2}$ in the last line of \eqref{eq:above_first_first_BB}.
Using that $b_1t \sim t^*$, we see that \eqref{eq:above_first_first_cov} is equal to
\begin{equation}
	\sqrt { \sfrac{2}{\pi} }
	\left( \sqrt{2} \s_1 t^{\b} - \tilde{x}_j \left( t^{\b} \right) \right) 
	\sfrac{ \s_1 - \s_2 }{ \s_1^3 }
	\int_{D^*}^{B^*}
	\d z \, 
	\eee^{ - \sqrt{2} \frac{y}{ \s_1 \s_2}}
	y \, (1+o(1)).
\end{equation}
The integral in the last line converges to $1/2$ as first $t \uparrow \infty$ and then $D^* \downarrow 0, B^* \uparrow \infty$.
The conditional expectation on $\widetilde{\ZZ}_j(t)$ is computed in an analogous way.
\end{proof}

\begin{proof}[Proof of Lemma \ref{prop:above_second_moment_first}]
The proof follows by the same arguments as the proof of Lemma~\ref{prop:above_middle_second_moment}. 
The only difference is that we use here the simplified localisation condition $\sqrt{2} \s_1 t^* - \tilde{x}_{j^*}^{j}(t^*)$ $\in (D^*, B^*)(\s_1 - \s_2)^{-1}$, where $D^*>D, B^*<B$.
\end{proof}

\begin{proof}[Proof of Lemma~\ref{lem:below_FirstMomentsMiddlePart}]
Recall that
\begin{align*}
	\YY_{i_1}(t) =
	\sum_{\substack{
			i_2 \le n^{i_1}(b_2t), \dots, i_{\ell-1} \le n^{\mi_{\ell-2}}(b_{\ell-1}t), \\
			\sum_{k=1}^{\ell-1} \multixb{k} \in \TT_{b_1t, (1-b_\ell)t, 0, \g}
	}}
	C \Delta^{\mi_{\ell-1}}(t)
	\exp\kl{
		b_\ell t
		-\sfrac{
			\kl{\Delta^{\mi_{\ell-1}}(t) + \sqrt{2} b_\ell 	t}^2
		}{
			2b_\ell t
		}
	},\tag{\ref{eq:below_DefY}}
\end{align*}
where
\begin{align}
	\label{eq:below_midfm0}
	\Delta^{\mi_{\ell-1}}(t) 
	+ \sqrt{2} b_\ell t
	= 
	\sfrac{1}{\s_\ell}
	\bbbkl{
		\sqrt{2}t 
		-\sum_{k=2}^{\ell-1}
		\multixb{k}(b_k t) 
		- \bar{x}_{i_{1}}(b_{1}t)
		+ L(t)+y
	}.
\end{align}
Since $\sum_{k=2}^{\ell-1}
\multixb{k}(b_k t) $ is centred  with variance \mbox{$(1 - \s_1^2 b_1 -\s_\ell^2 b_\ell)t$},
\begin{align}
	&\E\ekl{
		\mathcal{Y}_{i_1}(t) \,\big\vert\, \FF_{b_1 t}
	}
	=\:
	C\eee^{(1 - b_1) t} 
	\int\limits
	_{I}
	\sfrac{
		\d \omega
	}{
		\sqrt{2 \pi (1 - \s_1^2 b_1 -\s_\ell^2 b_\ell)t}
	}
	\eee^{
		-\sfrac{
			\omega^2
		}{
			2 (1 - \s_1^2 b_1 -\s_\ell^2 b_\ell)t
		}
	}\notag\\
	&\qquad\times
	\sfrac{1}{\s_\ell}
	\kl{
		\sqrt{2}(1-\s_\ell b_\ell)t 
		-\o 
		- \bar{x}_{i_{1}}(b_{1}t)
		+ L(t)+y
	}
	\eee^{
		-\sfrac{
			\kl{
				\sqrt{2}t 
				-\o
				- \bar{x}_{i_{1}}(b_{1}t)
				+ L(t)+y
			}^2
		}{
			2\s_\ell^2 b_\ell t
		}
	}\notag\\
	&\qquad\times
	\P\kl{
		\kl{
			\zet
			_{\bar{x}_{i_{1}}(b_{1}t), \bar{x}_{i_{1}}(b_{1}t)+\o}
			^{(1-\s_1^2b_1 -\s_\ell^2 b_\ell) t}
			\scriptstyle
			\kl{A_t(s/t) t - \s_1^2b_1t}
		}_{s\in [b_1 t, 1-b_\ell t]} 
		\in \TT_{b_1t, (1-b_\ell)t, 0, \g}
	},\label{eq:below_midfm1}
\end{align}
where
\begin{align}\label{eq:appendixIntervalDef}
	I \equiv \kl{
		\sqrt{2}(1-\s_\ell^2b_\ell) t-\bar{x}_{i_{1}}\!(b_{1}t) - (\s_\ell^2 b_\ell t)^{\g},
		\sqrt{2}(1-\s_\ell^2b_\ell) t-\bar{x}_{i_{1}}\!(b_{1}t) + (\s_\ell^2 b_\ell t)^{\g}
	}.
\end{align}
For $\bar{x}_{i_1} \in \LL_{b_1 t}$ and $\o$ in the range of integration of \eqref{eq:below_midfm1}, we find, 
as in \mbox{\eqref{eq:below_bridge4} -- \eqref{eq:below_bridge6}}, that 
\begin{align}
	\label{eq:below_midfm1.5}
	\lim_{t\uparrow\infty}
	\P\kl{
		\kl{
			\zet
			_{\bar{x}_{i_{1}}(b_{1}t), \bar{x}_{i_{1}}(b_{1}t)+\o}
			^{(1-\s_1^2b_1 -\s_\ell^2 b_\ell) t}
			\scriptstyle
			\kl{A_t(s/t) t - \s_1^2b_1t}
		}_{s\in [b_1 t, 1-b_\ell t]} 
		\in \TT_{b_1t, (1-b_\ell)t, 0, \g}
	} = 1,
\end{align}
and 
\begin{align}
	\sfrac{1}{\s_\ell}
	\kl{
		\sqrt{2}(1-\s_\ell b_\ell)t 
		-\o 
		- \bar{x}_{i_{1}}(b_{1}t)
		+ L(t)+y
	}
	&=
	\sqrt{2}(\s_\ell-1) b_\ell t + \OO((\s_\ell^2 b_\ell t)^\g)\notag\\
	&=
	\sfrac{1}{\sqrt{2}}
	b_\ell t^{1-\a_\ell}
	(1+o(1)).\label{eq:below_midfm2}
\end{align}
Inserting \eqref{eq:below_midfm1.5} and \eqref{eq:below_midfm2} into \eqref{eq:below_midfm1}, 
we obtain
\begin{align}
	\E&\ekl{
		\mathcal{Y}_{i_1}(t) \,\big\vert\, \FF_{b_1 t}
	}\notag\\
	=\:&
	\sfrac{C}{\sqrt{2}}
	b_\ell t^{1-\a_\ell} 
	\eee^{(1-b_\ell)t}
	\eee^{
		-
		\sfrac{
			\kl{
				\sqrt{2 t} - \bar{x}_{i_{1}}(b_{1}t) + L(t)+y
			}^2
		}{
			2 (1- \s_1^2 b_1) t
		}
	}
	\int\limits
	_{I}
	\sfrac{
		\d \omega
	}{
		\sqrt{2 \pi (1 - \s_1^2 b_1 -\s_\ell^2 b_\ell)t}
	}\label{eq:below_midfm3}
	\\
	&\qquad\times
	\exp\bbbbkl{
		{-} (1-\s_1^2 b_1)\sfrac{
			\kl{
				\o
				\,-\,
				\kl{1-\s_1^2 b_1}^{-1}
				\kl{
					1-\s_1^2 b_1 - \s_\ell^2 b_\ell
				}
				\kl{
					\sqrt{2}t - \bar{x}_{i_{1}}(b_{1}t) + L(t)+y
				}
			}^2
		}{
			2 (1 - \s_1^2 b_1 -\s_\ell^2 b_\ell)\s_\ell^2 b_\ell t
		}
	}(1+o(1)).
	\notag
\end{align}
For $\g>1/2$ and $\bar{x}_{i_1} \in \LL_{b_1 t}$, the integral in \eqref{eq:below_midfm3} equals
\begin{align}
	\sfrac{\s_\ell b_\ell^{1/2}}{(1-\s_1^2 b_1)^{1/2}}(1+o(1)).\label{eq:below_midfm4}
\end{align}
Inserting \eqref{eq:below_midfm4} into \eqref{eq:below_midfm3}	completes the proof.
\end{proof}

\begin{proof}[Proof of Lemma~\ref{lem:below_SecondMomentsMiddlePart}]
As in the proof of  Lemma~\ref{prop:above_middle_second_moment}, we write 
\begin{align}\label{eq:below_smmi1}
	\E\ekl{
		\mathcal{Y}^2_{i_1}(t) \,\big\vert\, \FF_{b_1 t}
	} 
	= (T1) + (T2),
\end{align}
with
\begin{align}\label{eq:below_smmi2}
	(T1) &\equiv C^2 \eee^{2b_\ell t}
	\E\bbbbekl{\!\!
		\sum_{\substack{
				i_2 \le n^{i_1}(b_2t), \dots,\\ i_{\ell-1} \le  n^{\mi_{\ell-2}}(b_{\ell-1}t) \\
		}}\!\!\!\!
		\1_{\sum_{k=1}^{\ell-1} \multixb{k} \in\, \TT_{b_1t, (1-b_\ell)t, 0, \g}}
		\\
		&\qquad\qquad\qquad\quad\times\bkl{\Delta^{\mi_{\ell-1}}(t)\!}^{2}
		\!\,\eee^{
			{-}\sfrac{
				\kl{\Delta^{\mi_{\ell-1}}(t) + \sqrt{2} b_\ell 	t}^2
			}{
				b_\ell t
			}
		}
		\Bigg\vert
		\FF_{b_1 t}
	}\!,\notag\\
	(T2)&\equiv C^2 \eee^{2b_\ell t}
	\E\bbbbekl{\!
		\sum_{\substack{
				i_2, j_2 \le n^{i_1}(b_2t), \dots,
				i_{\ell-1} \le  n^{\mi_{\ell-2}}(b_{\ell-1}t),\\
				j_{\ell-1} \le  n^{\bar{j}_{\ell-2}}(b_{\ell-1}t), \mi_{\ell-1} \neq \bar{j}_{\ell-1}\\
		}}
		\!
		\1_{\sum_{k=1}^{\ell-1} \multixb{k}\!,\,
			\sum_{k=1}^{\ell-1} \bar{x}_{j_k}^{\bar{j}_{k-1}} \in\, \TT_{b_1t, (1-b_\ell)t, 0, \g}}
		\notag\\
		&\qquad\qquad\qquad\quad\times\Delta^{\mi_{\ell-1}}(t)
		\Delta^{\bar{j}_{\ell-1}}(t)
		\exp\bbbkl{
			{-}\sfrac{
				\kl{\Delta^{\mi_{\ell-1}}(t) + \sqrt{2} b_\ell t}^2
				+
				\kl{\Delta^{\bar{j}_{\ell-1}}(t) + \sqrt{2} b_\ell t}^2
			}{
				2b_\ell t
			}
		}
		\,\Bigg\vert\,
		\FF_{b_1 t}
	}.\notag
\end{align}
Using that $\TT_{b_1t, (1-b_\ell)t, 0, \g} \subset \TT_{(1-b_\ell)t, (1-b_\ell)t, 0, \g}$, we obtain, as in \eqref{eq:below_midfm0}--\eqref{eq:below_midfm1}, by the many-to-one lemma that
\begin{align}
	\label{eq:below_midsm1}
	(T1)
	&\leq
	C^2 \eee^{(1 -b_1 + b_\ell) t}
	\int\limits		
	_{I}
	\sfrac{\d \o}{\sqrt{2\pi(1-\s_1^2 b_1 - \s_\ell^2 b_\ell) t}}
	\eee^{
		-\sfrac{
			\o^2
		}{
			2 (1-\s_1^2 b_1 - \s_\ell^2 b_\ell) t
		}
	}
	\\
	&\quad\times
	\sfrac{1}{\s^2_\ell}
	\kl{
		\sqrt{2}(1-\s_\ell b_\ell)t 
		-\o 
		- \bar{x}_{i_{1}}(b_{1}t)
		+ L(t)+y
	}^2
	\eee^{
		-\sfrac{
			\kl{
				\sqrt{2}t 
				-\o
				- \bar{x}_{i_{1}}(b_{1}t)
				+ L(t)+y
			}^2
		}{
			\s_\ell^2 b_\ell t
		}
	},\notag
\end{align}
with $I$ as in \eqref{eq:appendixIntervalDef}.
We have seen in \eqref{eq:below_midfm2} that for $\o$ in the range of integration of \eqref{eq:below_midsm1}, 
\begin{align}
	\sfrac{1}{\s^2_\ell}
	\kl{
		\sqrt{2}(1-\s_\ell b_\ell)t 
		-\o 
		- \bar{x}_{i_{1}}(b_{1}t)
		+ L(t)+y
	}^2
	\leq P(t).
\end{align}
Recall that $P$ is universal notation for any function satisfying $P(t) \leq t^c$ for some constant $c>0$ and all $t>0$ large enough.
The exponential terms inside the integral in \eqref{eq:below_midsm1} are equal to 
\begin{align}
	&\exp\kl{
		{-}\sfrac{
			\kl{
				\sqrt{2}t - \bar{x}_{i_{1}}(b_{1}t)
				+ L(t)+y
			}^2
		}{
			2(1-\s_1^2 b_1) t
		}
	}
	\exp\kl{
		{-}\sfrac{
			\kl{
				\sqrt{2}t - \o- \bar{x}_{i_{1}}(b_{1}t)
				+ L(t)+y
			}^2
		}{
			2\s_\ell^2 b_\ell t
		}
	}\notag\\
	&\qquad \times
	\exp\bbbbkl{
		{-} (1-\s_1^2 b_1)\sfrac{
			\kl{
				\o
				\,-\,
				\kl{1-\s_1^2 b_1}^{-1}
				\kl{
					1-\s_1^2 b_1 - \s_\ell^2 b_\ell
				}
				\kl{
					\sqrt{2}t - \bar{x}_{i_{1}}(b_{1}t) + L(t)+y
				}
			}^2
		}{
			2 (1 - \s_1^2 b_1 -\s_\ell^2 b_\ell)\s_\ell^2 b_\ell t
		}
	}.\label{eq:below_midsm2}
\end{align}
We insert \eqref{eq:below_midsm2} back into the right-hand side of \eqref{eq:below_midsm1}, bound the last exponential term by $1$ and shift the integral by $\sqrt{2}(1-\s_\ell^2 b_\ell)t - \bar{x}_{i_{1}}(b_{1}t)$. This gives
\begin{align}
	\label{eq:below_midsm2.4}
	(T1) 
	&\leq
	P(t)
	\eee^{(1 -b_1 + b_\ell) t}
	\!
	\exp\kl{
		{-}\sfrac{
			\kl{
				\!\sqrt{2}t - \bar{x}_{i_{1}}(b_{1}t)
				+ L(t)+y
			}^2
		}{
			2(1-\s_1^2 b_1) t
		}
	}\notag\\
	&\qquad\times
	\int\limits		
	_{
		- (\s_\ell^2 b_\ell t)^{\g}
	}
	^{
		(\s_\ell^2 b_\ell t)^{\g}
	}
	\!\!\!\!
	\sfrac{\d \o}{\!\sqrt{2\pi(1-\s_1^2 b_1 - \s_\ell^2 b_\ell) t}}
	\exp\kl{
		{-}\sfrac{
			\kl{
				\!\sqrt{2}\s_\ell^2 b_\ell t - \o
				+ L(t)+y
			}^2
		}{
			2\s_\ell^2 b_\ell t
		}
	}\!.
\end{align}
For $\o$ in the range of the integral in \eqref{eq:below_midsm2.4},
\begin{align}\label{eq:below_midsm2.42}
	\begin{split}
		b_\ell t
		-
		\sfrac{
			\kl{
				\sqrt{2}\s_\ell^2 b_\ell t - \o
				+ L(t)+y
			}^2
		}{
			2\s_\ell^2 b_\ell t
		}
		&\leq
		(1-\s_\ell^2) b_\ell t + \!\sqrt{2}(\s_\ell^2 b_\ell t)^\g + \!\sqrt{2} L(t)+y= -b_\ell t^{1-\a_\ell} (1+o(1)).
	\end{split}
\end{align}
Assumption~\ref{as:belowLSpeed3} implies that $b_\ell t^{1-\a_\ell} \gg t^{1/2}$, so $(T1)$ is bounded by the right-hand side of \eqref{eq:below_midsm0}.
Dropping the condition $\TT_{b_1t, (1-b_\ell)t, 0, \g}$ except at the endpoint and at the time of the branching gives via the many-to-two lemma
\begin{align}
	(T2) 
	&\leq
	C^2 \eee^{2(1-b_1) t}
	\!
	\int\limits_{b_1 t}^{(1-b_\ell) t}
	\d s \,\eee^{-(s-b_1 t)}
	\int\limits
	_{I_1}
	\!\!
	\sfrac{\d\o_1}{\sqrt{2\pi  \kl{A_t(s/t) - \s_1^2 b_1 }t}}
	\eee^{
		-\sfrac{
			\o_1^2
		}{
			2\kl{A_t(s/t) - \s_1^2 b_1 }t
		}
	}\notag\\
	&\times
	\Bigg(
	\int\limits
	_{I_2}
	\sfrac{\d\o_2}{\sqrt{2\pi  \kl{1- \s_\ell^2 b_\ell - A_t(s/t) } t}}
	\eee^{
		{-}\sfrac{
			\o_2^2
		}{
			2\kl{1- \s_\ell^2 b_\ell - A_t(s/t)} t
		}
	}
	\\\notag
	&\quad\times
	\!\sfrac{1}{\s_\ell}\!
	\kl{
		\!\sqrt{2}(1\!-\!\s_\ell b_\ell)t 
		-\o_1-\o_2 
		- \bar{x}_{i_{1}}(b_{1}t)
		+ L(t)+y
	}
	\eee^{\!
		{-}\sfrac{
			\kl{
				\!\sqrt{2}t 
				-\o_1-\o_2
				- \bar{x}_{i_{1}}(b_{1}t)
				+ L(t)+y
			}^2
		}{
			2\s_\ell^2 b_\ell t
		}
	}\!
	\Bigg)^{\!\! 2}\!,\label{eq:below_midsm2.5}
\end{align}
where 
\begin{align}
	f_{t,\g}(s) &\equiv \bkl{A_t(s/t) \wedge \kl{1 - A_t(s/t)}\!}^\g t^\g, \\
	I_1 &\equiv  \kl{
		\bar{x}_{i_{1}}\!(b_{1}t)  - \!\sqrt{2}t A_t(s/t) - f_{t,\g}(s), 
		\bar{x}_{i_{1}}\!(b_{1}t)  - \!\sqrt{2}t A_t(s/t) + f_{t,\g}(s)
	},\notag\\
	I_2&\equiv \kl{
		\bar{x}_{i_{1}}\!(b_{1}t) + \o_1 - \!\sqrt{2}(1\!-\!\s_\ell^2 b_\ell)t
		- (\s_\ell^2 b_\ell t)^\g\!,
		\bar{x}_{i_{1}}\!(b_{1}t) + \o_1 - \!\sqrt{2}(1\!-\!\s_\ell^2 b_\ell)t
		+ (\s_\ell^2 b_\ell t)^\g
	}\!.\notag
\end{align}
In the $\o_2$-integral in \eqref{eq:below_midsm2.5},
\begin{align}
	\sfrac{1}{\s_\ell}
	\kl{
		\!\sqrt{2}(1-\s_\ell b_\ell)t 
		-\o_1-\o_2 
		- \bar{x}_{i_{1}}(b_{1}t)
		+ L(t)+y
	}
	\leq P(t),
\end{align}
and the exponential terms are equal to
\begin{align}
	&\exp\bbbkl{
		{-}\sfrac{
			\kl{
				\!\sqrt{2}t 
				-\o_1
				- \bar{x}_{i_{1}}(b_{1}t)
				+ L(t)+y
			}^2
		}{
			2\kl{1-A_t(s/t)} t
		}
	}\notag\\
	&\quad\times\exp\kl{
		{-}\sfrac{
			(1-A_t(s/t))
			\,\bbkl{
				\o_2 - 
				\frac{1-\s_\ell^2 b_\ell-A_t(s/t)}{1-A_t(s/t)}
				\bkl{\!\sqrt{2}t 
					-\o_1
					- \bar{x}_{i_{1}}(b_{1}t)
					+ L(t)+y}
			}^2
		}{
			2\kl{1-\s_\ell^2 b_\ell-A_t(s/t)} \s_\ell^2 b_\ell t
		}
	}.
	\label{eq:below_midsm3}
\end{align}
The first factor in \eqref{eq:below_midsm3} does not depend on $\o_2$ and the integral of the second factor over $\o_2$ is Gaussian and thus can be bounded by $P(t)$. This shows that the $\o_2$-integral in \eqref{eq:below_midsm2.5} is bounded by
\begin{align}
	P(t)\,\eee^{
		{-}\sfrac{
			\kl{
				\sqrt{2}t 
				-\o_1
				- \bar{x}_{i_{1}}(b_{1}t)
				+ L(t)+y
			}^2
		}{
			2\kl{1-A_t(s/t)} t
		}
	}.
\end{align}
Inserting this bound into \eqref{eq:below_midsm2.5} and then proceeding as in \eqref{eq:below_midsm2}--\eqref{eq:below_midsm2.4}, we obtain
\begin{align}
	\label{eq:below_midsm4}
	(T2)
	&\leq
	P(t)\, \eee^{2(1-b_1) t}
	\!
	\int\limits_{b_1 t}^{(1-b_\ell) t}
	\!\d s \,\eee^{-(s-b_1 t)}
	\notag\\
	&\quad\times
	\int\limits		
	_{I_1}
	\!\!
	\sfrac{\d\o_1}{\sqrt{2\pi  \kl{A_t(s/t) - \s_1^2 b_1 }t}}
	\eee^{
		-\sfrac{
			\o_1^2
		}{
			2\kl{A_t(s/t) - \s_1^2 b_1 }t
		}
	}
	\eee^{
		{-}\sfrac{
			\kl{
				\!\sqrt{2}t 
				-\o_1
				- \bar{x}_{i_{1}}(b_{1}t)
				+ L(t)+y
			}^2
		}{
			\kl{1-A_t(s/t)} t
		}
	}\notag\\
	&\leq
	P(t)\, \eee^{(2-b_1) t}
	\eee^{
		{-}\sfrac{
			\kl{
				\!\sqrt{2}t 
				- \bar{x}_{i_{1}}(b_{1}t)
				+ L(t)+y
			}^2
		}{
			2\kl{1-\s_1^2 b_1} t
		}
	}
	\!
	\int\limits_{b_1 t}^{(1-b_\ell) t}
	\!\d s \,\eee^{-s}
	\notag\\
	&\quad\times
	\int\limits_{
		- f_{t,\g}(s) 
	}^{f_{t,\g}(s) }\!\!
	\!\sfrac{\d\o_1}{\sqrt{2\pi  \kl{A_t(s/t) - \s_1^2 b_1 }t}}
	\eee^{
		{-}\sfrac{
			\kl{
				\!\sqrt{2}(1-A_t(s/t))t 
				-\o_1
				+ L(t)+y
			}^2
		}{
			2\kl{1-A_t(s/t)} t
		}
	}.
\end{align}
Bounding $\o_1$ by $f_{t,\g}(s)$, we see that the $\o_1$-integral is not larger than
\begin{align}
	P(t) \eee^{
		{-}\sfrac{
			\kl{
				\sqrt{2}(1-A_t(s/t))t 
				-f_{t,\g}(s)
				+ L(t)+y
			}^2
		}{
			2\kl{1-A_t(s/t)} t
		}
	},
\end{align}
so we have
\begin{align}
	\label{eq:below_midsm7}
	(T2)\leq
	P(t)\, \eee^{(1-b_1) t}
	\eee^{
		{-}\sfrac{
			\kl{
				\sqrt{2}t 
				- \bar{x}_{i_{1}}(b_{1}t)
				+ L(t)+y
			}^2
		}{
			2\kl{1-\s_1^2 b_1} t
		}
	}
	\!
	\int\limits_{b_1 t}^{(1-b_\ell) t}
	\d s \,\eee^{t A_t(s/t) - s + \sqrt{2} f_{t,\g}(s)}(1+o(1)).
\end{align}
For all $s$ in the range of the integral in \eqref{eq:below_midsm7}, $f_{t,\g}(s) \leq t^\g$ and by \eqref{eq:below_midsm-1}, $t A_t(s/t) - s \leq -t^{\tilde{\g}}$ for some $\tilde{\g}>\g$ and $t$ large enough. Thus, the integrand in \eqref{eq:below_midsm7} is bounded from above by $\eee^{-t^{\tilde{\g}}}(1+o(1))$ for $t$ large enough. We conclude that
$(T2)$ is not larger than the right-hand side of \eqref{eq:below_midsm0}.
\end{proof}

\begin{proof}[Proof of Lemma~\ref{lem:below_FirstMomentsFirstPart}]
By the many-to-one lemma,	
\begin{align}
	\E\ekl{
		\tilde{\YY}_{j}(t) \,\big\vert\, \FF_{t^\b}
	}
	&=\sfrac{C}{\sqrt{2(1-\s_1^2 b_1)}}
	b_\ell^{3/2} t^{1-\a_\ell}
	\eee^{t-t^\b}
	\int\limits
	_{J}
	\sfrac{
		\d \omega
	}{
		\sqrt{2 \pi \s_1^2 t^*}
	}
	\eee^{
		-\sfrac{
			\omega^2
		}{
			2 \s_1^2 t^*
		}
	}\eee^{
		{-}\sfrac{
			\kl{
				\!\sqrt{2}t 
				-\o
				- \bar{x}_{j}(t^\b)
				+ L(t)+y
			}^2
		}{
			2 (1-\s_1^2 b_1) t
		}
	}\notag\\
	&\quad\times
	\P
	\kl{
		\zet_{
			\s_1^{-1} \bar{x}_{j}(t^\b),\, \s_1^{-1} \kl{\bar{x}_{j}(t^\b) + \o}
		}^{t^*} 
		(s) 
		< \!\sqrt{2}s - t^{\b\delta/2} \, \forall_{s \in [0,t^*]} 
	}
	,\label{eq:below_1fm1}
\end{align}
where 
\begin{align}\label{eq:appendixDefInterval2}
	J\equiv \kl{
		\sqrt{2} \s_1^2 b_1 t- \bar{x}_{j}(t^\b) - (\s_1^2 b_1 t)^{\g},
		\sqrt{2} \s_1^2 b_1 t- \bar{x}_{j}(t^\b) + (\s_1^2 b_1 t)^{\g}
	}.
\end{align}
By Lemma~\ref{lem:BB_line}, the probability in \eqref{eq:below_1fm1} satisfies
\begin{align}
	\P&
	\kl{
		\zet_{
			\s_1^{-1} \bar{x}_{j}(t^\b),\, \s_1^{-1} \kl{\bar{x}_{j}(t^\b) + \o}
		}^{t^*} 
		(s) 
		< \!\sqrt{2}s - t^{\b\delta/2} \, \forall_{s \in [0,t^*]} 
	}\notag\\
	=\:&
	\P
	\kl{
		\zet_{
			\s_1^{-1} \kl{\bar{x}_{j}(t^\b) + t^{\b\delta/2}} - \sqrt{2}t^\b,\,
			\s_1^{-1} \kl{\bar{x}_{j}(t^\b) + \o + t^{\b\delta/2}} - \sqrt{2}b_1 t
		}^{t^*} 
		(s) 
		< 0 \, \forall_{s \in [0,t^*]} 
	}\notag\\
	=\:&
	1-\exp\kl{
		-\sfrac{2}{t^*} 
		\kl{
			\sqrt{2}t^\b -  \sfrac{\bar{x}_{j}(t^\b) + t^{\b\delta/2}}{\s_1}
		}
		\kl{
			\sqrt{2}b_1 t -  \sfrac{\bar{x}_{j}(t^\b) + \o + t^{\b\delta/2}}{\s_1}
		}
	}
	\notag\\
	=\:&
	\sqrt{2}t^{-\a_1} \kl{
		\sqrt{2}\s_1t^\b - \bar{x}_{j}(t^\b) - t^{\b\delta/2}
	}(1+o(1)).
	\label{eq:below_1fm2}
\end{align}
In the last step we used that for $\bar{x}_{j} \in\LL_{t^\b}$, 
\begin{align}
	0 \leq \sqrt{2}t^\b -  \sfrac{\bar{x}_{j}(t^\b) + t^{\b\delta/2}}{\s_1} = \OO(t^\b),
\end{align}
and that for $\o$ in the range of integration in \eqref{eq:below_1fm1},
\begin{align}
	\sqrt{2}b_1 t -  \sfrac{\bar{x}_{j}(t^\b) + \o + t^{\b\delta/2}}{\s_1} = 2^{-1/2} b_1 t^{1-\a_1} (1+o(1)).
\end{align}
Inserting \eqref{eq:below_1fm2} into \eqref{eq:below_1fm1}, we get that $\E\ekl{
	\tilde{\YY}_{j}(t) \,\big\vert\, \FF_{t^\b}
}$ is up to an error term $(1+o(1))$ equal to
\begin{align}\label{eq:below_1fm3}
	&
	C
	b_\ell^{3/2} t^{1-\a_1 -\a_\ell}
	\kl{
		\sqrt{2}\s_1t^\b - \bar{x}_{j}(t^\b) - t^{\b\delta/2}
	}
	\eee^{t-t^\b}
	\eee^{
		-\sfrac{
			\kl{
				\sqrt{2}t 
				- \bar{x}_{j}(t^\b))
				+ L(t)+y
			}^2
		}{
			2 (t - \s_1^2 t^\b)
		}
	}\\
	&\:\:\times\!
	\int\limits
	_{J}
	\!
	\sfrac{
		\d \omega
	}{
		\sqrt{2 \pi (1-\s_1^2 b_1) \s_1^2 t^*}
	}
	\exp\kl{
		-\sfrac{
			t-\s_1^2t^\b
		}{
			2 \s_1^2 (1-\s_1^2b_1) t t^*
		}
		\bbkl{
			\o 
			- \sfrac{
				\s_1^2 t^*
			}{
				t - \s_1^2 t^\b
			}
			\bkl{
				\sqrt{2}t - \bar{x}_{j}(t^\b)
				+ L(t)+y
			}
		}^2
	}\!.\notag
\end{align}
A Gaussian tail bound shows that for $\g>1/2$ and $\bar{x}_{j} \in \LL_{t^\b}$,  the integral in \eqref{eq:below_1fm3} is of order~$1$. Recalling the definition of $L(t)$ in \eqref{eq:below_defL}, we find that \eqref{eq:below_1fm3} is equal to the right-hand side of \eqref{eq:below_FMFP_Result}.
\end{proof}

\begin{proof}[Proof of Lemma~\ref{lem:below_SecondMomentsFirstPart}]
The structure of this proof is the same as that of Lemma~\ref{lem:below_SecondMomentsMiddlePart}, which we will refer to for explanations.
We write
\begin{align}
	\E\ekl{
		\tilde{\YY}^2_{j}(t) \,\big\vert\, \FF_{t^\b}
	}=(T1)+(T2),
\end{align}
where 
\begin{align}\label{eq:below_smfi1}
	(T1) &\equiv \kl{\sfrac{C	b_\ell^{3/2} t^{1-\a_\ell}}{\sqrt{2(1-\s_1^2 b_1)}}}^2 
	\eee^{2(1-b_1) t}\,
	\E\,\bbbbekl{
		\sum_{\substack{
				j^* \le n^j(t^*), \\
				\bar{x}_{j^*}^{\, j} \in \LL_{t^*}
		}}
		\eee^{
			{-}
			\sfrac{
				\kl{
					\sqrt{2 t} - \bar{x}_{j^*}^{\, j}(t^*) - \bar{x}_j(t^\b) + L(t)+y
				}^2
			}{
				(1- \s_1^2 b_1) t
			}
		}
		\,\Bigg\vert\,
		\FF_{b_1 t}
	},\notag\\
	(T2)&\equiv \kl{\sfrac{C	b_\ell^{3/2} t^{1-\a_\ell}}{\sqrt{2(1-\s_1^2 b_1)}}}^2
	\eee^{2(1-b_1) t}\,
	\E\,\bbbbekl{
		\sum_{\substack{
				j^*\!,\, k^* \le n^j(t^*),\, j^*\neq k^*, \\
				\bar{x}_{j^*}^{\, j},\,
				\bar{x}_{k^*}^{\, j} \in \LL_{t^*}
		}}
		\eee^{
			{-}
			\sfrac{
				\kl{
					\sqrt{2 t} - \bar{x}_{j^*}^{\, j}(t^*) - \bar{x}_j(t^\b) + L(t)+y
				}^2
			}{
				2(1- \s_1^2 b_1) t
			}
		}\notag\\
		&\qquad\qquad\qquad\qquad\qquad\qquad
		\times 
		\eee^{
			{-}
			\sfrac{
				\kl{
					\sqrt{2 t} - \bar{x}_{k^*}^{\, j}(t^*) - \bar{x}_j(t^\b) + L(t)+y
				}^2
			}{
				2(1- \s_1^2 b_1) t
			}
		}
		\,\Bigg\vert\,
		\FF_{b_1 t}
	}.
\end{align}
As in \eqref{eq:below_midsm1}, we obtain
\begin{align}
	\label{eq:below_smfi2}
	(T1) 
	&\leq
	P(t)\, \eee^{2(1-b_1) t + t^*}
	\int\limits		
	_{J}
	\sfrac{\d \o}{\sqrt{2\pi \s_1^2 t^*}}
	\eee^{
		-\sfrac{
			\o^2
		}{
			2 \s_1^2 t^*
		}
	}
	\eee^{
		-\sfrac{
			\kl{
				\sqrt{2}t 
				-\o
				- \bar{x}_{j}(t^\b)
				+ L(t)+y
			}^2
		}{
			(1-\s_1^2 b_1) t
		}
	},
\end{align}
with $J$ as in \eqref{eq:appendixDefInterval2}.
Proceeding as in \eqref{eq:below_midsm2}--\eqref{eq:below_midsm2.42}, we see that the right hand side of \eqref{eq:below_smfi2} is bounded by
\begin{align}
	P(t)\,
	\eee^{-\sqrt{2}\kl{\sqrt{2}\s_1 t^\b - x_j(t^\b)}}
	\eee^{-b_1 t^{1/2}}(1+o(1)).
\end{align}
We get, as in \eqref{eq:below_midsm2.5}, that
\begin{align}
	\label{eq:below_fism3}
	(T2) 
	&\leq
	P(t)\, \eee^{2(1-b_1) t}
	\!
	\int\limits_{0}^{t^*}
	\d s \,\eee^{t^* + s}
	\int\limits
	_{
		-\infty
	}^{
		\sqrt{2}\s_1(b_1 t - s) - t^{\b\delta/2} - x_j(t^\b)
	}
	\!\!
	\sfrac{\d\o_1}{\sqrt{2\pi\s_1^2  \kl{t^*-s}}}
	\eee^{
		-\sfrac{
			\o_1^2
		}{
			2\s_1^2  \kl{t^*-s}
		}
	}
	\notag\\
	&\quad\times
	\kl{\,
		\int\limits
		_{J-\o_1}
		\!
		\sfrac{\d\o_2}{\sqrt{2\pi \s_1^2 s}}
		\eee^{
			{-}\sfrac{
				\o_2^2
			}{
				2\s_1^2 s
			}
		}
		\eee^{
			-\sfrac{
				\kl{
					\sqrt{2}t 
					-\o_1-\o_2
					- \bar{x}_{j}(t^\b)
					+ L(t)+y
				}^2
			}{
				2 (1-\s_1^2 b_1) t
			}
		}
	}^2,
\end{align}
where $\o_2 \in J-\o_1$ denotes $\o_2+\o_1 \in J$.	
As in \eqref{eq:below_midsm3}--\eqref{eq:below_midsm7}, we see that the right-hand side of \eqref{eq:below_fism3} is not larger than
\begin{align}
	P(t)\,
	\eee^{-\sqrt{2}\kl{\sqrt{2}\s_1 t^\b - x_j(t^\b)}}
	\eee^{- \sqrt{2} t^{\b\delta/2}}(1+o(1)).
\end{align}
\end{proof}



\begin{thebibliography}{10}

\bibitem{ABBS}
E.~A\"\i{}d\'ekon, J.~Berestycki, E.~Brunet, and Z.~Shi.
\newblock Branching {B}rownian motion seen from its tip.
\newblock {\em Probab. Theor. Rel. Fields}, 157:405--451, 2013.

\bibitem{ABK_G}
L.-P. Arguin, A.~Bovier, and N.~Kistler.
\newblock Genealogy of extremal particles of branching {B}rownian motion.
\newblock {\em Comm. Pure Appl. Math.}, 64(12):1647--1676, 2011.

\bibitem{ABK_P}
L.-P. Arguin, A.~Bovier, and N.~Kistler.
\newblock Poissonian statistics in the extremal process of branching {B}rownian
motion.
\newblock {\em Ann. Appl. Probab.}, 22(4):1693--1711, 2012.

\bibitem{ABK_E}
L.-P. Arguin, A.~Bovier, and N.~Kistler.
\newblock The extremal process of branching {B}rownian motion.
\newblock {\em Probab. Theor. Rel. Fields}, 157:535--574, 2013.

\bibitem{Berestycki}
J.~Berestycki et~al.
\newblock A simple backward construction of branching Brownian motion with large displacement
and applications.
\newblock {\em Ann. Inst. Henri Poincaré Probab. Stat.} 58(4):2094-–2113, 2022.

\bibitem{bbm-book}
A.~Bovier.
\newblock {\em Gaussian Processes on Trees. From Spin Glasses to Branching
	Brownian Motion}, volume 163 of {\em Cambridge Studies in Advanced
	Mathematics}.
\newblock Cambridge University Press, Cambridge, 2017.

\bibitem{BovHar13}
A.~{Bovier} and L.~{Hartung}.
\newblock {The extremal process of two-speed branching Brownian motion}.
\newblock {\em Electron. J. Probab.}, 19(18):1--28, 2014.

\bibitem{BH14.1}
A.~Bovier and L.~Hartung.
\newblock Variable speed branching {B}rownian motion: 1. {E}xtremal processes
in the weak correlation regime.
\newblock {\em ALEA Lat. Am. J. Probab. Math. Stat.}, 12:261--291, 2015.

\bibitem{1to6}
A.~Bovier and L.~Hartung.
\newblock From 1 to 6: {A} finer analysis of perturbed branching {B}rownian
motion.
\newblock {\em Comm. Pure Appl. Math.}, 73(7):1490--1525, 2020.

\bibitem{BK1}
A.~Bovier and I.~Kurkova.
\newblock Derrida's generalised random energy models. {I}. {M}odels with
finitely many hierarchies.
\newblock {\em Ann. Inst. H. Poincar\'e Probab. Statist.}, 40(4):439--480,
2004.

\bibitem{BK2}
A.~Bovier and I.~Kurkova.
\newblock Derrida's generalized random energy models. {II}. {M}odels with
continuous hierarchies.
\newblock {\em Ann. Inst. H. Poincar\'e Probab. Statist.}, 40(4):481--495,
2004.

\bibitem{B_M}
M.~D. Bramson.
\newblock Maximal displacement of branching {B}rownian motion.
\newblock {\em Comm. Pure Appl. Math.}, 31(5):531--581, 1978.

\bibitem{B_C}
M.~D. Bramson.
\newblock Convergence of solutions of the {K}olmogorov equation to travelling
waves.
\newblock {\em Mem. Amer. Math. Soc.}, 44(285):iv+190, 1983.

\bibitem{chauvin88}
B.~Chauvin and A.~Rouault.
\newblock K{PP} equation and supercritical branching {B}rownian motion in the
subcritical speed area. {A}pplication to spatial trees.
\newblock {\em Probab. Theory Related Fields}, 80(2):299--314, 1988.

\bibitem{chauvin90}
B.~Chauvin and A.~Rouault.
\newblock Supercritical branching {B}rownian motion and {K}-{P}-{P} equation in
the critical speed-area.
\newblock {\em Math. Nachr.}, 149:41--59, 1990.

\bibitem{DerriSpohn88}
B.~Derrida and H.~Spohn.
\newblock Polymers on disordered trees, spin glasses, and traveling waves.
\newblock {\em J. Statist. Phys.}, 51:817--840, 1988.

\bibitem{FZ_RW}
M.~Fang and O.~Zeitouni.
\newblock Branching random walks in time inhomogeneous environments.
\newblock {\em Electron. J. Probab.}, 17(67):1--18, 2012.

\bibitem{FZ_BM}
M.~Fang and O.~Zeitouni.
\newblock Slowdown for time inhomogeneous branching {B}rownian motion.
\newblock {\em J. Stat. Phys.}, 149(1):1--9, 2012.

\bibitem{GFF1}
M.~Fels.
\newblock Extremes of the 2d scale-inhomogeneous discrete {G}aussian free
field: {S}ub-leading order and exponential tails.
\newblock {\em arXiv e-print}, 2019.
\newblock Available at \url{https://arxiv.org/abs/1910.09915}.

\bibitem{GFF2}
M.~Fels and L.~Hartung.
\newblock Extremes of the 2d scale-inhomogeneous discrete {G}aussian free
field: {C}onvergence of the maximum in the regime of weak correlations.
\newblock {\em ALEA Lat. Am. J. Probab. Math. Stat.}, 18(1):1891--1930, 2021.

\bibitem{GFF3}
M.~Fels and L.~Hartung.
\newblock Extremes of the 2d scale-inhomogeneous discrete {G}aussian free
field: {E}xtremal process in the weakly correlated regime.
\newblock {\em ALEA Lat. Am. J. Probab. Math. Stat.}, 18(1):1689--1718, 2021.

\bibitem{GD86b}
E.~Gardner and B.~Derrida.
\newblock Solution of the generalised random energy model.
\newblock {\em J. Phys. C}, 19:2253--2274, 1986.

\bibitem{kistler2015}
N.~Kistler.
\newblock Derrida's random energy models. {F}rom spin glasses to the extremes
of correlated random fields.
\newblock In {\em Correlated random systems: five different methods}, volume
2143 of {\em Lecture Notes in Math.}, pages 71--120. Springer, Cham, 2015.

\bibitem{KistSchmi15}
N.~Kistler and M.~A. Schmidt.
\newblock From {D}errida's random energy model to branching random walks: from
1 to 3.
\newblock {\em Electron. Commun. Probab.}, 20:no. 47, 12, 2015.

\bibitem{LS}
S.~P. Lalley and T.~Sellke.
\newblock A conditional limit theorem for the frontier of a branching
{B}rownian motion.
\newblock {\em Ann. Probab.}, 15(3):1052--1061, 1987.

\bibitem{MZ}
P.~Maillard and O.~Zeitouni.
\newblock Slowdown in branching {B}rownian motion with inhomogeneous variance.
\newblock {\em Ann. Inst. Henri Poincar\'e Probab. Stat.}, 52(3):1144--1160,
2016.

\bibitem{Mallein}
B.~Mallein.
\newblock Maximal displacement of a branching random walk in time-inhomogeneous
environment.
\newblock {\em Stochastic Process. Appl.}, 125(10):3958--4019, 2015.

\bibitem{Ouimet}
F.~Ouimet. 
\newblock Maxima of branching random walks with piecewise constant variance.
\newblock {\em Braz. J. Probab. Stat.}, 32(4):679-–706, 2018.

\bibitem{Sl}
D.~Slepian.
\newblock The one-sided barrier problem for {G}aussian noise.
\newblock {\em Bell System Tech. J.}, 41:463--501, 1962.




\end{thebibliography}




\end{document}